\theoremstyle{plain}
\newtheorem{theorem}{Theorem}[section]
\newtheorem{proposition}[theorem]{Proposition}
\newtheorem{lemma}[theorem]{Lemma}
\newtheorem{mainlemma}[theorem]{Main Lemma}
\newtheorem{corollary}[theorem]{Corollary}
\newtheorem*{claim*}{Claim} 
\newtheorem*{case*}{Case}
\theoremstyle{definition}
\newtheorem{definition}[theorem]{Definition}
\newtheorem{example}[theorem]{Example}
\theoremstyle{remark}
\newtheorem*{remark}{Remark}
\newcommand\Z{\mathbb{Z}}
\newcommand\Q{\mathbb{Q}}
\newcommand\co{\colon}
\newcommand\abs[1]{\lvert #1\rvert}
\newcommand\abso[1]{\lvert #1\rvert_O}
\DeclareMathOperator{\Aut}{Aut}
\DeclareMathOperator{\Out}{Out}
\DeclareMathOperator{\SL}{SL}
\DeclareMathOperator{\GL}{GL}
\DeclareMathOperator{\st}{st}
\DeclareMathOperator{\dom}{dom}
\DeclareMathOperator{\supp}{supp}
\newcommand{\genby}[1]{\langle #1\rangle}
\newcommand{\whset}[1]{\Omega_{[#1]}}
\newcommand{\whsetsr}[2]{\Omega_{[#1],#2}}
\newcommand{\pcount}[3]{\langle #1,#2\rangle_{#3}}
\title{
Full-featured peak reduction in right-angled Artin groups
}
\author{Matthew B. Day}
\date{November 19, 2013}
\begin{document}
\maketitle

\begin{abstract}
We prove a new version of the classical peak-reduction theorem for automorphisms of free groups in the setting of right-angled Artin groups.
We use this peak-reduction theorem to prove two important corollaries about the action of the automorphism group of a right-angled Artin group $A_\Gamma$ on the set of $k$-tuples of conjugacy classes from $A_\Gamma$: orbit membership is decidable, and stabilizers are finitely presentable.
Further, we explain procedures for checking orbit membership and building presentations of stabilizers.
This improves on a previous result of the author's.
We overcome a technical difficulty from the previous work by considering infinite generating sets for the automorphism groups.
The method also involves a variation on the Hermite normal form for matrices.
\end{abstract}

\section{Introduction}
\subsection{Background}
Let $F$ denote a finite-rank free group with automorphism group $\Aut(F)$.
\emph{Peak reduction} is a technique in the study of $\Aut(F)$ that is a key ingredient in the solution of several important problems.
J.H.C.\ Whitehead invented the technique in the 1930's in~\cite{Whitehead} to provide an algorithm that takes in two conjugacy classes (or more generally, $k$-tuples of conjugacy classes) from $F$ and determines whether there is an automorphism in $\Aut(F)$ that carries one to the other.
In a series of papers in the 1970's~\cite{McCool1,McCool2,McCool3}, McCool used peak-reduction methods to reprove Nielsen's result that $\Aut(F)$ is finitely presented, and to prove that the stabilizer in $\Aut(F_n)$ of a tuple of conjugacy classes in $F$ is also finitely presented.

Given a finite simplicial graph $\Gamma$, the \emph{right-angled Artin group} $A_\Gamma$ is the  group given by the finite presentation whose generators are the vertices of $\Gamma$, and whose only relations are that two generators commute if and only if they are adjacent as vertices in $\Gamma$.
These groups are also called ``partially commutative groups'' and ``graph groups''.
Free groups are extreme examples of right-angled Artin groups, but this class of groups contains also contains free abelian groups and many other groups.

The goal of this paper is to generalize the peak-reduction method to right-angled Artin groups.
In a previous paper~\cite{Day1}, the author found a weak generalization of peak reduction and used it to prove that $\Aut(A_\Gamma)$ is finitely presented for every $\Gamma$.
That paper and its sequel~\cite{Day2} used peak reduction to prove that the stabilizers of certain specific tuples of conjugacy classes in $A_\Gamma$ are finitely generated, and to investigate an analogue of mapping class groups of surfaces inside $\Aut(A_\Gamma)$.
There is also a version of peak reduction for right-angled Artin groups from recent work of Charney--Stambaugh--Vogtmann~\cite{CharneyStambaughVogtmann}; 
they use peak reduction to study an outer space for $\Aut(A_\Gamma)$ (a contractible cell complex that $\Aut(A_\Gamma)$ acts on geometrically). 
We relate their theorem to our results after we state them below.
However, the other applications of peak reduction do not seem to follow directly from either of the peak-reduction theorems just mentioned.
The present paper proves a strong generalization of peak reduction to right-angled Artin groups and uses it to prove two important corollaries of peak reduction.

\subsection{Results}
First we state our two corollaries of peak reduction.
Let $A_\Gamma$ be a right-angled Artin group.
\begin{theorem}\label{th:raagcheckorbit}
There is an algorithm that takes in two tuples $U$ and $V$ of conjugacy classes from $A_\Gamma$ and either produces an automorphism $\alpha\in \Aut(A_\Gamma)$ with $\alpha\cdot U=V$ or determines that no such automorphism exists.
\end{theorem}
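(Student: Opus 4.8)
The plan is to run the Whitehead--McCool algorithm, with our peak-reduction theorem (the Main Lemma below) as the essential input. First I would fix a length function on tuples of conjugacy classes: for a conjugacy class $c$ of $A_\Gamma$ let $\abs{c}$ be the length of a cyclically reduced representative (equivalently $\min\{\,\abs{g}:g\in c\,\}$), and set $\norm{U}=\sum_i\abs{c_i}$ for $U=(c_1,\dots,c_k)$. Because the word and conjugacy problems for $A_\Gamma$ are solvable and cyclically reduced forms are computable, $\norm{U}$ is computable, equality of tuples of conjugacy classes is decidable, and for each $L$ the set of conjugacy classes $c$ with $\abs{c}\le L$ is finite and effectively listable (there are finitely many words of length $L$ over a fixed finite alphabet).

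The one genuinely new issue is that the Whitehead generating set is infinite. The automorphisms responsible are the long-range transvections $x\mapsto xa^{\,n}$ and their conjugating variants, indexed by $n\in\Z$; such a move multiplies every occurrence of $x$ by $a^{\,n}$, so once $\abs{n}$ exceeds an explicit function of $\norm{U}$ it strictly increases $\norm{U}$. Hence I would record, as a lemma, that for a fixed $U$ the set of Whitehead automorphisms $\varphi$ with $\norm{\varphi\cdot U}\le\norm{U}$ is finite and computable: run over the finitely many combinatorial types of Whitehead automorphism, for each type test only the integer parameters bounded in terms of $\norm{U}$, apply the automorphism, cyclically reduce, and compare lengths. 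This boundedness is exactly what the infinite generating set costs and buys, and it is also what lets the peak-reduction statement be stated cleanly.

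Granting these ingredients the algorithm is routine. While some Whitehead automorphism strictly decreases $\norm{U}$ (decidable by the lemma), replace $U$ by its image and compose the automorphism onto a running product; since $\norm{U}\in\Z_{\ge 0}$ strictly decreases, this halts, and by peak reduction it halts exactly at a tuple $U_0$ of minimal length in the $\Aut(A_\Gamma)$-orbit of $U$, together with a computed $\alpha_0$ satisfying $\alpha_0\cdot U=U_0$; do the same for $V$ to obtain $V_0$ and $\beta_0$. Minimal orbit length is an orbit invariant, so if $\norm{U_0}\ne\norm{V_0}$ then no automorphism carries $U$ to $V$. Otherwise form the finite directed graph $G$ whose vertices are the conjugacy-class tuples of length $\norm{U_0}$ and whose edges are the Whitehead automorphisms between such tuples (finitely many leaving each vertex, by the lemma), and search the connected component of $U_0$, recording the moves used; by peak reduction, $V$ lies in the $\Aut(A_\Gamma)$-orbit of $U$ if and only if $V_0$ lies in that component. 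If the search reaches $V_0$ along a product $\gamma$ of Whitehead automorphisms, output $\beta_0^{-1}\gamma\alpha_0$; otherwise report that no such automorphism exists.

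The hard part is not this assembly but the two facts about the infinite Whitehead family that it rests on: that peak-reduced sequences exist in the sense used above — this is our peak-reduction theorem, the technical core of the paper — and the boundedness lemma of the second paragraph, which is the heart of making the infinite generating set tractable. I would also remark that the variation on the Hermite normal form developed later can be used to replace $G$ by a smaller quotient graph, collapsing the orbits of the abelian group generated by the length-preserving long-range transvections; this is unnecessary for decidability, but it is the same mechanism underlying the companion theorem on finite presentations of stabilizers.
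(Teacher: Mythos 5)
Your high-level strategy—reduce both tuples to minimal length, then search a finite graph of minimal-length tuples for a path—is exactly the paper's strategy, and the peak-reduction justification for why this is complete is the right one. The gap is in the second paragraph, the "boundedness lemma," which is both stated incorrectly and (in its intended form) not proved.

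As literally stated, the lemma is false: the set of generalized Whitehead automorphisms $\varphi$ with $\abs{\varphi\cdot U}\le\abs{U}$ is almost never finite. Any transvection $x\mapsto xa^n$ where $x$ does not occur in $U$ fixes $U$ for every $n$; more to the point, the stabilizer $(\whset{a})_U$ is in general an infinite group (that it is merely \emph{finitely presentable} is the content of Proposition~\ref{pr:whstab}, and that result would be vacuous if the stabilizer were always finite). What you actually need, and what your algorithm implicitly relies on, is the weaker but still nontrivial statement: given $U$, a vertex $a$, and a candidate tuple $U'$ with $\abs{U'}\le\abs{U}$, one can decide whether some element of $\whset{a}$ carries $U$ to $U'$ and, if so, produce one. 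That is precisely Proposition~\ref{pr:insamewhorbit}. Your proposed justification—bound the transvection exponent because a large exponent lengthens $U$—handles a single transvection, but a general element of $\whset{a}$ acts through the whole group $\GL(n,\Z)\ltimes M_{n,k}(\Z)$ (Lemma~\ref{le:etainjective}), and there the parameters of an element mapping one syllable vector to another need not be bounded a priori: the stabilizer of a low-rank configuration in $\GL(n,\Z)$ is infinite, and the element you need is only well defined up to this infinite coset. That a bounded representative always exists is true, but proving it is essentially the content of the modified Hermite normal form (Proposition~\ref{pr:normalformexists}, Lemma~\ref{le:normalformunique}). Moreover there is a further pitfall you don't address: two matrices can lie in the same $G_\Q$--orbit (equal normal forms) yet in different $G_1$--orbits—the paper's final example in Section~\ref{se:linearproblems} exhibits exactly this—so even after computing normal forms one must run the Schreier-graph argument over $G_d$ from the proof of Proposition~\ref{pr:matrixorbitalgorithm}. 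You mention the Hermite normal form variant only as an optional optimization to shrink the search graph; in fact it is the mechanism that makes the orbit-membership test for a single $\whset{a}$ decidable at all, and without it your ``test bounded parameters'' step has no justification. If you prove Proposition~\ref{pr:insamewhorbit} (or an equivalent decidability statement for $\whset{a}$-orbits), the rest of your argument goes through and agrees with the paper's.
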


\begin{theorem}\label{th:raagstabpres}
There is an algorithm that takes in a tuple $W$ of conjugacy classes from $A_\Gamma$ and produces a finite presentation for its stabilizer $\Aut(A_\Gamma)_W$.
\end{theorem}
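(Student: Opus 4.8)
The plan is to adapt McCool's strategy for free groups to $A_\Gamma$. Fix the standard generating set of $A_\Gamma$ and let $\abs{W}$ be the total length of a tuple of conjugacy classes with respect to it; call $W$ \emph{minimal} if $\abs{W}$ is least in the $\Aut(A_\Gamma)$-orbit of $W$. First I would reduce to the minimal case: using Theorem~\ref{th:raagcheckorbit} (or directly our peak-reduction theorem) one can compute the least norm $N$ attained in the orbit of $W$ and produce an $\alpha_0\in\Aut(A_\Gamma)$ with $\abs{\alpha_0\cdot W}=N$; since $\Aut(A_\Gamma)_W=\alpha_0^{-1}\,\Aut(A_\Gamma)_{\alpha_0\cdot W}\,\alpha_0$ and conjugation transports finite presentations, it suffices to treat a minimal tuple, so assume $W$ is minimal.

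Next, build a $2$-complex from which the stabilizer is read off as a fundamental group. Let $\mathcal{O}_{\min}$ be the set of minimal tuples in the orbit of $W$; it is finite because $A_\Gamma$ has only finitely many conjugacy classes of any bounded length. Let $\mathcal{S}$ be the (generally infinite) generating set used in our peak-reduction theorem, and let $\mathcal{S}_{\min}\subseteq\mathcal{S}$ consist of those $\phi$ with $\phi\cdot V\in\mathcal{O}_{\min}$ for some $V\in\mathcal{O}_{\min}$; a length estimate (a Whitehead-type generator that preserves the norm of a bounded tuple is one of boundedly many possibilities) shows $\mathcal{S}_{\min}$ is finite and computable. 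Form the graph $\Gamma_{\min}$ with vertex set $\mathcal{O}_{\min}$ and, for each $\phi\in\mathcal{S}_{\min}$ and each $V\in\mathcal{O}_{\min}$ with $\phi\cdot V\in\mathcal{O}_{\min}$, a directed edge $V\to\phi\cdot V$ labeled $\phi$. By our peak-reduction theorem (sharpened to say that two minimal tuples in one orbit are joined by a chain of norm-preserving generators), $\Gamma_{\min}$ is connected, and reading off labels around loops at $W$ gives a surjection $\rho\co\pi_1(\Gamma_{\min},W)\twoheadrightarrow\Aut(A_\Gamma)_W$; in particular $\Aut(A_\Gamma)_W$ is finitely generated. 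Now attach $2$-cells: one for each relator of $\Aut(A_\Gamma)$ with respect to $\mathcal{S}$ of a suitably bounded length, glued along every loop in $\Gamma_{\min}$ that spells that relator while remaining in $\mathcal{O}_{\min}$, together with $2$-cells for the short identities $\phi\psi=\chi$ that hold among elements of $\mathcal{S}_{\min}$. Call the result $K$; since its $2$-cell boundaries lie in $\ker\rho$, the map $\rho$ descends to $\rho\co\pi_1(K,W)\to\Aut(A_\Gamma)_W$.

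The heart of the argument is to prove that $K$ is simply connected, equivalently that $\rho$ is an isomorphism, equivalently that every relation among the loop labels that holds in $\Aut(A_\Gamma)$ is a consequence of the attached $2$-cells. This is a second-order, relator-level peak reduction: given a loop in the $1$-skeleton of $K$ whose label is trivial in $\Aut(A_\Gamma)$, one shows it bounds in $K$ by running the sequence of elementary moves that witnesses triviality through tuples that never stray far from minimal norm, then absorbing the resulting excursions cell by cell. Supplying exactly this $2$-dimensional control is the purpose of the full-featured peak-reduction theorem. The extra difficulty over the free-group case is the presence of transvections: these generate subgroups acting like linear groups on the lattices spanned by mutually commuting generators, and to keep both the generating set $\mathcal{S}_{\min}$ and the list of needed relators finite one must present the relevant stabilizers of integer matrices, for which the variation on the Hermite normal form is used. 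This step — the relator-level peak reduction together with the Hermite-normal-form bookkeeping in the abelian directions — is where essentially all the work lies; everything else is formal.

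Finally, with $K$ a finite, connected, simply connected $2$-complex and $\rho$ an isomorphism, reading the standard presentation of $\pi_1$ of a finite $2$-complex off $K$ (choose a spanning tree, take one generator per non-tree edge, one relator per $2$-cell) yields a finite presentation of $\Aut(A_\Gamma)_W$. Every object produced along the way — $N$, $\alpha_0$, $\mathcal{O}_{\min}$, $\mathcal{S}_{\min}$, the bounded relators, and hence $K$ itself — is finite and explicitly computable from $W$ and $\Gamma$, so the construction is an algorithm, as the theorem requires.
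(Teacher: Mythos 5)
Your high-level strategy is the same as the paper's (both follow McCool: minimize, build a finite labeled $2$-complex whose fundamental group maps onto the stabilizer, and prove the map injective by running peak reduction on edge loops). However, your construction of the $1$-skeleton contains a genuine gap that the paper's Example~\ref{ex:nofiniteprgenset} is precisely warning about.

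You assert that $\mathcal{S}_{\min}$ — the set of generators $\phi\in\mathcal{S}$ carrying some minimal tuple to another minimal tuple — is finite, justifying this by saying a norm-preserving Whitehead-type generator ``is one of boundedly many possibilities.'' This is false here because the generators in $\Omega$ come from the infinite groups $\whset{a}$, and a single $\whset{a}$ can contain infinitely many automorphisms preserving the length of a fixed tuple. For instance, take $A_\Gamma=\Z^2=\langle a,b\rangle$ and $W=(a)$; then $\whset{a}=\GL(2,\Z)$ and the stabilizer of the class of $a$ already contains the infinite subgroup $\left\{\begin{pmatrix}1&0\\ n&1\end{pmatrix}:n\in\Z\right\}$, all of whose elements lie in your $\mathcal{S}_{\min}$. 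So your graph $\Gamma_{\min}$ would have infinitely many edges, and your list of ``relators of suitably bounded length'' is likewise unbounded (there is no a priori bound once the generating set is infinite). The paper's workaround is exactly what its Propositions~\ref{pr:insamewhorbit} and~\ref{pr:whstab} (reduced via $\eta$ to the Hermite-normal-form Propositions~\ref{pr:matrixorbitalgorithm} and~\ref{pr:matrixstabpres}) provide: for each ordered pair of minimal vertices and each $\whset{a}$, one chooses a \emph{single} representative automorphism as the edge label, and for each vertex $W_1$ one glues in a finite presentation $2$-complex for the (possibly infinite but finitely presented) group $(\whsetsr{a}{S})_{W_1}$ as loop edges plus $2$-cells (the (C1) cells). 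Your sketch acknowledges that Hermite normal form ``bookkeeping'' must enter somewhere, but as written the finiteness of your complex rests on a false claim; the Hermite-normal-form machinery is not just bookkeeping for a finite set you already have — it is what makes the construction finite in the first place. You also leave unspecified which $2$-cells to attach; the paper needs a specific finite list of seven cell types (C1)--(C7), including Steinberg-type relations (Lemma~\ref{le:steinbergrel}), and then proves injectivity by a homotopy argument (Proposition~\ref{pr:homotopeinZ}) that tracks, for each peak-lowering move of Theorem~\ref{th:fullfeaturedpeakreduction}, a homotopy across cells of a specific type; the device of taking the tuple $V=U\cup mW$ with $U$ the universal length-$\le 2$ tuple (Lemma~\ref{le:mastertuple}) is what forces the peak reduction to stay inside the complex, and this does not appear in your sketch.
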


These results follow from our result on peak reduction, which will take a few definitions to state.
Let $X$ denote the vertex set of $\Gamma$.
The length $\abs{u}$ of an element $u\in A_\Gamma$ is the usual length in terms of the generating set $X$.
The length $\abs{w}$ of a conjugacy class $w$ is the minimum length of a representative, and the length $\abs{W}$ of a tuple $W$ of conjugacy classes is the sum of the lengths of its entries.

For $a\in X$, the \emph{star} $\st(a)$ is the subset of $X$ consisting of $a$ and all vertices adjacent to $a$.
For $a$ and $b$ in $X$, $a$ and $b$ are in the same \emph{adjacent domination equivalence class} if $\st(a)=\st(b)$.
We denote the adjacent domination equivalence class of $a$ by $[a]$; if $b\in[a]$ with $a\neq b$, then $a$ is necessarily adjacent to $b$.
It is entirely possible that $[a]=\{a\}$.
We define the set of \emph{generalized Whitehead automorphisms with multiplier set $[a]$}, denoted $\whset{a}$, to be the subgroup of automorphisms in $\Aut(A_\Gamma)$ such that
$\alpha\in\whset{a}$ if and only if
\begin{itemize}
\item for each $b\in X\setminus [a]$, there are $u,v\in \genby{[a]}$ with $\alpha(b)=ubv$ and
\item for each $b\in [a]$, we have $\alpha(b)\in\genby{[a]}$.
\end{itemize}
In general each $\whset{a}$ is an infinite group.
We define the \emph{permutation automorphisms} $P$ to be the finite subgroup of $\Aut(A_\Gamma)$ with $\alpha$ in $P$ if and only if $\alpha$ restricts to a permutation of $X\cup X^{-1}$.
We define the set of \emph{generalized Whitehead automorphisms} $\Omega$ to be the union of $P$ with all the $\whset{a}$ as $a$ varies over the vertices of $\Gamma$.
It follows from a result of Laurence~\cite{Laurence} (see Theorem~\ref{th:Laurence} below) that $\Omega$ is a generating set for $\Aut(A_\Gamma)$.
\begin{theorem}[Peak reduction]\label{th:fullfeaturedpeakreduction}
Suppose $\alpha\in\Aut(A_\Gamma)$ and $W$ is a tuple of conjugacy classes in $A_\Gamma$.
Then there is a factorization
\[\alpha=\beta_m\beta_{m-1}\dotsm\beta_2\beta_1\]
with $\beta_1,\dotsc,\beta_m$ in $\Omega$, such that the sequence of intermediate lengths
\[k\mapsto \abs{\beta_k\dotsm \beta_1\cdot W}\quad\text{for $k=0,\dotsc,m$}\]
strictly decreases from $k=0$ to $k=k_1$ for some $k_1$ with $0\leq k_1\leq m$,
stays constant from $k=k_1$ to $k=k_2$ for some $k_2$ with $k_1\leq k_2\leq m$, 
and strictly increases from $k=k_2$ to $k=m$.
Further, there is an algorithm to find such a factorization.
\end{theorem}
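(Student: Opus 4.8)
\section*{Proof proposal}

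The plan is to derive the theorem from a \emph{single-peak reduction lemma} together with a well-founded induction on the factorization. Call a factorization $\alpha=\beta_m\dotsm\beta_1$ with each $\beta_i\in\Omega$ \emph{$W$-reduced} when the length sequence $a_k=\abs{\beta_k\dotsm\beta_1\cdot W}$ has the shape demanded by the theorem (a block of strict decreases, then a block of equalities, then a block of strict increases); any index where this pattern is violated we call a \emph{peak}. The single-peak lemma will assert: if the factorization has a peak at $k$, then one can replace the two factors $\beta_{k+1}\beta_k$ by an explicitly computable product $\gamma_\ell\dotsm\gamma_1=\beta_{k+1}\beta_k$ of elements of $\Omega$ all of whose intermediate lengths $\abs{\gamma_j\dotsm\gamma_1\beta_{k-1}\dotsm\beta_1\cdot W}$ are at most $a_k$, and are \emph{strictly} below $a_k$ whenever $k$ is a strict local maximum. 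Assigning to a factorization the multiset of its peak heights and using the (well-founded) multiset order, each application of the lemma removes one occurrence of a height $h$ and can only introduce peaks of height $<h$ (the replacement and the two seams with $\beta_{k-1}$ and $\beta_{k+2}$ produce lengths $<h$ when $h$ is a strict maximum, and no new strict maximum of height $h$ when it is a plateau that is merely being relocated). Hence the invariant strictly decreases, the process terminates, and the terminal factorization is $W$-reduced. Effectiveness is then automatic: elements of $\Omega$ have finite descriptions (a permutation of $X\cup X^{-1}$, or a matrix in $\GL_n(\Z)$ acting on $\genby{[a]}\cong\Z^n$ together with the long-range multiplier data, where $n=\abs{[a]}$ and we use that $[a]$ is a clique), lengths of conjugacy classes are computable, and the multiset invariant lies in a well-ordered set, so the reduction loop is an algorithm.

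For the single-peak lemma, the first observation---and the reason the infinite generating set is the right choice---is that most potential peaks collapse for free. If $\beta_k$ and $\beta_{k+1}$ both lie in $P$, or both lie in a single $\whset{a}$, then $\beta_{k+1}\beta_k$ again lies in $\Omega$, so we replace two steps by one and the index disappears; this is exactly the situation that is unmanageable with a finite generating set, because there the abelian ``pile-up'' inside $\genby{[a]}$ produces irreducible peaks. Next, a permutation automorphism preserves the length of every conjugacy class, so it never creates a strict peak, and conjugating a generalized Whitehead automorphism by a permutation yields another one with relabeled multiplier set, so permutation factors can be shuffled to the ends. We are therefore reduced to the genuine case: $\beta_k\in\whset{a}$, $\beta_{k+1}\in\whset{b}$ with $[a]\neq[b]$, and $a_{k-1}<a_k>a_{k+1}$.

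To treat this case I would first develop the length-change combinatorics for a single multiplier set, generalizing the Whitehead-graph argument of the free-group theory and of~\cite{Day1}: for a tuple $W$ and a multiplier set $[a]$, record the way the letters of cyclically reduced representatives of $W$ meet $\st(a)$ and the syllables lying in $\genby{[a]}$ in a weighted (hyper)graph, and express $\abs{\gamma\cdot W}$ for $\gamma\in\whset{a}$ through cut-type quantities of this object together with the image, under the $\GL_n(\Z)$-part of $\gamma$, of the $\genby{[a]}$-syllables of $W$. The role of the Hermite-normal-form variation is to factor that $\GL_n(\Z)$-part, together with the long-range multiplier vectors, through a sequence of elementary moves each of which changes $\abs{\,\cdot\,}$ monotonically, so that the part of a peak coming from ``internal'' motion in $\genby{[a]}$ can be straightened out step by step. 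With this in hand I would run the classical Whitehead interpolation argument on the peak configuration $Y=\beta_k\dotsm\beta_1\cdot W$: since $\abs{Y}>\abs{\beta_k^{-1}\cdot Y}$ and $\abs{Y}>\abs{\beta_{k+1}\cdot Y}$, a min-cut and exchange argument applied to the combined data of $[a]$ and $[b]$ yields either a reordering $\beta_k'\beta_{k+1}'$ of the two factors, valid when the relevant parts of $\st(a)$ and $\st(b)$ are compatible so that the automorphisms essentially commute, whose intermediate length is $<\abs{Y}$, or an intermediate generalized Whitehead automorphism---supported on $[a]$, on $[b]$, or on a ``smaller'' multiplier set arising from their interaction---that strictly lowers the length, after which one recurses. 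The case analysis is organized by the relative position of $\st(a)$ and $\st(b)$: disjoint, nested, and overlapping-with-$[a]\cap[b]=\varnothing$.

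The main obstacle, as in the free-group setting, is precisely this final case analysis, and it is harder here for two reasons. First, once the $\GL_n(\Z)$-action on $\genby{[a]}\cong\Z^n$ is combined with long-range multiplication, the length of a conjugacy class is no longer piecewise-linear in the natural parameters, so the classical dichotomy ``either a cut vertex splits off or we may exchange'' must be rebuilt around cut \emph{sublattices}; this is where the Hermite-form variant does its real work, and where the proof is most at risk of failing if the decomposition it supplies is not canonical enough to interact well with the Whitehead-graph cuts. Second, and relatedly, one must check that the proposed replacement genuinely decreases the well-founded complexity measure rather than trading one peak for several of the same height---equivalently, that the algorithm terminates---and this bookkeeping, carried through every branch of the case analysis, is the step I expect to absorb the bulk of the technical effort.
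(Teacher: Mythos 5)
Your overall architecture is correct and matches the paper's: a ``single-peak lowering lemma'' (the paper's Main Lemma~\ref{mle:peaklowering}) plus well-founded induction on the multiset of peak heights (the paper delegates this bookkeeping to the proof of Theorem~B(3) in~\cite{Day1}). You also correctly identify the reason the infinite set $\Omega$ is the right generating set: peaks with both factors in $P$ or both in a single $\whset{a}$ collapse trivially to one step, which is precisely what fails for finite generating sets (Example~\ref{ex:nofiniteprgenset}). The case analysis starting point---reduce to $\beta_k\in\whset{a}$, $\beta_{k+1}\in\whset{b}$ with $[a]\neq[b]$---is also right.

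The concrete gap is your assigned role for the Hermite-normal-form variant. You want it to ``factor the $\GL_n(\Z)$-part \ldots through a sequence of elementary moves each of which changes $\abs{\cdot}$ monotonically,'' i.e.\ to factor a generalized Whitehead automorphism within $\whset{a}$ into length-monotone pieces during peak lowering. This is in tension with your own correct observation one paragraph earlier that the whole point of keeping $\whset{a}$ whole is to \emph{avoid} ever factoring inside it (that factoring is exactly where $\genby{[a]}$-pile-ups force peaks, and for $A=\GL(n,\Z)$ with $n\ge 2$ no elementary generating set admits monotone factorizations in general---that is the content of Example~\ref{ex:nofiniteprgenset}). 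In fact the $G_\Q$-normal form appears nowhere in the peak-reduction proof. Its actual role in the paper is in the \emph{algorithmic applications}: since $\whset{a}$ is infinite, one needs algorithms to decide $\whset{a}$-orbit membership and to present stabilizers $(\whset{a})_U$ (Propositions~\ref{pr:insamewhorbit} and~\ref{pr:whstab}, reduced via the syllable/$\nu$ machinery to the linear Propositions~\ref{pr:matrixorbitalgorithm} and~\ref{pr:matrixstabpres}, where the normal form lives). Those feed into the finite graph $\Delta$ of Section~\ref{ss:usefulgraph}, which is what makes Theorems~\ref{th:raagcheckorbit} and~\ref{th:raagstabpres} effective.

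With Hermite form removed from your hard-case engine, the proposal is missing the tools the paper actually uses to lower a genuine $\whset{a}$-vs-$\whset{b}$ peak. The case split is by adjacency of $a$ and $b$ in $\Gamma$ and by the domination relation between them (not by containment of stars): when $a$ is adjacent to $b$, a Steinberg-type conjugation relation $\alpha\beta\alpha^{-1}\in\whset{b}$ (Lemma~\ref{le:steinbergrel}) lowers the peak, and Proposition~\ref{pr:steinberglengthchange} supplies the length identity showing the lowered factorization really is lower; when $a\not\sim b$ and both dominate each other, both automorphisms are forced to be long-range and the peak is handed off to the earlier peak-reduction theorem for long-range automorphisms (Theorem~\ref{th:longrangepeakreduction}, i.e.\ Theorem~A of~\cite{Day1}); the remaining non-adjacent cases (Propositions~\ref{pr:nonadjacentasymmetric} and~\ref{pr:nonadjacentnodom}) are handled by a combination of Steinberg moves, the classic-Whitehead cut-counting estimate (Proposition~\ref{pr:shorterfactors}/Corollary~\ref{co:shorterfactors}), freedom to multiply by inner automorphisms (Lemma~\ref{le:lowerconj}), and inductions on $\abs{\supp(\alpha)\cap\supp(\beta)}$ and on $\abso{\alpha}$. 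Your ``min-cut and exchange'' intuition maps onto the $\pcount{\cdot}{\cdot}{\cdot}$ bracket used in Proposition~\ref{pr:shorterfactors}, but that bracket only applies to classic long-range Whitehead automorphisms, and getting to that situation from a general $\whset{a}$-element is exactly what the Steinberg relation and the reduction to Theorem~\ref{th:longrangepeakreduction} accomplish. So the proposal correctly identifies the shape of the argument but leaves the mechanism of the hard case unspecified and points at the wrong tool (Hermite form) to fill the hole.
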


A major difference between this formulation and the one for free groups is that the set $\Omega$ is finite in that setting, but is usually infinite in this setting.
Since the classic applications rely on this set being finite, we need some additional results to get the applications to work.
\begin{proposition}\label{pr:insamewhorbit}
There is an algorithm that takes in two tuples $U$ and $V$ of conjugacy classes from $A_\Gamma$, and a vertex $a$ in $\Gamma$, and produces an automorphism $\alpha$ in $\whset{a}$ with $\alpha\cdot U=V$ or determines that no such automorphism exists.
In particular, it is possible to determine whether a tuple $U$ can be shortened by an automorphism from $\whset{a}$.
\end{proposition}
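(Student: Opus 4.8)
The plan is to exploit the very rigid structure of $\whset{a}$. Since $\st$ is constant on $[a]$, the set $[a]$ is a clique in $\Gamma$, so $\genby{[a]}$ is free abelian of rank $n:=\abs{[a]}$, and every vertex outside $[a]$ is adjacent either to all of $[a]$ or to none of it. First I would pin down $\whset{a}$ itself. Restriction to $\genby{[a]}$ is a homomorphism $\whset{a}\to\GL_n(\Z)$; it is surjective (the transvections within $[a]$ and a single inversion are automorphisms precisely because $\st$ is constant on $[a]$, and they realize a generating set of $\GL_n(\Z)$) and split (send a matrix to the automorphism acting by it on $[a]$ and fixing every other vertex), so $\whset{a}\cong\GL_n(\Z)\ltimes K$, where $K$ is the subgroup fixing $[a]$ pointwise. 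Using Laurence's generators (Theorem~\ref{th:Laurence}) one checks that $K$ is abelian --- two of its elements $b\mapsto u b v$ and $b\mapsto u'b v'$ commute because all the multipliers lie in the abelian group $\genby{[a]}$ --- and that $K$ is a finitely generated free abelian group with an effectively computable basis of ``multiplication'' automorphisms, one batch for each vertex $b\notin[a]$ that is dominated by $[a]$ (i.e.\ with $\lk(b)\subseteq\st(a)$). In particular $\whset{a}$ has an explicit finite generating set, and one can even write down a presentation for it.

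Next I would split the orbit problem into a combinatorial part and a part that is linear algebra over $\Z$. Put each entry of $U$ (and of $V$) in cyclically reduced form and mark the \emph{heavy} letters (those in $X\drop[a]$ and their inverses) and the \emph{light} letters (those representing $[a]^{\pm}$). An element of $\whset{a}$ carries a heavy letter $h$ to $uhv$ with $u,v$ light, so it leaves the cyclic sequence of heavy letters of each entry unchanged unless some light syllable trapped between two mutually inverse heavy letters collapses --- and that strictly shortens the tuple. Hence I would first run a terminating length-reduction procedure inside $\whset{a}$ (that greedy shortening within $\whset{a}$ really reaches the orbit minimum is a peak-reduction-type statement internal to $\whset{a}$, which I would establish directly, in the spirit of Theorem~\ref{th:fullfeaturedpeakreduction}), replacing $U$ and $V$ by minimal-length representatives $U_0,V_0$ and recording automorphisms $\alpha_U,\alpha_V\in\whset{a}$ with $\alpha_U\cdot U=U_0$, $\alpha_V\cdot V=V_0$. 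Once no shortening move remains, the cyclic heavy pattern of each entry of $U_0$ is an honest invariant of the $\whset{a}$-orbit --- here one must work modulo shuffles of commuting letters --- and so is the separate data of the entries lying entirely in $\genby{[a]}$; if these combinatorial invariants of $U_0$ and $V_0$ disagree, output that no automorphism exists.

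When they agree, what is left is the \emph{light data}: for each entry of $U_0$ the tuple of vectors in $\Z^n$ read off in the gaps between consecutive heavy letters, together with the vectors coming from the $\genby{[a]}$-entries; assemble these into an integer matrix $M_{U_0}$, and likewise $M_{V_0}$. An automorphism $\phi\kappa\in\GL_n(\Z)\ltimes K$ transforms light data by $M\mapsto\phi\cdot M+B(\kappa)$, where $\phi$ acts $\GL_n(\Z)$-linearly (the same $\phi$ applied to every vector) and $B$ is a fixed, effectively computable $\Z$-linear map encoding the abelian group $K$ (it adds multipliers next to the heavy occurrences of each entry); moreover $\operatorname{im}(B)$ is $\GL_n(\Z)$-invariant because $K$ is normal in $\whset{a}$. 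Thus $U$ and $V$ lie in the same $\whset{a}$-orbit exactly when $U_0,V_0$ have equal combinatorial invariants and $M_{U_0}$, $M_{V_0}$ become $\GL_n(\Z)$-equivalent modulo $\operatorname{im}(B)$. To decide the last condition I would compute a canonical form --- a variation on the Hermite normal form that normalizes an integer matrix under left multiplication by $\GL_n(\Z)$ \emph{modulo a fixed $\GL_n(\Z)$-invariant sublattice of the target} --- so that orbit membership becomes equality of canonical forms; since the reduction is constructive it also returns the matrix $\phi$ and the element $\kappa$ that work, and composing $\alpha_V^{-1}\,(\phi\kappa)\,\alpha_U$ yields the desired $\alpha\in\whset{a}$. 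The shortening special case is then immediate: apply the algorithm to $U$ and each of the finitely many tuples of smaller length.

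The main obstacle I anticipate is precisely the interface between the two halves. Cyclically reduced elements of $A_\Gamma$ are unique only up to shuffling commuting generators, so ``reading off the light data'' must be done by a recipe that is genuinely well defined on conjugacy classes and genuinely $\whset{a}$-equivariant, and one must be certain the length-reduction step truly attains the orbit minimum (so that the heavy pattern is locked). Isolating the right invariant object for the light data, and establishing correctness and termination of the accompanying Hermite-type normal form modulo a sublattice, is the technical core; the remainder is standard effective linear algebra over $\Z$ together with finite bookkeeping.
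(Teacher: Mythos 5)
Your overall strategy mirrors the paper's: identify $\whset{a}\cong\GL_n(\Z)\ltimes(\text{free abelian})$, separate the ``heavy'' part of a conjugacy class from the $[a]$-exponents, and reduce orbit membership to an integer linear-algebra problem solved by a Hermite-type normal form (the paper's ``$G_{\Q}$-normal form'', Propositions~\ref{pr:matrixorbitalgorithm} and~\ref{pr:normalformexists}). But there is a genuine gap exactly where you flag one: making the ``light data'' well defined. Your split is $[a]$ versus non-$[a]$, so letters of $\st(a)\setminus[a]$ are heavy in your scheme; yet $[a]$-letters commute freely past them, so the $\Z^n$-vector you read off ``in the gaps between consecutive heavy letters'' genuinely depends on the chosen reduced word. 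Acknowledging this without fixing it is not a proof: the fix is the whole content of the paper's Section~\ref{se:linearreduction}. The paper's syllables absorb \emph{all} of $\st(a)$ into the gap, so the heavy endpoints are exactly the letters outside $\st(a)$; then the map $\nu$ records the $[a]$-exponents, the exponents of $\st(a)\cap\dom(a)\setminus[a]$-letters, and the ``type'' of each endpoint, while deliberately ignoring $\st(a)\setminus\dom(a)$-letters (which every element of $\whset{a}$ fixes pointwise). It is precisely this calibrated $\nu$ that makes the data independent of the chosen reduced word up to permutation (Lemma~\ref{le:decompwelldef}) and $\whset{a}$-equivariant (Lemma~\ref{le:decompositionequivariance}). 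Without an explicit recipe of this kind your matrix $M_{U_0}$ is not a function of the conjugacy class, and the argument does not close.

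Two further points. First, your preliminary minimization step, resting on an unproven ``peak reduction internal to $\whset{a}$'', is both an extra burden and unnecessary. The ``collapse'' you worry about cannot occur: for consecutive mutually inverse heavy letters $c,c^{-1}$ the light data maps to its $\alpha$-image, which is trivial only if the original was, contradicting reducedness; and any intervening heavy letters commuting with $c$ lie in $\st(a)\setminus\dom(a)$ and are fixed by $\whset{a}$, so they cannot create cancellation either. Hence the cyclic heavy pattern is an orbit invariant for \emph{arbitrary} tuples, and the paper accordingly never minimizes -- it instead enumerates the finitely many permutations of a fixed syllable decomposition of $V$ and tests each against $\nu(T)$ via the matrix algorithm. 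Second, a smaller slip: your proposed basis for $K$ -- ``one batch per vertex dominated by $[a]$'' -- omits the partial conjugations attached to connected components of $\Gamma\setminus\st(a)$ having at least two vertices; the vertices in such a component are \emph{not} dominated by $a$ (each has a neighbor outside $\st(a)$), yet the component contributes a generator $r_Y$ to $Z_{[a]}$ and a partial conjugation to $K$. Leaving these out changes both the group you are working in and the rank of the lattice feeding your Hermite reduction.
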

\begin{proposition}\label{pr:whstab}
There is an algorithm that takes in a tuple $U$ of conjugacy classes from $A_\Gamma$ and a vertex $a$ in $\Gamma$, and produces a finite presentation for the stabilizer $(\whset{a})_U$.
\end{proposition}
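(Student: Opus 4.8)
The plan is to exploit the algebraic structure of $\whset{a}$ as a split extension and reduce the computation of $(\whset{a})_U$ to integer linear algebra together with the computation of a presentation for an arithmetic group. Since $[a]$ is a clique, $\genby{[a]}$ is free abelian of rank $m:=\abs{[a]}$, and restriction to $\genby{[a]}$ gives a homomorphism $\whset{a}\to\GL_m(\Z)$; its image is an arithmetic subgroup $Q$ of $\GL_m(\Z)$, namely the stabilizer of the finite family of coordinate subgroups $\genby{[a]\cap\st(b)}$, $b\in X\drop[a]$, that the commutation relations of $A_\Gamma$ force to be preserved. This map is split, a splitting $\sigma\co Q\to\whset{a}$ being given by $\sigma(M)|_{\genby{[a]}}=M$ and $\sigma(M)(b)=b$ for $b\in X\drop[a]$. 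Its kernel $K$ consists of the automorphisms fixing $\genby{[a]}$ pointwise and sending each $b\notin[a]$ to $u_b\,b\,v_b$ with $u_b,v_b\in\genby{[a]}$; a direct check shows composition in $K$ adds exponent vectors, so $K$ is abelian, hence a finitely generated free abelian group, and one reads off generators and defining relations for $K$ and for $Q$ explicitly from $\Gamma$. So we have a split exact sequence $1\to K\to\whset{a}\to Q\to 1$, and the strategy is: compute $K_U:=K\cap(\whset{a})_U$, compute the image $\overline{Q}_U$ of $(\whset{a})_U$ in $Q$, show both are finitely presented with algorithmically computable presentations, and assemble a presentation of $(\whset{a})_U$ from these two pieces.

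First I would handle $K_U$. For $\alpha\in K$ and a cyclically reduced representative $w$ of an entry of $U$, the word $\alpha(w)$ is obtained from $w$ by inserting $\genby{[a]}$-syllables around each letter of $w$ lying outside $[a]^{\pm1}$; after normalizing $u_b\,b\,v_b$ modulo the part of $\genby{[a]}$ commuting with $b$ and then cyclically reducing, the condition that $\alpha$ fixes each conjugacy class $u^{(i)}$ becomes a finite system of $\Z$-linear equations in the exponent coordinates of $\alpha$. Thus $K_U$ is the solution set of a finite linear system, a subgroup of the finitely generated free abelian group $K$, and one computes a basis and (abelian) presentation for it by Smith normal form. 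The content here is verifying that the conjugacy-fixing condition really is equivalent to finitely many $\Z$-linear conditions and that these can be written down effectively; this parallels the analysis behind Proposition~\ref{pr:insamewhorbit}, which I would reuse.

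Next, $\overline{Q}_U$. To each entry of $U$ one associates a finite packet of $\genby{[a]}$-data recording the $[a]$-syllables it carries (images under abelianization-type maps and the relevant sublattices), and the claim is that $\overline{Q}_U$ is exactly the stabilizer in $Q$ of this packet of integer vectors and sublattices. One inclusion is immediate. For the reverse, if $M\in Q$ fixes the packet one must show the lift $\sigma(M)$ can be corrected by an element of $K$ so as to fix $U$; this is a solvability statement for the $M$-twisted version of the linear system above, and the point is that fixing the packet is precisely the vanishing of its obstruction. Granting the claim, $\overline{Q}_U$ is the stabilizer of finitely many vectors and lattices inside the arithmetic group $Q$, hence is finitely presented, and to obtain an explicit presentation I would invoke the variation on Hermite normal form developed in the paper, which renders the relevant stabilizers in $\GL_m(\Z)$ algorithmically tractable. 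Note also that the conjugation action of $(\whset{a})_U$ on its normal subgroup $K_U$ factors through $\overline{Q}_U$ (since $K$, being abelian, acts trivially on $K\supseteq K_U$), and this action is computable by applying automorphisms and re-reading exponent vectors.

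Finally I would assemble the presentation. We have $1\to K_U\to(\whset{a})_U\to\overline{Q}_U\to 1$ with $K_U$ finitely generated abelian, $\overline{Q}_U$ finitely presented, the action of $\overline{Q}_U$ on $K_U$ explicitly known, and — by lifting each defining relator of $\overline{Q}_U$ via $\sigma$ and correcting it into the stabilizer — the $K_U$-valued factor set of the extension computable. The standard presentation-of-an-extension construction then outputs a finite presentation of $(\whset{a})_U$, with every step effective. I expect the main obstacle to be the arithmetic-group step: proving that the stabilizer in $Q$ of the $\genby{[a]}$-packet of $U$ is finitely presented \emph{and} producing that presentation algorithmically — exactly where the Hermite-normal-form variant is needed — with the correction lemma identifying $\overline{Q}_U$ with that stabilizer (so that the kernel and quotient computations match up) a close second.
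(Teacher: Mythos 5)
Your algebraic setup is sound as far as it goes: by Lemma~\ref{le:etainjective} the group $\whset{a}$ really is a split extension $\GL(n,\Z)\ltimes M_{n,k}(\Z)$ with abelian kernel, and the image of restriction to $\genby{[a]}$ is all of $\GL(n,\Z)$ (your ``stabilizer of coordinate subgroups'' description collapses to the whole group, since $[a]\cap\st(b)$ is either $[a]$ or empty for $b\notin[a]$). But there are two substantive gaps, and they are exactly the technical points the paper's proof is built to handle.

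First, you assert that the fixing condition $\alpha\cdot U=U$ for $\alpha\in K$ is ``a finite system of $\Z$-linear equations in the exponent coordinates.'' This is not immediate: a fixed syllable decomposition $T$ of $U$ is not canonical, and an automorphism can send $U$ to itself while sending $\nu(T)$ to a nontrivial permutation $\nu(T')$ of itself (Example~\ref{ex:syllablealgorithm} in the paper exhibits this, and the proof of Proposition~\ref{pr:whstabsupportrestricted} explicitly handles it). So the condition for membership in $K_U$ is ``$\eta(\alpha)\cdot\nu(T)$ equals $\nu(T')$ for some valid decomposition $T'$ of $U$,'' a finite union of affine conditions, not a single homogeneous linear system; Smith normal form does not apply until you have produced such a system (or justified why the union collapses to one coset). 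The paper avoids this by showing $(\whset{a})_U$ acts transitively on the finite set $\{\nu(T_1),\dotsc,\nu(T_k)\}$ of decomposition-vectors, so that $(\whset{a})_{\nu(T_1)}$ is a finite-index subgroup of $(\whset{a})_U$ that can be computed by pure linear algebra, and then it promotes the presentation up the finite index via a Schreier graph (Lemma~\ref{le:presfromfinidx}).

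Second, and more seriously, your identification of $\overline{Q}_U$ as ``the stabilizer in $Q$ of a packet of integer vectors and sublattices'' with a correction/obstruction lemma is stated but not argued, and I do not see why it should hold. Deciding whether a given $M\in\GL(n,\Z)$ lifts to $(\whset{a})_U$ is an existence question in the $M_{n,k}(\Z)$-fibre, and encoding it as the stabilizer of a finite packet is precisely the nontrivial content; moreover, stabilizers of sublattices in $\GL(n,\Z)$ are not the kind of object the paper's normal form handles. The paper's modified Hermite normal form (the $G_{\Q}$-normal form, Propositions~\ref{pr:normalformexists} and \ref{pr:normalformQstabpres}) treats the full semidirect product $G_1=\GL(n,\Z)\ltimes M_{n,k}(\Z)$ acting on $M_{n+k,m}(\Z)$ all at once, passing through the rational group $G_{\Q}$, the denominator subgroup $G_d$, and a crossed homomorphism to navigate cosets of $G_1$ in $G_d$; it does not at any point reduce to stabilizers in $\GL(n,\Z)$ alone. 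Your kernel-quotient split therefore doesn't dodge the hard step --- it relocates it into an unproved correction lemma. A repaired version of your plan would essentially have to reprove the paper's finite-index argument inside each of $K_U$ and $\overline{Q}_U$, at which point the paper's direct treatment of the single matrix stabilizer in $G_1$ is cleaner.
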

It turns out that the groups $\whset{a}$ embed in integer general linear groups, and the key to these propositions is a modified version of the Hermite normal form for integer matrices. 

The following example motivates the use of the infinite generating sets.
\begin{example}\label{ex:nofiniteprgenset}
This is Example~4.1 from Day~\cite{Day1}, which shows that no finite generating set will work for a peak-reduction theorem formulated for the entirety of $\Aut(A_\Gamma)$.
In this example, $\Gamma$ is the four-vertex path graph with vertices $a$, $b$, $c$ and $d$ in that order.
In~\cite[Proposition~C]{Day1}, we show that for $k\in\Z$, the conjugacy class $w=ad^k$ in this $A_\Gamma$ is fixed by an automorphism $\alpha$ with $\alpha(a)=ac^k$ and $\alpha(d)=c^{-1}d$ and with $\alpha$ fixing $b$ and $c$.
Further, we show that there is no peak-reduced factorization of $\alpha$ with respect to $w$ by automorphisms that are simpler in a specific sense.
This contradicts the existence of a certain formulation of peak-reduction theorem, because such a theorem could be used to produce such factorizations with automorphisms taken from a fixed finite set; however, such a set cannot work for all choices of $k$.
But since the automorphism $\alpha$ is in our set $\whset{c}$ for any $k$, this example does not contradict Theorem~\ref{th:fullfeaturedpeakreduction}.
\end{example}

The results in the present paper are somewhat similar to certain other results.
However, Theorems~\ref{th:raagcheckorbit} and~\ref{th:raagstabpres} do not appear to be a direct consequence of any results in the literature.
First we compare Theorem~\ref{th:fullfeaturedpeakreduction} to Theorem~B of Day~\cite{Day1}.
Although part~(3) of Theorem~B is a peak-reduction theorem, it applies only to a proper subgroup of $\Aut(A_\Gamma)$ (in general) and there does not appear to be a straightforward way to apply that theorem to characterize orbits or stabilizers under the entirety of $\Aut(A_\Gamma)$.

Next we note that there is a peak-reduction theorem for right-angled Artin groups in a recent preprint of Charney--Stambaugh--Vogtmann~\cite{CharneyStambaughVogtmann}.
Theorem~5.19 of that paper proves that $\Aut(A_\Gamma)$ has peak reduction using a finite generating set, but only with respect to a specific kind of tuple of conjugacy classes: tuples containing all conjugacy classes of length one or two, and the images of such tuples under automorphisms.
Their proof is elegant, but the methods do not seem to apply to other kinds of tuples of conjguacy classes (in particular, Example~\ref{ex:nofiniteprgenset} is still a problem).

We also note another special case where results like these are previously known.
Collins--Zieschang have a series of papers,~\cite{CollinsZieschang84a}, \cite{CollinsZieschang84b}, \cite{CollinsZieschang84c} and \cite{CollinsZieschang87}, on the Whitehead method for free products of groups.
If $\Gamma$ is a disjoint union of complete graphs, then $A_\Gamma$ is a free product of free abelian groups, and results of Collins--Zieschang similar to Theorems~\ref{th:raagcheckorbit} and~\ref{th:raagstabpres} apply.
Their methods do not extend to $A_\Gamma$ for general $\Gamma$ since $A_\Gamma$ is freely indecomposable if $\Gamma$ is connected.

\begin{remark}
Another potential application of results of this paper is to the algebraic geometry of groups.
Casals-Ruiz and Kazachkov define an algorithm in~\cite{CasalsRuizKazachkov} for parametrizing solutions to systems of equations in right-angled Artin groups, and our Theorem~\ref{th:raagstabpres} is relevant to that algorithm.
Specifically, a system of equations over a right-angled Artin group $A_\Gamma$ corresponds to tuple $W$ of conjugacy classes (those which are set equal to $1$) in $A_\Gamma * F_n$, a free product of $A_\Gamma$ with a finite-rank free group.
This free product is another right-angled Artin group $A_\Delta$.
Theorem~\ref{th:raagstabpres} gives us a finite presentation, in an effective way, for the stabilizer $(A_\Delta)_W$, and this stabilizer maps to the automorphism group of the coordinate group of the system of equations. 
Casals-Ruiz and Kazachkov have informed the author that the finite presentability of these groups could be used to make improvements to their algorithm.
This connection is not pursued in the present paper.
\end{remark}

\subsection{Organization of the paper}
We postpone the proofs of the more technical results to later sections in the paper.
Section~\ref{se:preliminaries} contains preliminary facts about right-angled Artin groups.
We reduce Propositions~\ref{pr:insamewhorbit} and~\ref{pr:whstab} to propositions about linear groups in Section~\ref{se:linearreduction}, and we prove our two main application theorems in Section~\ref{se:applications}.
Section~\ref{se:linearproblems} proves the propositions about linear groups and we finally prove the peak reduction theorem in Section~\ref{se:peakreduction}.

\section{Preliminaries}\label{se:preliminaries}
\subsection{Combinatorial group theory of right-angled Artin groups}\label{ss:combinatorial}
The survey by Charney~\cite{Charney} is a good general reference for right-angled Artin groups.
We recall a few facts here that are important for understanding the paper.
As mentioned above, the \emph{length} of an element $u$ of $A_\Gamma$ is always the minimal length of an expression for $u$ as a product of generators from $\Gamma$ and their inverses.
Servatius~\cite{Servatius} points out that a word in the generators representing $u$ is of minimal length if and only if it is \emph{graphically reduced}, meaning that there is no subword $xvx^{-1}$ where $x$ is a generator or inverse generator and $v$ is a word in those generator that commute with $x$.
Further, Servatius shows that it is possible to get between any two minimal-length representatives for a word by a sequence of commutation moves.
Normal forms found by VanWyk~\cite{VanWyk} and by Hermiller--Meier~\cite{HermillerMeier} are a convenient way to check whether two words represent the same element.
Algorithms for checking conjugacy have been described by Liu, Wrathall and Zeger~\cite{LiuWrathallZeger}, and by Wrathall~\cite{Wrathall}.

One important result is the centralizer theorem of Servatius, from~\cite{Servatius}.
It states that for any $u\in A_\Gamma$, we can find an element $v$ with $u'=vuv^{-1}$ cyclically reduced (minimal length in its conjugacy class), and for $u'$ cyclically reduced, its centralizer is exactly what one would guess.
Specifically, $u'$ can be written as a graphically reduced product  $t_1^{k_1}\dotsm t_m^{k_m}$ where each $t_i$ is not a proper power and every vertex of $\Gamma$ used in writing $t_i$ commutes with every vertex of $\Gamma$ used in writing $t_j$, for each $i\neq j$.
Then the centralizer of $u'$ is generated by the $\{t_i\}_i$ together with each vertex in $\Gamma$ that commutes with all the vertices appearing in $u'$.
The centralizer of the original $u$ is of course the conjugate of the centralizer of  $u'$ by the same conjugating element $v$.

\subsection{Generating Automorphisms}
If $a$ and $b$ are vertices in $\Gamma$ and every vertex that is adjacent to $b$ commutes with $a$, we say that $a$ \emph{dominates} $b$.
This can happen whether or not  $a$ is adjacent to $b$.
Domination is important because it determines whether certain maps defined on generators actually extend to automorphisms.
The following definitions are originally from Servatius~\cite{Servatius}:
\begin{itemize}
\item If $a$ and $b$ are distinct vertices in $\Gamma$ and $a$ dominates $b$, then there is an automorphism in $\Aut(A_\Gamma)$ sending $b$ to $ba$ and fixing all other vertices in $\Gamma$.
This is a \emph{dominated transvection}.
If $a$ is not adjacent to $b$, then there is a different automorphism sending $b$ to $ab$ and fixing all other generators; this is also a dominated transvection.
When we need to distinguish between these, we refer to \emph{right} and \emph{left} dominated transvections.
\item If $a$ is a vertex in $\Gamma$ and $Y$ is a connected component of $\Gamma\setminus\st(a)$, then there is an automorphism in $\Aut(A_\Gamma)$ sending each $c$ in $Y$ to $aca^{-1}$ and fixing all other vertices in $\Gamma$.
This is a \emph{partial conjugation}.
\item If $\pi$ is a symmetry of the graph $\Gamma$, then there is an automorphism in $\Aut(A_\Gamma)$ sending each generator $x$ to its image $\pi(x)$.
This is called a \emph{graphic automorphism}.
\item For each vertex $a$ in $\Gamma$, there is an automorphism sending $a$ to $a^{-1}$ and fixing all other generators.
This is the \emph{inversion} in $a$.
\end{itemize}
If $a$ is adjacent to $b$ then the right and left dominated transvections are the same.
Dominated transvections generalize Nielsen moves and also those elementary matrices with a single nonzero off-diagonal entry of one.
It is well known that transvections and inversions generate automorphism groups of free groups and general linear groups over the integers, but for general right-angled Artin groups, there are nontrivial partial conjugations and graphic automorphisms that cannot be expressed as products of inversions and transvections.
However, these four kinds of automorphisms generate:
\begin{theorem}[Laurence~\cite{Laurence}]\label{th:Laurence}
For any $\Gamma$, the group $\Aut(A_\Gamma)$ is generated by the finite set consisting of all dominated transvections, partial conjugations, graphic automorphisms, and inversions.
\end{theorem}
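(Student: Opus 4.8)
The plan is to run a Whitehead-style peak-reduction argument, but applied to the automorphism itself---equivalently, to the tuple of its values on the generators---rather than to an external tuple of conjugacy classes. Write $\Phi\leq\Aut(A_\Gamma)$ for the subgroup generated by the four families named in the statement; the goal is $\Phi=\Aut(A_\Gamma)$. I would begin with two easy closure observations. First, $\Phi$ contains every inner automorphism: conjugation by a vertex $a$ is the product of the partial conjugations by $a$ taken over all connected components of $\Gamma\setminus\st(a)$. Second, $\Phi$ contains every \emph{permutation automorphism}, meaning every $\alpha$ whose restriction to $X\cup X^{-1}$ is a bijection of that set onto itself: the underlying permutation of $X$ must then be a symmetry of $\Gamma$, so such an $\alpha$ is a product of inversions with a single graphic automorphism.

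Next I would set up a minimality argument. For $\alpha\in\Aut(A_\Gamma)$ put $\norm{\alpha}=\sum_{x\in X}\abs{\alpha(x)}$, using the word length of Section~\ref{ss:combinatorial}; then $\norm{\alpha}\geq\abs{X}$, with equality precisely when $\alpha$ is a permutation automorphism (a surjective endomorphism sending each generator to a length-one word must use every vertex, hence permutes $X\cup X^{-1}$). Choose $\beta$ in the coset $\Phi\alpha$ with $\norm{\beta}$ as small as possible. It then suffices to prove the reduction step: if $\norm{\beta}>\abs{X}$, there is $\tau\in\Phi$ with $\norm{\tau\beta}<\norm{\beta}$. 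This contradicts minimality, forcing $\norm{\beta}=\abs{X}$, so $\beta$ is a permutation automorphism and $\alpha=\psi^{-1}\beta\in\Phi$, where $\psi\in\Phi$ is chosen with $\beta=\psi\alpha$.

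For the reduction step I would adapt Whitehead's construction to the partially commutative setting. Since $\beta$ is surjective, the graphically reduced words $\beta(x)$ jointly generate $A_\Gamma$; after possibly post-composing by an inner automorphism (still in $\Phi$) to normalize cyclic cancellation, I would record how these words begin and end---in a right-angled Artin group the set of possible first (respectively last) letters of a graphically reduced word is a clique, so one must work with sets of pairwise-commuting ``end letters'' rather than with single letters as in the free case. From this data one extracts a generalized Whitehead move: a multiplier vertex $a$ together with prescribed left and right multiplications of the generators by elements of $\genby{[a]}$ (and an internal change of the generators lying in $[a]$), chosen so that $\norm{\cdot}$ strictly drops; Servatius's characterization of geodesic words and his centralizer theorem are what make the existence and the length effect of such a move precise, and the commutation relations are exactly what force the multiplier data to be organized by an adjacent-domination class $[a]$. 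The last and, I expect, hardest point is that this length-reducing move must itself lie in $\Phi$: one decomposes it into dominated transvections---here the domination condition $\lk(w)\subseteq\st(a)$ for each multiplied generator $w$ is precisely what is needed for the constituent transvections to define automorphisms---together with partial conjugations and permutation automorphisms. Checking that the move produced by the shortening argument genuinely satisfies these domination conditions, rather than being merely an abstract shortening of the tuple, is the technical crux, and is in essence the content of Laurence's argument; everything else is bookkeeping with lengths and commutation moves.
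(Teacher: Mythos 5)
The paper does not prove this statement; it is quoted verbatim as Laurence's theorem, with a citation to~\cite{Laurence}, and used throughout as a black box. So there is no in-paper proof to compare against.

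As a sketch of Laurence's actual argument your outline is broadly on target: the proof in the literature is indeed a Whitehead-style length reduction on the tuple $(\alpha(x))_{x\in X}$, with Servatius's characterization of geodesics and his centralizer theorem doing the combinatorial work, and the adjacent-domination structure governing which multipliers are admissible. Your two preliminary closure observations (inner automorphisms as products of partial conjugations over the components of $\Gamma\setminus\st(a)$; permutation automorphisms as graphic automorphisms composed with inversions) are correct, and the claim $\norm{\alpha}=\abs{X}\iff\alpha$ is a permutation automorphism is correct — the abelianization forces the $\alpha(x)$ to hit every basis vector up to sign when each has length one. But you have explicitly deferred the heart of the matter: producing a move $\tau\in\Phi$ with $\norm{\tau\beta}<\norm{\beta}$ whenever $\norm{\beta}>\abs{X}$, and in particular verifying that the shortening data can always be realized by dominated transvections and partial conjugations rather than merely by some abstract length-decreasing endomorphism. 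That step is not bookkeeping; it is essentially the whole of Laurence's paper (which itself settles a conjecture of Servatius, who had only handled special graphs). Your proposal is therefore a correct high-level plan, not a proof, and it does not diverge from the known route — it retraces it while leaving the hard part as a pointer to Laurence.
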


\subsection{Labeled directed graphs}

A \emph{labeled directed graph} is a directed multigraph, with self-loops allowed, so that each directed edge carries a label from a pre-specified label set.
In this paper, the label sets are always subsets of groups, and we use the convention that a directed edge from a vertex $v_1$ to a vertex $v_2$ with label $g$ is also a directed edge from $v_2$ to $v_1$ with label $g^{-1}$.
Since all directed edges are considered directed in both directions (with different labels) this means that edge paths are the same as in the underlying undirected graph.
In particular, the connected components of a labeled directed graph are the same as the connected components of the underlying undirected graph.

Again we emphasize that in this paper, edge labels are always elements of a pre-specified group.
By the \emph{composition of edge labels} of an edge path $p$ in a labeled directed graph, we mean the following: for each edge $e$ on $p$, we record the label of $e$ in $S$ if the orientations of $e$ and $p$ agree, and if the orientations disagree, we record the inverse of the label of $e$; the composition of edge labels is then the composition of these labels and inverses that we recorded, in the order that $p$ traverses them.
Or since we consider reversed edges to be labeled with the inverse group element, we can take the composition of edge labels of $p$ simply to be the composition of the labels on the edges in $p$, interpreted with orientation agreeing with $p$.

One note: we compose automorphisms like functions, but we compose edge labels on paths in the opposite order, like the usual convention for fundamental groups.
This means that if $\Delta$ is a directed graph with labels in an automorphism group, then the composition of edge labels along a path labeled with $\alpha_1$--$\alpha_2$--$\dotsm$--$\alpha_k$ in $\Delta$ is $\alpha_k\dotsm\alpha_2\alpha_1$.

Let $S$ be a set of elements of an arbitrary group $G$ and let $H$ be a subgroup of $G$.
The \emph{Schreier graph} of $H$ in $G$ with respect to $S$ is the labeled, directed multigraph whose vertex set is the set of left cosets of $H$ in $G$ and with an edge labeled by $c$ in $S$ from coset $aH$ to $bH$ if and only if $caH=bH$.
Note that since we are not supposing that $S\cup H$ generates $G$, it will not generally be the case that these Schreier graphs are connected.

\begin{lemma}\label{le:travelSchreier}
Suppose $S$ is a set of elements of $G$ and $\Delta$ is the Schreier graph of $H$ in $G$ with respect to $S$.
Suppose $p$ is an edge path in  $\Delta$, starting at a vertex $aH$ and ending at a vertex $bH$.
Let $c$ be the composition of edge labels along $p$.
Then $caH=bH$.
\end{lemma}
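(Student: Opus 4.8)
The plan is to argue by induction on the number of edges in the path $p$. Before starting, I would invoke the paper's convention that a reversed edge is itself a directed edge carrying the inverse label; with this in force, every edge $e$ occurring on $p$, traversed in the direction that $p$ runs, has a well-defined \emph{effective label} $g_e\in G$, and the defining property of the Schreier graph reads uniformly as: if $e$ goes from a coset $X$ to a coset $Y$, then $g_e X = Y$. (One checks this is consistent: an edge originally recorded from $X$ to $Y$ with label $g$ gives $gX=Y$ by definition, and its reverse, recorded from $Y$ to $X$ with label $g^{-1}$, gives $g^{-1}Y = X$, i.e.\ the same relation.)

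For the base case, a path of length $0$ has $aH=bH$ and empty composition of edge labels, so $c=1$ and $c\,aH=aH=bH$. For the inductive step, write $p=p'\cdot e$, where $p'$ is the initial subpath of length $n$ running from $aH$ to some coset $dH$, and $e$ is the final edge, running from $dH$ to $bH$ with effective label $g_e$. Let $c'$ be the composition of edge labels along $p'$; by the inductive hypothesis $c'\,aH=dH$. Recalling the paper's convention that edge labels compose in the reverse of traversal order (as for fundamental groups), the composition along $p$ is $c = g_e\,c'$, and hence
\[
c\,aH = g_e\,c'\,aH = g_e\,dH = bH,
\]
the last equality being the defining property of the edge $e$. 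This closes the induction.

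I do not expect a genuine obstacle here; the only thing that needs care is keeping the two order conventions straight — edges of $p$ are traversed tail-to-head, but their labels compose head-to-tail — and making sure the ``reversed edge carries the inverse label'' convention is applied consistently, so that the relation ``head coset equals (effective label)$\cdot$(tail coset)'' holds for every edge of $p$ regardless of how that edge was originally oriented in $\Delta$.
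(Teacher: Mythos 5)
Your proof is correct, and it is exactly the argument the paper has in mind: the paper omits the proof with the remark that it ``is an induction argument using the definitions,'' which is precisely what you carried out, including the careful handling of the two order conventions (reversed edges carrying inverse labels, and labels composing in the reverse of traversal order).
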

The proof is an induction argument using the definitions, and is omitted.

In our arguments we need to be able to promote a presentation from a finite-index subgroup of a group to the entire group.
Although it is common knowledge that this is possible, we could not find a reference on how to do it.
So for completeness, we provide an argument.
\begin{lemma}\label{le:presfromfinidx}
Suppose $H$ is a finite-index subgroup of a group $G$ and we are given a finite presentation $H=\genby{S_H|R_H}$ and a finite set $S_G\subset G$ such that the Schreier graph $\Delta$ of $H$ in $G$ with respect to $S_H\cup S_G$ is connected.
Suppose we are also given an explicit description of $\Delta$ as a labeled directed multigraph.
Then we can write down a finite presentation for $G$.
\end{lemma}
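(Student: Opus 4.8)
\emph{Plan of proof.}
The plan is to carry out a ``reverse Reidemeister--Schreier'' construction: write down the evident candidate presentation of $G$ and then verify it is correct by a coset-counting argument. Identify the vertex set of $\Delta$ with the (finitely many) left cosets of $H$ in $G$, with base vertex $v_0=H$, and fix a spanning tree $T$ of $\Delta$, which exists because $\Delta$ is connected. For each vertex $v$ let $t_v$ be the word in $(S_H\cup S_G)^{\pm 1}$ obtained by composing the edge labels along the $T$-path from $v_0$ to $v$; by Lemma~\ref{le:travelSchreier} the element $t_v$ represents a point of the coset $v$, and $t_{v_0}$ is the empty word. Note also that $S_H\cup S_G$ generates $G$: connectedness of $\Delta$ says $\genby{S_H\cup S_G}$ acts transitively on the cosets, and this subgroup contains $H=\genby{S_H}$, so it is all of $G$.

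The candidate presentation of $G$ has generating set $S_H\cup S_G$, and its relators are the relators in $R_H$ together with, for each directed edge $e$ of $\Delta$ running from a vertex $v$ to a vertex $v'$ with label $c\in S_H\cup S_G$, a relator $r_e=t_{v'}^{-1}\,c\,t_v\,w_e^{-1}$. Here $w_e$ is a word in $S_H^{\pm 1}$ representing the same element of $G$ as $t_{v'}^{-1}\,c\,t_v$; this last element lies in $H$ by Lemma~\ref{le:travelSchreier} and the definition of the Schreier graph, and for a tree edge it is trivial, so there $w_e$ may be taken empty. Since $\Delta$ is finite---it has $[G:H]$ vertices and, at each vertex, only one edge in each direction for each element of $S_H\cup S_G$---this is a finite presentation. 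Producing the words $w_e$ for the finitely many non-tree edges is a bookkeeping step: each $t_{v'}^{-1}\,c\,t_v$ is a Schreier generator of $H$ relative to $S_H\cup S_G$ and the transversal $\{t_v\}$, these Schreier generators generate $H$, and since $S_H$ also generates $H$ one rewrites each of them as a word in $S_H^{\pm 1}$ using the given presentation $\genby{S_H\mid R_H}$; I will not belabor this, but it is the only place a computation inside $H$ is needed.

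Write $\tilde G$ for the group so presented. The substance of the argument is to show that the obvious homomorphism $\psi\co\tilde G\to G$ fixing every generator is an isomorphism. It is well defined because the relators of $R_H$ hold in $G$ (as $H\le G$) and each $r_e$ holds by the choice of $w_e$, and it is surjective because $S_H\cup S_G$ generates $G$. For injectivity, let $\tilde H\le\tilde G$ be the subgroup generated by the images of $S_H$. For each generator $c$ and vertex $v$, the relator $r_e$ of the edge labeled $c$ out of $v$ gives $t_{v'}^{-1}c\,t_v=w_e\in\tilde H$, hence $c\,t_v\in t_{v'}\tilde H$; applying this to generators and their inverses shows $\bigcup_v t_v\tilde H$ is closed under left multiplication by all generators, contains $1=t_{v_0}$, and is a union of right cosets of $\tilde H$, so it equals $\tilde G$. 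Therefore $[\tilde G:\tilde H]\le[G:H]$. On the other hand the relators $R_H$ hold among the images of $S_H$, so there is a surjection $H\to\tilde H$, and composing it with $\psi|_{\tilde H}\co\tilde H\to H$ gives the identity of $H$; hence $\tilde H\cong H$ and $\psi|_{\tilde H}$ is an isomorphism. Finally $\psi$ descends to a surjection of coset spaces $\tilde G/\tilde H\to G/H$, from a set of size at most $[G:H]$ onto a set of size $[G:H]$, so it is a bijection and $[\tilde G:\tilde H]=[G:H]$; then any $k\in\ker\psi$ has $k\tilde H$ mapping to the trivial coset, forcing $k\in\tilde H$ and then $k\in\ker(\psi|_{\tilde H})=\{1\}$. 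Thus $\psi$ is an isomorphism and the displayed data is a finite presentation of $G$.

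I expect the main obstacle to be the injectivity step, and within it the verification that the $t_v$ genuinely behave like a transversal inside $\tilde G$---that is, that $\bigcup_v t_v\tilde H$ is closed under the generators---since that is exactly where the edge relators $r_e$ (for both orientations of each edge) must be used, and it is what forces $[\tilde G:\tilde H]$ down to $[G:H]$. Making the words $w_e$ explicit from the presentation of $H$ is a second, more routine, point requiring care.
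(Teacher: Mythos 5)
Your proof is correct, and while it produces the same finite presentation as the paper (generators $S_H\cup S_G$; relators $R_H$ together with one relation per non-spanning-tree edge, since the $t_{v'}^{-1}c\,t_v$ for non-tree edges are exactly the compositions along the loops $p_i$), your verification that the candidate presentation is correct follows a genuinely different route. The paper lets the group $G'$ defined by the candidate presentation act on $\Delta$, identifies the stabilizer $K$ of the base vertex, observes that $\Delta$ is the Schreier graph of $K$ in $G'$ so that $\pi_1(\Delta)$ reads off onto $K$, and then matches $K$ with $H$; injectivity follows because the kernel is contained in $K\cong H$. You instead work with the explicit transversal $\{t_v\}$ and a coset-counting argument: you show $\bigcup_v t_v\tilde H=\tilde G$ to bound $[\tilde G:\tilde H]\leq[G:H]$, match $\tilde H$ with $H$ via the same pair of inverse natural maps, and then conclude from the cardinality of the coset space that $\psi$ is injective. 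Your route is more elementary in that it does not appeal to the fact that the fundamental group of a Schreier graph surjects onto the corresponding subgroup, at the cost of a slightly longer hands-on bookkeeping argument with the transversal. One small imprecision: rewriting the Schreier elements $t_{v'}^{-1}c\,t_v$ as words $w_e$ in $S_H^{\pm1}$ does not follow from the presentation $\genby{S_H\mid R_H}$ alone (a finite presentation does not by itself give a rewriting procedure); what is needed, and implicitly available in this paper's applications, is a constructive way to express elements of $H$ in the generators $S_H$. The paper's proof has the same implicit assumption, so this is not a gap specific to your argument.
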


\begin{proof}
Let $p_1,\dotsc,p_k$ be generators for the fundamental group of $\Delta$ based at $H$.
Since $\Delta$ is a finite graph, this is a finite set (generators can be found by selecting a maximal subtree of $\Delta$).
Let $u_i$ be the composition of edge labels along $p_i$ for each $i$; each $u_i$ is a word in $S_H\cup S_G$.
By Lemma~\ref{le:travelSchreier}, we know that $u_i\in H$.
Let $w_i$ be a word in $S_H$ representing the same element as $u_i$.
Let $R_G$ be the set of relations $\{u_1w_1^{-1},\dotsc,u_kw_k^{-1}\}$.
Let $G'=\genby{S_H\cup S_G|R_H\cup R_G}$.

Let $\phi\co G'\to G$ send each generator to the element of the same name.
Since $\phi$ sends relators to the identity, it is a well defined group homomorphism.
We claim that it is an isomorphism.
The group $G'$ acts on $\Delta$ through $\phi$; since $S_G\subset G$ is in the image of $\phi$, this action is transitive.
Let $K$ be the stabilizer of the vertex $H$ of $\Delta$ in $G'$.
We note that $\Delta$ is then isomorphic to the Schreier graph of $K$ in $G'$, and therefore the fundamental group of $\Delta$ based at $H$ surjects to $K$ by reading off edge labels.
In particular, $K$ is generated by the elements $u_1,\dotsc,u_k$.
Since $G'$ contains relations $R_G$ turning the generators $u_1,\dotsc,u_k$ into words in $S_H$, it follows that $K$ is the subgroup of $G'$ generated by $S_H$.
The map $\phi$ restricts to a map $K\to G$; since $K$ fixes the vertex $H$ in $\Delta$, the image of this map is in $H$.
We have a map $H\to G'$ sending each generator the generator of the same name; since $H$ fixes the vertex $H$ in $\Delta$, the image of this map lies in $K$.
So we have natural maps $H\to K$ and $K\to H$; since these are defined by sending generators to generators of the same name, they are inverses of each other.
This implies that $\phi$ is injective: anything in the kernel acts trivially on $H$ and is therefore in $K$, but $G'\to G$ restricts to an isomorphism on $K$.

Now we claim that $\phi$ is surjective.
If not, there is some $g\in G$ not in $\phi(G')$.
But $G'$ acts transitively on $\Delta$, so there is $g'\in G'$ with $g'\cdot H=gH$.
Then $\phi(g')g^{-1}$ is not in $\phi(G')$, however, it fixes $H$ and is therefore in $H$.
But $\phi$ maps $K$ maps surjectively to $H$, a contradiction.
\end{proof}

\section{Reduction to linear problems}\label{se:linearreduction}
\subsection{Structure of generalized Whitehead automorphisms}\label{ss:genwharelinear}
In this section we fix a vertex $a$ and consider the set $\whset{a}$ of generalized Whitehead automorphisms with multiplying set $[a]$.
As defined in the introduction, this is the subgroup of $\Aut(A_\Gamma)$ consisting of automorphisms that multiply elements not in $[a]$ on the right and left by elements of $[a]$, and send elements of $[a]$ to products of elements of $[a]$.
Let $\dom(a)$ denote the set of vertices that $a$ dominates, including $a$ itself.

We define $Z_{[a]}$ to be the free abelian group generated by the following symbols:
\begin{itemize}
\item for each vertex $b$ in $\st(a)\cap\dom(a)$, a generator $r_b$,
\item for each vertex $b$ in $\dom(a)\setminus\st(a)$, two generators $r_b$ and $l_b$, and
\item for each connected component $Y$ of $\Gamma\setminus\st(a)$ with at least two vertices, a generator $r_Y$.
\end{itemize}

We define a map $\eta\co\whset{a}\to\Aut(Z_{[a]})$ as follows, given $\alpha\in\whset{a}$:
\begin{itemize}
\item For $b$ in $\st(a)\cap\dom(a)$ and $c$ in $[a]$, the coefficient of $r_c$ in $\eta(\alpha)(r_b)$ is the sum exponent of $c$ in $\alpha(b)$, if $b\notin [a]$ then the coefficient of $r_b$ in $\eta(\alpha)(r_b)$ is one, and the coefficients of all other generators in $\eta(\alpha)(r_b)$ are zero.
\item For $b$ in $\dom(a)\setminus\st(a)$ and $c$ in $[a]$, the coefficient of $r_c$ in $\eta(\alpha)(r_b)$ is the sum exponent of $c$ in $v$ and the coefficient of $r_c$ in $\eta(\alpha)(l_b)$ is the sum exponent of $c$ in $u$, where $\alpha(b)=ubv$; the coefficient of $r_b$ in $\eta(\alpha)(r_b)$ is one, the coefficient of $l_b$ in $\eta(\alpha)(l_b)$ is one, and coefficients of all other basis elements in $\eta(\alpha)(r_b)$ and $\eta(\alpha)(l_b)$ are zero.
\item For $Y$ a connected component of $\Gamma\setminus\st(a)$ with at least two vertices and $c$ in $[a]$, the coefficient of $r_c$ in $\eta(\alpha)(r_Y)$ is the sum exponent of $c$ in $u$ where $\alpha(x)=u^{-1}xu$ for some $x$ in $Y$; the coefficient of $r_Y$ in $\eta(\alpha)(r_Y)$ is one and the coefficients of all other basis elements in $\eta(\alpha)(r_Y)$ are zero.
\end{itemize}

Our next goal is to show that $\eta$ is well defined and describe its image.
Let $n=\abs{[a]}$ and let $k=\dim(Z_{[a]})-n$.
To describe this image precisely, we pick an isomorphism between $Z_{[a]}$ and $\Z^{n+k}$ to identify $\Aut(Z_{[a]})$ with a matrix group.
Let $[a]=\{a_1,\dotsc,a_n\}$.
We map the basis elements $r_{a_1},\cdots, r_{a_n}$ to the first $n$ basis elements of $\Z^{n+k}$, and we map the remaining basis elements of $Z_{[a]}$ to the remaining basis elements of $\Z^{n+k}$ arbitrarily.

Throughout the paper, we use $\GL(n,R)$ to denote the general linear group of invertible $n\times n$ matrices with entries in a ring $R$, and we use $M_{n,k}(A)$ to denote the abelian group of $n\times k$ matrices with entries in an abelian group $A$.
\begin{lemma}\label{le:etainjective}
The map $\eta$ is a well defined injective homomorphism.

Further, we can describe its image.
Under the identification of $Z_{[a]}$ with $\Z^{n+k}$ above, the image of $\eta$ is 
 the set of block-upper-triangular matrices of the form
\[
\left(
\begin{array}{cc}
A & B \\ O & I
\end{array}\right),
\]
where $A$ is in $\GL(n,\Z)$,
$B$ is in $M_{n,k}(\Z)$,
$O$ is the zero matrix and $I$ is the $k\times k$ identity matrix.
Here the matrix $A$ records the coefficients of $r_b$ for $b\in [a]$ and the matrix $B$ records the remaining coefficients.
\end{lemma}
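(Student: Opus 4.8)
The plan is to route everything through a \emph{normal form} for the elements of $\whset{a}$. Once one knows exactly how an $\alpha\in\whset{a}$ acts on each vertex of $\Gamma$, the recipe defining $\eta$ becomes unambiguous, the homomorphism property follows from a short computation, injectivity is immediate, and the image is pinned down by exhibiting a generating set whose $\eta$-values are visibly the matrices in the statement.

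First I would record some combinatorics and then establish the normal form. Distinct vertices of $[a]$ are pairwise adjacent, so $\genby{[a]}$ is free abelian of rank $n=\abs{[a]}$, and $[a]\subseteq\st(a)\cap\dom(a)$; since $\st(c)=\st(a)$ for each $c\in[a]$, a vertex $c\in[a]$ dominates $b$ exactly when $b\in\dom(a)$ and is adjacent to $b$ exactly when $b\in\st(a)$. One-vertex components of $\Gamma\drop\st(a)$ lie in $\dom(a)\drop\st(a)$, while components with at least two vertices are disjoint from $\dom(a)$, so the three families of generators of $Z_{[a]}$ are indexed by disjoint sets. Using that a subgroup generated by vertices is the right-angled Artin group on the induced subgraph: for $x\notin\st(a)$ one has $\genby{[a]\cup\{x\}}\cong\Z^n*\Z$ (factors $\genby{[a]}$ and $\genby{x}$), for an edge $\{x,x'\}$ of $\Gamma\drop\st(a)$ one has $\genby{[a]\cup\{x,x'\}}\cong\Z^n*\Z^2$, and for $b\in\st(a)\drop\dom(a)$ with a neighbour $c_0\notin\st(a)$ one has $\genby{[a]\cup\{b,c_0\}}\cong\genby{b}\times(\Z^n*\genby{c_0})$. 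Now fix $\alpha\in\whset{a}$. Since $\whset{a}$ is a group, $\alpha^{-1}$ also preserves $\genby{[a]}$, so $\alpha$ restricts to an automorphism of $\genby{[a]}\cong\Z^n$, with matrix $A\in\GL(n,\Z)$. For $b\in(\st(a)\cap\dom(a))\drop[a]$, commutation of $b$ with $\genby{[a]}$ gives $\alpha(b)=\lambda_b b$ for the unique $\lambda_b=\alpha(b)b^{-1}\in\genby{[a]}$. For $b\in\dom(a)\drop\st(a)$, uniqueness of normal form in $\genby{[a]\cup\{b\}}\cong\Z^n*\Z$ gives $\alpha(b)=\mu_b b\nu_b$ with $\mu_b,\nu_b\in\genby{[a]}$ uniquely determined. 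For a component $Y$ with $\abs{Y}\ge 2$ and an edge $\{x,x'\}$ of $Y$, writing $\alpha(x)=uxv$ and $\alpha(x')=u'x'v'$ with multipliers in $\genby{[a]}$ and applying Servatius's centralizer theorem inside $\Z^n*\Z^2$ to $[\alpha(x),\alpha(x')]=1$ forces $v=u^{-1}$, $u'=u$ and $v'=u^{-1}$; connectedness of $Y$ then yields a single $\rho_Y\in\genby{[a]}$ with $\alpha(x)=\rho_Y^{-1}x\rho_Y$ for all $x\in Y$, unique since $\genby{[a]}\cap C_{A_\Gamma}(x)=\genby{[a]}\cap\genby{x}=1$ in $\Z^n*\Z$. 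Finally, for $b\in\st(a)\drop\dom(a)$ we again have $\alpha(b)=\lambda_b b$, and applying the centralizer theorem in $\genby{b}\times(\Z^n*\genby{c_0})$ to $[\alpha(b),\alpha(c_0)]=1$ (using the already-established form of $\alpha(c_0)$) forces $\lambda_b=1$, so $\alpha(b)=b$. I expect this normal-form analysis — in particular forcing the partial-conjugation shape on large components and excluding hidden multipliers on $\st(a)\drop\dom(a)$ — to be the main obstacle; the rest is bookkeeping.

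Granted the normal form, $\eta$ is well defined: the multipliers $\lambda_b$, $(\mu_b,\nu_b)$ and $\rho_Y$ are uniquely determined and $\rho_Y$ is independent of $x\in Y$, so the exponent vectors over $[a]$ recorded by $\eta$ are unambiguous; moreover $\eta(\alpha)$ visibly has the block shape $\left(\begin{smallmatrix}A&B\\O&I\end{smallmatrix}\right)$, since it fixes each of the $k$ non-$[a]$ generators of $Z_{[a]}$ modulo $\genby{r_c:c\in[a]}$ and acts on $\genby{r_c:c\in[a]}$ by the matrix $A\in\GL(n,\Z)$ of $\alpha|_{\genby{[a]}}$, with $A$ recording the $r_b$-coefficients for $b\in[a]$. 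For the homomorphism property I would compute, e.g.\ for $b\in\dom(a)\drop\st(a)$, that $(\alpha\beta)(b)=\alpha(\mu_b^{(\beta)})\,\mu_b^{(\alpha)}\,b\,\nu_b^{(\alpha)}\,\alpha(\nu_b^{(\beta)})$, where $\alpha$ acts on $\mu_b^{(\beta)},\nu_b^{(\beta)}\in\genby{[a]}$ through $A^{(\alpha)}$; reading off exponent vectors reproduces exactly the block-matrix product $\eta(\alpha)\eta(\beta)$, and the remaining generator types behave identically. Injectivity follows at once: $\eta(\alpha)=I$ forces $A=I$ and every recorded multiplier trivial, so $\alpha$ fixes $[a]$, all of $\dom(a)$ and every large component pointwise; combined with $\alpha(b)=b$ on $\st(a)\drop\dom(a)$ from the normal form, $\alpha$ fixes every vertex, hence $\alpha=\mathrm{id}$.

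Lastly I would show the image is all of the claimed set by exhibiting generators of the latter. For $A\in\GL(n,\Z)$ let $\iota_A$ act by $A$ on $\genby{[a]}$ and fix every other vertex; this is a well-defined automorphism — since $\st(c)=\st(a)$ for $c\in[a]$, any vertex adjacent to some $c\in[a]$ is adjacent to all of them, so the defining relations are preserved — it lies in $\whset{a}$, and $\eta(\iota_A)=\left(\begin{smallmatrix}A&O\\O&I\end{smallmatrix}\right)$. For the unipotent block: for $c\in[a]$ and $b\in(\st(a)\cap\dom(a))\drop[a]$ the dominated transvection $b\mapsto bc$ lies in $\whset{a}$ with $\eta$-value $I+E_{r_c,r_b}$; for $b\in\dom(a)\drop\st(a)$ the two dominated transvections $b\mapsto bc$ and $b\mapsto cb$ (distinct since $c\notin\st(b)$) give $I+E_{r_c,r_b}$ and $I+E_{r_c,l_b}$; and for a component $Y$ with $\abs{Y}\ge 2$ the partial conjugation of $Y$ by $c$ (legitimate since $Y$ is a component of $\Gamma\drop\st(a)=\Gamma\drop\st(c)$) gives $I\pm E_{r_c,r_Y}$. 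These elementary matrices generate every $\left(\begin{smallmatrix}I&B\\O&I\end{smallmatrix}\right)$ with $B\in M_{n,k}(\Z)$, and left-multiplying by the $\eta(\iota_A)$ produces every $\left(\begin{smallmatrix}A&B\\O&I\end{smallmatrix}\right)$; with the reverse containment from the previous paragraph this identifies the image and finishes the proof.
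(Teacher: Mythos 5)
Your proof is correct, and its core engine is the same as the paper's: pin down the form of $\alpha(b)$ generator-by-generator, with the Servatius centralizer theorem doing the real work in showing that on a component $Y$ of $\Gamma\setminus\st(a)$ with $\abs{Y}\ge 2$ the images $\alpha(x)$ are simultaneous conjugates $\rho_Y^{-1}x\rho_Y$ by a single element $\rho_Y\in\genby{[a]}$. The differences from the paper are ones of packaging and of completeness. You prove surjectivity and injectivity by producing explicit generators of the target group $\GL(n,\Z)\ltimes M_{n,k}(\Z)$ (the automorphisms $\iota_A$, the dominated transvections, and the partial conjugations) rather than by writing down an inverse map $\theta$ as the paper does; these are interchangeable and roughly equal in length. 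More usefully, you explicitly prove that $\alpha$ fixes every $b\in\st(a)\setminus\dom(a)$, using the commuting relation with a neighbour $c_0\notin\st(a)$ to force $\lambda_b=1$. The paper never addresses this class of vertex in its well-definedness discussion and relies on it silently when asserting that its $\theta$ inverts $\eta$, so your version closes a small gap in the paper's argument. One minor clean-up: your appeal to the centralizer theorem inside $\genby{b}\times(\Z^n*\genby{c_0})$ is more machinery than is needed there; the centralizer of $c_0$ in $\genby{[a]}$ is trivial because $c_0$ is adjacent to no element of $[a]$, which gives $\lambda_b=1$ directly once you commute $b$ and the $\genby{[a]}$-factors out of the equation $[\alpha(b),\alpha(c_0)]=1$.
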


\begin{proof}
Since $\alpha\in\whset{a}$ sends each element $b$ of $[a]$ to some $\alpha(b)\in\genby{[a]}$, and the elements of $[a]$ commute, the image of $\eta(\alpha)(r_b)$ is well defined for each $b\in[a]$.
For each $b\in\dom(a)\cap\st(a)\setminus[a]$, we know that $\alpha(b)$ is $bu$ for some $u\in\genby{[a]}$, since we can commute elements of $[a]$ to the right side of $b$.
Since elements of $[a]$ commute, again $\eta(\alpha)(r_b)$ is well defined.
For each $b\in\dom(a)\setminus\st(a)$, we know $\alpha(b)=ubv$ for some $u,v\in\genby{[a]}$.
These elements $u$ and $v$ are well defined because we cannot commute elements of $[a]$ across this $b$.
Again the coefficients are well defined because $\genby{[a]}$ is abelian.

Now consider $Y$ a connected component of $\Gamma\setminus\st(a)$ with at least two vertices.
For any $b$ in $Y$ there is some $c$ in $Y$ such that $b$ commutes with $c$.
We know $\alpha(b)=u_1bv_1$ and $\alpha(c)=u_2cv_2$, with $u_i,v_i\in \genby{[a]}$.
It must be the case that $\alpha(b)$ commutes with $\alpha(c)$.
We consider the centralizer of $\alpha(b)$, as indicated by the Servatius centralizer theorem (see Section~\ref{ss:combinatorial}).
If $u_1\neq v_1^{-1}$, then the cyclically reduced form of $\alpha(b)$ contains $b$ and elements of $[a]$.
This means that there is an element of $\genby{[a]}$ that conjugates $\alpha(c)$ into $\genby{\st(b)\cap\st(a)}$.
However, this is impossible---any such conjugate of $\alpha(c)$ would have $c$ in it, which is not in $\st(a)$.
So this implies that $u_1=v_1^{-1}$, and of course a parallel argument implies that $u_2=v_2^{-1}$.
Further, the centralizer theorem then  implies that $u_1=u_2$.
Repeating this argument on all adjacent pairs in $Y$, we see that there is a fixed $u\in \genby{[a]}$, depending only on $\alpha$, such that $\alpha(b)=u^{-1}bu$ for any $b$ in $Y$.
Then since $\genby{[a]}$ is abelian, the image $\eta(\alpha)(r_Y)$ is well defined.

It is straightforward computation to check that $\eta$ is a homomorphism.

To see that $\eta$ is injective and has the specified image, we construct an inverse map $\theta$.
Really there is only one way to do this---given a matrix $M$ in the specified image, we read off the first $n$ entries in each column and multiply together elements of $[a]$ with these exponents to get $n+k$ elements of $\genby{[a]}$.
For each basis element, let $u_b$ be the element determined by the column for $r_b$, let $v_b$ be determined by the column for $l_b$, and let $u_Y$ be determined by the column for $r_Y$.
We construct an automorphism $\theta(M)$ that sends $b\in[b]$ to $u_b$; $\theta(M)$ sends $b\in\dom(a)\cap\st(a)\setminus[a]$ to $bu_b$; $\theta(M)$ sends $b\in\dom(a)\setminus\st(a)$ to $v_bbu_b$; and $\theta(M)$ sends each $c$ in a connected component $Y$ with at least two vertices to $u_Y^{-1}cu_Y$.
In fact this completely specifies $\theta(M)$ on generators; $\theta(M)$ extends to an endomorphism of $A_\Gamma$ because the images of generators still satisfy the defining relations for $A_\Gamma$, and $\theta(M)$ specifies an automorphism of $A_\Gamma$ because $\theta(M^{-1})$ is its inverse.
Since these maps are clearly inverses of each other, this finishes the proof.
\end{proof}

Let $m,n,k$ be integers with $k\geq 0$ and $n,m\geq 1$.
In light of Lemma~\ref{le:etainjective}, we define define the group $G_1$ to be $\GL(n,\Z)\ltimes M_{n,k}(\Z)$ and we state versions of Propositions~\ref{pr:insamewhorbit} and~\ref{pr:whstab} in this setting.
The reason for the subscript $1$ will be clear later; we will consider other versions of this group where the subscript denotes a common denominator for certain rational matrix entries.
\begin{proposition}\label{pr:matrixorbitalgorithm}
There is an algorithm that takes in matrices $A$ and $B$ in $M_{n+k,m}(\Z)$, and returns a matrix $D$ in $G_1$ with $DA=B$, or determines that there is no such matrix.
\end{proposition}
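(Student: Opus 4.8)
The plan is to peel off the part of the equation $DA=B$ that is forced, then reduce the rest to an orbit problem for integer matrices under a general linear action, which can be solved with Hermite and Smith normal forms together with one computation inside a finite matrix group. Write $A_1,B_1\in M_{n,m}(\Z)$ for the top $n$ rows of $A,B$ and $A_2,B_2\in M_{k,m}(\Z)$ for the bottom $k$ rows. By Lemma~\ref{le:etainjective}, an element $D\in G_1$ is a block matrix with top-left block $C\in\GL(n,\Z)$, top-right block $E\in M_{n,k}(\Z)$, bottom-left block zero, and bottom-right block $I_k$; then $DA=B$ says precisely that $A_2=B_2$ and $CA_1+EA_2=B_1$. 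So the algorithm first checks whether $A_2=B_2$, reporting that no $D$ exists if not. Assuming $A_2=B_2$, what remains is to decide whether there are $C\in\GL(n,\Z)$ and $E\in M_{n,k}(\Z)$ with $CA_1+EA_2=B_1$, and to produce them. As $E$ ranges over $M_{n,k}(\Z)$, the rows of $EA_2$ range independently over the row lattice $\Lambda:=\{xA_2:x\in\Z^k\}\subseteq\Z^m$, so, writing $Q:=\Z^m/\Lambda$, the question is whether the images of $A_1$ and $B_1$ in $M_{n,m}(\Z)/\bigl(M_{n,k}(\Z)A_2\bigr)\cong Q^{\,n}$ lie in the same orbit of $\GL(n,\Z)$ acting by left multiplication.

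To make this effective, compute a Smith normal form of $A_2$: invertible $U\in\GL(k,\Z)$, $V\in\GL(m,\Z)$ with $UA_2V=\operatorname{diag}(d_1,\dots,d_r,0,\dots,0)$ and $d_1\mid\dots\mid d_r$, all $d_i\ge1$. Since $CA_1+EA_2=B_1$ is equivalent to $C(A_1V)+(EU^{-1})(UA_2V)=B_1V$, we may replace $A_1,B_1$ by $A_1V,B_1V$, replace $A_2$ by $\operatorname{diag}(d_1,\dots,d_r,0,\dots)$, solve for $C$ and a new $E'$, and recover the true $E=E'U$ afterward. With $A_2$ diagonal the equation decouples column by column: columns with $d_i=1$ impose no constraint on $C$; the columns $i>r$ (where $d_i=0$) require $CA_1=B_1$ exactly in those columns; the finitely many columns with $d_i>1$ require equality modulo $d_i$. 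Letting $A_1^f,B_1^f$ be the submatrices of the columns with $d_i=0$ and $A_1^t,B_1^t$ the images of the columns with $d_i>1$ in the \emph{finite} group $T:=\prod_{i:\,d_i>1}\Z/d_i$, the problem is to find one $C\in\GL(n,\Z)$ with $CA_1^f=B_1^f$ \emph{and} $CA_1^t=B_1^t$ in $T^{\,n}$. (Once $C$ is found, the columns of $E'$ are forced and integral by construction, with the unused columns set to zero, and $D$ is assembled from $C$ and $E=E'U$.)

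For the free equation $CA_1^f=B_1^f$, compute Hermite normal forms $A_1^f=P_AH_A$ and $B_1^f=P_BH_B$ with $P_A,P_B\in\GL(n,\Z)$ and $H_A,H_B$ in the fixed echelon normal form; if $H_A\ne H_B$ there is no solution at all (left multiplication by $\GL(n,\Z)$ does not change the Hermite normal form), and the algorithm reports failure. If $H_A=H_B=:H$, then $\{C:CA_1^f=B_1^f\}=P_B\cdot\operatorname{Stab}(H)\cdot P_A^{-1}$ with $\operatorname{Stab}(H):=\{g\in\GL(n,\Z):gH=H\}$; because $H$ is in Hermite normal form its $t$ nonzero rows come first and are linearly independent, so $gH=H$ forces $g$ to have the block shape $\left(\begin{smallmatrix}I_t&*\\0&g'\end{smallmatrix}\right)$ with $g'\in\GL(n-t,\Z)$ and $*\in M_{t,n-t}(\Z)$ arbitrary. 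Hence $\operatorname{Stab}(H)$ is generated by an explicit finite set of integer matrices (elementary matrices $I+E_{ij}$ with $i\le t<j$, together with standard generators of the bottom-right $\GL(n-t,\Z)$). Substituting $C=P_BgP_A^{-1}$ turns the torsion equation into: is there $g\in\operatorname{Stab}(H)$ with $g\cdot(P_A^{-1}A_1^t)=P_B^{-1}B_1^t$ in $T^{\,n}$? Since $T$ has finite exponent $e$, this action factors through the finite group $\GL(n,\Z/e)$; we enumerate the subgroup of $\GL(n,\Z/e)$ generated by the reductions of the explicit generators of $\operatorname{Stab}(H)$ by breadth-first search, recording a generator word for each element, and test whether any element sends $P_A^{-1}A_1^t$ to $P_B^{-1}B_1^t$. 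If one does, evaluating its word in $\GL(n,\Z)$ gives $g$, hence $C=P_BgP_A^{-1}$; if none does, no such $D$ exists. The main obstacle is exactly this last step: the set of $C$ solving the free part is a coset of the \emph{infinite} group $\operatorname{Stab}(H)$, which cannot be searched directly, so the content is that $\operatorname{Stab}(H)$ has a transparent parabolic structure with an explicit finite generating set and acts on the finite set $T^{\,n}$ through a finite quotient — this "relative" normal-form analysis of $A_1$ with the rows of $A_2$ frozen is the crux of the argument.
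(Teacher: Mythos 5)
Your proposal is correct, and it takes a genuinely different route from the paper. The paper never splits $DA=B$ into its forced and residual parts; instead it introduces a single ``$G_\Q$--normal form'' for the full $(n+k)\times m$ matrix, obtained by first reducing the top $n$ rows as much as possible using \emph{rational} combinations of the bottom $k$ rows, then applying integer Hermite reduction to what remains. Both $A$ and $B$ are driven to this normal form $N$; if $N$ differs, there is no solution even over $G_\Q$. The residual work of passing from ``same $G_\Q$-orbit'' to ``same $G_1$-orbit'' is then handled by fixing a denominator $d$, computing a presentation of the stabilizer $(G_d)_N$ (Proposition~\ref{pr:normalformQstabpres}), and building the Schreier graph of $G_1$ inside $G_d$ with respect to those generators, tracked concretely via the crossed homomorphism $\rho$ of Lemma~\ref{le:crohomcosets}; connectivity in that finite graph decides the question. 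Your approach avoids rational coefficients entirely: you observe that the bottom rows are frozen, reinterpret the problem as a $\GL(n,\Z)$-orbit problem in $(\Z^m/\Lambda)^n$, diagonalize $\Lambda$ by Smith normal form to decouple the columns into a free part and a torsion part, solve the free part by ordinary row Hermite normal form (the solution set being a coset of an explicit parabolic $\left(\begin{smallmatrix}I_t&*\\0&\GL(n-t,\Z)\end{smallmatrix}\right)$), and then intersect with the torsion condition by pushing that parabolic's generators into $\GL(n,\Z/e)$ and doing a finite search. Both arguments are sound; the trade-off is that the paper's $G_\Q$-normal form and Schreier-graph machinery are reused verbatim to prove Proposition~\ref{pr:matrixstabpres} (finite presentability of the $G_1$-stabilizer), so its extra overhead is amortized, whereas your Smith-plus-Hermite decomposition is a cleaner and more self-contained argument for the orbit problem alone but would need further work to yield stabilizer presentations.
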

\begin{proposition}\label{pr:matrixstabpres}
Suppose $A$ is in $M_{n+k,m}(\Z)$.
Then there is an algorithm that produces a finite presentation for the stabilizer of $A$ in $G_1$.
\end{proposition}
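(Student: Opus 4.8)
The plan is to compute the stabilizer in two stages, matching the semidirect-product structure $G_1 = \GL(n,\Z)\ltimes M_{n,k}(\Z)$. Write an element of $G_1$ as a block matrix $\left(\begin{smallmatrix} P & Q\\ O & I\end{smallmatrix}\right)$ with $P\in\GL(n,\Z)$ and $Q\in M_{n,k}(\Z)$, and split the given matrix $A\in M_{n+k,m}(\Z)$ into its top $n$ rows $A_1$ and bottom $k$ rows $A_2$. The condition $\left(\begin{smallmatrix} P & Q\\ O & I\end{smallmatrix}\right)A = A$ is then simply $A_2 = A_2$ (automatic) together with $PA_1 + QA_2 = A_1$, i.e. $(P-I)A_1 = -QA_2$. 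So the stabilizer is the set of pairs $(P,Q)$ with $P\in\GL(n,\Z)$, $Q\in M_{n,k}(\Z)$, and $(P - I)A_1 + QA_2 = O$. The first step is to understand the purely linear object $N = \{(P,Q) : (P-I)A_1 + QA_2 = O,\ P\in M_{n,n}(\Z),\ Q\in M_{n,k}(\Z)\}$, which is a subgroup of $M_{n,n}(\Z)\times M_{n,k}(\Z) \cong \Z^{n^2+nk}$ cut out by the homogeneous linear system; here I allow $P$ arbitrary integer for the moment and then intersect with the condition $\det P = \pm 1$.

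The core difficulty is that the invertibility condition $P\in\GL(n,\Z)$ is not linear, so $N$ itself is a finitely generated free abelian group (a sublattice of $\Z^{n^2+nk}$, hence $\Z$-linearly presentable by Hermite/Smith normal form computations on the coefficient matrix of the system), but the actual stabilizer is the preimage in $N$ of $\GL(n,\Z)$ under the projection $(P,Q)\mapsto P$. I would handle this by the standard finite-index trick: the kernel of that projection restricted to $N$ is the subgroup $N_0 = \{(I,Q) : QA_2 = O\}$, which is again a computable sublattice of $M_{n,k}(\Z)$ and is central (indeed it is the intersection of the stabilizer with $\{I\}\times M_{n,k}(\Z)$, a normal abelian subgroup of $G_1$). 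The quotient of the stabilizer by $N_0$ embeds into $\GL(n,\Z)$ as the group $\{P\in\GL(n,\Z) : (P-I)A_1 \in (M_{n,k}(\Z))A_2\}$; calling this group $S$, we get a central extension $1\to N_0\to (G_1)_A\to S\to 1$ with $N_0$ free abelian of known rank. To get a presentation of the stabilizer it then suffices to (i) produce a finite presentation of $S$, and (ii) lift it through the central extension, recording for each relator of $S$ the corresponding element of $N_0$ that appears — this is routine once (i) is done, since central extensions of finitely presented groups by finitely generated abelian groups are finitely presented in an effective way.

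So the real work is step (i): finding a finite presentation of the subgroup $S\leq\GL(n,\Z)$ defined by the congruence-type condition $(P-I)A_1\equiv O$ modulo the column span of $A_2$. This is exactly where I expect the main obstacle to lie, and I would attack it via \emph{a modified Hermite normal form} — the tool the introduction explicitly advertises. Concretely: choose a $\GL(m,\Z)$ change of basis (acting on the right, i.e. on columns) that puts $A$ into a normal form in which $A_1$ and $A_2$ are simultaneously as simple as possible, for instance with $A_2$ in column-Hermite form; the condition on $P$ then becomes a condition on finitely many of the columns of $PA_1$ modulo an explicit lattice, i.e. a congruence condition $P\equiv I$ modulo some ideal structure plus an equality condition on a complementary piece. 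Such subgroups of $\GL(n,\Z)$ — stabilizers of a flag-with-torsion-quotient, equivalently stabilizers of the pair consisting of a subgroup of $\Z^n$ and an element of the quotient — are virtually the congruence subgroups $\Gamma$ of $\GL(n,\Z)$, which are finitely presented (arithmetic groups), and more to the point are of finite index in a parabolic-type stabilizer whose generators and relations one can write down explicitly using elementary matrices and the structure of $\GL$ over $\Z$ and over $\Z/N\Z$. Then Lemma~\ref{le:presfromfinidx} lets me promote a presentation of a finite-index subgroup of $S$ (a congruence subgroup, whose presentation is obtainable from a standard presentation of $\GL(n,\Z)$ or $\SL(n,\Z)$ together with the reduction map to the finite group $\GL(n,\Z/N\Z)$) up to a presentation of $S$ itself, provided I also exhibit the relevant Schreier graph, which is finite and explicitly computable since the index is finite. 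Assembling these pieces — modified Hermite normal form to simplify $A$, standard arithmetic-group presentation plus Lemma~\ref{le:presfromfinidx} to present $S$, and central-extension lifting to incorporate $N_0$ — yields the algorithm, and all the intermediate data (the normal form, the Schreier graph, the lattice $N_0$) are produced effectively, so the procedure is genuinely an algorithm as claimed.
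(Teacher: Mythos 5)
Your reduction of the problem to $(P-I)A_1 + QA_2 = O$ and the decomposition into the abelian kernel $N_0=\{(I,Q):QA_2=O\}$ and the projection $S\leq\GL(n,\Z)$ is a sensible starting point, but two of the specific claims are wrong and the main work is left undone. First, $N_0$ is a normal abelian subgroup of the stabilizer but not central in general: conjugating $(I,Q)$ by $(P,Q')$ gives $(I,PQ)$, and $S$ can act nontrivially on the lattice $\{Q:QA_2=O\}$. (This is a cosmetic error — a computable action is all you need — but you would need to adjust the lifting step accordingly.) Second, and more seriously, $S$ need not contain any congruence subgroup of $\GL(n,\Z)$. Writing $L_0\leq\Z^m$ for the $\Z$-span of the rows of $A_2$, the condition on $P$ is that every row of $(P-I)A_1$ lie in $L_0$; when $L_0$ has infinite index in $\Z^m$ this imposes honest equality constraints, and $S$ has infinite index in $\GL(n,\Z)$ (e.g. $n=2,m=1,k=0$, $A_1=\binom{1}{0}$: then $S$ is the stabilizer of a vector, containing no $\Gamma(N)$). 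You do partly acknowledge this by saying $S$ sits with finite index in a ``parabolic-type stabilizer,'' but you then describe the finite-index subgroup as a congruence subgroup ``obtainable from a standard presentation of $\GL(n,\Z)$ ... together with the reduction map to $\GL(n,\Z/N\Z)$,'' which is precisely what fails. The genuinely unresolved step is how to write down, effectively, a presentation of $S$ (equivalently, of the relevant parabolic subgroup and then a finite-index piece of it); ``one can write down explicitly'' is doing all the work, and that is where the actual content lies.

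The paper takes a genuinely different route that sidesteps this difficulty. Instead of staying inside $\GL(n,\Z)$, it enlarges the group to $G_\Q=\GL(n,\Z)\ltimes M_{n,k}(\Q)$, where every matrix admits a canonical $G_\Q$-normal form by an explicit row-reduction procedure (Proposition~\ref{pr:normalformexists}). Over $\Q$ the stabilizer of a normal-form matrix has a transparent structure $\bigl(M_{l,n-l}(\Z)\rtimes\GL(n-l,\Z)\bigr)\ltimes K^n$ that can be presented directly from Milnor's presentation of $\SL(n-l,\Z)$ together with the semidirect-product lemma (Proposition~\ref{pr:normalformQstabpres}). One then descends from the rational group $G_d$ (with $d$ the common denominator produced by the normal form algorithm) to $G_1$ via the crossed homomorphism $\rho:G_d\to M_{n,k}(\Z/d\Z)$, which makes the Schreier graph of $G_1$ in $G_d$ explicitly computable, and $(G_1)_A$ is then the fundamental group of a finite-sheeted cover of the presentation complex of $(G_d)_A$. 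In effect, the paper converts the ``mixed congruence-plus-equality'' structure of $S$ that your approach has to confront head-on into a clean finite-index problem by moving to rational coefficients first. Your approach could probably be salvaged by carrying out the parabolic-stabilizer analysis carefully and then doing Reidemeister–Schreier, but as written it asserts rather than proves the key algorithmic step, and the congruence-subgroup shortcut does not exist.
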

The proofs of these propositions are postponed to Section~\ref{se:linearproblems}.

\subsection{Algorithms for groups of generalized Whitehead automorphisms}
In this section we prove Propositions~\ref{pr:insamewhorbit} and~\ref{pr:whstab} modulo Propositions~\ref{pr:matrixorbitalgorithm} and~\ref{pr:matrixstabpres}, which we prove later.
We again fix $a\in \Gamma$.
The following definition is not only central to this section but is also important for much of the rest of the paper.
\begin{definition}\label{de:syllable}
A \emph{syllable} in $A_\Gamma$ with respect to $a$ is a graphically reduced product of the form $c u d\in  A_\Gamma$, where $u$ is an element of $\genby{\st(a)}$, and $c$ and $d$ are elements of $(\Gamma\setminus\st(a))^{\pm1}$ or $c=d=1$.
It is a \emph{linear syllable} if $c\neq 1$ and $d\neq 1$, and a \emph{cyclic syllable} if $c=d=1$.
If $cud$ is a linear syllable, then $c$ and $d$ are the \emph{endpoints}.

If $w$ is a conjugacy class in $A_\Gamma$, a \emph{decomposition of $w$ into syllables with respect to $a$} is either of the following:
\begin{itemize}
\item if the conjugacy class $w$ has a representative element $u$ in $\genby{\st(a)}$, then $u$ itself is a cyclic syllable, and the singleton $(u)$ is a decomposition of $w$ into syllables;
\item if $w$ does not have a representative in $\genby{\st(a)}$, then a decomposition of $w$ into syllables is a $k$-tuple of linear syllables for some $k$:
\[(c_1u_1c_2, c_2u_2c_3,\dotsc,c_ku_kc_1),\]
such that the product
\[c_1u_1c_2u_2c_3u_3\dotsm c_ku_k\in A_\Gamma\]
is a graphically reduced and cyclically reduced product representing the class $w$.
\end{itemize}
The products $u$ or $c_1u_1\dotsm c_ku_k$ above are the \emph{representatives associated with the decomposition}.

If $W=(w_1,\dotsc,w_k)$ is a $k$-tuple of conjugacy classes in $A_\Gamma$, a \emph{decomposition of $W$ into syllables with respect to $a$} is the concatenation, into a single tuple, of some choice of decompositions of the $w_i$ into syllables.
Given a decomposition of $W$ into syllables, the \emph{representative associated to the decomposition} is the tuple of elements of $A_\Gamma$ consisting of the representatives associated with the decompositions of the $w_i$.
\end{definition}

We give some specific examples of syllable decompositions in Example~\ref{ex:syllablealgorithm} below.
To clarify the definition, we note a couple of unusual cases.
If $c$ and $d$ distinct elements not in $\st(a)$, and $u\in\genby{\st(a)}$, then $(cd,duc)$ is a syllable decomposition for the conjugacy class of $cdu$.
Likewise, $(cuc)$ is a syllable decomposition for the conjugacy class of $cu$.
Generally, syllable decompositions are far from being unique; however, it is not hard to see that certain aspects of syllable decompositions are determined by the tuples being decomposed.
First of all, for a given conjugacy class, it is either the case that it has a unique decomposition as a cyclic syllable, or that all of its syllable decompositions are tuples of linear syllables.
Second, if a conjugacy class decomposes into linear syllables, then the number of linear syllables in the decomposition is the number of instances of letters from $X\setminus\st(a)$ appearing in any cyclically and graphically reduced representative.
In particular, the number of syllables in a decomposition is the same for all decompositions.

If $w$ is a conjugacy class in $A_\Gamma$, there are two possible kinds ambiguities that come up in decomposing $w$ into syllables.
First of all, if $T$ is a decomposition of $w$ into syllables, then any cyclic permutation of $T$ is also a decomposition of $w$ into syllables.
Second, if we take the representative associated with a decomposition and get a different graphically reduced product by commuting some letters across syllable boundaries, the resulting product will be associated with a different decomposition of $w$.
These two kinds of ambiguities also come up in decomposing tuples of conjugacy classes into syllables.
As we will see below, this second kind of ambiguity is not  very important.

The purpose of decomposing things into syllables with respect to $a$ is that it gives us a convenient way to encode the action of $\whset{a}$ and to keep track of its effect on lengths.
We recall the free abelian group $Z_{[a]}$ from Section~\ref{ss:genwharelinear}.
We define a map $\nu$ to $Z_{[a]}$ from the set of syllables in $A_\Gamma$ with respect to $a$:
\begin{itemize}
\item If $u$ is a cyclic syllable then $\nu(u)$ is $\sum_{b\in\st(a)\cap \dom(a)} n_b r_b$, where $n_b$ is the sum exponent of $b$ in $u$ for each $b\in\st(a)\cap\dom(a)$.
\item If $cud$ is a linear syllable then $\nu(cud)$ is $v_c+v_d+\sum_{b\in\st(a)\cap\dom(a)} n_b r_b$, where $n_b$ is the sum exponent of $b$ in $u$ for each $b\in\st(a)\cap\dom(a)$, and
\begin{itemize}
\item if $c$ is a positive generator and $c\in \dom(a)$, then $v_c=r_c$,
\item if $c^{-1}$ is a positive generator and $c^{-1}\in\dom(a)$, then $v_c=-l_c$, 
\item otherwise $c$ or $c^{-1}$ is in $Y$, a connected component of $\Gamma\setminus\st(a)$ with two or more vertices, and $v_c=r_Y$;
\item if $d$ is a positive generator and $d\in \dom(a)$, then $v_d=l_c$,
\item if $d^{-1}$ is a positive generator and $d^{-1}\in\dom(a)$, then $v_d=-r_d$, and
\item otherwise $d$ or $d^{-1}$ is in $Y$, a connected component of $\Gamma\setminus\st(a)$  with two or more vertices, and $v_d=-r_Y$.
\end{itemize}
\end{itemize}
The map $\nu$ extends diagonally to define a map $\nu$ from tuples of syllables to tuples of vectors of $Z_{[a]}$.
Note that $\nu$ makes no record of the letters in $cud$ from $\st(a)\setminus\dom(a)$.
In fact, the map $\nu$ gets rid of one of the ambiguities possible in decomposing a class into syllables.

\begin{lemma}\label{le:decompwelldef}
Suppose $w$ is a conjugacy class in $A_\Gamma$ and $T$ and $T'$ are decompositions of $w$ into syllables with respect to $a$.
Then the tuple $\nu(T)$ is a cyclic permutation of the entries of the tuple $\nu(T')$.
Further, if $W$ is a tuple of conjugacy classes and $T$ and $T'$ are decompositions of $W$, then $\nu(T)$ is a permutation of $\nu(T')$.
\end{lemma}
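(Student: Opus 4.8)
The plan is to reduce the tuple statement to the single-class statement, and to prove the latter by following the element through the two standard kinds of ambiguity (cyclic rotation of the representative, and commutation moves), checking that $\nu$ is insensitive to all of them. For the reduction: a decomposition of $W=(w_1,\dots,w_m)$ is by definition a concatenation $T=T^{(1)}T^{(2)}\cdots T^{(m)}$ in which $T^{(i)}$ is a decomposition of $w_i$; given two such, $T=T^{(1)}\cdots T^{(m)}$ and $T'=T'^{(1)}\cdots T'^{(m)}$, if each $\nu(T^{(i)})$ is a cyclic permutation of $\nu(T'^{(i)})$ then $\nu(T)$ is a (block-preserving) permutation of $\nu(T')$. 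So it remains to treat a single conjugacy class $w$ and prove the cyclic-permutation statement there. If $w$ has a representative in $\genby{\st(a)}$, then every decomposition of $w$ is a one-tuple $(u)$ with $u\in\genby{\st(a)}$ cyclically reduced, and $\nu((u))=\sum_{b\in\st(a)\cap\dom(a)}n_b r_b$ with $n_b$ the sum exponent of $b$ in $u$; since sum exponents are conjugacy invariants in $A_\Gamma$, this vector depends only on $w$, and a one-tuple is its own cyclic permutation.

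So assume $w$ has no representative in $\genby{\st(a)}$, so that every decomposition consists of linear syllables. First I would record a bookkeeping observation: a decomposition $T=(c_1 u_1 c_2,\dots,c_k u_k c_1)$ of $w$ is determined by its associated representative $P=c_1u_1c_2u_2\cdots c_ku_k$ together with a choice of which occurrence of a letter from $X\setminus\st(a)$ is called $c_1$ (just group the $\genby{\st(a)}$-letters between consecutive non-$\st(a)$-letters); conversely every graphically and cyclically reduced word representing $w$ arises this way, the cyclic reducedness guaranteeing that the ``wrap-around'' syllable $c_ku_kc_1$ is itself graphically reduced. Changing the choice of $c_1$ replaces $P$ by a cyclic permutation of itself (still graphically and cyclically reduced) and cyclically permutes $T$, hence cyclically permutes $\nu(T)$. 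Thus it suffices to show: if $P$ and $P'$ are graphically and cyclically reduced words representing the same conjugacy class $w$, then $\nu(T)$, viewed up to cyclic permutation, is the same for the decomposition determined by $P$ and for that determined by $P'$.

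For this I would invoke the standard solution of the conjugacy problem in right-angled Artin groups (\cite{Servatius}, \cite{Wrathall}, \cite{LiuWrathallZeger}): two graphically and cyclically reduced words represent conjugate elements if and only if one is obtained from the other by a finite sequence of cyclic permutations of the word and elementary commutation moves, i.e.\ transpositions of two adjacent letters whose underlying vertices are adjacent in $\Gamma$. A cyclic permutation of the word cyclically permutes the syllable tuple, hence cyclically permutes $\nu(T)$, as in the previous paragraph. For a commutation move I would split into three cases according to the types of the two swapped letters. If both lie in $\st(a)$, the move alters some $u_i$ but not its abelianized exponent vector, and in particular not the coefficients of $r_b$ for $b\in\st(a)\cap\dom(a)$, and it does not touch any endpoint; so $\nu$ of every syllable is unchanged. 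If both lie in $X\setminus\st(a)$, then the intervening word $u_i$ is empty and the two underlying vertices, being adjacent in $\Gamma$ and outside $\st(a)$, lie in a common connected component $Y$ of $\Gamma\setminus\st(a)$ with $\abs{Y}\geq 2$; then both contribute $\pm r_Y$ (with the \emph{same} $Y$) as endpoints, and a direct check of the three affected syllables $c_{i-1}u_{i-1}c_i$, $c_i\,1\,c_{i+1}$, $c_{i+1}u_{i+1}c_{i+2}$ (and their images) shows $\nu$ is unchanged, the bookkeeping at the ends of the cyclic sequence being routine. Finally, if one swapped letter $x$ lies in $\st(a)$ and the other $c$ in $X\setminus\st(a)$, the move carries $x$ across a syllable boundary from one $u_i$ into an adjacent one; but $x$ commutes with $c$, so the underlying vertex of $c$, which is not in $\st(a)$, is adjacent to $x$, which forces $x\notin\dom(a)$, so the exponent of $x$ is not recorded by $\nu$ and again $\nu(T)$ is unchanged.

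The step I expect to require the most care is this last case---more precisely, the observation that a vertex of $\st(a)$ adjacent to some vertex outside $\st(a)$ cannot lie in $\dom(a)$---together with correctly invoking the conjugacy normal form: what is quoted in Section~\ref{ss:combinatorial} is only that two minimal-length representatives of a fixed \emph{element} differ by commutation moves, whereas here $P$ and $P'$ may be distinct conjugates of each other, so one genuinely needs the version for cyclically reduced words up to commutations and cyclic permutations. Once the framing of the previous paragraph is set up, the computations in the first two cases are mechanical.
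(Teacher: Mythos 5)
Your proof is correct, and it takes a genuinely different route from the paper's. The paper works directly on the decompositions: it defines a family of compound moves on tuples of syllables (in one move a syllable is deleted, a neighboring syllable absorbs its contents, and a farther syllable is split in two, subject to a list of commutation side-conditions), and then verifies by hand that each compound move preserves $\nu$ up to cyclic permutation. You instead descend to the letter level via the bijection between syllable decompositions and graphically and cyclically reduced representatives of $w$ that begin with a letter of $(X\setminus\st(a))^{\pm1}$, and reduce the comparison of two decompositions to a chain of elementary adjacent transpositions and cyclic rotations of the representative; the three-case check for a single transposition is substantially lighter than the paper's verification. The graph-theoretic facts you isolate---that a vertex of $\st(a)$ with a neighbor outside $\st(a)$ cannot lie in $\dom(a)$, and that two adjacent vertices outside $\st(a)$ lie in a common component $Y$ of $\Gamma\setminus\st(a)$ with at least two vertices---are precisely the facts the paper invokes, just applied one letter at a time rather than in bulk, so the two proofs agree on substance and differ only in granularity. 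Both depend on the same unproved input, stated at the top of the paper's own proof of this lemma: any two graphically and cyclically reduced representatives of a conjugacy class differ by commutations and cyclic permutations. You are right to flag that this is stronger than the fixed-element Servatius statement quoted in Section~\ref{ss:combinatorial}, but the paper itself asserts the stronger form without further citation, so you are on equal footing. In a full write-up, the one thing to nail down carefully is the bijection itself: cyclic reducedness of the representative is exactly what makes the wrap-around syllable $c_ku_kc_1$ graphically reduced, and when the chosen starting occurrence is one of the two swapped letters the start shifts, which your framing correctly absorbs as a cyclic permutation of $\nu(T)$.
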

\begin{proof}
As mentioned in Section~\ref{ss:combinatorial}, any graphically reduced word representing a given element can be transformed into any other by a sequence of commutations.
Further, any graphically and cyclically reduced word representing a conjugacy class can be transformed into any other by a sequence of commutations and cyclic permutations.
This observation has a corollary for decompositions of $w$ into syllables.
Any decomposition $T=(c_1u_1c_2,\dotsc,c_ku_kc_1)$ of $w$ into syllables can be turned into any other by a sequence of the following moves:
\begin{itemize}
\item If some $u_q=u'_qu''_q$ (possibly with $u'_q=1$ or $u''_q=1$) and for some $p\leq q$ we have that $c_p$ commutes with  $c_q$, $u'_q$, $u_p$, and all $c_i$ and $u_i$ for $i=p+1,\dotsc,q-1$, and none of the intervening $c_i$ are equal to $c_p^{\pm1}$, then we can replace the syllable $c_{p-1}u_{p-1}c_p$ with $c_{p-1}u_{p-1}u_pc_{p+1}$, delete $c_pu_pc_{p+1}$ from the list, and break $c_qu_qc_{q+1}$ into two adjacent syllables $c_qu'_qc_p$ and $c_pu''_qc_{q+1}$.
In the case that $p=q$, we simply replace $c_{p-1}u_{p-1}c_p$ with $c_{p-1}u_{p-1}u'_pc_p$ and replace $c_pu_pc_{p+1}$ with $c_pu''_pc_{p+1}$.
\item The previous move can be done in reverse.
\item If some $u_p=u'_pxu''_p$ and for some $q>p$ we have $u_q=u'_qu''_q$ (possibly with any of  $u'_p,u''_p,u'_q,u''_q$ equal to $1$) such that $x$ commutes with $u''_p$, $c_q$, $u'_q$ and all $c_i$ and $u_i$ for $i=p+1,\dotsc, q-1$, then we can replace the syllable $c_pu_pc_{p+1}$ with $c_pu'_pu''_pc_{p+1}$ and replace $c_qu_qc_{q+1}$ with $c_qu'_qxu''_qc_{q+1}$.
\item The previous move can be done in reverse.
\item We can cyclically permute the entries of $T$. 
\end{itemize}
One issue that needs to be addressed with these moves is the fact that they send a well formed syllable decomposition to a well formed syllable decomposition.
This follows from the fact that syllables are graphically reduced products.
If one of the replacements above results in a syllable that is not graphically reduced, for example, where the $c_i$ cancels with the $c_{i+1}$, then the original syllable was not graphically reduced.
By virtue of moving things only across things they commute with, if a cancellation is possible after the replacement, it was also possible in the first place.

We consider the effect the first move has on $\nu(T)$ in the case that $p<q$.
This replaces $c_{p-1}u_{p-1}c_p$ with $c_{p-1}u_{p-1}u_pc_{p+1}$.
Since $c_p$ commutes with $u_p$, but $c_p$ is not adjacent to $a$, no generator from $\st(a)\cap\dom(a)$ appears in $u_p$.
Since $c_p$ commutes with $c_{p+1}$, and neither is adjacent to $a$, this means $a$ dominates neither one and they are in the same component $Y$ of $\Gamma\setminus\st(a)$.
These facts imply that $\nu(c_{p-1}u_{p-1}c_p)=\nu(c_{p-1}u_{p-1}u_pc_{p+1})$.
The syllable $c_pu_pc_{p+1}$ is deleted, but since $c_p$ and $c_{p+1}$ are both in $Y$ and since no generator from $\st(a)\cap\dom(a)$ appears in $u_p$, we have $\nu(c_pu_pc_{p+1})=0$.
The intervening syllables $c_iu_ic_{i+1}$ for $i=p+1,\dotsc,q-1$ commute entirely with $c_p$; this implies for each $i$ that $u_i$ contains no generators from $\st(a)\cap\dom(a)$ and that both $c_i$ and $c_{i+1}$ are in $Y$; this implies that for each $i$, $\nu(c_iu_ic_{i+1})=0$.
So although a syllable is deleted before this sequence, thus shifting this sequence forward, there is no effect because this is a sequence of zeros.
The syllable $c_qu_qc_{q+1}$ is broken into $c_qu'_qc_p$ and $c_pu''_qc_{q+1}$.
For the same reason as the intervening syllables, we see $\nu(c_qu'_qc_p)=0$.
Similarly, $c_q$ and $c_p$ must both be in $Y$, and $u'_q$ makes no contribution, so $\nu(c_qu_qc_{q+1})=\nu(c_pu''_qc_{q+1})$.
So the effect of the move on $\nu(T)$ is to replace the entry in position $p$ with the same entry, replace zero entries from position $p+1$ to position $q-1$ with zero entries, and replace the entry in position $q$ with an identical entry.
Therefore the first move does not affect $\nu(T)$ if $p<q$.

If $p=q$, the first move replaces $c_{p-1}u_{p-1}c_p$ with $c_{p-1}u_{p-1}u'_pc_p$ and replaces $c_pu_pc_{p+1}$ with $c_pu''_pc_{p+1}$.
Since $u'_p$ commutes with $c_p$ it contains no generators from $\st(a)\cap\dom(a)$, and we deduce that $\nu(c_{p-1}u_{p-1}c_p)=\nu(c_{p-1}u_{p-1}u'_pc_p)$ and $\nu(c_pu_pc_{p+1})=\nu(c_pu''_pc_{p+1})$.
So the move has no effect on $\nu(T)$ in this case as well.

Next we consider the second kind of move.
We replace $c_pu_pc_{p+1}=c_pu_p'xu_p''c_{p+1}$ with $c_pu_p'u_p''c_{p+1}$; since $x$ commutes with $c_{p+1}$, it is not in $\st(a)\cap\dom(a)$, and therefore
$\nu(c_pu_pc_{p+1})=\nu(c_pu_p'u_p''c_{p+1})$.
We also replace  $c_qu_qc_{q+1}=c_qu'_qu''_qc_{q+1}$ with $c_qu'_qxu''_qc_{q+1}$; for the same reason, $\nu(c_qu_qc_{q+1})=\nu(c_qu'_qxu''_qc_{q+1})$.
Therefore $\nu(T)$ is unchanged for this kind of move.
Of course, if a kind of move leaves $\nu(T)$ unchanged, then the same kind of move in reverse will also leave it unchanged.
So the moves above can only change $\nu(T)$ by cyclically permuting its entries.

If $W$ is a tuple of cyclic words and $T$ is a decomposition of $W$ into syllables, then of course, we can cyclically permute the decompositions of the entries in $W$.
Since $T$ is a concatenation of these decompositions of entries, it is possible that different decompositions of $W$ will differ by more complicated permutations---ones that cyclically permute the segments coming from different entries of $W$.
\end{proof}

\begin{definition}\label{de:syllableaction}
Fix a vertex $a$ in $\Gamma$.
We define an action of $\whset{a}$ on the set of syllables with respect to $a$ as follows:
if $\alpha\in\whset{a}$ and $s=cud$ where  $u\in\genby{\st(a)}$ and $c,d\in(\Gamma\setminus\st(a))^{\pm1}$ and $\alpha(c)=w_1cw_2$ and $\alpha(d)=w_3dw_4$, then define
\[\alpha\cdot s=cw_2\alpha(u)w_3d.\]
We extend this action diagonally to the set of $k$-tuples of syllables for each $k$.
\end{definition}
This action loses the element that $\alpha$ places on the left of the left endpoint of the syllable, and loses the element that $\alpha$ places on the right of the right endpoint of the syllable.
We recall the map $\eta\co\whset{a}\to \Aut(Z_{[a]})$ from Section~\ref{ss:genwharelinear}.
Note that the action of $\whset{a}$ on syllables just described is the same as the action of $\eta(\whset{a})$ on $Z_{[a]}$, in that the injective map $\eta$ intertwines the two actions.

Let $\{a_1,\dotsc,a_n\}=[a]$.
We note that any syllable with respect to $a$ may be written without loss of generality in the form $ca_1^{p_1}\dotsm a_n^{p_n}ud$, where $u\in\genby{\st(a)\setminus[a]}$.
This is because in a general syllable $cu'd$, everything from $[a]$ in $u'$ can be commuted to the left, since $u\in\genby{\st(a)}$.
The following result is a kind of equivariance between the actions of $\whset{a}$ on syllables and on $Z_{[a]}$.

\begin{lemma}\label{le:syllableequivariance}
Suppose $ca_1^{p_1}\dotsm a_n^{p_n}ud$ is a syllable of $A_\Gamma$ with respect to $a$, and suppose $\alpha\in\whset{a}$.
Let $p'_i$ be the coefficient of $r_{a_i}$ in $\alpha\cdot \nu(ca_1^{p_1}\dotsm a_n^{p_n}ud)$.
Then there are elements $v_1,v_2\in\genby{[a]}$ with
\[\alpha(ca_1^{p_1}\dotsm a_n^{p_n}ud)=v_1ca_1^{p'_1}\dotsm a_n^{p'_n}udv_2.\]
\end{lemma}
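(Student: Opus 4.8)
The plan is to unwind the definitions and check that the two ways of acting on a syllable --- applying $\alpha$ directly versus applying $\alpha$ to $\nu$ of the syllable --- agree on the $[a]$-exponents, up to the ``extra'' elements of $\genby{[a]}$ that $\alpha$ puts on the outside of the endpoints. First I would recall that $\alpha\in\whset{a}$ acts on generators in a very restricted way: it fixes the conjugacy-type of $c$ and $d$ (they lie in $X\setminus[a]$), sending $c\mapsto w_1 c w_2$ and $d\mapsto w_3 d w_4$ with $w_1,w_2,w_3,w_4\in\genby{[a]}$, and it sends each $a_i\in[a]$ to some element $\alpha(a_i)\in\genby{[a]}$. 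Since $\genby{[a]}$ is abelian, $\alpha$ restricted to $\genby{[a]}$ is exactly the linear map on $\Z^n$ recorded by the $A$-block of $\eta(\alpha)$ (Lemma~\ref{le:etainjective}), so $\alpha(a_1^{p_1}\dotsm a_n^{p_n})=a_1^{q_1}\dotsm a_n^{q_n}$ where $(q_i)$ is $A$ applied to $(p_i)$.

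Next I would compute $\alpha$ applied to the whole syllable. Write $s=c\,a_1^{p_1}\dotsm a_n^{p_n}\,u\,d$ with $u\in\genby{\st(a)\setminus[a]}$. Applying $\alpha$ and using that $\alpha$ is a homomorphism,
\[
\alpha(s)=w_1 c w_2 \cdot a_1^{q_1}\dotsm a_n^{q_n}\cdot \alpha(u)\cdot w_3 d w_4.
\]
Now I would use that $c$ and $d$ are not adjacent to $a$: the only generators of $\genby{\st(a)}$ that could fail to commute with $c$ or $d$ are exactly those in $\st(a)$, and elements of $[a]\subseteq\st(a)$ commute with everything in $[a]$; the relevant point is that $w_2,w_3\in\genby{[a]}$ and $[a]$ commutes with $a$, hence with everything in $\st(a)\setminus[a]$ as well, so $w_2$ commutes past $a_1^{q_1}\dotsm a_n^{q_n}$ and past $\alpha(u)$, and $w_3$ commutes backwards past $\alpha(u)$ and the powers of the $a_i$ --- or more simply, I collect all of $w_2,w_3$ together with any $[a]$-part produced by $\alpha(u)$. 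Here I should be slightly careful: $\alpha(u)$ for $u\in\genby{\st(a)\setminus[a]}$ need not lie in $\genby{\st(a)\setminus[a]}$, since a generator $b\in\st(a)\cap\dom(a)\setminus[a]$ gets sent to $b$ times an element of $\genby{[a]}$; but all such extra factors lie in $\genby{[a]}$ and can be commuted out, and the resulting ``core'' $u'$ still lies in $\genby{\st(a)\setminus[a]}$ and contributes nothing to the $r_{a_i}$-coordinates. Absorbing the $[a]$-parts of $w_2$, $\alpha(u)$, and $w_3$ into a single $v$, I get $\alpha(s)=w_1 c\, a_1^{p'_1}\dotsm a_n^{p'_n}\, u' \, (\text{stuff})\, d w_4$ where the total $[a]$-exponent vector of the $a_i$'s between $c$ and $d$ is $A(p_i)$ plus the contributions of $w_2,w_3$ and the $[a]$-part of $\alpha(u)$. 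The last step is to identify this exponent vector with $p'_i$: by Definition~\ref{de:syllableaction}, $\alpha\cdot s = c w_2 \alpha(u) w_3 d$, and its $\nu$-value's $r_{a_i}$-coordinate is by definition the total sum-exponent of $a_i$ appearing in $w_2\alpha(u)w_3$, which is precisely that same combination; and then $\alpha\cdot\nu(s)=\eta(\alpha)(\nu(s))$ adds the $A(p_i)$ term. Thus the $r_{a_i}$-coordinate of $\alpha\cdot\nu(s)$ equals the exponent $p'_i$ just computed. Setting $v_1=w_1$ and letting $v_2$ be $w_4$ together with the leftover $[a]$-elements collected to the right of $u'$ (again using abelianness of $\genby{[a]}$ and that $u'\in\genby{\st(a)\setminus[a]}$ commutes with $[a]$), I obtain the claimed factorization $\alpha(s)=v_1 c a_1^{p'_1}\dotsm a_n^{p'_n} u d v_2$ --- noting that $u'$ equals $u$ up to commuting $[a]$-letters out, so after renaming it is literally $u$.

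The main obstacle is bookkeeping: making sure that every element I want to ``commute out to the side'' genuinely commutes with the letters it must pass, and that nothing is double-counted or lost when I reconcile Definition~\ref{de:syllableaction} (which discards $w_1$ on the far left and $w_4$ on the far right) with the full image $\alpha(s)$. The key structural facts that make this go through are: (i) $\genby{[a]}$ is abelian and every element of $[a]$ commutes with every element of $\st(a)$, so $[a]$-syllables slide freely past the $\st(a)\setminus[a]$-part $u$; (ii) $c,d\notin\st(a)$, so the only letters of $\genby{\st(a)}$ that interact nontrivially with $c$ and $d$ are irrelevant to our exponent count; and (iii) $\eta$ intertwines the action of $\whset{a}$ on syllables with the action on $Z_{[a]}$, which is exactly the content needed to equate the combinatorial exponent $p'_i$ with the $r_{a_i}$-coefficient of $\alpha\cdot\nu(s)$. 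None of the individual commutation checks is hard; the care is entirely in organizing them so the two sides match term by term.
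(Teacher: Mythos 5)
Your computation of $\alpha(s)$ is exactly what the paper has in mind when it says ``computational exercise in the definitions,'' and the organizing principle is correct: write $\alpha(s)=w_1 c w_2\cdot \alpha(a_1^{p_1}\dotsm a_n^{p_n})\cdot \alpha(u)\cdot w_3 d w_4$, use that $\genby{[a]}$ is abelian and commutes with $\genby{\st(a)\setminus[a]}$ to collect every $[a]$-letter between $c$ and $d$, and observe that the ``core'' of $\alpha(u)$ is literally $u$ again. However, the final step --- showing that the collected exponent vector equals the $r_{a_i}$-coefficients of $\eta(\alpha)(\nu(s))$ --- is not actually carried out. You appeal to the ``intertwining'' remark $\nu(\alpha\cdot s)=\eta(\alpha)(\nu(s))$, but that remark is stated in the paper without proof in the sentence immediately preceding the lemma, and it is essentially the content of the lemma. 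Citing it here is circular, and the sentence ``$\alpha\cdot\nu(s)=\eta(\alpha)(\nu(s))$ adds the $A(p_i)$ term'' conflates the definition of the left-hand side with the computation you still owe.

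To close the gap, compute $\eta(\alpha)(\nu(s))$ directly from the definitions of $\nu$ and $\eta$. Writing $\nu(s)=v_c+v_d+\sum_j p_j r_{a_j}+\sum_b n_b r_b$ (the last sum over $b\in(\st(a)\cap\dom(a))\setminus[a]$, with $n_b$ the sum-exponent of $b$ in $u$), the $\sum_j p_j r_{a_j}$ part contributes $A(p)$ to the $r_{a_i}$-coordinates by definition of the upper-left block. For the $n_b r_b$ part, the definition of $\eta$ gives the $r_{a_i}$-coefficient $\sum_b n_b\cdot(\text{sum exp. of }a_i\text{ in }\alpha(b))$, which is precisely the sum-exponent of $a_i$ in your $w_u$. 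For $v_c$ and $v_d$, you need a short case analysis on the endpoint types (whether $c^{\pm1}\in\dom(a)$, so $v_c$ is $\pm r_c$ or $\mp l_{c^{-1}}$, or whether $c$ lies in a component $Y$ of $\Gamma\setminus\st(a)$, so $v_c=r_Y$, and similarly for $d$); in each case, unwinding the definition of $\eta(\alpha)$ on that generator using $\alpha(c)=w_1 c w_2$ and $\alpha(d)=w_3 d w_4$ shows that the $r_{a_i}$-coefficient of $\eta(\alpha)(v_c)$ is the sum-exponent of $a_i$ in $w_2$ and that of $\eta(\alpha)(v_d)$ is the sum-exponent of $a_i$ in $w_3$. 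Summing these four pieces reproduces exactly the exponent vector you extracted from $\alpha(s)$, and this is the piece of bookkeeping your write-up currently replaces with an appeal to the conclusion.
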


\begin{proof}
This is a computational exercise in the definitions.
\end{proof}

\begin{lemma}\label{le:decompositionequivariance}
Suppose $W$ is a tuple of cyclic words and 
\[T=(c_1a_1^{p_{1,1}}\dotsm a_n^{p_{1,n}}u_1d_1,\dotsc,c_ma_1^{p_{m,1}}\dotsm a_n^{p_{m,n}}u_md_m)\]
 is a decomposition of $W$ into syllables, with each $u_i\in\genby{\st(a)\setminus[a]}$.
Suppose $\alpha\in\whset{a}$.
Let $p'_{i,j}$ be the coefficient on $r_{a_j}$ in the $i$th coordinate of $\eta(\alpha)\cdot \nu(T)$, for $i=1,\dotsc,m$ and $j=1,\dotsc, n$.
Then
\[\alpha\cdot T=(c_1a_1^{p'_{1,1}}\dotsm a_n^{p'_{1,n}}u_1d_1,\dotsc,c_ma_1^{p'_{m,1}}\dotsm a_n^{p'_{m,n}}u_md_m),\]
and $\alpha\cdot T$ is a syllable decomposition of $\alpha\cdot W$.
\end{lemma}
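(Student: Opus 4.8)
The plan is to reduce to a single‑syllable statement and apply Lemma~\ref{le:syllableequivariance} coordinate by coordinate. Since both the action of $\whset{a}$ on tuples of syllables and the map $\nu$ are defined diagonally, it suffices to treat each entry $s_i = c_ia_1^{p_{i,1}}\dotsm a_n^{p_{i,n}}u_id_i$ of $T$ separately: $\nu(s_i)$ is the $i$th coordinate of $\nu(T)$, and $p'_{i,j}$ is the coefficient of $r_{a_j}$ in $\eta(\alpha)\cdot\nu(s_i)$, which is exactly the quantity $p'_j$ of Lemma~\ref{le:syllableequivariance} applied to $s_i$. That lemma gives $\alpha(s_i)=v_1c_ia_1^{p'_{i,1}}\dotsm a_n^{p'_{i,n}}u_id_iv_2$ with $v_1,v_2\in\genby{[a]}$, while Definition~\ref{de:syllableaction} says $\alpha\cdot s_i$ is what is left of $\alpha(s_i)$ after discarding whatever $\alpha$ places to the left of $c_i$ and to the right of $d_i$. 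So the first task is bookkeeping: I will check that those discarded pieces are precisely $v_1$ and $v_2$, and that the remaining interior $\genby{[a]}$‑content collapses to the single block $a_1^{p'_{i,1}}\dotsm a_n^{p'_{i,n}}$. The point is that $c_i,d_i\notin\st(a)$ are not adjacent to any vertex of $[a]$, so no element of $\genby{[a]}$ can be commuted past $c_i$ or $d_i$; meanwhile the vertices of $\st(a)\setminus[a]$ occurring in $u_i$ all commute with $[a]$, so the interior factors coming from $\alpha(c_i)$, $\alpha(a_1^{p_{i,1}}\dotsm a_n^{p_{i,n}})$, $\alpha(u_i)$ and $\alpha(d_i)$ gather (past $u_i$) into one abelian block, which by Lemma~\ref{le:syllableequivariance} must be $a_1^{p'_{i,1}}\dotsm a_n^{p'_{i,n}}$. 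This yields $\alpha\cdot s_i = c_ia_1^{p'_{i,1}}\dotsm a_n^{p'_{i,n}}u_id_i$, hence the displayed formula for $\alpha\cdot T$.

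Next I will show $\alpha\cdot T$ is an honest syllable decomposition of $\alpha\cdot W$. Fix one conjugacy class $w$ making up $W$, with corresponding piece $(c_1\beta_1u_1c_2,\dotsc,c_l\beta_lu_lc_1)$ of $T$, where $\beta_i=a_1^{p_{i,1}}\dotsm a_n^{p_{i,n}}$ and $c_{l+1}=c_1$. Multiplying the single‑syllable identities and using that consecutive syllables share an endpoint, $\alpha$ applied to the associated representative $c_1\beta_1u_1c_2\beta_2u_2\dotsm c_l\beta_lu_l$ equals, up to conjugacy and commuting elements of $\genby{[a]}$ around, the word $R=c_1\gamma_1u_1c_2\gamma_2u_2\dotsm c_l\gamma_lu_l$ with $\gamma_i=a_1^{p'_{i,1}}\dotsm a_n^{p'_{i,n}}$; so $R$ represents $\alpha\cdot w$ and is the representative associated with the corresponding piece of $\alpha\cdot T$. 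It remains to see that $R$ is graphically and cyclically reduced, which in turn makes each $c_i\gamma_iu_ic_{i+1}$ a genuine linear syllable, being a subword of a reduced word.

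For that I will use the quantity ``number of letters from $X\setminus\st(a)$ in a cyclically reduced representative,'' which by the discussion after Definition~\ref{de:syllable} is the number of linear syllables, hence is well defined on conjugacy classes. Every automorphism in $\whset{a}$ — in particular $\alpha$ and $\alpha^{-1}$ — carries each generator in $\st(a)$ into $\genby{\st(a)}$ and each generator outside $\st(a)$ to itself flanked by elements of $\genby{[a]}\subseteq\genby{\st(a)}$; applying such an automorphism to a word creates no new letters outside $\st(a)$ and leaves the count of old ones unchanged, so after reduction this count can only drop. Applying this to $\alpha$ and then to $\alpha^{-1}$ shows the count is preserved by $\alpha$. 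Since $R$ already contains exactly $l$ letters from $X\setminus\st(a)$ — namely the $c_i$ — no reduction performed on $R$ can delete any of them. A reduction of $R$ deleting a pair of letters not from $X\setminus\st(a)$ must cancel two letters either both in $[a]$ or both in $\st(a)\setminus[a]$; the former is impossible because between any two of the blocks $\gamma_i$ in $R$ there is always some $c_s\notin\st(a)$, which commutes with nothing in $[a]$; the latter is impossible because such a cancellation is unobstructed by the $\gamma_i$ (their letters lie in $[a]$, commuting with all of $\st(a)\setminus[a]$), so it would already have been available in the cyclically reduced representative $c_1\beta_1u_1\dotsm c_l\beta_lu_l$ of $w$. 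Hence $R$ admits no reduction, i.e.\ it is graphically and cyclically reduced, and $\alpha\cdot T$ is a syllable decomposition of $\alpha\cdot W$.

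The routine part is the single‑syllable bookkeeping in the first paragraph — essentially an unwinding of the definitions, as in Lemma~\ref{le:syllableequivariance}. The part needing the most care, and the main obstacle, is ruling out cancellation in $R$: a priori a block $\gamma_i$ could be trivial even though $\beta_i$ is not, and one might fear that a formerly blocked cancellation between two of the $c$'s becomes available. Invariance of the ``number of outside letters'' under $\whset{a}$ is what rules this out cleanly; without it one is forced into a delicate case analysis of how the coefficients $p'_{i,j}$ can degenerate, tracking the contributions of endpoints through the off‑diagonal block of $\eta(\alpha)$.
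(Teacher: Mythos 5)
Your proof is correct and rests on the same key ingredient as the paper's, namely Lemma~\ref{le:syllableequivariance} applied syllable by syllable, so the core approach is the same. The organization differs mildly: the paper rewrites the associated representative as a telescoping product $c_1u_1c_2\cdot c_2^{-1}\cdot c_2u_2c_3\cdot c_3^{-1}\cdots$, applies $\alpha$ to each factor and cancels the leading/trailing $\genby{[a]}$--pieces, whereas you apply the single-syllable identity to each entry of $T$ directly and then reassemble the representative $R$; these are essentially the same computation. Where you genuinely go beyond the paper is the last step: the paper simply asserts that ``it is easy to see'' the result is a syllable decomposition of $\alpha\cdot w$, while you supply an explicit argument that $R$ is graphically and cyclically reduced, via the observation that the number of letters from $X\setminus\st(a)$ in a cyclically reduced representative is a $\whset{a}$--invariant of the conjugacy class (by applying $\alpha$ and $\alpha^{-1}$), together with the check that neither $[a]$--letters nor $\st(a)\setminus[a]$--letters can newly cancel. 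This count-invariance argument is clean and closes a hole that the paper leaves to the reader; it buys rigor at the cost of a slightly longer write-up. One small caveat: both the lemma statement and your argument (and the paper's) are phrased for tuples of linear syllables; the degenerate case where some $w_i$ is decomposed as a single cyclic syllable $(u)$ with $u\in\genby{\st(a)}$ is not covered by the $c_i\cdots c_{i+1}$ bookkeeping, but it is immediate since then $\alpha(u)\in\genby{\st(a)}$ and your count argument shows $\alpha\cdot w_i$ again has a representative in $\genby{\st(a)}$; worth a sentence if you want a complete proof.
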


\begin{proof}
This follows from repeated application of Lemma~\ref{le:syllableequivariance}.
In the case that $w$ is a cyclic word and $(c_1u_1c_2,\dotsc,c_mu_mc_1)$ is its decomposition,
we rewrite the representative associated to this decomposition as
\[c_1u_1c_2 \cdot c_2^{-1}\cdot c_2u_2c_3 \cdot c_2^{-1} \dotsm c_mu_mc_1\cdot c_1^{-1}.\]
We apply $\alpha$ to each factor in this product separately.
Lemma~\ref{le:syllableequivariance} applies to each syllable factor in the product.
When we act on a standalone $c_i^{-1}$ factor in the product, we cancel away the leading and trailing elements from $\genby{[a]}$ from the preceding and following syllables (the $v_1$ and $v_2$ elements from Lemma~\ref{le:syllableequivariance}).
From the resulting product, in which the exponents on the $a_i$--factors inside the syllables are from the action of $\eta(\alpha)$ (as in Lemma~\ref{le:syllableequivariance}), it is easy to see that the corresponding syllable decomposition $T'$ from the lemma statement is a decomposition of $\alpha\cdot w$.
The case that $W$ is a tuple of cyclic words follows by applying the same argument to each entry in the tuple separately.
\end{proof}

\begin{example}\label{ex:syllablealgorithm}
This example illustrates why we need to take a little care with the algorithms for Proposition~\ref{pr:insamewhorbit} and Proposition~\ref{pr:whstab}.
Suppose for this example that $\Gamma$ is the graph with four vertices $\{a,b,c,d\}$, with an edge from $a$ to $b$ and an edge from $c$ to $d$ (so $A_\Gamma$ is $\Z^2 * \Z^2$).
Consider the conjugacy classes $u$ and $v$ represented by $cacbcb$ and $cbcabcb$ respectively.
Choosing syllable decompositions with respect to $a$ arbitrarily, we might choose $T=(cac,cbc,cbc)$ for $u$ and $T'=(cbc,cabc,cbc)$ for $v$.
The group $Z_{[a]}$ is generated by $r_a$, $r_b$ and $r_Y$, where $Y=\{c,d\}$, and $\nu(T)=(r_a,r_b,r_b)$ and $\nu(T')=(r_b,r_b+r_a,r_b)$.
To check whether $\nu(T)$ and $\nu(T')$ are in the same orbit, we apply the algorithm from Proposition~\ref{pr:matrixorbitalgorithm}, after choosing an appropriate identification between $Z_{[a]}$ and $\Z^3$.
We find that $\nu(T)$ and $\nu(T')$ are not in the same orbit.
However, $T''=(cabc,cbc,cbc)$ is also a syllable decomposition for $v$ with respect to $a$, and it is not hard to see that $\nu(T)$ and $\nu(T'')$ are in the same orbit under $\eta(\whset{a})$: the automorphism sending $a$ to $ab^{-1}$ and fixing the other generators sends one to the other.
This automorphism also sends $u$ to $v$.  
This example illustrates the need to consider permutations of a syllable decomposition, instead of only considering a single arbitrary decomposition.

Now consider the conjugacy class $u$ represented by $cacb$ in the same group.
One syllable decomposition for the conjugacy class is $T=(cac,cbc)$.
The automorphism $\alpha$ sending $a$ to $b$ and $b$ to $a$ and fixing the other generators is in $\whset{a}$, and $\alpha\cdot u=u$.
However, $\eta(\alpha)\cdot \nu(T)\neq\nu(T)$.
This illustrates the possibility of automorphisms fixing a conjugacy class but not a particular syllable decomposition of that class.
\end{example}

For our finite presentation result, we need refined versions of Propositions~\ref{pr:insamewhorbit} and~\ref{pr:whstab}.
Specifically, we need to perform the algorithms in these propositions while respecting certain restrictions on the support of automorphisms, which we now define.
\begin{definition}\label{de:support}
The \emph{support} of a generalized Whitehead automorphism $\alpha\in\whset{a}$ is the subset of $X^{\pm1}$ with 
\begin{itemize}
\item for $b$ adjacent to or equal to $a$, $b$ and $b^{-1}$ are both in $\supp(\alpha)$ if $\alpha(b)\neq b$ and neither $b$ nor $b^{-1}$ is in $\supp(\alpha)$ if $\alpha(b)=b$, and
\item for $b$ not adjacent to $a$ with $\alpha(b)=ubv$, $b\in\supp(\alpha)$ if and only if $v\neq 1$ and $b^{-1}\in\supp(\alpha)$ if and only if $u\neq 1$.
\end{itemize}
For $a\in X$ and $S\subset  (X\setminus\st(a))^{\pm1}$, we define $\whsetsr{a}{S}$ to be the subset of $\whset{a}$ consisting of automorphisms $\alpha$ with $\supp(\alpha)\cap S=\varnothing$.
\end{definition}

Suppose $a\in X$ and $S\subset(X\setminus\st(a))^{\pm1}$.
Now we consider the image of $\whsetsr{a}{S}$ under $\eta$.
Say that a basis element of $Z_{[a]}$ does not intersect $S$ if it is of the form $r_b$ or $l_b$ with $b\notin S\cup [a]$, or of the form $r_Y$ with $Y\cap S=\varnothing$.
Suppose $|[a]|=n$, and there are $k$ basis elements of $Z_{[a]}$ not intersecting $S$, and $l$ remaining basis elements for $Z_{[a]}$.
Suppose we identify $Z_{[a]}$ with $\Z^{n+k+l}$ so that the basis elements of the form $r_b$ with $b\in[a]$ map to the first $n$ basis elements, the basis elements not intsecting $S$ map to the next $k$ basis elements, and the remaining basis elements map to the remaining basis elements.

\begin{lemma}\label{le:supportrestrictedsubgroup}
With $a$, $S$ as above, $\whsetsr{a}{S}$ is a subgroup of $\whset{a}$.
Identifying $\Aut(Z_{[a]})$ with $\GL(n+k+l,\Z)$ using the identification of $Z_{[a]}$ with $\Z^{n+k+l}$ above, the image of $\whsetsr{a}{S}$ under $\eta$ is the set of matrices of the form
\[
\begin{pmatrix}
A & B & O_{n,l} \\ 
O_{k,n} & I_k & O_{k,l} \\
O_{l,n} & O_{l,k} & I_l
\end{pmatrix}
\]
where $A\in\GL(n,\Z)$, $B\in M_{n,k}(\Z)$, and the $O$'s and $I$'s represent zero and identity blocks of the indicated dimensions.
\end{lemma}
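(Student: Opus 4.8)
The plan is to reduce everything to Lemma~\ref{le:etainjective}, which already exhibits $\eta$ as an injective homomorphism from $\whset{a}$ onto the group of block matrices $\begin{pmatrix} A' & B' \\ O & I \end{pmatrix}$ with $A'\in\GL(n,\Z)$ and $B'\in M_{n,k+l}(\Z)$ (here $k+l$ is the quantity called $k$ in that lemma, i.e.\ $\dim Z_{[a]}-n$). Against that backdrop the present lemma is purely bookkeeping: I must identify, among these matrices, exactly those coming from automorphisms $\alpha$ with $\supp(\alpha)\cap S=\varnothing$, and then observe that the answer is a subgroup.

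The heart of the argument is a dictionary between the non-$[a]$ basis elements of $Z_{[a]}$ and the elements of $(X\setminus\st(a))^{\pm1}$, read directly off the formula for $\eta$ in Section~\ref{ss:genwharelinear} together with Definition~\ref{de:support}. For $b\in\dom(a)\setminus\st(a)$, write $\alpha(b)=ubv$; then the entries of $B'$ in the $r_b$-column record the exponents of $v$ and those in the $l_b$-column record the exponents of $u$, so the $r_b$-column of $B'$ is nonzero iff $v\neq 1$ iff $b\in\supp(\alpha)$, and the $l_b$-column is nonzero iff $u\neq 1$ iff $b^{-1}\in\supp(\alpha)$. For a connected component $Y$ of $\Gamma\setminus\st(a)$ with at least two vertices, the centralizer argument in the proof of Lemma~\ref{le:etainjective} produces a single $u\in\genby{[a]}$ with $\alpha(x)=u^{-1}xu$ for all $x\in Y$, so the $r_Y$-column of $B'$ is nonzero iff $u\neq 1$ iff $Y^{\pm1}\subseteq\supp(\alpha)$ iff $Y^{\pm1}\cap\supp(\alpha)\neq\varnothing$. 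A short check with the definitions of $\st$, $\dom$, and connected components shows that every vertex of $X\setminus\st(a)$ falls into exactly one of these two cases (the dominated ones being precisely the isolated vertices of $\Gamma\setminus\st(a)$), so the dictionary is exhaustive; and since $S\subseteq(X\setminus\st(a))^{\pm1}$ meets none of $\st(a)^{\pm1}$, the basis elements $r_b$ with $b\in\st(a)\cap\dom(a)$ are irrelevant to $S$ and are always among the $k$ non-intersecting ones.

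With the dictionary, "$\supp(\alpha)\cap S=\varnothing$" translates column by column into "the column of $B'$ at each basis element intersecting $S$ vanishes." Under the prescribed identification $Z_{[a]}\cong\Z^{n+k+l}$ (the $r_b$ with $b\in[a]$ first, then the $k$ basis elements not intersecting $S$, then the $l$ remaining ones), this is precisely the assertion that $\eta(\alpha)$ has the displayed block shape, with $B$ the upper $n\times k$ sub-block and the $O_{n,l}$ block forced. Conversely every matrix of that shape is of the form $\begin{pmatrix} A' & B' \\ O & I \end{pmatrix}$, hence in $\eta(\whset{a})$ by Lemma~\ref{le:etainjective}, and the same translation shows its $\eta$-preimage lies in $\whsetsr{a}{S}$; thus $\eta(\whsetsr{a}{S})$ equals the stated set. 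That set is closed under products and inverses by a one-line block computation (using that $\GL(n,\Z)$ is closed under inverses), so it is a subgroup of $\GL(n+k+l,\Z)$; since $\whsetsr{a}{S}$ is the $\eta$-preimage of that subgroup and $\eta$ is a homomorphism, $\whsetsr{a}{S}$ is a subgroup of $\whset{a}$.

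The only genuinely delicate point is the dictionary itself, namely keeping the left/right ($l_b$ versus $r_b$, $u$ versus $v$) and positive/negative ($b$ versus $b^{-1}$) conventions of Definition~\ref{de:support}, of the formula for $\eta$, and of the phrase ``does not intersect $S$'' aligned with one another. Once those are matched, the rest is formal.
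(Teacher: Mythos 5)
Your proof is correct and takes essentially the same route as the paper's: reduce to Lemma~\ref{le:etainjective}, then do column-by-column bookkeeping to translate the support condition into the block shape. The paper's version is terser (it leaves the subgroup claim as an exercise and compresses the dictionary into a couple of sentences about "sum exponents on the right and left"), whereas you spell out the dictionary between support and columns, verify that it is exhaustive (dominated vertices $=$ isolated vertices of $\Gamma\setminus\st(a)$, so the two case of basis elements are disjoint), and close the subgroup claim by a block-matrix computation plus injectivity of $\eta$. The content matches.

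One small thing worth flagging: your dictionary carefully distinguishes $r_b$ (controlled by $b\in\supp(\alpha)$) from $l_b$ (controlled by $b^{-1}\in\supp(\alpha)$), and $r_Y$ by $Y^{\pm1}\cap S$, which is the correct reading. The paper's sentence defining "does not intersect $S$" writes the condition only in terms of the positive vertex $b$ (respectively $Y$ rather than $Y^{\pm1}$); for the argument to go through literally one should read that definition as you do (e.g.\ $l_b$ intersects $S$ iff $b^{-1}\in S$). Your formulation is the one that makes the dictionary, the support condition, and the displayed block shape all line up, so this is a harmless clarification rather than a gap.
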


\begin{proof}
The assertion that this subset is a subgroup is left as an exercise for the reader.

For $\alpha\in\whset{a}$, the definition of $\eta$ tells us that $\eta$ counts the sum exponent of elements of $[a]$ on the right and left sides of elements of $X$.
If $\supp(\alpha)\cap S=\varnothing$, then for any element of $S$, all of these counts are zero.
As explained in the proof of Lemma~\ref{le:etainjective}, the sum exponents are the same for both sides of all elements in the same connected component of $X\setminus\st(a)$, if that component has at least two vertices.
So if a basis element intersects $S$, then our counts of sum exponents are all zero for $\alpha\in\whsetsr{a}{S}$, which explains the shape of the matrix.

To see that any matrix of this shape is in the image, we use the same argument as in Lemma~\ref{le:etainjective}.
\end{proof}

\begin{lemma}\label{le:supportrestrictedmatrixalgs}
There is an algorithm to check whether two matrices in $M_{n+k+l,m}(\Z)$ are in the same orbit under the group $G$ of block matrices of the form
\[
\left(
\begin{array}{ccc}
A & B & O_{n,l} \\ O_{k,n} & I_k & O_{k,l} \\  O_{l,n} & O_{l,k} & I_l
\end{array}\right),
\]
where $A\in\GL(n,\Z)$, $B\in M_{n,k}(\Z)$, and the $O$'s and $I$'s represent zero and identity blocks of the indicated dimensions.

Further, there is an algorithm that returns a presentation for the stabilizer of a matrix in $M_{n+k+l,m}$ under the action of $G$.
\end{lemma}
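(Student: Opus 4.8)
The plan is to reduce both tasks to Propositions~\ref{pr:matrixorbitalgorithm} and~\ref{pr:matrixstabpres}, which are precisely the case $l=0$. The key observation is that the last $l$ rows of a matrix are inert under $G$. Writing $M\in M_{n+k+l,m}(\Z)$ in block form $M=\begin{pmatrix}M_1\\M_2\\M_3\end{pmatrix}$, where $M_1,M_2,M_3$ have $n,k,l$ rows respectively, one computes for $g\in G$ with $\GL$-block $A$ and off-diagonal block $B$ that
\[
gM=\begin{pmatrix}AM_1+BM_2\\M_2\\M_3\end{pmatrix}.
\]
Thus $M_3$ is fixed by every element of $G$. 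Moreover $G$ is isomorphic to $G_1=\GL(n,\Z)\ltimes M_{n,k}(\Z)$ via the map that forgets the inert $l\times l$ identity block, i.e.\ sends $g$ to the pair $(A,B)$; this is a group isomorphism because block multiplication in $G$ is $(A,B)(A',B')=(AA',AB'+B)$, matching the semidirect-product structure of $G_1$. Under this isomorphism the action of $G$ on the truncation $\begin{pmatrix}M_1\\M_2\end{pmatrix}$ agrees with the action of $G_1$ on $M_{n+k,m}(\Z)$ by the block-upper-triangular matrices of Lemma~\ref{le:etainjective}.

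For the orbit algorithm, given $M,M'\in M_{n+k+l,m}(\Z)$, first test whether $M_3=M'_3$; if not, they lie in different $G$-orbits. If they are equal, feed $\begin{pmatrix}M_1\\M_2\end{pmatrix}$ and $\begin{pmatrix}M'_1\\M'_2\end{pmatrix}$ to the algorithm of Proposition~\ref{pr:matrixorbitalgorithm}. If it returns $D\in G_1$ carrying one to the other, reinsert the $l\times l$ identity block to obtain the required element of $G$; if it reports that no such $D$ exists, then no element of $G$ works either, since $G\to G_1$ is surjective.

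For the stabilizer algorithm, observe that $\stab_G(M)$ consists of exactly those $g\in G$ with $AM_1+BM_2=M_1$ (the conditions coming from the other two blocks being automatic), which under the isomorphism $G\cong G_1$ is precisely the stabilizer of $\begin{pmatrix}M_1\\M_2\end{pmatrix}\in M_{n+k,m}(\Z)$ in $G_1$. So we run the algorithm of Proposition~\ref{pr:matrixstabpres} on $\begin{pmatrix}M_1\\M_2\end{pmatrix}$ and transport the resulting finite presentation back along the isomorphism.

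I do not expect any real obstacle here beyond this bookkeeping: all of the genuine algorithmic content is already packaged in Propositions~\ref{pr:matrixorbitalgorithm} and~\ref{pr:matrixstabpres}. The one point that deserves a moment's care is verifying that $g\mapsto(A,B)$ is an isomorphism of groups, so that non-solvability of $DA=B$ over $G_1$ transfers to $G$ and so that the stabilizer presentation transports back correctly; this is just the short computation with block multiplication indicated above.
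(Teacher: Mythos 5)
Your proof is correct and takes essentially the same route as the paper: both observe the last $l$ rows are inert under $G$, identify $G$ with $G_1$ by dropping the identity block, and reduce to Propositions~\ref{pr:matrixorbitalgorithm} and~\ref{pr:matrixstabpres}. The only cosmetic difference is that you write out the block computation explicitly, whereas the paper leaves it to the reader.
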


\begin{proof}
Suppose $C$ and $D$ are two matrices in $M_{n+k+l,m}$.
If the last $l$ rows of $C$ do not match the last $l$ rows of $D$, then they cannot be in the same orbit.
So we suppose these last $l$ rows match.
Next we consider $C'$ and $D'$ in $M_{n+k.m}$, where each is $C$ or $D$ respectively with the last $l$ rows omitted.
The group $G$ above is isomorphic to the group $G_1$ of Proposition~\ref{pr:matrixorbitalgorithm}, by the mapping that omits the last $l$ rows and the last $l$ columns,
and $C$ is in $G\cdot D$ if and only if $C'$ is in $G_1\cdot D'$.
Of course, this is exactly what Proposition~\ref{pr:matrixorbitalgorithm} checks.

Similarly, the stabilizer of $C$ in $G$ will be isomorphic to the stabilizer of $C'$ in $G_1$, and Proposition~\ref{pr:matrixstabpres} provides a presentation for this.
\end{proof}

So instead of proving Proposition~\ref{pr:insamewhorbit},
we prove the following proposition.
Proposition~\ref{pr:insamewhorbit} is the special case where the set $S$ is empty.
\begin{proposition}\label{pr:insamewhorbitsupportrestricted}
There is an algorithm that takes in two tuples $U$ and $V$ of conjugacy classes from $A_\Gamma$, a vertex $a$ of $\Gamma$ and a subset $S$ of $(X\setminus\st(a))^{\pm1}$, and produces an automorphism $\alpha\in \whsetsr{a}{S}$ with $\alpha\cdot U=V$, or determines that no such automorphism exists.
\end{proposition}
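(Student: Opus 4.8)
The plan is to reduce this to a finite collection of the matrix‑orbit problems solved by Lemma~\ref{le:supportrestrictedmatrixalgs}, using the syllable calculus of this section. Fix a syllable decomposition $T_U$ of $U$ with respect to $a$; this is computable, since one can pass to cyclically reduced representatives of the entries of $U$ and read off the syllables. For a syllable $s=cud$ (with $u\in\genby{\st(a)}$), call the pair of endpoints $c,d$ together with the image $\bar u$ of $u$ under the retraction $A_\Gamma\to\genby{X\drop[a]}$ killing the generators in $[a]$ the \emph{skeleton} of $s$, and extend this to tuples of syllables, writing $\bar T$ for the skeleton of a tuple $T$. Two facts drive the argument. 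First, the $\whset{a}$‑action on syllables of Definition~\ref{de:syllableaction} preserves skeletons: for $\alpha\in\whset{a}$, the endpoints of $\alpha\cdot s$ are those of $s$, and the $[a]$‑free part of $\alpha(u)$ equals $\bar u$, because $\alpha$ replaces each generator $b$ of $u$ lying in $[a]$ by an element of $\genby{[a]}$ and each $b\notin[a]$ by $u_bbv_b$ with $u_b,v_b\in\genby{[a]}$. Second, a syllable is recovered from its skeleton together with its image under $\nu$: the endpoints are part of the skeleton, and $u$ is obtained from $\bar u$ by prepending $a_1^{e_1}\dotsm a_n^{e_n}$, where $e_i$ is the $r_{a_i}$‑coefficient of $\nu(s)$---this is unambiguous because every generator of $[a]$ commutes with every generator of $\st(a)$.

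Combining these observations with Lemma~\ref{le:decompositionequivariance} and Lemma~\ref{le:etainjective} yields the characterization on which the algorithm rests: \emph{there is an $\alpha\in\whsetsr{a}{S}$ with $\alpha\cdot U=V$ if and only if $V$ has a syllable decomposition $T_V$ with respect to $a$ such that $\bar T_V=\bar T_U$ and $\nu(T_V)$ lies in the $\eta(\whsetsr{a}{S})$‑orbit of $\nu(T_U)$.} For the forward implication, take $T_V=\alpha\cdot T_U$: by Lemma~\ref{le:decompositionequivariance} this is a syllable decomposition of $\alpha\cdot U=V$, its skeleton is $\bar T_U$ (by the first fact), and $\nu(T_V)=\eta(\alpha)\cdot\nu(T_U)$. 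For the converse, let $M\in\eta(\whsetsr{a}{S})$ satisfy $M\cdot\nu(T_U)=\nu(T_V)$ and set $\alpha=\eta^{-1}(M)\in\whsetsr{a}{S}$. The tuples $\alpha\cdot T_U$ and $T_V$ have equal skeletons (both $\bar T_U$) and equal images under $\nu$, so by the second fact they are the same tuple of syllables; since $\alpha\cdot T_U$ decomposes $\alpha\cdot U$ while $T_V$ decomposes $V$, we get $\alpha\cdot U=V$.

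With the characterization in hand the algorithm is immediate. Enumerate the finitely many syllable decompositions $T_V$ of $V$ with respect to $a$ (there are only finitely many cyclically and graphically reduced representatives of each entry of $V$, since these have bounded length, hence finitely many such decompositions, and they can be listed effectively). Discard those with $\bar T_V\neq\bar T_U$; if none survive---in particular if the entries of $U$ and $V$ fail to have matching numbers of syllables---report that no automorphism exists. For each surviving $T_V$, identify $Z_{[a]}$ with $\Z^{n+k+l}$ as in Lemma~\ref{le:supportrestrictedsubgroup} (here $m$ is the number of syllables, $n=\abs{[a]}$, $k$ is the number of basis elements of $Z_{[a]}$ not intersecting $S$, and $l$ the number of remaining basis elements), form the matrices in $M_{n+k+l,m}(\Z)$ whose columns are the coordinate vectors of the entries of $\nu(T_U)$ and of $\nu(T_V)$, and run the orbit algorithm of Lemma~\ref{le:supportrestrictedmatrixalgs}, which (being built on Proposition~\ref{pr:matrixorbitalgorithm}) either exhibits an $M$ with $M\cdot\nu(T_U)=\nu(T_V)$ or certifies there is none; by Lemma~\ref{le:supportrestrictedsubgroup} the group there is precisely $\eta(\whsetsr{a}{S})$. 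If some $T_V$ produces such an $M$, output $\alpha=\eta^{-1}(M)$, computed via the explicit inverse to $\eta$ from the proof of Lemma~\ref{le:etainjective} (which lands in $\whsetsr{a}{S}$ by Lemma~\ref{le:supportrestrictedsubgroup}); otherwise output that no such automorphism exists. Correctness is exactly the displayed characterization, and taking $S=\varnothing$ recovers Proposition~\ref{pr:insamewhorbit}.

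The step I expect to require the most care is the ``second fact'' above and its use in the converse implication: one must check that $\alpha\cdot T_U$ and $T_V$---which a priori are only two decompositions of the same tuple of conjugacy classes, built from possibly different choices among the commutation and splitting ambiguities listed before Lemma~\ref{le:decompwelldef}---really coincide as tuples of syllables once their skeletons and $\nu$‑images agree. The point is that a generator of $[a]$ commutes with every generator occurring in the $\genby{\st(a)}$‑part of a syllable (so its position \emph{within} a syllable carries no information) but cannot be commuted past an endpoint, which lies outside $\st(a)$ (so the $[a]$‑content is genuinely localized to a single syllable); together with the fact that $\nu$ already quotients out the harmless ambiguity, this pins down each syllable. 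The cyclic‑syllable case (an entry of $U$ or $V$ with a representative in $\genby{\st(a)}$) is handled the same way and is slightly simpler.
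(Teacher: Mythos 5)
Your argument is correct and takes essentially the same approach as the paper: both fix a syllable decomposition $T$ of $U$, enumerate a finite list of candidate syllable decompositions of $V$ carrying the same non-$[a]$ data as $T$, and hand the resulting pair of integer matrices to Lemma~\ref{le:supportrestrictedmatrixalgs}, with Lemma~\ref{le:decompositionequivariance} justifying the reduction. Your ``skeleton'' notion cleanly isolates the invariant the paper uses implicitly when it substitutes the $a_i$-exponents from a permuted decomposition $T'_r$ of $V$ into $T$ to form $\hat T_r$ and then tests whether $\hat T_r$ decomposes $V$; enumerating all syllable decompositions of $V$ directly, as you do, sidesteps the paper's appeal to Lemma~\ref{le:decompwelldef} but is otherwise the same reduction.
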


\begin{proof}
Suppose $U$ and $V$ are two tuples of conjugacy classes of $A_\Gamma$.
The first step in the algorithm is to form syllable decompositions $T$ of $U$ and $T'$ of $V$.
If the syllable decompositions $T$ and $T'$ do not have the same number of entries, then $U$ and $V$ are not in the same orbit (this follows from Lemma~\ref{le:decompositionequivariance}, since the decomposition of $\alpha\cdot W$ is the same length as the decomposition of $W$).
So we suppose that $T$ and $T'$ have the same number of entries.
We consider all the permutations of the entries of $T'$ and select from these the ones $T'_1,\dotsc, T'_m$ that are also syllable decompositions of $V$.

Suppose that
\[T=(c_1a_1^{p_{1,1}}\dotsm a_k^{p_{1,k}}u_1d_1,\dotsc,c_ma_1^{p_{m,1}}\dotsm a_k^{p_{m,k}}u_md_m).\]
We fix an $r$ from $1$ through $m$, and suppose
\[T'_r=(c'_1a_1^{p'_{1,1}}\dotsm a_k^{p'_{1,k}}u'_1d'_1,\dotsc,c'_ma_1^{p'_{m,1}}\dotsm a_k^{p'_{m,k}}u'_md'_m).\]
We define a tuple $\hat T_r$ to be $T$ with the exponents of the $a_i$ replaced by those from $T'_r$:
\[\hat T_r=(c_1a_1^{p'_{1,1}}\dotsm a_k^{p'_{1,k}}u_1d_1,\dotsc,c_ma_1^{p'_{m,1}}\dotsm a_k^{p'_{m,k}}u_md_m).\]
At this point, we check whether $\hat T_r$ is a decomposition of $V$ (this amounts to finding the representative associated to $\hat T_r$ and checking whether it represents the $V$).

If the answer is yes, we use the algorithm from Proposition~\ref{pr:matrixorbitalgorithm} to check whether there is $A\in \eta(\whset{a})$ with $A\cdot \nu(T)=\nu(\hat T_r)$ and with $A$ in the image of $\whsetsr{a}{S}$.
By Lemma~\ref{le:supportrestrictedsubgroup} we know that $\eta(\whsetsr{a}{S})$ is $G_1$ with the last $l$ columns (without loss of generality) zeroed out in its upper right block (for some $l$) and we use the modification of the algorithm from Proposition~\ref{pr:matrixorbitalgorithm} described in Lemma~\ref{le:supportrestrictedmatrixalgs} to check for such automorphisms.

If our algorithm finds such a matrix $A$, let $\alpha\in\whsetsr{a}{S}$ map to $A$.
By Lemma~\ref{le:decompositionequivariance}, we have that $\hat T_r$ is a representative for $\alpha\cdot U$.
Of course, in that case, we have $\alpha\cdot U=V$ and the algorithm returns $\alpha$.

If we get negative answers (either there is no matrix $A$ as above or $\hat T_r$ does not represent $V$), we increment $r$ and check the next $T_r$.
If we try this for each $T_r$ and none of them have a matrix $A$ as above, we declare that $U$ and $V$ are in different orbits.

To show the correctness of the algorithm, we suppose that we have $\alpha\in\whsetsr{a}{S}$ with $\alpha\cdot U=V$.
We want to show that the algorithm finds an automorphism sending one tuple to the other.
We take the coefficients from $\eta(\alpha)\cdot \nu(T)$ and substitute them into the exponents of $T$ to get a tuple $T''$;
by Lemma~\ref{le:decompositionequivariance}, $T''$ is a syllable decomposition for $V$.
By the argument in Lemma~\ref{le:decompwelldef}, $T''$ will differ from other syllable decompositions for $V$ by a sequence of permutations and commutation of elements not in $\dom(a)$ across each other.
This means that one of the $T'_r$ will differ from $T''$ only by the positions of elements not in $\dom(a)$.
In particular, $\hat T_r=T''$.
This means that the algorithm will catch that $\hat T_r$ is a representative for $V$, and then will catch that there is some $\beta\in\whsetsr{a}{S}$ with  
$\eta(\beta)\cdot \nu(T)=\nu(\hat T_r)$ (of course this is true for $\beta=\alpha$, but there is no guarantee that the algorithm will catch the same automorphism).
So by the contrapositive, if the algorithm does not catch any such automorphism, then no such automorphism exists.
\end{proof}

Our next goal is to prove Proposition~\ref{pr:whstab}.
Again, we need a slight refinement of the proposition for our argument for finite presentability of stabilizers.
So we prove the following, which implies Proposition~\ref{pr:whstab} as a special case.
\begin{proposition}\label{pr:whstabsupportrestricted}
There is an algorithm that takes in a tuple $U$ of conjugacy classes from $A_\Gamma$, and an element $a\in X$ and a subset $S\subset(X\setminus\st(a))^{\pm1}$ and returns a presentation for the stabilizer $(\whsetsr{a}{S})_U$.
\end{proposition}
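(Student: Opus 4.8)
My plan is to follow the syllable-decomposition setup of Proposition~\ref{pr:insamewhorbitsupportrestricted}, but now to realize the stabilizer $(\whsetsr{a}{S})_U$ as a finite union of cosets of a matrix-group stabilizer $K$, obtain a presentation of $K$ from Lemma~\ref{le:supportrestrictedmatrixalgs}, and then promote it to a presentation of the whole stabilizer via Lemma~\ref{le:presfromfinidx}. Recall from Lemma~\ref{le:supportrestrictedsubgroup} that, after the prescribed identification of $Z_{[a]}$ with $\Z^{n+k+l}$, the injective homomorphism $\eta$ identifies $G:=\whsetsr{a}{S}$ with a matrix group of exactly the shape treated in Lemma~\ref{le:supportrestrictedmatrixalgs}. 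First I would fix a syllable decomposition $T=(c_1a_1^{p_{1,1}}\dotsm a_n^{p_{1,n}}u_1d_1,\dotsc,c_ma_1^{p_{m,1}}\dotsm a_n^{p_{m,n}}u_md_m)$ of $U$ with respect to $a$, with each $u_i\in\genby{\st(a)\setminus[a]}$, as in Lemma~\ref{le:decompositionequivariance}. For a permutation $\sigma$ of $\{1,\dotsc,m\}$ let $\hat T_\sigma$ be the tuple obtained from $T$ by leaving each triple $(c_i,u_i,d_i)$ in place but moving the $[a]$-exponent vector $(p_{\sigma^{-1}(i),1},\dotsc,p_{\sigma^{-1}(i),n})$ into position $i$. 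There are finitely many such $\sigma$, and for each I can decide whether $\hat T_\sigma$ is a syllable decomposition of $U$ by forming its associated representative and testing it. Let $\mathcal{P}$ be the set of $\sigma$ that pass; note that $\mathrm{id}\in\mathcal{P}$ and $\hat T_{\mathrm{id}}=T$.

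The heart of the proof is the claim that $\alpha\in\whsetsr{a}{S}$ fixes $U$ if and only if $\eta(\alpha)\cdot\nu(T)=\nu(\hat T_\sigma)$ for some $\sigma\in\mathcal{P}$. For the forward implication, Lemma~\ref{le:decompositionequivariance} says that $\alpha\cdot T$ is a syllable decomposition of $\alpha\cdot U=U$ with the same skeleton $(c_i,u_i,d_i)$ as $T$ in the same positions and with $[a]$-exponents read off $\eta(\alpha)\cdot\nu(T)$; by Lemma~\ref{le:decompwelldef} the tuple $\nu(\alpha\cdot T)$ is a permutation of $\nu(T)$, and splitting $Z_{[a]}$ into its $[a]$-part and the complementary part shows that this permutation carries $\alpha\cdot T$ onto $\hat T_\sigma$ for the corresponding $\sigma$, which is then in $\mathcal{P}$; moreover $\eta(\alpha)\cdot\nu(T)=\nu(\alpha\cdot T)=\nu(\hat T_\sigma)$. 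Conversely, if $\eta(\alpha)\cdot\nu(T)=\nu(\hat T_\sigma)$ with $\sigma\in\mathcal{P}$, then Lemma~\ref{le:decompositionequivariance} again identifies $\alpha\cdot T$ with $\hat T_\sigma$ (same skeleton, same $[a]$-exponents), so $\alpha\cdot T$ is a syllable decomposition of $U$ and hence $\alpha\cdot U=U$. The fact that $\nu$ discards the $\st(a)\setminus\dom(a)$-content of syllables is harmless here precisely because $\alpha\cdot T$ retains the skeleton of $T$.

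It follows that $\eta((\whsetsr{a}{S})_U)=\bigcup_{\sigma\in\mathcal{P}}\{A\in\eta(G):A\cdot\nu(T)=\nu(\hat T_\sigma)\}$. Viewing $\nu(T)$ and the $\nu(\hat T_\sigma)$ as elements of $M_{n+k+l,m}(\Z)$, the second algorithm of Lemma~\ref{le:supportrestrictedmatrixalgs} produces a finite presentation $\genby{S_K\mid R_K}$ of $K:=\{A\in\eta(G):A\cdot\nu(T)=\nu(T)\}$ with $S_K$ an explicit set of matrices, and the first algorithm, applied to each $\sigma\in\mathcal{P}$, either returns an explicit $A_\sigma\in\eta(G)$ with $A_\sigma\cdot\nu(T)=\nu(\hat T_\sigma)$ or certifies that the $\sigma$-transporter is empty. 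After discarding the empty ones, $H:=\bigcup_\sigma A_\sigma K$ equals $\eta((\whsetsr{a}{S})_U)$; since $\eta$ is injective and $(\whsetsr{a}{S})_U$ is a group, $H$ is a subgroup containing $K$ of index at most $\abs{\mathcal{P}}$. The Schreier graph of $K$ in $H$ with respect to $S_K\cup\{A_\sigma\}_\sigma$ is connected, since the coset $K$ is joined to $A_\sigma K$ by the edge labeled $A_\sigma$, and it can be written down explicitly because membership in $K$ (the matrix equation $A\cdot\nu(T)=\nu(T)$ together with membership in $\eta(G)$) is decidable, so all cosets and edges are computed by matrix arithmetic. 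Lemma~\ref{le:presfromfinidx} now outputs a finite presentation of $H$, and transporting it through the explicit inverse of $\eta$ (the map $\theta$ of Lemma~\ref{le:etainjective}, adapted to the support-restricted setting) gives the required finite presentation of $(\whsetsr{a}{S})_U$.

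The main obstacle is the claim in the second paragraph: one must arrange the finite family of target vectors $\nu(\hat T_\sigma)$ so that it detects the full stabilizer of the conjugacy-class tuple $U$, not merely the stabilizer of one particular syllable decomposition of $U$ (recall from Example~\ref{ex:syllablealgorithm} that these genuinely differ). This is a bookkeeping argument resting on Lemma~\ref{le:decompwelldef}: one tracks the $[a]$-exponents of a syllable decomposition (which $\nu$ records, and on which $\eta(G)$ acts) separately from the $\st(a)\setminus[a]$-skeleton (which $\nu$ forgets, but which the action of $\whsetsr{a}{S}$ on syllables leaves in place). Everything else is routine: the reduction to the matrix group is Lemmas~\ref{le:supportrestrictedsubgroup}--\ref{le:supportrestrictedmatrixalgs}, and the passage from the finite-index subgroup $K$ up to $H$ is Lemma~\ref{le:presfromfinidx}.
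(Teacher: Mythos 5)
Your proof is correct and follows essentially the same route as the paper's: fix a syllable decomposition of $U$, realize the stabilizer of $\nu(T)$ in $\eta(\whsetsr{a}{S})$ as a finite-index subgroup of $\eta\bigl((\whsetsr{a}{S})_U\bigr)$ via a finite set of target matrices, apply Lemma~\ref{le:supportrestrictedmatrixalgs} for the small stabilizer, and promote to the full stabilizer through the Schreier graph and Lemma~\ref{le:presfromfinidx}. The one cosmetic difference is in how the finite target family is built: the paper enumerates full permutations $T_1,\dotsc,T_m$ of the syllable tuple $T_1$ itself (those that are decompositions of $U$), whereas you enumerate exponent-only permutations $\hat T_\sigma$ that keep the $(c_i,u_i,d_i)$-skeleton in place, mirroring the construction in the proof of Proposition~\ref{pr:insamewhorbitsupportrestricted}. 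Both lists have the same $\nu$-images in the end, and your version has the minor advantage that the identification $\alpha\cdot T=\hat T_\sigma$ falls directly out of Lemma~\ref{le:decompositionequivariance}, making the ``heart of the proof'' claim a little more transparent to verify. One small caveat on the wording: $\nu$ does not forget the entire $\st(a)\setminus[a]$-skeleton --- it still records sum exponents of generators in $\st(a)\cap\dom(a)\setminus[a]$ and the endpoint contributions $v_c,v_d$ --- but this only strengthens your bookkeeping argument, since it forces the permutation produced by Lemma~\ref{le:decompwelldef} to preserve the non-$[a]$ part of $\nu$ and hence to be realized by one of your $\hat T_\sigma$.
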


\begin{proof}
Let $U$ be a tuple of conjugacy classes in $A_\Gamma$.
Let $T_1$ in $Z_{[a]}^M$ be a syllable decomposition of $U$ with respect to $a$.
Let $T_1,\dotsc, T_m$ be all the permutations of $T_1$ that are also syllable decompositions of $U$.
The group $\whsetsr{a}{S}$ acts on $Z_{[a]}^M$ and a given $\nu(T_i)$ may or may not be in the orbit of $\nu(T_1)$ under this action.
We reorder $T_1,\dotsc,T_m$ so that the intersection of the $\{\nu(T_i)\}_i$ with the orbit of $\nu(T_1)$ is $\nu(T_1),\dotsc,\nu(T_k)$ for some $k$.
If $\alpha\in\whsetsr{a}{S}$ with $\alpha\cdot U=U$ and $i=1,\dotsc,k$, then by Lemma~\ref{le:decompositionequivariance}, the element $\alpha\cdot\nu(T_i)$ is the image under $\nu$ of a syllable decomposition of $U$.
Then by Lemma~\ref{le:decompwelldef}, the element $\alpha\cdot\nu(T_1)$ is one of $\nu(T_1),\dotsc,\nu(T_k)$.
Therefore $(\whsetsr{a}{S})_U$ acts on the finite set $\{\nu(T_1),\dotsc,\nu(T_k)\}$, and by construction, it acts transitively.
Then $(\whsetsr{a}{S})_{\nu(T_1)}$ is a finite index subgroup of $(\whsetsr{a}{S})_U$.

The observations we just made make it possible to see the correctness of the following algorithm.
First we find a syllable decomposition $T_1$ of $U$.
Then we enumerate all the permutations of $T_1$ that are also syllable decompositions of $U$, say $T_1,\dotsc, T_m$.
Then we use the algorithm from Proposition~\ref{pr:matrixorbitalgorithm} and its modification in Lemma~\ref{le:supportrestrictedmatrixalgs}  to go through the list $\nu(T_1),\dotsc,\nu(T_m)$ to determine which of these are in the same $\whsetsr{a}{S}$--orbit as $\nu(T_1)$.
By relabeling the list, we assume that these are $\nu(T_1),\dotsc, \nu(T_k)$.
Also, as we use the algorithm to check which $\nu(T_i)$ are in the same orbit, we record an example automorphism in $\whsetsr{a}{S}$ that sends $\nu(T_1)$ to $\nu(T_i)$, for each $i$ from $1$ to $k$.
Let $S_1$ denote the set of these automorphisms.

Next we use the algorithm from Proposition~\ref{pr:matrixstabpres} and its modification from Lemma~\ref{le:supportrestrictedmatrixalgs} to find a presentation for $(\whsetsr{a}{S})_{\nu(T_1)}$.
Let $S_2$ denote the generating set for $(\whsetsr{a}{S})_{\nu(T_1)}$ from this presentation.
For each $\nu(T_i)$ for $i$ from $1$ to $k$, we check where each element of $S_1\cup S_2$ sends it (since $(\whsetsr{a}{S})_U$ acts on $\{\nu(T_1),\dotsc,\nu(T_k)\}$, the image will be one of these).
We construct the finite graph whose vertices are $\nu(T_1),\dotsc,\nu(T_k)$ and whose edges are labeled by the elements of $S_1\cup S_2$, with an edge labeled by $\alpha\in S_1\cup S_2$ from $\nu(T_i)$ to $\nu(T_j)$ if and only if $\alpha\cdot \nu(T_i)=\nu(T_j)$.
It is not hard to see that this graph is isomorphic to the Schreier graph of $(\whsetsr{a}{S})_{\nu(T_1)}$ in $(\whsetsr{a}{S})_U$ with respect to $S_1\cup S_2$.
Note that by the choice of $S_1$, this Schreier graph is connected.
Using this Schreier graph together with the presentation for $(\whsetsr{a}{S})_{\nu(T_1)}$, we then construct a finite presentation for $(\whsetsr{a}{S})_U$ using the procedure in Lemma~\ref{le:presfromfinidx}.
\end{proof}

\section{Applications}\label{se:applications}
\subsection{A useful finite graph}\label{ss:usefulgraph}
In this section we prove our two applications, Theorem~\ref{th:raagcheckorbit} and Theorem~\ref{th:raagstabpres}, modulo the technical results that we prove in the later sections.

Suppose $W$ is an $M$--tuple of cyclic words in $A_\Gamma$ and $W$ is of minimal length in its $\Aut(A_\Gamma)$--orbit.
We construct a directed, labeled multigraph $\Delta$ associated to $W$ as follows.
\begin{itemize}
\item The vertices of $\Delta$ are the set of $M$--tuples of cyclic words of $A_\Gamma$ of the same length as $W$.
\item For each pair of vertices $W_1$ and $W_2$, possibly with $W_1=W_2$, and for each permutation automorphism $\alpha\in P$ with $\alpha\cdot W_1=W_2$, there is a directed edge from $W_1$ to $W_2$ labeled by $\alpha$.
\item For each pair of distinct vertices $W_1$ and $W_2$ and each generator $a\in X$, if there is an automorphism in $\whset{a}$ sending $W_1$ to $W_2$, then there is a directed edge from $W_1$ to $W_2$ labeled by some $\alpha\in\whset{a}$ with $\alpha \cdot W_1=W_2$ (this involves a choice).
\item For each vertex $W_1$ and each generator $a\in X$, there are edges from $W_1$ to $W_1$ labeled by a finite generating set for the stabilizer $(\whset{a})_{W_1}$ (this also involves choices).
\end{itemize}

\begin{lemma}
The graph $\Delta$ associated to the minimal tuple $W$ is finite and can be effectively constructed.
\end{lemma}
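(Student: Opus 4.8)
The plan is to show finiteness first, then effectiveness. For finiteness: the vertex set is the set of $M$-tuples of cyclic words of length $\abs{W}$. Since $A_\Gamma$ has a finite generating set $X$, there are only finitely many elements of $A_\Gamma$ of any given length, hence only finitely many cyclic words of any given length, hence only finitely many $M$-tuples of the appropriate total length. So the vertex set is finite. For the edges: at each vertex there are finitely many edges coming from $P$, since $P$ is a finite group (it injects into the permutations of the finite set $X\cup X^{-1}$). For each ordered pair of distinct vertices and each $a\in X$ there is at most one edge of the third type (one choice per pair per vertex of $\Gamma$), and there are finitely many such pairs and finitely many vertices $a$. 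For the loops of the fourth type: each $(\whset{a})_{W_1}$ is finitely generated, so we add only finitely many loops at each vertex for each $a$; again there are finitely many vertices $W_1$ and finitely many $a$. Hence $\Delta$ has finitely many vertices and finitely many edges.

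For effectiveness, I would go through the four bullets in the construction in turn. The vertex set can be listed: enumerate all words in $X^{\pm1}$ of length at most $\abs{W}$, cyclically and graphically reduce them using the algorithms of VanWyk or Hermiller--Meier together with the conjugacy algorithms of Liu--Wrathall--Zeger (cited in Section~\ref{ss:combinatorial}), discard duplicates, and then form all $M$-tuples whose total length equals $\abs{W}$. The edges labeled by elements of $P$ can be found by brute force: $P$ is finite and explicitly enumerable (run over all bijections of $X\cup X^{-1}$ compatible with $\Gamma$), and for each $\alpha\in P$ and each vertex $W_1$ we compute $\alpha\cdot W_1$ and check, using the conjugacy algorithm, which vertex $W_2$ it equals. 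For the third type of edge, for each ordered pair of distinct vertices $(W_1,W_2)$ and each $a\in X$ we invoke Proposition~\ref{pr:insamewhorbit}, which either produces an $\alpha\in\whset{a}$ with $\alpha\cdot W_1=W_2$ (which we record as the edge label) or certifies that none exists (in which case we add no edge). For the fourth type, for each vertex $W_1$ and each $a\in X$ we invoke Proposition~\ref{pr:whstab} to obtain a finite presentation, hence an explicit finite generating set, of $(\whset{a})_{W_1}$, and we add the corresponding loops at $W_1$.

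The main point to be careful about — and the only place where anything beyond routine bookkeeping is needed — is that all the subroutines being invoked are genuinely available: deciding equality and conjugacy of elements of $A_\Gamma$ (from Section~\ref{ss:combinatorial}), the orbit-membership algorithm for a single $\whset{a}$ (Proposition~\ref{pr:insamewhorbit}), and the presentation algorithm for a single stabilizer $(\whset{a})_{W_1}$ (Proposition~\ref{pr:whstab}). Granting those, the construction is a finite sequence of finite computations, so $\Delta$ is effectively constructible. (One should note the hypothesis that $W$ is of minimal length in its $\Aut(A_\Gamma)$-orbit is not actually needed for this lemma; it is stated because $\Delta$ is only useful under that hypothesis, and it will be used in the subsequent applications rather than here.)
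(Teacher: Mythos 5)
Your proposal is correct and follows essentially the same route as the paper's proof: finiteness of the vertex set from finiteness of tuples of bounded length, finiteness and effectiveness of the edges from the finiteness of $P$ and from Propositions~\ref{pr:insamewhorbit} and~\ref{pr:whstab}. Your parenthetical observation that the minimality hypothesis on $W$ is not actually used in this particular lemma is also accurate.
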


\begin{proof}
First of all, $\Delta$ has finitely many vertices because there are finitely many tuples of a given length.
We construct $\Delta$ by finding the required edges and attaching them to the $0$--skeleton.
There are finitely many permutation automorphisms, so we can explicitly check which ones send which vertices to which vertices.
For each generator $a\in X$ and each pair of vertices, we use Proposition~\ref{pr:insamewhorbit} to check whether there is an automorphism in $\whset{a}$ sending one to the other, and if there is, we add an edge labeled by such an automorphism (Proposition~\ref{pr:insamewhorbit} gives us one if one exists).
For each generator $a$ and each vertex $W_1$, we use Proposition~\ref{pr:whstab} to get a finite generating set for the stabilizer $(\whset{a})_{W_1}$.
These last two steps are effective since there are only finitely many generators in $X$ and vertices in $\Delta$.
\end{proof}

\begin{lemma}\label{le:composedelta}
If $\alpha$ is the composition of edge labels on a path from a vertex $W_1$ to a vertex $W_2$ in $\Delta$, then $\alpha\cdot W_1=W_2$.
\end{lemma}

\begin{proof}
This is true for paths of length one by construction and true in general by Lemma~\ref{le:travelSchreier}.
\end{proof}

\begin{lemma}\label{le:peakredongraph}
Suppose $W_0$ is a vertex in $\Delta$ and $W_0$ is minimal length in its automorphism orbit.
If $W'$ is also a vertex in $\Delta$ and $\alpha\in\Aut(A_\Gamma)$ with $\alpha\cdot W_0=W'$, then there is a path $p$ in $\Delta$ from $W_0$ to $W'$ such that the composition of edge labels along $p$ is $\alpha$.
\end{lemma}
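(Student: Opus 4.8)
The plan is to use the peak-reduction theorem (Theorem~\ref{th:fullfeaturedpeakreduction}) to reduce $\alpha$ to a factorization by elements of $\Omega$ whose intermediate lengths form a valley-then-plateau-then-ascent pattern, and then to lift this factorization to an edge path in $\Delta$. Since $W_0$ is minimal length in its $\Aut(A_\Gamma)$--orbit, the strictly decreasing initial portion of the length sequence must be empty; that is, $k_1=0$. So if $\alpha=\beta_m\dotsm\beta_1$ is a peak-reduced factorization with respect to $W_0$, then the sequence $k\mapsto\abs{\beta_k\dotsm\beta_1\cdot W_0}$ is constant from $k=0$ to $k=k_2$ and strictly increasing from $k=k_2$ to $k=m$. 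In particular every partial product $\beta_k\dotsm\beta_1\cdot W_0$ has length at least $\abs{W_0}$, and for $k\leq k_2$ it has length exactly $\abs{W_0}$.

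First I would handle the strictly-increasing (ascending) part. Here the intermediate tuples have length greater than $\abs{W_0}$, so they are \emph{not} vertices of $\Delta$, and I cannot lift $\beta_{k_2+1},\dotsc,\beta_m$ one at a time. Instead I would invert: write $\alpha^{-1}\cdot W'=W_0$ and apply the same reasoning with the roles reversed. Actually the cleaner approach is to rephrase the whole problem symmetrically. Both $W_0$ and $W'$ are vertices of $\Delta$ (they have the same length), and $W'$ is also minimal in its orbit since it is in the orbit of $W_0$. Reading the peak-reduced factorization $\alpha=\beta_m\dotsm\beta_1$ from both ends: set $V_k=\beta_k\dotsm\beta_1\cdot W_0$, so $V_0=W_0$, $V_m=W'$, and $\abs{V_k}=\abs{W_0}$ for $0\leq k\leq k_2$ while $\abs{V_k}>\abs{W_0}$ for $k_2<k<m$ — except this contradicts $\abs{V_m}=\abs{W'}=\abs{W_0}$ unless $k_2=m$. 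That is the key point: since both endpoints are minimal, the strictly increasing part must also be empty, so $k_2=m$ and the \emph{entire} length sequence is constant at $\abs{W_0}$. Hence every $V_k$ is a vertex of $\Delta$.

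With that reduction in hand, the lifting is routine. For each $k$ from $1$ to $m$, the automorphism $\beta_k\in\Omega$ satisfies $\beta_k\cdot V_{k-1}=V_k$ with both $V_{k-1}$ and $V_k$ vertices of $\Delta$. If $\beta_k\in P$ is a permutation automorphism, there is by construction an edge of $\Delta$ from $V_{k-1}$ to $V_k$ labeled by $\beta_k$ itself. If $\beta_k\in\whset{a}$ for some vertex $a$, then: when $V_{k-1}\neq V_k$ there is an edge from $V_{k-1}$ to $V_k$ labeled by \emph{some} $\gamma\in\whset{a}$ with $\gamma\cdot V_{k-1}=V_k$ (not necessarily $\beta_k$), and when $V_{k-1}=V_k$ the element $\beta_k$ lies in the stabilizer $(\whset{a})_{V_{k-1}}$, which by construction is generated by the edge-labels of loops of $\Delta$ at $V_{k-1}$, so $\beta_k$ is a product of such labels and their inverses, giving a loop at $V_{k-1}$ whose composition of edge labels is $\beta_k$. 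In the non-loop case we must also account for the discrepancy between $\gamma$ and $\beta_k$: the element $\gamma^{-1}\beta_k$ fixes $V_{k-1}$ and lies in $\whset{a}$, hence in $(\whset{a})_{V_{k-1}}$, and so is again realized by a loop at $V_{k-1}$. Concatenating, for each $k$ we obtain a path in $\Delta$ from $V_{k-1}$ to $V_k$ whose composition of edge labels equals $\beta_k$ (using Lemma~\ref{le:composedelta} and the convention that edge labels compose in reverse order). Stringing these $m$ paths together yields a path $p$ from $W_0=V_0$ to $W'=V_m$ with composition of edge labels $\beta_m\dotsm\beta_1=\alpha$, as desired.

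The main obstacle is the observation that forces the length sequence to be entirely constant: one must be careful that the hypothesis ``$W_0$ minimal in its orbit'' together with ``$W'$ in the same orbit, hence also minimal'' genuinely kills both the descending and the ascending parts of the peak-reduced factorization, so that all intermediate tuples stay at length $\abs{W_0}$ and therefore remain inside $\Delta$. Everything after that is bookkeeping with the definitions of $\Delta$'s edges and with the stabilizer-generating loops, together with Lemmas~\ref{le:travelSchreier} and~\ref{le:composedelta}.
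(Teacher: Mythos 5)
Your proof is correct and follows essentially the same route as the paper: peak-reduce $\alpha$ with respect to $W_0$ via Theorem~\ref{th:fullfeaturedpeakreduction}, observe that both the descending part (because $W_0$ is minimal) and the ascending part (because $W'$ is a vertex of $\Delta$, hence $\abs{W'}=\abs{W_0}$) of the length sequence must be empty so that every intermediate image stays a vertex of $\Delta$, and then lift each factor $\beta_k$ to a path by concatenating stabilizer loops representing $\gamma^{-1}\beta_k$ with the chosen edge labeled $\gamma$. Your write-up is somewhat more explicit than the paper's about why $k_2=m$ — the paper states this as an immediate consequence of minimality without spelling out that it also uses $\abs{W'}=\abs{W_0}$ — but the underlying argument is the same.
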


\begin{proof}
We peak reduce $\alpha$ with respect to $W_0$ by elements of $\Omega$, which is possible by Theorem~\ref{th:fullfeaturedpeakreduction}.
Suppose $\alpha=\alpha_k\dotsm\alpha_1$ is the resulting factorization.
Let $W_i=\alpha_i\dotsm\alpha_1\cdot W_0$ for $i=1,\dotsc,k$, so that $W_k=W'$.
Since $W_0$ is minimal length, the factorization being peak reduced means that  $\abs{W_k}=\abs{W_0}$ for $i=0,\dotsc,k$.
Then each $W_i$ is a vertex in $\Delta$.
If $\alpha_i$ is a permutation automorphism, then there is an edge from $W_{i-1}$ to $W_i$ labeled by $\alpha_i$ by construction.
If $\alpha_i\in\whset{a}$ for some $a$, then there is an edge from $W_{i-1}$ to $W_i$ labeled by $\beta_i$ for some $\beta_i\in\whset{a}$.
Then $\beta_i^{-1}\alpha_i$ stabilizes $W_{i-1}$.
By construction, the edge loops at $W_{i-1}$ contain generators for that stabilizer, and there is a path $p_i$ in the loops at $W_{i-1}$ whose edge labels compose to be $\beta_i^{-1}\alpha_i$.
So following $p_i$ and then the edge labeled by $\beta_i$ gives a path $p'_i$ from $W_{i-1}$ to $W_i$ such that the composition of labels on $p'_i$ is $\alpha_i$.
Composing these paths as $i$ goes from $1$ to $k$ gives a path from $W_0$ to $W_k$ whose edge label composition is $\alpha$.
\end{proof}

\subsection{Orbit membership and finite generation}
\begin{lemma}\label{le:checkminlength}
Suppose $W$ is a tuple of conjugacy classes from $A_\Gamma$.
Then $W$ is minimal length in its $\Aut(A_\Gamma)$--orbit if and only if
it cannot be shortened by any element of $\whset{a}$ for any $a\in X$.
\end{lemma}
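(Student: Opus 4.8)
The plan is to prove the two directions separately, with the forward direction being essentially trivial and the reverse direction being the substance. For the forward direction: if $W$ can be shortened by some $\alpha\in\whset{a}$, then $\abs{\alpha\cdot W}<\abs{W}$, so $W$ is not of minimal length in its $\Aut(A_\Gamma)$--orbit. This uses only that $\whset{a}\subseteq\Aut(A_\Gamma)$, which is immediate from the definitions in the introduction.

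For the reverse direction, I would argue the contrapositive: suppose $W$ is \emph{not} of minimal length in its $\Aut(A_\Gamma)$--orbit, and produce a single $\alpha\in\whset{a}$ (for some vertex $a$) that strictly shortens $W$. The tool is the peak-reduction theorem, Theorem~\ref{th:fullfeaturedpeakreduction}. Pick $\beta\in\Aut(A_\Gamma)$ with $\abs{\beta\cdot W}<\abs{W}$, and apply peak reduction to $\beta$ with respect to $W$: we get a factorization $\beta=\beta_m\dotsm\beta_1$ with each $\beta_i\in\Omega$ and the intermediate-length sequence $k\mapsto\abs{\beta_k\dotsm\beta_1\cdot W}$ first strictly decreasing, then constant, then strictly increasing. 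Since $\abs{\beta\cdot W}<\abs{W}$, the sequence must decrease at least once at the very beginning; in particular $k_1\geq 1$, so $\abs{\beta_1\cdot W}<\abs{W}$. Thus $\beta_1$ is a single element of $\Omega$ that strictly shortens $W$.

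The remaining point is that $\beta_1$ can be taken in some $\whset{a}$ rather than merely in $\Omega=P\cup\bigcup_a\whset{a}$. But a permutation automorphism $\alpha\in P$ never changes the length of a tuple of conjugacy classes, since it restricts to a permutation of $X\cup X^{-1}$ and hence carries cyclically reduced representatives to cyclically reduced representatives of the same length. So $\beta_1\notin P$ (as it strictly shortens $W$), which forces $\beta_1\in\whset{a}$ for some $a\in X$. This $a$ and $\alpha=\beta_1$ are the desired data.

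The only step requiring any care is the assertion that elements of $P$ preserve length; I expect this to be the ``main obstacle'' only in the sense that it must be stated explicitly, as it is otherwise a routine consequence of the fact that graphically and cyclically reduced words are sent to graphically and cyclically reduced words under a permutation of the generators. No further input beyond Theorem~\ref{th:fullfeaturedpeakreduction} and the definition of $\Omega$ is needed.
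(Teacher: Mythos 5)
Your proposal is correct and follows essentially the same approach as the paper's proof: apply Theorem~\ref{th:fullfeaturedpeakreduction} to an automorphism that shortens $W$ and observe that the first factor $\beta_1$ already strictly shortens $W$. You are in fact slightly more careful than the paper, which leaves implicit the (easy) step that $\beta_1$ cannot be a permutation automorphism and hence must lie in some $\whset{a}$.
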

\begin{proof}
It is clear that a minimal-length tuple cannot be shortened, so we prove the other direction.
Suppose for contradiction that there is some $W'$ with $\abs{W'}<\abs{W}$ and some $\alpha\in\Aut(A_\Gamma)$ with $\alpha\cdot W= W'$.
We peak reduce $\alpha$ with respect to $W$ by Theorem~\ref{th:fullfeaturedpeakreduction}.
Then $\alpha=\beta_k\dotsm\beta_1$ with $\beta_i\in\Omega$ for all $i$, where $k\mapsto\abs{\beta_k\dotsm\beta_1\cdot W}$ is a sequence of lengths that decreases, stays level, and then increases (with any of these phases possibly omitted).
Since $\abs{\alpha\cdot W}<\abs{W}$, the decreasing phase cannot be omitted, and therefore $\abs{\beta_1\cdot W}<\abs{W}$.
This contradiction proves the lemma.
\end{proof}

\begin{proof}[Proof of Theorem~\ref{th:raagcheckorbit}]
Let $U$ and $V$ be two $M$-tuples of conjugacy classes from $A_\Gamma$; we want to check whether they are in the same $\Aut(A_\Gamma)$--orbit.
We start by enumerating the tuples of conjugacy classes from $A_\Gamma$ that are strictly shorter than $\abs{U}$.
Of course there are only finitely many such conjugacy classes (even so, this step is a disappointing bottleneck in the algorithm).
For each $U'$ strictly shorter than $U$, and each class $[a]$ with $a\in\Gamma$, we use Proposition~\ref{pr:insamewhorbit} to check whether $U'$ is in the same orbit as $U$ under $\whset{a}$.
If it is, we replace $U$ by $U'$ and repeat the previous step again, checking whether one of the $\{\whset{a}\}_a$ can shorten $U$.
We stop when we have verified that none of these sets of automorphisms can shorten $U$.
By Lemma~\ref{le:checkminlength}, this resulting $U$ is of minimal length.

After we shorten $U$ as much as possible,
we do the same to $V$.
If the minimal lengths are different, we declare that the orbits are different.
Now we suppose that $U$ and $V$ are both minimal length in their automorphism orbits with $\abs{U}=\abs{V}$; 
let $\Delta$ be the graph from Section~\ref{ss:usefulgraph} above, constructed using $U$
(the vertices of $\Delta$ are tuples of length $\abs{U}$).
At this point, we check whether $U$ and $V$ are in the same connected component of $\Delta$ (this is doable since $\Delta$ is a finite graph).
By Lemma~\ref{le:composedelta}, if $U$ and $V$ are in the same component, then there is an automorphism $\alpha\in \Aut(A_\Gamma)$ with $\alpha\cdot U=V$, and we can find such an $\alpha$ by composing the edge labels on a path from $U$ to $V$.
Conversely, if there is an automorphism $\alpha$ sending $U$ to $V$, 
then there is a path from $U$ to $V$ in $\Delta$ and both are in the same connected component.
This is true by Lemma~\ref{le:peakredongraph}, which uses Theorem~\ref{th:fullfeaturedpeakreduction}. 
\end{proof}

At this point we can quickly prove an intermediate result.
By $\pi_1(\Delta,W)$ we mean the fundamental group of $\Delta$ based at $W$; this may be interpreted either combinatorially or topologically.
\begin{proposition}\label{pr:stabfg}
Suppose $W$ is a tuple of cyclic words in $A_\Gamma$.
Then there is an algorithm to find a finite generating set for the stabilizer $\Aut(A_\Gamma)_W$.
\end{proposition}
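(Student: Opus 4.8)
The plan is to reduce the statement to the finite graph $\Delta$ already constructed in Section~\ref{ss:usefulgraph}, exactly as in the proof of Theorem~\ref{th:raagcheckorbit}. First I would reduce to the case where $W$ is of minimal length in its $\Aut(A_\Gamma)$--orbit: using Proposition~\ref{pr:insamewhorbit} repeatedly (as in the proof of Theorem~\ref{th:raagcheckorbit}), we find $\alpha_0 \in \Aut(A_\Gamma)$ and a minimal-length tuple $W_0 = \alpha_0 \cdot W$, with $\alpha_0$ written explicitly as a product of generalized Whitehead automorphisms. Since $\Aut(A_\Gamma)_W = \alpha_0^{-1} \, \Aut(A_\Gamma)_{W_0} \, \alpha_0$, a finite generating set for $\Aut(A_\Gamma)_{W_0}$ conjugates to one for $\Aut(A_\Gamma)_W$, so it suffices to treat $W_0$.

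Now build the graph $\Delta$ associated to the minimal tuple $W_0$. I claim that $\Aut(A_\Gamma)_{W_0}$ is a quotient of $\pi_1(\Delta, W_0)$ via the map that reads off the composition of edge labels along a loop: by Lemma~\ref{le:composedelta} every loop at $W_0$ does give an element of $\Aut(A_\Gamma)_{W_0}$, and by Lemma~\ref{le:peakredongraph} (which invokes the peak-reduction theorem, Theorem~\ref{th:fullfeaturedpeakreduction}) every $\alpha \in \Aut(A_\Gamma)_{W_0}$ is realized as the edge-label composition of some loop at $W_0$, since peak reduction keeps all intermediate lengths equal to $\abs{W_0}$ and hence keeps all intermediate tuples inside $\Delta$. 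So the ``reading off edge labels'' homomorphism $\pi_1(\Delta, W_0) \to \Aut(A_\Gamma)_{W_0}$ is surjective. Since $\Delta$ is a finite graph, $\pi_1(\Delta, W_0)$ is a finitely generated free group — pick a maximal subtree of $\Delta$ and take the loops through the remaining edges — and the images of these finitely many loop generators form a finite generating set for $\Aut(A_\Gamma)_{W_0}$, which can be written down explicitly.

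The only real subtlety is making sure each ingredient is effective, and this is already in place: $\Delta$ is finite and effectively constructible (the lemma preceding Lemma~\ref{le:composedelta}, relying on Propositions~\ref{pr:insamewhorbit} and~\ref{pr:whstab}), a maximal subtree can be chosen algorithmically, and the edge labels are explicit elements of $\Omega$. The main obstacle — and it is conceptual rather than computational — is the appeal to Lemma~\ref{le:peakredongraph}, i.e.\ the fact that peak reduction lets us stay inside the finite graph; everything else is bookkeeping with free groups on finite graphs and with the conjugation $\alpha_0^{-1}(-)\alpha_0$. (Note that this proposition only claims finite generation, not finite presentability; the latter requires the refined support-restricted machinery of Propositions~\ref{pr:insamewhorbitsupportrestricted} and~\ref{pr:whstabsupportrestricted} and is handled separately in the proof of Theorem~\ref{th:raagstabpres}.)
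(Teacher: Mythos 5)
Your proposal is correct and follows essentially the same route as the paper: reduce to a minimal-length representative (conjugating the stabilizer), build the finite graph $\Delta$, and observe that edge-label composition gives a homomorphism $\pi_1(\Delta,W)\to\Aut(A_\Gamma)_W$ which is well defined by Lemma~\ref{le:composedelta} and surjective by Lemma~\ref{le:peakredongraph}, with finite generation then coming from $\pi_1$ of a finite graph being finitely generated free. The paper's proof is a terser version of exactly this argument.
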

\begin{proof}
Altering $W$ by an automorphism sends $\Aut(A_\Gamma)_W$ to a conjugate, so we are free to replace $W$ with a representative of its orbit of minimal length.
We find such an element using Lemma~\ref{le:checkminlength} and Proposition~\ref{pr:insamewhorbit}.
Now assuming that $W$ has minimal length in its orbit, we construct the graph $\Delta$ as above.
Then composition of edge labels defines a homomorphism $\pi_1(\Delta,W)\to \Aut(A_\Gamma)$ (this cannot fail because the domain is free).
By Lemma~\ref{le:composedelta} (with $W_1=W_2=W$), the image of this homomorphism lies in $\Aut(A_\Gamma)_W$.
By Lemma~\ref{le:peakredongraph} (with $W_0=W'=W$), this homomorphism surjects on $\Aut(A_\Gamma)_W$.
\end{proof}

\subsection{Relations}\label{ss:relations}
Our next goal is to show that stabilizers of tuples of conjugacy classes are finitely presentable.
Before we start, we need to record some relations that hold among the generalized Whitehead automorphisms.
First we mention relations between classic Whitehead automorphisms.
In our terminology, a \emph{classic Whitehead automorphism} is either
\begin{itemize}
\item a permutation automorphism from $P$ (an automorphism that restricts to a permutation of $X^{\pm1}$) or
\item an automorphism $\alpha$ with a special element $a$ of $X$, called its \emph{multiplier}, such that $\alpha$ sends $a$ to $a$ and sends each $b\in \Gamma\setminus \{a\}$ to one of $b$, $ba$, $a^{-1}b$ or $a^{-1}ba$.
\end{itemize}
The relations between classic Whitehead automorphisms from Definition~2.6 of Day~\cite{Day1} play a limited role in the current paper.
We can essentially treat these as a black box.

We also need peak reduction of long-range Whitehead automorphisms, which we likewise can treat as a black box.
A generalized Whitehead automorphism $\alpha$ in $\whset{a}$ 
is \emph{short-range} if $\alpha(b)=b$ for all $b$ not adjacent to $a$ (but $\alpha$ may do anything to $\st(a)$).
It is \emph{long-range} if the  restriction of $\alpha$ to $\st(a)^{\pm1}$ is a permutation of that set (but $\alpha$ may do anything to $X\setminus\st(a)$).
More generally, an automorphism in $\Aut(A_\Gamma)$ is \emph{long-range} if and only if it can be factored as a product of long-range dominated transvections, partial conjugations, inversions and graphic automorphisms.
The important fact about long-range automorphisms is:
\begin{theorem}[Day, from~\cite{Day1}, Theorem~A]\label{th:longrangepeakreduction}
If $W$ is a tuple of conjugacy classes in $A_\Gamma$ and $\alpha\in\Aut(A_\Gamma)$ is a long-range automorphism, then $\alpha$ has a factorization by classic long-range Whitehead automorphisms and permutation automorphisms that is peak reduced with respect to $W$ (that is, $\alpha$ has a factorization by these kinds of automorphisms that satisfies the conclusions of Theorem~\ref{th:fullfeaturedpeakreduction}).
\end{theorem}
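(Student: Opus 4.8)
Since this is Theorem~A of~\cite{Day1}, the plan is to recall the shape of that argument. I would set up an induction on a complexity measure attached to a factorization of $\alpha$ into classic long-range Whitehead automorphisms and permutation automorphisms---say the number of \emph{peaks} of the length sequence $k\mapsto\abs{\beta_k\dotsm\beta_1\cdot W}$, and, among factorizations minimizing that number, the total height of those peaks. The theorem then reduces to a single \emph{peak-reduction lemma}: if $\phi$ and $\psi$ are each a classic long-range Whitehead automorphism or a permutation automorphism and $W'$ is a tuple with $\abs{W'}\leq\abs{\phi\cdot W'}\geq\abs{\psi\phi\cdot W'}$, at least one inequality being strict, then $\psi\phi$ can be rewritten as a product of classic long-range Whitehead and permutation automorphisms, every partial product of which sends $W'$ to a tuple of length at most $\abs{\phi\cdot W'}$, in such a way that the total peak height strictly drops. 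Feeding this into a minimal-complexity factorization of $\alpha$ forces the length sequence to be non-increasing and then non-decreasing; sliding the length-preserving permutation automorphisms to the ends of the level stretch then upgrades this to the strict down, constant, strict up shape in the statement, and keeping the rewrites explicit throughout gives the algorithm.

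For the peak-reduction lemma I would follow Whitehead's classical argument, adapted to the partially commutative setting. The case where $\phi$ or $\psi$ is a permutation automorphism is essentially immediate, since such an automorphism preserves lengths and conjugates a classic long-range Whitehead automorphism to another one; so reduce to $\phi=\phi_a$ with multiplier $a$ and $\psi=\psi_b$ with multiplier $b$. Next I would record, for a cyclically reduced conjugacy class $w$, a formula for $\abs{\phi_a\cdot w}-\abs{w}$ in terms of a ``Whitehead graph'' of $w$ relative to $a$: each occurrence in the cyclic word of a letter lying in a connected component $Y$ of $\Gamma\drop\st(a)$ that $\phi_a$ moves contributes $+1$, $-1$, or $0$ according to its cyclic neighbours, so that $\abs{\phi_a\cdot w}$ is an explicit piecewise-linear function of the data defining $\phi_a$, carrying the submodularity that underlies all length-counting arguments of this kind. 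The crux is then a dichotomy at a peak: either $(a,b)$ is ``compatible'', in the sense that the stars and the component partitions of $\Gamma\drop\st(a)$ and $\Gamma\drop\st(b)$ are suitably nested---in which case $\psi_b\phi_a$ can be reordered into pieces whose multipliers act on disjoint parts of $\Gamma$ and which never push the length above $\abs{\phi_a\cdot W'}$---or $(a,b)$ is ``linked'', and one shows that $\abs{\phi_a\cdot W'}>\abs{W'}$ would force a connectivity property of the Whitehead graph of $\phi_a\cdot W'$ relative to $b$ that is incompatible with the configuration being a peak; that failure is exactly what lets us split the peak into lower steps with multipliers drawn from $a$, $b$, and a few vertices derived from them.

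The step I expect to be the real obstacle---and the reason a separate treatment is needed instead of a citation to Whitehead or his free-group successors---is this partially commutative version of Whitehead's connectivity lemma. Here the ``Whitehead graph'' must simultaneously encode the partition of $\Gamma\drop\st(a)$ into components, the domination relation among vertices, and which vertices commute; tracking all three through the peak makes the case analysis substantially heavier than in the free group. On top of that, I would need to manage the facts that the generating set mixes transvection-type moves with partial-conjugation-type moves carrying different multipliers, that a single classic long-range Whitehead automorphism can be ``active'' on several components at once, and that the level stretch of the length sequence must be controlled finely enough to reach the precise down--constant--up shape rather than merely a weakly unimodal one. All of this is carried out in~\cite{Day1}, and I would defer to it for the details.
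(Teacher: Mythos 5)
The paper offers no independent proof of this statement: it is cited verbatim as Theorem~A of~\cite{Day1} and used as a black box. Your proposal correctly identifies this and defers to~\cite{Day1}, which matches the paper; the broad outline (reduce to a single peak-lowering lemma, induct, use a Higgins--Lyndon/Whitehead-style length-count with a connectivity dichotomy in the partially commutative setting) is in the right spirit of Day's earlier argument, where the key device is the count $\pcount{A}{B}{c}$ used in the present paper as well.

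However, the bookkeeping you propose for the induction is flawed and would not close the argument as stated. You suggest minimizing first the \emph{number} of peaks in the length sequence and then the \emph{total} peak height. Lowering a peak typically replaces one high peak by several lower ones, so the number of peaks can go \emph{up}, and the total of the peak heights can go up as well (e.g.\ a single height-$10$ peak replaced by two height-$8$ peaks). The measure that actually decreases is lexicographic in (maximum peak height, number of peaks attaining that maximum), which is exactly what the present paper uses in deducing Theorem~\ref{th:fullfeaturedpeakreduction} from Main Lemma~\ref{mle:peaklowering}. Relatedly, your peak-lowering lemma requires intermediate images to have length \emph{at most} $\abs{\phi\cdot W'}$; this is too weak---a lowering must make the intermediate lengths \emph{strictly} less than the peak top (cf.\ the paper's definition of a lowering in Section~\ref{se:peakreduction}), or the induction measure need not decrease at all.
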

We need the following relations.
These are like the Steinberg relations from algebraic K-theory.
\begin{lemma}\label{le:steinbergrel}
Suppose $a,b\in\Gamma$ with $[a]\neq[b]$, and we have $\alpha\in\whset{a}$, and $\beta\in\whset{b}$.
Further suppose $\alpha$ restricts to the identity on $[b]$.
Then $\alpha\beta\alpha^{-1}\in\whset{b}$ and $\alpha\cdot \beta\cdot \alpha^{-1}=(\alpha\beta\alpha^{-1})$ is an identity among generalized Whitehead automorphisms
if either of the following is true:
\begin{itemize}
\item $a$ is adjacent to $b$, or
\item  $\supp(\alpha)\cap\supp(\beta)=\varnothing$ and $\beta$ restricts to the identity on $[a]$.
\end{itemize}
\end{lemma}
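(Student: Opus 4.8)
The plan is to verify the claim by direct computation on generators, tracking carefully what each automorphism does to elements of $X$. The key structural observations are: (1) $\alpha\in\whset{a}$ modifies a vertex $c\notin[a]$ by multiplying on the left and right by elements of $\genby{[a]}$, and sends elements of $[a]$ into $\genby{[a]}$; similarly for $\beta$ and $[b]$; (2) the hypothesis $[a]\neq[b]$ means $\st(a)\neq\st(b)$, so in particular $[a]\cap[b]=\varnothing$ unless one is contained in the star of the other in a controlled way; and (3) the two bulleted hypotheses are exactly the conditions under which the ``$[a]$-decorations'' introduced by $\alpha$ commute past the ``$[b]$-decorations'' introduced by $\beta$ without interference. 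The goal is to show $\gamma := \alpha\beta\alpha^{-1}$ lies in $\whset{b}$, i.e., that $\gamma(c)\in\genby{[b]}\cdot c\cdot\genby{[b]}$ for $c\notin[b]$ and $\gamma(d)\in\genby{[b]}$ for $d\in[b]$.

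First I would handle the case $d\in[b]$. Since $\alpha$ restricts to the identity on $[b]$, we have $\alpha^{-1}(d)=d$, then $\beta(d)\in\genby{[b]}$, and finally $\alpha$ applied to a word in $[b]$ is again that same word (identity on $[b]$), so $\gamma(d)=\beta(d)\in\genby{[b]}$; this part uses only the standing hypothesis that $\alpha|_{[b]}=\mathrm{id}$. Next, for $c\notin[b]$, write $\alpha^{-1}(c)=u'cv'$ with $u',v'\in\genby{[a]}$ (if $c\notin[a]$) or $\alpha^{-1}(c)\in\genby{[a]}$ (if $c\in[a]$). In the first subcase, applying $\beta$ gives $\beta(u')\beta(c)\beta(v')$; since $\beta|_{[a]}$ — in the second bullet's hypothesis, $\beta$ fixes $[a]$, and in the first bullet's hypothesis, $a$ adjacent to $b$ forces $[a]\subseteq\st(b)$ so $\beta$ acts on $[a]$ by right-multiplying by $\genby{[b]}$-elements, but $[a]\subseteq\st(a)$... — I need to be careful here. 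Let me restructure: in the adjacency case, $a$ adjacent to $b$ implies every element of $[a]$ commutes with every element of $[b]$ (both stars contain each other's relevant vertices), so the $\genby{[a]}$-decorations and $\genby{[b]}$-decorations commute freely, and one checks $\gamma(c)$ has the required form by sliding all $[a]$-letters to the outside where $\alpha$ and $\alpha^{-1}$ cancel them. In the disjoint-support case, $\supp(\alpha)\cap\supp(\beta)=\varnothing$ means that for each vertex, at most one of $\alpha,\beta$ decorates it nontrivially on a given side, so again no interference occurs and the conjugation telescopes.

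The main obstacle I expect is bookkeeping the interaction at vertices $c$ that are acted on nontrivially by \emph{both} $\alpha$ and $\beta$ — these are precisely what the support and adjacency hypotheses are designed to control, so the crux is to show that in each of the two cases the hypotheses genuinely rule out any ``crossing'' term (an $[a]$-letter trapped between two $[b]$-letters or vice versa that cannot be extracted). Concretely, after expanding $\gamma(c)=\alpha\bigl(\beta(\alpha^{-1}(c))\bigr)$ one gets a word alternating between $[a]$- and $[b]$-syllables around a core $c$; I would argue that the hypotheses imply all $[a]$-syllables can be commuted to the extreme left/right (because they commute with everything $\beta$ inserted, either by adjacency or by disjoint support plus $\beta|_{[a]}=\mathrm{id}$), whereupon the outermost $\alpha$ is being applied to a word of the form $(\text{$[a]$-word})\cdot w\cdot(\text{$[a]$-word})$ where $w$ involves only $c$ and $[b]$-letters fixed by $\alpha$, and $\alpha$ of the flanking $[a]$-words cancels against the $\alpha^{-1}$ that produced them. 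That this cancellation is exact — not merely up to conjugacy — is what makes it an \emph{identity} among generalized Whitehead automorphisms rather than just a statement about orbits, and I would close by remarking that the verification that $\gamma$ lands in $\whset{b}$ (as opposed to some larger set) follows from the same syllable analysis, since the only non-$[b]$, non-$c$ letters appearing are those $\alpha$ inserts, which have been shown to cancel.
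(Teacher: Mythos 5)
Your overall plan --- verify by direct computation on generators, tracking how the $\genby{[a]}$-decorations from $\alpha^{\pm1}$ interact with the $\genby{[b]}$-decorations from $\beta$ around a generic $c$ --- is the paper's own approach, and your handling of $d\in[b]$ and of the adjacency case is sound. You even state the correct key fact for the disjoint-support case early on (``for each vertex, at most one of $\alpha,\beta$ decorates it nontrivially on a given side''). But your later ``crux'' argument then reaches for the wrong mechanism: you claim the $[a]$-syllables ``can be commuted to the extreme left/right (because they commute with everything $\beta$ inserted, \dots\ by disjoint support plus $\beta|_{[a]}=\mathrm{id}$).'' When $a$ is not adjacent to $b$, elements of $[a]$ do \emph{not} commute with elements of $[b]$, so that step fails as stated. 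What actually happens (for $c\notin[a]\cup[b]$, writing $\alpha(c)=u_1cu_2$ and $\beta(c)=v_1cv_2$ with $u_i\in\genby{[a]}$, $v_i\in\genby{[b]}$, and using $\alpha|_{[b]}=\mathrm{id}$ and $\beta|_{[a]}=\mathrm{id}$) is that $\gamma(c)=u_1^{-1}v_1u_1\,c\,u_2v_2u_2^{-1}$. Since $[a]$ and $[b]$ do not commute here, $u_1^{-1}v_1u_1\in\genby{[b]}$ requires $u_1=1$ or $v_1=1$ (and similarly on the right); that is exactly what $\supp(\alpha)\cap\supp(\beta)=\varnothing$ supplies, read against the left/right definition of support. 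There is no commuting to the extremes in this case --- the conjugate collapses because on each side one factor is trivial. Keep your earlier account and drop the ``commute past'' framing for the second bullet.

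Two smaller points. The case $c\in[a]$ needs separate treatment (there $\alpha^{-1}(c)\in\genby{[a]}$, not a decorated copy of $c$) and is not visibly covered by your argument; the paper flags it as its own subcase. And your aside that $[a]\cap[b]=\varnothing$ ``unless one is contained in the star of the other in a controlled way'' is mistaken: adjacent-domination equivalence is an equivalence relation, so $[a]\neq[b]$ forces $[a]\cap[b]=\varnothing$ unconditionally.
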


We further note that 
if $\beta$ restricts to the identity on $[a]$, then
in fact $\alpha\beta\alpha^{-1}=\beta$.

\begin{proof}[Proof of Lemma~\ref{le:steinbergrel}]
This follows by straightforward computations, which we describe in broad strokes.
Let $\gamma$ denote $\alpha\beta\alpha^{-1}$ and suppose that $c$ is in $X$; we need to show that $\gamma(c)$ is in $\genby{[b]}$ if $c\in[b]$, and that $\gamma(c)=u_1cu_2$ with $u_1,u_2\in\genby{[b]}$ if $c\notin[b]$.
If $c\in[b]$, then it is clear that $\gamma(c)=\beta(c)$.
If $c\in[a]$, then in all three cases it is straightforward to show that $\gamma(c)=ucv$ for some $u,v$ in $\genby{[b]}$.
The reasons for this are different in the three cases above.
Now suppose that $c\notin[a]\cup[b]$.
Of course, there are $u_1$ and $u_2$ in $\genby{[a]}$ with $\alpha(c)=u_1cu_2$ and $v_1$ and $v_2$ in $\genby{[b]}$ with $\beta(c)=v_1cv_2$.
Then $\alpha\beta\alpha^{-1}$ sends $c$ to
\[\alpha\beta\alpha^{-1}(u_1^{-1})v_1u_1cu_2v_2\alpha\beta\alpha^{-1}(u_2^{-1}).\]
In the first case, $\alpha\beta\alpha^{-1}(u_1)$ differs from $u_1$ by an element of $\genby{[b]}$ and $u_1$ and $v_1$ commute, so the result follows.
In the second case, either $u_i$ or $v_i$ is trivial for $i=1,2$ and the result follows.
\end{proof}

\subsection{The stabilizer presentation complex}\label{ss:stabpres}
Let $W$ be an $M$--tuple of cyclic words in $A_\Gamma$ that is minimal length in its automorphism orbit.
To prove Theorem~\ref{th:raagstabpres}, we build a finite cellular $2$--complex $Z$ whose fundamental group is the stabilizer $\Aut(A_\Gamma)_W$.
The $1$--skeleton $Z^1$ is like the graph $\Delta$ defined earlier, but with some extra edges.
In order to define a map $\pi_1(Z,W)\to \Aut(A_\Gamma)$, we give the $Z^1$ the structure of a labeled multigraph.
\begin{itemize}
\item The vertices $Z^0$ are the $M$--tuples of cyclic words in $A_\Gamma$ of the same length as $W$ and in the same orbit.
\item For each pair of vertices (not necessarily distinct) $W_1$ and $W_2$ and each classic Whitehead automorphism $\alpha$, we add an edge from $W_1$ to $W_2$ labeled by $\alpha$ if $\alpha\cdot W_1=W_2$.
This includes the cases where $\alpha$ is a permutation automorphism.
\item For each pair of distinct vertices $W_1$ and $W_2$, each generator $a\in X$ and each subset $S\subset (X\setminus\st(a))^{\pm1}$, if there is an element of $\whsetsr{a}{S}$ sending $W_1$ to $W_2$, then we make sure there is an edge from $W_1$ to $W_2$ labeled by some such element.
\item For each vertex $W_1$, each generator $a\in X$ and each subset $S\subset(X\setminus\st(a))^{\pm1}$, the labels on edges from $W_1$ to itself must include a generating set from a presentation for the stabilizer $(\whsetsr{a}{S})_{W_1}$.
\end{itemize}
Like $\Delta$, the graph $Z^1$ can be effectively constructed using Propositions~\ref{pr:insamewhorbitsupportrestricted} and~\ref{pr:whstabsupportrestricted}.
Instead of checking whether a tuple is in the same length as $W$ to decide whether to use it as a vertex, it may be more efficient to construct $\Delta$ as above, discard other connected components, and then add extra edges to form $Z^1$.
Since we only consider vertices in the same orbit as $W$, $Z^1$ is automatically connected and each vertex $W_1$ of $Z$ is minimal length in its orbit.

Next we define the several situations where we add $2$--cells to $Z$.
When we say a $2$--cell ``reads off" a word starting at a given vertex, we mean that we glue in the $2$--cell so that its boundary follows the path whose edge labels form that word.
\begin{itemize}
\item[(C1)] For each vertex $W_1$, each generator $a\in X$ and each subset $S\subset(X\setminus\st(a))^{\pm1}$, the self-edges at $W_1$ labeled by elements of $\whsetsr{a}{S}$ give a generating set for $(\whsetsr{a}{S})_{W_1}$ by construction.
We add $2$--cells reading off the relations between these elements, and we add enough $2$--cells so that the subcomplex of $Z$ spanned by these edges forms a presentation complex for $(\whsetsr{a}{S})_{W_1}$.
This is possible by Proposition~\ref{pr:whstabsupportrestricted}.
\item[(C2)] 
Suppose there is an edge starting at the vertex $W_1$ with label $\alpha$, where $\alpha$ is a long-range generalized Whitehead automorphism but not a classic Whitehead automorphism.
We find a path from $W_1$ to $\alpha\cdot W_1$ with label sequence $\gamma_1$--$\gamma_2$--$\dotsm$--$\gamma_k$, where each $\gamma_i$ is a classic long-range Whitehead automorphism, and glue in a $2$--cell reading off the difference between these two paths.
This is possible by Theorem~\ref{th:longrangepeakreduction}: we peak-reduce $\alpha$ with respect to $W_1$ to get the factorization $\alpha=\gamma_k\dotsm\gamma_1$; since the factorization is peak reduced and $W_1$ is minimal length, each intermediate image $\gamma_i\dotsm\gamma_1\cdot W_1$ is also a vertex of $Z$ and this word defines an edge path.
\item[(C3)]
Whenever we find an edge loop in $Z$ whose labels read off one of the relations between classic Whitehead automorphisms from Day~\cite[Definition~2.6]{Day1}, we add a $2$--cell bounding this edge loop.
These relations fall into ten classes, are easily recognizable, and each such relation has length at most five.
\item[(C4)]
Suppose $W_1$ is in $Z^0$, $\alpha$ is a generalized Whitehead automorphism labeling an edge starting at $W_1$, and $\beta$ is an inner automorphism that is also a classic Whitehead automorphism.
Then there is a factorization $\alpha\beta\alpha^{-1}=\gamma_k\dotsm\gamma_1$, where the $\gamma_i$ are also inner classic Whitehead automorphisms.
The inner classic Whitehead automorphisms label loops in $Z^1$ and we glue in a $2$--cell reading off the difference between these two factorizations starting at $\alpha\cdot W_1$.
We repeat this for each such $W_1$, $\alpha$ and $\beta$.
\item[(C5)]
Suppose $W_1$ is in $Z^0$ and there is a closed edge path $p$ starting at $W_1$ whose edge labels $\alpha,\beta,\gamma$ are in $\whset{a}$ for some $a\in X$.
Then $\gamma\beta\alpha\in(\whset{a})_{W_1}$.
Since the labels on the loops at $W_1$ include generators for $(\whset{a})_{W_1}$, we know that there is an edge path $w$ consisting of loops at $W_1$ whose composition represents the same automorphism as $\gamma\beta\alpha$.
Then we add a $2$--cell to $Z$ whose boundary follows $p$ and then follows $w$ backwards.
We add such a cell for each vertex $W_1$ on each such path $p$ involving at least two vertices.
\item[(C6)]
Suppose $W_1$ is in $Z^0$, $\alpha\in P$ and $\beta\in\whset{b}$ for some $b$, and  both $\alpha$ and $\beta$ are edge labels on edges starting at $W_1$.
Then since $\alpha$ is a permutation automorphism, $\alpha\beta\cdot W_1$ is also a vertex of $W_1$.
It is easy to see that the element $\alpha\beta\alpha^{-1}$ is in $\whset{\alpha(b)}$ and sends $\alpha\cdot W_1$ to $\alpha\beta\cdot W_1$.
By construction, there is an edge in $Z^1$ labeled by some $\gamma\in\whset{\alpha(b)}$ with $\gamma\alpha\cdot W_1=\alpha\beta\cdot W_1$, and therefore $\alpha\beta^{-1}\alpha^{-1}\gamma$ is in $(\whset{\alpha(b)})_{W_1}$.
Since the loop edge labels at $W_1$ include a generating set for $(\whset{\alpha(b)})_{W_1}$, we have a path $w$ in these loops where the composition of edge labels represents $\alpha\beta^{-1}\alpha^{-1}\gamma$.
Then we add a $2$--cell to $Z$ whose boundary, starting at $\alpha\cdot W_1$, follows $\alpha^{-1}$ then $\beta$ then $\alpha$ then $\gamma^{-1}$, and then $w$.
We repeat this for each vertex $W_1$ and each such pair $\alpha$ and $\beta$.
\item[(C7)]
Suppose $W_1$ is in $Z^0$ and $a,b\in X$ and $\alpha\in\whset{a}$ and $\beta\in\whset{b}$ are edge labels on edges starting at $W_1$ 
and $\alpha$ and $\beta$ satisfy the hypotheses of Lemma~\ref{le:steinbergrel} (so  $\alpha|_{[b]}$ is the identity, and either $a$ is adjacent to $b$, or $\beta|_{[a]}$ is also the identity and $\supp(\alpha)\cap\supp(\beta)=\varnothing$).
Then Proposition~\ref{pr:steinberglengthchange} below implies that $\alpha\beta\cdot W_1$ is the same length as $W_1$ and therefore is a vertex in $Z^0$.
So by construction, there is an automorphism $\gamma\in\whset{a}$ labeling an edge from $\beta\cdot W_1$ to $\alpha\beta\cdot W_1$ ($\alpha$ is such an automorphism, but there is no guarantee that our construction of $Z^1$ found this particular automorphism.)
Further, $\alpha\beta\alpha^{-1}$ is in $\whset{b}$ by Lemma~\ref{le:steinbergrel}, and $\alpha\beta\alpha^{-1}$ sends $\alpha\cdot W_1$ to $\alpha\beta\cdot W_1$.
By construction of $Z^1$, there is $\delta\in\whset{b}$ sending $\alpha\cdot W_1$ to $\alpha\beta\cdot W_1$, and there is an edge in $Z^1$ labeled by $\delta$ from $\alpha\cdot W_1$ to $\alpha\beta\cdot W_1$.
We note that $\alpha\gamma^{-1}$ fixes $\alpha\beta\cdot W_1$, and therefore there is a path $w_1$ in the edge loops at $\alpha\beta\cdot W_1$ where the composition of the edge labels represents $\alpha\gamma^{-1}$.
Similarly, $\delta\alpha\beta^{-1}\alpha^{-1}$ fixes $\alpha\beta\cdot W_1$ and there is a path $w_2$ in the edge loops at $\alpha\beta\cdot W_1$ where the composition of these edge loops represents $\delta\alpha\beta^{-1}\alpha^{-1}$.
We glue a $2$--cell into $Z$ whose boundary, starting at $W_1$, follows $\beta$, then $\gamma$, then $w_1$, then $w_2$, then $\delta^{-1}$, and finally $\alpha^{-1}$.
We repeat this for each vertex $W_1$ and each pair $\alpha,\beta$ at $W_1$ satisfying the hypotheses of Lemma~\ref{le:steinbergrel}.
\end{itemize}
This completes the construction of $Z$.
We note that in principle, $Z$ can be effectively constructed from $\Gamma$, since in each of the cases (C1) through (C7), there are only finitely many cases in which we may have to add a $2$--cell.

\begin{lemma}\label{le:surjhomZ}
Composition of edge labels defines a surjective homomorphism $\pi_1(Z,W)\to \Aut(A_\Gamma)_W$.
\end{lemma}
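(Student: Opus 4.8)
The plan is to imitate the argument for Proposition~\ref{pr:stabfg} but now tracking the extra edges in $Z^1$ beyond those in $\Delta$. First I would observe that composition of edge labels is a well-defined homomorphism $\pi_1(Z,W)\to\Aut(A_\Gamma)$ simply because $\pi_1$ of a graph is free, so there is nothing to check for well-definedness. That this homomorphism lands in $\Aut(A_\Gamma)_W$ is an application of Lemma~\ref{le:travelSchreier} (or directly: each edge of $Z^1$ from a vertex $W_1$ to a vertex $W_2$ is labeled by an automorphism carrying $W_1$ to $W_2$, by construction of each of the four families of edges, so composing labels around a loop based at $W$ gives an automorphism fixing $W$). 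The content is surjectivity.

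For surjectivity, let $\alpha\in\Aut(A_\Gamma)_W$. Since $W$ is minimal length in its orbit, Theorem~\ref{th:fullfeaturedpeakreduction} gives a factorization $\alpha=\beta_k\dotsm\beta_1$ with each $\beta_i\in\Omega$ and all intermediate lengths $\abs{\beta_i\dotsm\beta_1\cdot W}$ equal to $\abs{W}$ (the descending and ascending phases are empty because $W$ is minimal and $\alpha\cdot W=W$ has the same length). Set $W_0=W$ and $W_i=\beta_i\dotsm\beta_1\cdot W$, so $W_k=W$ and each $W_i$ is a vertex of $Z$. I would then promote the sequence $\beta_1,\dotsc,\beta_k$ to an edge path in $Z^1$ from $W$ to $W$, exactly as in Lemma~\ref{le:peakredongraph}: if $\beta_i\in P$ it already labels an edge from $W_{i-1}$ to $W_i$; if $\beta_i\in\whset{a}$ then there is an edge labeled by some $\gamma_i\in\whset{a}$ from $W_{i-1}$ to $W_i$, the automorphism $\gamma_i^{-1}\beta_i$ lies in $(\whset{a})_{W_{i-1}}=(\whsetsr{a}{\varnothing})_{W_{i-1}}$, whose generators label loops at $W_{i-1}$ by construction of $Z^1$, so $\beta_i$ is realized by a path from $W_{i-1}$ to $W_i$. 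Concatenating these pieces gives a loop $p$ in $Z^1$ based at $W$ whose composition of edge labels is $\beta_k\dotsm\beta_1=\alpha$. Then $[p]\in\pi_1(Z,W)$ maps to $\alpha$, proving surjectivity.

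I would note that the many $2$--cells of $Z$ — the families (C1) through (C7) — play \emph{no role} in this lemma; surjectivity onto $\Aut(A_\Gamma)_W$ already holds for the image of $\pi_1(Z^1,W)$, and adding $2$--cells only passes to a quotient, which cannot destroy surjectivity. (The $2$--cells matter for the reverse inclusion — i.e.\ that the homomorphism from $\pi_1(Z,W)$ is also injective, so that $\pi_1(Z,W)\cong\Aut(A_\Gamma)_W$ — which is presumably the subject of a later lemma.) The only subtlety to handle carefully is that the loop edges at $W_{i-1}$ used in the $\whset{a}$ case genuinely generate the \emph{full} stabilizer $(\whset{a})_{W_{i-1}}$, not merely the support-restricted version; this is guaranteed by the fourth bullet in the construction of $Z^1$ (taking $S=\varnothing$). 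Once that is observed, the argument is the same bookkeeping as Lemma~\ref{le:peakredongraph}, and I expect the main (modest) obstacle to be simply writing the concatenation-of-paths step cleanly rather than any genuine difficulty.
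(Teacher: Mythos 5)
Your argument for surjectivity is exactly the paper's: peak-reduce $\alpha\in\Aut(A_\Gamma)_W$ against $W$, note that minimality forces all intermediate images to stay at length $\abs{W}$, and then convert the factorization into an edge path in $Z^1$ by inserting stabilizer loops, just as in Lemma~\ref{le:peakredongraph}. That part is fine. The paper phrases it by invoking Lemma~\ref{le:peakredongraph} directly (with $W_0=W'=W$) rather than re-deriving it from Theorem~\ref{th:fullfeaturedpeakreduction}, but the content is identical.

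There is, however, a slip in your treatment of well-definedness. You claim ``composition of edge labels is a well-defined homomorphism $\pi_1(Z,W)\to\Aut(A_\Gamma)$ simply because $\pi_1$ of a graph is free,'' but $Z$ is a $2$--complex, not a graph, and $\pi_1(Z,W)$ is \emph{not} free: it is the quotient of the free group $\pi_1(Z^1,W)$ by the normal closure of the boundary loops of the $2$--cells. What the freeness argument gives you is a well-defined homomorphism $\pi_1(Z^1,W)\to\Aut(A_\Gamma)_W$. To get the map on $\pi_1(Z,W)$ you must check that the boundary loop of each $2$--cell maps to the trivial automorphism, so that the homomorphism descends through the Seifert--Van Kampen quotient; the paper makes this explicit (``By construction, each of these boundary loops maps to the trivial automorphism''). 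This check is easy --- the (C1)--(C7) cells are built precisely to read off genuine relations --- but it is a distinct step from injectivity, not something you can wave off by remarking that quotients preserve surjectivity. Your parenthetical that the $2$--cells ``matter only for injectivity'' conflates the two issues: they matter first for getting a map on $\pi_1(Z,W)$ at all, and then again, separately, for injectivity (Proposition~\ref{pr:homotopeinZ}).
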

\begin{proof}
The proof of Lemma~\ref{le:composedelta} goes through with $\Delta$ replaced by $Z^1$, since the extra edges in $Z^1$ still indicate the action by their labels.
Then we have a well defined homomorphism $\pi_1(Z^1,W)\to \Aut(A_\Gamma)_W$ for the same reasons as in the proof of Proposition~\ref{pr:stabfg}.
By Lemma~\ref{le:peakredongraph} (with $W_0=W'=W$), this homomorphism surjects on $\Aut(A_\Gamma)_W$.
By the Seifert--Van Kampen theorem, the kernel of the natural map $\pi_1(Z^1,W)\to\pi_1(Z,W)$ is normally generated by the boundary loops of the $2$--cells.
By construction, each of these boundary loops maps to the trivial automorphism.
Then the homomorphism descends to a homomorphism $\pi_1(Z,W)\to\Aut(A_\Gamma)_W$, which is necessarily surjective.
\end{proof}

The following proposition is the key to Theorem~\ref{th:raagstabpres}.
\begin{proposition}\label{pr:homotopeinZ}
Suppose $p$ is an edge loop in $Z$ based at $W$ that maps to the trivial automorphism.
Then $p$ can be homotoped relative to $W$ to an edge loop whose edge labels consist entirely of permutation automorphisms and inner automorphisms.
\end{proposition}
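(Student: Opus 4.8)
The plan is to argue by induction on a complexity measure of the loop $p$: the number of edges of $p$ whose label is a generalized Whitehead automorphism that is neither a permutation automorphism nor an inner automorphism (call such edges \emph{bad}), with the total length of $p$ as a secondary measure. If $p$ has no bad edges there is nothing to prove, so assume it has at least one; the goal is to homotope $p$ rel $W$, using the $2$--cells (C1)--(C7), to a loop of strictly smaller complexity (homotopy rel $W$ preserves the property of mapping to the trivial automorphism, so the inductive step stays inside the hypothesis). Throughout we exploit that every vertex of $Z$ met by $p$ has minimal length in its orbit, so that any sub-factorization of the label word of $p$ can be peak-reduced by Theorem~\ref{th:fullfeaturedpeakreduction} or Theorem~\ref{th:longrangepeakreduction} without producing a genuine peak.

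First I would carry out normalization moves. The cells (C1) let any stretch of loop-edges at a single vertex lying in one $\whsetsr{a}{S}$ be replaced by any other path representing the same element of the presented stabilizer $(\whsetsr{a}{S})_{W_1}$, so such debris can be rearranged or absorbed freely. The cells (C5) let two consecutive edges with labels in the same $\whset{a}$ be amalgamated into a single such edge, disposing of the resulting short loop with (C1). The cells (C6) let an edge labeled by a permutation automorphism be slid past an adjacent edge labeled in $\whset{b}$, at the cost of replacing that label by a conjugate in $\whset{\alpha(b)}$ and inserting a stabilizer loop; similarly (C4) lets an inner classic Whitehead edge be slid past a generalized Whitehead edge at the cost of rewriting the inner element. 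Iterating, I may push all permutation edges and all inner debris to one end of $p$; after this $p$ consists of a \emph{core}, in which consecutive edges lie in distinct $\whset{a}$'s, followed by permutation and inner edges, and since $p$ maps to the identity the core composes to a product of permutation and inner automorphisms.

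The heart of the argument is to remove a bad edge from the core. Each $\whset{a}$--label factors as a short-range part times a long-range part (visible from the block form of $\eta(\whset{a})$ in Lemma~\ref{le:etainjective}); I would insert this factorization---a relation inside $\whset{a}$, realizable after (C5)-type amalgamation---and then use (C2) together with Theorem~\ref{th:longrangepeakreduction} to rewrite every long-range part as a path of classic long-range Whitehead automorphisms. This reduces us to a core all of whose bad labels are either classic Whitehead automorphisms or short-range generalized Whitehead automorphisms genuinely using the abelian structure of $\genby{[a]}$. Since the core composes into the subgroup generated by $P$ and $\mathrm{Inn}(A_\Gamma)$, a peak-reduction analysis---feeding prefixes of the core to Theorem~\ref{th:fullfeaturedpeakreduction} and using that no intermediate length can fall below $\abs{W}$---forces a cancelling configuration among the bad edges: two bad edges that together contribute trivially can be brought into position where the Steinberg relations (C7), the classic-Whitehead relations (C3), and the conjugation relations (C4) apply, with Proposition~\ref{pr:steinberglengthchange} certifying that every intermediate vertex produced stays at length $\abs{W}$ and hence lies in $Z^0$. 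Cancelling that pair via those cells lowers the complexity, and we finish by induction.

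The step I expect to be the main obstacle is precisely this last one: arranging, with only the relations actually built into $Z$, that a cancelling pair of bad edges becomes adjacent. The relations (C7) fire only under the hypotheses of Lemma~\ref{le:steinbergrel}---$\alpha$ restricting to the identity on $[b]$, and either $a$ adjacent to $b$ or $\supp(\alpha)\cap\supp(\beta)=\varnothing$ with $\beta$ trivial on $[a]$---so each bad label must first be split, using (C1) and the support-restricted groups $\whsetsr{a}{S}$, into pieces with controlled support and controlled action on the relevant domination classes, and the commutations must be sequenced so that every intermediate configuration satisfies those hypotheses while Proposition~\ref{pr:steinberglengthchange} keeps all vertices minimal. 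Controlling short-range pieces whose stars overlap, and preventing the long-range rewriting from (C2) from silently reintroducing bad edges of the first kind, is where the bookkeeping is genuinely delicate.
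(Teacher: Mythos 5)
Your proposal has a genuine gap, and you have in fact put your finger on exactly where it lies: the step where you ``remove a bad edge from the core.'' You assert that ``a peak-reduction analysis\dots forces a cancelling configuration among the bad edges,'' but there is no mechanism in your argument that would produce this. Since every intermediate vertex of $p$ already has length $\abs{W}$, peak-reducing the core with respect to $W$ (or any prefix of it) is vacuous: a flat sequence of lengths already satisfies the conclusion of Theorem~\ref{th:fullfeaturedpeakreduction}, so applying it performs no substitutions at all. Nothing in your argument prevents the core from consisting of arbitrary generalized Whitehead automorphisms in many different $\whset{a}$'s that happen to compose to something in $P\cdot\mathrm{Inn}(A_\Gamma)$, and the cells (C3)--(C7), applied opportunistically, give you no termination guarantee. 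Your secondary measure (total length) does not help either, because several of the moves you enumerate \emph{increase} length by inserting stabilizer loops.

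The paper closes this gap with a device you do not have: the \emph{master tuple} $U$ of Lemma~\ref{le:mastertuple}, which is minimal length in its orbit and has the property that any generalized Whitehead automorphism preserving that minimality must be a permutation or an inner automorphism. One forms $V = U\oplus W^{\oplus m}$ (with $m$ large enough that $W$-lengths can never be the bottleneck), and peak-reduces the label word of $p$ with respect to $V$, not with respect to $W$. Because the $W$-lengths are already minimal and padded $m$-fold, the peak reduction is driven entirely by the $U$-lengths; the peak-lowering substitutions from Sections~\ref{ss:algbasic}--\ref{ss:alggen} are then shown, case by case, to be realizable as homotopies across the cells (C1)--(C7) (via the auxiliary graph $\widehat Z$ and the map $\phi$ into $Z^1$). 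At the end, the intermediate images of $U$ are all minimal, and Lemma~\ref{le:mastertuple} forces every surviving label to be a permutation or inner automorphism. Without something playing the role of $U$, the leverage your argument needs simply isn't available, so the induction you propose does not close.

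As a secondary remark, your normalization step of pushing all permutation and inner edges ``to one end'' of $p$ is also not free: each (C4)/(C6) slide can introduce new loop edges, and you would need an argument that this process terminates and leaves a core of the claimed form. The paper sidesteps this by not attempting such a normalization; it works directly with the unnormalized word and only normalizes at the very end (sliding inners past permutations and cancelling them using (C3) commutations, once they are already known to be the only labels remaining).
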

Since the proof of Proposition~\ref{pr:homotopeinZ} uses the structure of the proof of Theorem~\ref{th:fullfeaturedpeakreduction}, we postpone it to Section~\ref{se:peakreduction}.
This statement is all we need to prove the finite presentation result.

\begin{proof}[Proof of Theorem~\ref{th:raagstabpres}]
Suppose $W$ is a tuple of conjugacy classes in $A_\Gamma$.
First we find a minimal-length representative of the orbit of $W$ using Proposition~\ref{pr:insamewhorbit} and Lemma~\ref{le:checkminlength}.
Of course, replacing $W$ with an image of itself under an automorphism will not change the isomorphism type of $\Aut(A_\Gamma)_W$---it will only replace it with a corresponding conjugate of itself in $\Aut(A_\Gamma)$.
So we replace $W$ with a minimal representative of its orbit.

We construct the complex $Z$ with respect to $W$ as described above, and consider the map $\pi_1(Z,W)\to \Aut(A_\Gamma)_W$ from Lemma~\ref{le:surjhomZ}.
If this map is an isomorphism, then the Seifert--Van Kampen theorem implies that the stabilizer is finitely presented, since $Z$ is a finite complex.
To prove the theorem, it is enough to show that the map is injective, since it is already surjective by Lemma~\ref{le:surjhomZ}.

To show injectivity, we assume we have an edge loop $p$ based at $W$, such that the composition of the edge labels of $p$ yields the trivial homomorphism.
By Proposition~\ref{pr:homotopeinZ}, we assume we have homotoped $p$ to an edge loop whose edge labels are permutation automorphisms and inner automorphisms.
We use the $2$--cells of type (C6) to slide these inner automorphisms past the permutation automorphisms.
Then the inner automorphisms label loops at the base vertex $W$.
The multiplication table of the group $P$ is included in the relations that the (C3) cells bound, so we can eliminate all the permutation automorphisms from $p$ by homotoping across these cells.
Then $p$ reads off a product of inner automorphisms representing the trivial automorphism.
We can rewrite any inner automorphism in $\Omega$ as a product of inner automorphisms that are also classic Whitehead automorphisms by homotoping across (C1) cells.
The group of inner automorphisms of $A_\Gamma$ is isomorphic to another right-angled Artin group ($A_{\Gamma'}$, where $\Gamma'$ is $\Gamma$ with all the vertices representing central generators deleted).
Further, this isomorphism carries the inner classic Whitehead automorphisms to the standard generating set of the right-angled Artin group.
So any word in the inner classic Whitehead automorphisms that represents the trivial automorphism can be eliminated by applying commutation relations.
These commutation relations are given by $2$--cells in $Z$ (redundantly as (C3) or (C7) cells), so we can homotope $p$ to the trivial edge path at $W$.
\end{proof}

\section{Orbits of matrices}
\label{se:linearproblems}
In this section we prove Propositions~\ref{pr:matrixorbitalgorithm} and~\ref{pr:matrixstabpres}.
\subsection{Partly rational linear problems}
We fix  $n\geq 1$ and $k\geq 0$ and consider block-upper triangular matrices of the form
\[
\left(
\begin{array}{cc}
A & B \\ O & I
\end{array}\right),
\]
with $A$ in $\GL(n,\Z)$ and $B$ in $M_{n,k}(\Q)$.
This is the semidirect product $\GL(n,\Z)\ltimes M_{n,k}(\Q)$, where $\GL(n,\Z)$ acts on $M_{n,k}(\Q)$ on the left by multiplication.
We will vary the kinds of entries we wish to consider in the upper-right block, so we let
$G_{\Q}$ denote $\GL(n,\Z)\ltimes M_{n,k}(\Q)$, and for a positive integer $d$, we let $G_d$ denote $\GL(n,\Z)\ltimes M_{n,k}(\frac{1}{d}\Z)$.
We deliberately restrict the coefficients in the upper left-block to $\Z$, so that $G_d$ will always be a finite-index subgroup in $G_1$ (we discuss this more below).
A key tool in our discussion will be the following modified version of the Hermite normal form for integer row reduction.
For more on the Hermite normal form, see Cohen~\cite{Cohen}, section 2.4.2.
Note that Cohen describes Hermite normal form for column reduction, whereas we use Hermite normal form for row reduction.

\begin{definition}
Suppose $A$ is a matrix in $M_{n+k,m}(\Q)$.
Roughly speaking, $A$ is in \emph{$G_{\Q}$--normal form} if
\begin{itemize}
\item $\Q$-linear combinations of rows $n+1$ through $n+k$ have been added to rows $1$ through $n$ to reduce them as much as possible, and
\item there is a multiple of the block of rows $1$ through $n$ that is a matrix in Hermite normal form for integer row reduction.
\end{itemize}
Precisely, $A=(a_{ij})$ is in $G_{\Q}$--normal form if
\begin{itemize}
\item for each $j$, $1\leq j\leq m$, if there is a linear combination of rows $n+1$ through $n+k$ that is nonzero in column $j$ but is zero in all previous rows, then every entry in column $j$ from row $1$ through row $n$ is zero,
\item there is an increasing sequence $p_1,\dotsc,p_l$ of pivot column positions, for some $l$ with $0\leq l\leq n$, such that for each $i$ from $1$ through  $l$:
\begin{itemize}
\item the entry $a_{i,p_i}$ is positive,
\item the entries $a_{i,1},\dotsc,a_{i,p_i-1}$ in row $i$ preceding column $p_i$ are all $0$,
\item for each $i'$, $i'=1,\dotsc, i-1$, the entry $a_{i',p_i}$ satisfies $0\leq a_{i',p_i} < a_{i,p_i}$ (the entries above the pivot position are nonnegative and less than the pivot), and
\end{itemize}
\item rows from $l+1$ through $n$ contain only zero entries.
\end{itemize}
\end{definition}

We want to show that every matrix is equivalent to a unique one in this form, and to understand the stabilizer of a matrix in this form.
First we prove uniqueness.
\begin{lemma}\label{le:normalformunique}
Suppose $A$ and $B$ are matrices in $M_{n+k,m}(\Q)$ in $G_{\Q}$--normal form and $Q$ is in $G_{\Q}$ with $B=QA$.
Let $d$ be  the smallest positive integer with $Q\in G_d$.
Then $Q=Q_1Q_2$ with $Q_2\in \{I\}\ltimes M_{n,k}(\frac{1}{d}\Z)$ and $Q_1\in\GL(n,\Z)\ltimes \{O\}$, such that $Q_2A=A$ and such that
the first $l$ columns of $Q_2$ are the same as those of the identity matrix, where $l$ is the number of pivots in $A$.
In particular, $A=B$.
\end{lemma}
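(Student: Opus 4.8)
The plan is to split everything along the block structure. Write $Q=\left(\begin{smallmatrix}M&N\\O&I\end{smallmatrix}\right)$ with $M\in\GL(n,\Z)$, and with $N\in M_{n,k}(\tfrac1d\Z)$ since $Q\in G_d$. Put $Q_1=\left(\begin{smallmatrix}M&O\\O&I\end{smallmatrix}\right)$, which lies in $\GL(n,\Z)\ltimes\{O\}$, and $Q_2=\left(\begin{smallmatrix}I&M^{-1}N\\O&I\end{smallmatrix}\right)$; then $Q=Q_1Q_2$ automatically, and $Q_2\in\{I\}\ltimes M_{n,k}(\tfrac1d\Z)$ because $M^{-1}$ is integral. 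Splitting $A=\left(\begin{smallmatrix}A_1\\A_2\end{smallmatrix}\right)$ and $B=\left(\begin{smallmatrix}B_1\\B_2\end{smallmatrix}\right)$ into their first $n$ and last $k$ rows, the hypothesis $B=QA$ becomes $B_2=A_2$ and $B_1=MA_1+NA_2$. The whole lemma then reduces to the two identities $B_1=A_1$ and $MA_1=A_1$: the first is exactly $A=B$; and together they give $M^{-1}NA_2=M^{-1}(B_1-MA_1)=(M^{-1}-I)A_1=0$, so $Q_2A=A$; while the claim about the first $l$ columns of $Q_2$ is automatic, since columns $1$ through $n\ge l$ of $Q_2$ are already the standard basis vectors $e_1,\dots,e_n$.

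The key object is $V=\mathrm{rowspace}_\Q(A_2)\subseteq\Q^m$, which also equals the row space of $B_2$ since $A_2=B_2$. Let $q_1<\dots<q_s$ be the pivot columns of the reduced row-echelon form of $A_2$; then $s=\dim V$, the projection of $V$ onto the coordinates $q_1,\dots,q_s$ is an isomorphism, and hence any vector of $V$ vanishing in all of columns $q_1,\dots,q_s$ is zero. The first clause in the definition of $G_\Q$--normal form says precisely that columns $q_1,\dots,q_s$ of $A_1$, and of $B_1$, are zero. Identifying $\Q^m/V$ with $\Q^{m-s}$ via the non-pivot coordinates, and letting $\pi\co\Q^m\to\Q^m/V$ act on matrices row by row, the matrix $\overline{A_1}:=\pi(A_1)$ is just $A_1$ with columns $q_1,\dots,q_s$ deleted (those columns of $A_1$ being zero already), and likewise $\overline{B_1}:=\pi(B_1)$; and applying $\pi$ to $B_1=MA_1+NA_2$ annihilates the last term, giving $\overline{B_1}=M\,\overline{A_1}$.

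The heart of the matter is that $\overline{A_1}$ and $\overline{B_1}$ each lie in the following normal form for the left $\GL(n,\Z)$--action on $M_{n,m-s}(\Q)$: some positive integer multiple is in integer Hermite normal form. Granting that this normal form is unique on each orbit, $\overline{B_1}=M\,\overline{A_1}$ forces $\overline{B_1}=\overline{A_1}$. That $\overline{A_1}$ is in this normal form holds because the pivot columns of any integral multiple of $A_1$ avoid $\{q_1,\dots,q_s\}$ (the pivot entries are nonzero there, while those columns of $A_1$ vanish), so deleting those columns disturbs none of the Hermite conditions; similarly for $\overline{B_1}$. For the uniqueness: if $Y=MX$ with $M\in\GL(n,\Z)$ and $dX,d'Y$ in integer Hermite normal form, then, using that $\mathrm{HNF}(kZ)=k\,\mathrm{HNF}(Z)$ for integral $Z$ and $k\in\Z_{>0}$, one shows that $c_0X$ and $c_0Y$ are themselves in integer Hermite normal form, where $c_0$ is the least positive integer clearing the denominators of $X$ (equivalently of $Y$); then uniqueness of the ordinary integer Hermite normal form gives $c_0X=c_0Y$, i.e.\ $X=Y$. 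Applying this yields $\overline{B_1}=\overline{A_1}$, so $B_1-A_1$ has all rows in $V$ and (since both $A_1$ and $B_1$ do) vanishing in columns $q_1,\dots,q_s$; hence $B_1-A_1=0$, which is $B_1=A_1$ and so $A=B$. Finally, $B_1=A_1$ turns $B_1=MA_1+NA_2$ into $(I-M)A_1=NA_2$, whose rows lie in $V$; being $\Q$--combinations of rows of $A_1$, they also vanish in columns $q_1,\dots,q_s$, so they are zero, giving $MA_1=A_1$. The main obstacle is establishing the uniqueness of this ``scaled'' Hermite normal form and checking that passage to $\Q^m/V$ keeps $A_1$ and $B_1$ in it; after that, the rest is bookkeeping with the block decomposition.
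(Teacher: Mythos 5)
Your argument is correct for the statement as written, and it takes a genuinely different route from the paper's; but the statement contains a likely typo that you have interpreted literally and should address.

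The phrase ``the first $l$ columns of $Q_2$'' is --- as you correctly note --- vacuous for $Q_2\in\{I\}\ltimes M_{n,k}(\tfrac1d\Z)$. It should almost certainly read $Q_1$: the paper's own proof runs a pivot-by-pivot induction to conclude exactly that the first $l$ columns of $Q_1$ agree with the identity, and Proposition~\ref{pr:normalformQstabpres} cites the lemma for that $Q_1$ fact (to identify the nontrivial block of $Q_1$ as $M_{l,n-l}(\Z)\rtimes\GL(n-l,\Z)$). Your writeup does not derive this, so there is, in effect, a missing paragraph. Fortunately the claim follows from your identity $MA_1=A_1$ by examining the pivot columns $p_1<\dots<p_l$ of $A_1$: column $p_1$ of $A_1$ is a nonzero multiple of $e_1$, so $Me_1=e_1$; column $p_2$ is supported in rows $1,2$ with nonzero $(2,p_2)$-entry, so given $Me_1=e_1$ we get $Me_2=e_2$; and inductively $Me_i=e_i$ for $1\le i\le l$. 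Since $Q_1=\left(\begin{smallmatrix}M&O\\O&I\end{smallmatrix}\right)$, these are exactly the first $l$ columns of $Q_1$. You should add this step.

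As for the route: the paper first argues $Q_2A=A$ directly --- any column where $Q_2$ adds a nonzero contribution to the top $n$ rows is a forced-zero column, and since $B=Q_1Q_2A$ with $Q_1$ block-diagonal and invertible, a nonzero top in that column of $Q_2A$ would propagate to $B$, contradicting normality --- so $B_1=MA_1$ holds with both sides scaled Hermite normal forms, and then a pivot induction finishes. You instead pass to the quotient $\Q^m/V$ with $V=\mathrm{rowspace}(A_2)$, which kills the $NA_2$ term before you know it vanishes, reduces to a clean $\GL(n,\Z)$--HNF-uniqueness problem, and then lets you read $B_1=A_1$ and $MA_1=A_1$ back from the vanishing in the pivot columns of $V$. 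The quotient move is conceptual and tidy; the paper's order is more elementary and packages the $Q_1$ structure into the same induction that proves $A=B$. Both ultimately hinge on uniqueness of scaled Hermite normal form, and your treatment of that (clear denominators by the least $c_0$, note that conjugation by $M\in\GL(n,\Z)$ preserves $c_0$, reduce to integer HNF uniqueness) is correct, as is your check that deleting the columns $q_1,\dots,q_s$ --- zero in $A_1$, hence disjoint from its pivots --- preserves the scaled-HNF conditions.
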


\begin{proof}
First of all, since $G_{\Q}$ cannot alter rows $n+1$ through $n+k$ by multiplication on the left, we see that the bottom $k$ rows of $B$ and $A$ must be identical.
In particular, for each position $j$, if there is a linear combination of the bottom $k$ rows in $A$ that is trivial in columns $1$ through $j-1$ but nontrivial in column $j$ then there is one for $B$ as well, and vice versa.
Then the set of columns that are forced to be trivial by the first condition in the definition of $G_{\Q}$--normal form is the same in both $A$ and $B$.
The matrix $Q$ can be factored as $Q_1Q_2$, where $Q_2\in \{I\}\ltimes M_{n,k}(\frac{1}{d}\Z)$ and $Q_1\in\GL(n,\Z)\ltimes \{O\}$, because $G_d$ is an internal semidirect product of these two subgroups.
Then $Q_2$ fixes $A$, because $Q_2$ can only change an entry in $A$ that is forced to be zero by the definition, and $Q_1$ cannot change the set of columns that only have zero entries in their first $n$ rows.

So we have $B=Q_1A$, with $Q_1\in\GL(n,\Z)\ltimes\{O\}$.
The uniqueness of Hermite normal form implies that $A=B$, since $Q_1$ only changes the top $n$ rows of $A$ and the top $n\times k$ blocks of $A$ and $B$ are already rational multiples of matrices in Hermite normal form.
However, our statement about the form of $Q_1$ is a little stronger, so we prove the lemma as stated.

Let $p_1,\dotsc,p_l$ be the pivots of $A$.
We perform induction on the hypothesis that for $i$ with $i\leq l$, columns $1$ through $p_{i+1}-1$ of $A$ and $B$ match and columns $1$ through $i$ of $Q_1$ are the same as those of identity matrix.
The hypothesis is true for $i=0$ since columns $1$ through $p_1-1$ of $A$ are trivial and the equation $B=Q_1A$ is only possible if the corresponding columns of $B$ are also trivial.
Now we fix an $i$ with $1\leq i\leq l$ and consider column $p_i$ of $A$ and $B$.
Certainly entries $i+1$ through $n$ of column $p_i$ of $B$ are zero, since $B$ is in $G_{\Q}$--normal form and column $p_i$ is left of the $(i+1)$st pivot column of $B$ (if it exists).
Consider $i'$ with $i<i'\leq n$.
By the inductive hypothesis, the first $i-1$ entries of the $i'$th row in $Q_1$ are zero.
We dot this row $i'$ with the column $p_i$ in $A$ to get an entry in $B$ that we know to be zero.
Since entries $i+1$ through $n$ of column $p_i$ in $A$ are zero, the only possibly nonzero term in this dot product is the product of the $i',i$ entry of $Q_1$ with the $i,p_i$ entry of $A$.
So entry $i',i$ of $Q_1$ is zero, and varying $i'$, we see that every below-diagonal entry in $Q_1$ in column $i$ is zero.
Next we note that if the diagonal entry $i,i$ of $Q_1$ were zero, then $Q_1$ would have determinant zero.
So this entry is nonzero, and therefore position $i,p_i$ in $B$ has a nonzero entry and is the pivot there.
Since the pivot entries are positive, the fact that $Q_1$ has determinant $\pm1$ implies that the $i,i$ entry of $Q_1$ is $1$.
Then position $i,p_i$ matches in $A$ and $B$.
If any above-diagonal entry in column $i$ of $Q_1$ is nonzero, then an entry in column $p_i$ of $B$ above the pivot will not be reduced modulo the pivot entry.
This then implies that all the columns before $p_{i+1}$ of $A$ and $B$ match, since these columns have only zero entries in positions $i+1$ through $n$.

The induction continues until we reach the last pivot position of $A$, showing that the first $l$ columns of $Q_1$ match those of the identity matrix.
Since rows $l+1$ through $n$ of $A$ are zero rows, this is enough to deduce that $B=A$.
\end{proof}

Now we show existence of matrices in normal form.
\begin{proposition}\label{pr:normalformexists}
Every matrix $A$ in $M_{n+k,m}(\Q)$ is associated to a matrix $B$ in $M_{n+k,m}(\Q)$ in $G_{\Q}$--normal form, with $A=QB$ for some $Q\in G_{\Q}$.
The matrix $B$ is unique.
\end{proposition}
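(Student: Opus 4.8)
The plan is to deduce uniqueness immediately from Lemma~\ref{le:normalformunique} and to prove existence by an explicit two-stage row reduction, each stage realized by left-multiplication by an element of $G_{\Q}$. For uniqueness: if $B$ and $B'$ are both in $G_{\Q}$--normal form and both associated to $A$, say $A=QB$ and $A=Q'B'$ with $Q,Q'\in G_{\Q}$, then $B'=(Q')^{-1}Q\,B$ with $(Q')^{-1}Q\in G_{\Q}$, so Lemma~\ref{le:normalformunique} (applied with its ``$A$'' taken to be our $B$ and its ``$B$'' taken to be our $B'$) forces $B=B'$. For existence, recall that left-multiplication by an element of $G_{\Q}$ fixes rows $n+1$ through $n+k$ and replaces the block of rows $1$ through $n$ by the result of applying an element of $\GL(n,\Z)$ to it plus an arbitrary rational combination of rows $n+1$ through $n+k$.

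\emph{Stage one (rational clearing).} View rows $n+1$ through $n+k$ as vectors in $\Q^{m}$ and put them in reduced row echelon form; let $q_1<\dots<q_s$ be the pivot columns and $v_1,\dots,v_s$ the echelon basis vectors, so $v_t$ has a $1$ in column $q_t$ and $0$ in every other $q_{t'}$. Subtracting the appropriate rational multiple of each $v_t$ from each of rows $1$ through $n$ is a legal $G_{\Q}$--move, since each $v_t$ lies in the span of rows $n+1,\dots,n+k$, and it makes all entries of rows $1$ through $n$ in columns $q_1,\dots,q_s$ equal to zero; because $v_t$ vanishes in the other pivot columns, clearing one pivot column does not reintroduce nonzero entries into a previously cleared one. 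These $q_t$ are exactly the columns $j$ such that some combination of rows $n+1,\dots,n+k$ is nonzero in column $j$ and zero in all earlier columns, so this establishes the first condition in the definition of $G_{\Q}$--normal form.

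\emph{Stage two (integer Hermite reduction).} The block of rows $1$ through $n$ now has rational entries; let $c$ be a common denominator, so $M:=c\cdot(\text{this block})$ is an integer matrix. Choose $U\in\GL(n,\Z)$ with $UM$ in Hermite normal form for integer row reduction, and apply $U$ to the (rational) block of rows $1$ through $n$; this is a legal $G_{\Q}$--move, and the resulting block equals $\tfrac1c\,UM$, i.e.\ $c$ times it is in Hermite normal form. Positivity of the pivots, the reduction of entries above a pivot modulo that pivot, and the vanishing of the last $n-l$ rows all survive the scaling by $\tfrac1c$, so the remaining conditions of the definition hold. Moreover columns $q_1,\dots,q_s$ of the top block were identically zero after stage one, and integer row operations keep zero columns zero, so stage two does not disturb stage one; in particular the pivots of the top block automatically avoid $q_1,\dots,q_s$. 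The composite of the two moves exhibits $A$ as $QB$ with $B$ in $G_{\Q}$--normal form, and $B$ is the unique such matrix by the first paragraph. Since reduced row echelon form and Hermite normal form are both effectively computable, so is $B$.

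The one point that requires care is the interaction of the two stages: the rational clearing must be performed \emph{before} the Hermite step, and one must check (as above) that the integer step preserves the zero columns produced by the rational step — carrying out the stages in the other order would fail. Everything else is a routine unwinding of the definitions of reduced row echelon form and Hermite normal form.
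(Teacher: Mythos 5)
Your proof is correct and follows essentially the same two-stage strategy as the paper's: first clear the ``rational'' columns of the top block using arbitrary $\Q$-combinations of the bottom $k$ rows (the paper does this column-by-column by computing kernels of restriction maps, you do it in one pass via the reduced row echelon form of the bottom block, but these are the same computation), then apply integer Hermite reduction to the top $n$ rows, noting that this leaves the cleared columns zero; uniqueness is obtained from Lemma~\ref{le:normalformunique} exactly as in the paper.
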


\begin{proof}
We prove existence by supplying an algorithm; of course uniqueness is then the result of Lemma~\ref{le:normalformunique}.
The algorithm is a row reduction algorithm.
Multiplication on the left by elements of $G_{\Q}$ allows us to replace any of the top  $n$  rows of $A$ by itself plus a rational linear combination of the bottom $k$ rows, or to replace any row in the top $n$ by itself plus an integer linear combination of the other top $n$ rows, or to permute the top $n$ rows, or to multiply any of the top $n$ rows by $-1$.

The first part of the algorithm is to use the bottom $k$ rows of $A$ to simplify $A$ as much as possible; this is step 1 below.
The second part is to perform integer row reduction and reduce the entries above the pivots as much as possible.

\textbf{Step 1:}
We start by setting $j=1$; $j$ is the position of the column we are trying to simplify.
We consider the map $\Q^k\to \Q^{j-1}$ that sends a $k$-tuple of coefficients to the corresponding linear combination of the bottom $k$ rows of $A$, restricted to their first $j-1$ columns.
We find generators for the kernel of this map.
Each generator gives us a linear combination of the bottom $k$ rows of $A$ that is zero in its first $j-1$ entries; if some generator's linear combination is nonzero in the $j$th column, we add rational multiples of this linear combination to the top $n$ rows of $A$ to zero out their $j$th column entries.
Of course this leaves the previous columns unaffected.
(In the case that $j=1$, we simply check whether some row in the bottom $k$ rows has a nonzero entry in its first column, and if so,  we use its multiples to zero out the top $n$ entries of the first column.)
So we replace $A$ with an equivalent matrix with the first $n$ entries of column $j$ zeroed out if possible.
We  then replace $j$ with $j+1$ and repeat this step.
We do this until we have tried it for all columns of $A$.

After completing the previous step, we perform the procedure to turn the top $n\times m$ block of $A$ into a rational multiple of a matrix in Hermite normal form for integer row reduction. 
Although this is standard, we include it here for completeness.

\textbf{Step 2:}
If the top $n\times m$ block of $A$ is now the zero matrix, then $A$ is in $G_{\Q}$--normal form and we are done.
Otherwise we start the second part by setting $j$ to be the first column with a nonzero entry in its first $n$ rows.
We initialize our sequence of pivots by setting $l=0$, so that there are no pivots in the pivots sequence $p_1,\dotsc,p_l$.

\textbf{Step 3:}
By construction, $j>l$ and all entries in row $l+1$ through row $n$ in columns $1$ through $j-1$ are zero.
We look the entries of column $j$ from row $l+1$ through row $n$, and choose a row $i$ whose column-$j$ entry has minimal nonzero absolute value among these
(if all the entries were zero, we would increment $j$ and loop, but the choice of $j$ should prevent this).
We add integer multiples of row $i$ to each row from $l+1$ through $n$ in order to diminish the sum of the absolute values of the entries in column $j$.
We continue this until either another row's $j$th entry becomes smaller in absolute value than that of row $i$, or else row $i$ becomes the only entry from position $l+1$ through $n$ with a nonzero entry.
If another row's $j$th entry becomes smaller in absolute value than that of row $i$, then we replace $i$ with the position of that row and repeat this step.
If position $i$ becomes the only entry in column $j$ from position $l+1$ through $n$ that is nonzero, then we proceed to the next step.

\textbf{Step 4:}
The entry in position $i$ is the unique nonzero entry in column $j$ among rows from $l+1$ through $n$.
We permute rows $l+1$ through $n$ of $A$ so that this nonzero entry is now in row $l+1$.
We replace $l$ with $l+1$ and set the new pivot position $p_l$ to be $j$.
We replace row $l$ with its multiple by $\pm1$ to ensure that the pivot entry in position $l,p_l$ is positive.
We then add integer multiples of row $l$ to rows $1$ through $l-1$ to make sure that these entries are nonnegative and strictly less than the pivot entry.
Since all entries to the left of the pivot entry are zero, this does not affect the previous columns of the matrix.
We then set $j$ to be the next column with  a nonzero entry in rows $l+1$ through $n$ and return to step~3.
If no such column exists, we are done.
This finishes the algorithm.

It is clear that this procedure terminates.
The loop in step 1 repeats once for each column.
After step 1, the least common denominator of the entries of $A$ does not change; call this number $d$.
The loop in step 3 always terminates because we decrease the sum of the absolute values of the entries of column $j$ of $A$ by at least $1/d$ with each iteration.
The loop going back from step 4 to step 3 can be repeated at most $m$ times since it requires a new column each time.

It is also clear that the output of this procedure is a matrix in $G_{\Q}$--normal form.
The matrix coming out of step 1 satisfies the first property in the definition: if there were a way to use the bottom $k$ rows to zero out the top $n$ in the $j$th column without disturbing the previous columns, we would have used it already.
This is not disturbed by the remainder of the algorithm, which never changes a column with only zeros in its first $n$ rows to one with a nonzero entry there.
The output of the algorithm has its list of pivot columns, and by construction, the pivots satisfy the conditions in the definition.
Of course, we only stop producing pivots when all the remaining rows in the first $n$ are zero rows, so we satisfy the last condition in the definition.

Let $A$ denote the matrix input to the procedure and $B$ the output, which is in $G_{\Q}$--normal form.
Keeping track of the row moves performed in this algorithm and composing them gives us a matrix $Q$ in $G_{\Q}$ with $B=QA$.
\end{proof}

Now we turn our attention to stabilizers in $G_d$.  We need the following.
\begin{lemma}\label{le:sdpres}
Suppose the group $G$ acts on the group $H$, $G$ has the presentation $\genby{S_G | R_G}$, $H$ has the presentation $\genby{S_H|R_H}$, and the set $R_C$ consists of words $gh^{-1}g^{-1}w_{g,h}$ for all $g\in S_G$ and $h\in S_H$, where $w_{g,h}$ is a word in $S_H$ representing $ghg^{-1}$.
Then 
\[\genby{S_G\cup S_H | R_G\cup R_H\cup R_C}\]
is a presentation for the semidirect product $G\ltimes H$.
\end{lemma}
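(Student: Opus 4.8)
The plan is to exhibit the obvious homomorphism $\phi$ from the presented group onto $G\ltimes H$ and then prove it injective by recognizing the split extension structure that is built into the presentation. Write $P=\genby{S_G\cup S_H\mid R_G\cup R_H\cup R_C}$, and let $\theta\co G\to\Aut(H)$ denote the action, so that for $g\in S_G$ and $h\in S_H$ the word $w_{g,h}$ represents $\theta_g(h)$. Define $\phi\co P\to G\ltimes H$ by sending each generator to the element of the same name. Since $R_G$ and $R_H$ visibly map to the identity, and since conjugation inside $G\ltimes H$ realizes $\theta$, each relator $gh^{-1}g^{-1}w_{g,h}$ of $R_C$ also maps to the identity; so $\phi$ is well defined, and it is surjective because its image contains all generators of $G\ltimes H$.

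For injectivity I would first set up the internal structure of $P$. Because $R_G$ holds in $P$, the inclusion $S_G\hookrightarrow P$ extends to a homomorphism $s\co G\to P$; likewise $R_H$ gives $q\co H\to P$. Put $N=q(H)$. Reading $R_C$ in $P$ gives $s(g)q(h)s(g)^{-1}=q(\theta_g(h))$ for all $g\in S_G$ and $h\in S_H$; since both sides are homomorphisms in $h$ that agree on the generating set $S_H$, this identity holds for all $h\in H$. As $\theta_g$ is an automorphism of $H$, it follows that conjugation by $s(g)$ carries $N$ \emph{onto} $N$, and hence that $N$ is normalized by every $s(g)^{\pm1}$; it is trivially normalized by every $q(h)$, so $N\trianglelefteq P$. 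Quotienting by $N$ kills $S_H$ and trivializes $R_H$ and $R_C$, so $P/N=\genby{S_G\mid R_G}=G$, and the quotient map $\pi$ satisfies $\pi s=\mathrm{id}_G$. In particular every element of $P$ can be written as $q(h')s(g)$ with $h'\in H$ and $g\in G$.

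To finish, note that $\phi q$ and $\phi s$ agree on generators with the standard embeddings $H\hookrightarrow G\ltimes H$ and $G\hookrightarrow G\ltimes H$, hence equal them; in particular $q$ is injective, so $q\co H\xrightarrow{\ \sim\ }N$. Now if $p\in P$ with $\phi(p)=1$, write $p=q(h')s(g)$; applying $\phi$ expresses $1$ as the product of $h'\in H$ and $g\in G$ inside $G\ltimes H$, which forces $g=1$ and $h'=1$ since $G\cap H=\{1\}$ there. Thus $p=1$, and $\phi$ is an isomorphism.

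The step I expect to be the main obstacle is proving that $N$ is normal: the relations $R_C$ only directly say that conjugation by each $s(g)$ maps $N$ into itself, which by itself would be consistent with an ascending HNN phenomenon, and one genuinely needs the hypothesis that $\theta_g\in\Aut(H)$ (not merely $\theta_g\in\mathrm{End}(H)$) to conclude that this conjugation is onto $N$. All the other steps are routine invocations of the universal property of group presentations, namely that two homomorphisms out of $\genby{S\mid R}$ that agree on $S$ coincide.
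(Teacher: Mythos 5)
The paper omits the proof of this lemma ("left as an exercise for the reader"), so there is no argument in the source to compare against; your proposal supplies a complete and correct proof. The argument is the standard one: exhibit the canonical surjection $\phi\co P\to G\ltimes H$, show that the image $N$ of $H$ in $P$ is normal (using that each $\theta_g$ is onto to get both containments $s(g)N s(g)^{-1}\subseteq N$ and $s(g)^{-1}Ns(g)\subseteq N$, not merely one), identify $P/N$ with $G$ via the splitting $s$, deduce that every element of $P$ factors as $q(h')s(g)$, and then read off injectivity of $\phi$ from the set-theoretic factorization $G\ltimes H = H\cdot G$. Your remark at the end isolates precisely the point where the automorphism hypothesis is used, which is exactly the subtlety a reader should be alert to; the rest is routine use of the universal property of presentations, as you say.
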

The proof is left as an exercise for the reader.

\begin{proposition}\label{pr:normalformQstabpres}
Suppose $d$ is a positive integer and $A$ is a matrix in $M_{n+k,m}(\frac{1}{d}\Z)$ in $G_{\Q}$--normal form.
Then there is an effective procedure to give a finite presentation for the stabilizer $(G_d)_A$.
\end{proposition}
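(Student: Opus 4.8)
The plan is to realize $(G_d)_A$ as an internal semidirect product $L\ltimes N$, where $N$ is a finitely generated free abelian group with an explicitly computable basis and $L$ is a ``parabolic-type'' subgroup of $\GL(n,\Z)$ with an explicitly computable finite presentation, and then to apply Lemma~\ref{le:sdpres}. Write $A=\left(\begin{smallmatrix} A_{\mathrm t}\\ A_{\mathrm b}\end{smallmatrix}\right)$ with $A_{\mathrm t}$ the top $n\times m$ block and $A_{\mathrm b}$ the bottom $k\times m$ block. A matrix $\left(\begin{smallmatrix} A_1 & C\\ O & I\end{smallmatrix}\right)$ in $G_d$ (so $A_1\in\GL(n,\Z)$, $C\in M_{n,k}(\tfrac{1}{d}\Z)$) fixes $A$ exactly when $A_1A_{\mathrm t}+CA_{\mathrm b}=A_{\mathrm t}$, the bottom $k$ rows being automatically fixed. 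Put $N=(G_d)_A\cap\bigl(\{I\}\ltimes M_{n,k}(\tfrac{1}{d}\Z)\bigr)$, identified with $\{C\in M_{n,k}(\tfrac{1}{d}\Z):CA_{\mathrm b}=O\}$, and $L=(G_d)_A\cap\bigl(\GL(n,\Z)\ltimes\{O\}\bigr)$, identified with $\{A_1\in\GL(n,\Z):A_1A_{\mathrm t}=A_{\mathrm t}\}$.

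First I would prove $(G_d)_A=L\ltimes N$. Lemma~\ref{le:normalformunique} applied with $B=A$ shows that every $Q\in(G_d)_A$ factors as $Q=Q_1Q_2$ with $Q_2\in\{I\}\ltimes M_{n,k}(\tfrac{1}{d}\Z)$ fixing $A$ and $Q_1\in\GL(n,\Z)\ltimes\{O\}$; since $Q_1=QQ_2^{-1}$ also fixes $A$, we get $(G_d)_A=LN$, and evidently $L\cap N=\{1\}$. A direct computation shows that conjugating $\left(\begin{smallmatrix}I&B\\O&I\end{smallmatrix}\right)$ by $\left(\begin{smallmatrix}A_1&C\\O&I\end{smallmatrix}\right)$ yields $\left(\begin{smallmatrix}I&A_1B\\O&I\end{smallmatrix}\right)$; since $A_1BA_{\mathrm b}=O$ whenever $BA_{\mathrm b}=O$, the subgroup $N$ is normal in $(G_d)_A$, and $L$ acts on $N$ by $B\mapsto A_1B$.

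Next I would analyze the two factors. For $N$: clearing denominators, $N=\tfrac{1}{d}\{C\in M_{n,k}(\Z):C(dA_{\mathrm b})=O\}$, the integer kernel of a $\Z$-linear map between free abelian groups, which has an effectively computable $\Z$-basis $B_1,\dots,B_p$ (via Hermite or Smith normal form); a presentation for $N\cong\Z^p$ is standard. For $L$: here I use that $A$ is in $G_{\Q}$--normal form, so rows $l+1,\dots,n$ of $A_{\mathrm t}$ vanish while rows $1,\dots,l$ have their leading nonzero entries in the strictly increasing pivot columns $p_1<\dots<p_l$, hence are $\Q$--linearly independent. Writing $A_1$ in $(l,n-l)$ block form and imposing $A_1A_{\mathrm t}=A_{\mathrm t}$ then forces $A_1=\left(\begin{smallmatrix}I_l&*\\O&S\end{smallmatrix}\right)$ with $S\in\GL(n-l,\Z)$ arbitrary and $*\in M_{l,n-l}(\Z)$ arbitrary; so $L$ is a semidirect product of $\GL(n-l,\Z)$ with $M_{l,n-l}(\Z)$, and since $\GL(n-l,\Z)$ is classically finitely presented (with an explicitly known presentation) and $M_{l,n-l}(\Z)$ is free abelian of finite rank, Lemma~\ref{le:sdpres} yields an effective finite presentation of $L$.

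Finally I would assemble the presentation. Taking finite presentations $\genby{S_L\mid R_L}$ of $L$ and $\genby{S_N\mid R_N}$ of $N$, I compute for each $A_1\in S_L$ and each basis element $B_j\in S_N$ the product $A_1B_j$ and express it in the basis $B_1,\dots,B_p$ of $N$ (a linear system over $\Q$), obtaining the conjugation words $w_{A_1,B_j}$; Lemma~\ref{le:sdpres} then outputs the finite presentation $\genby{S_L\cup S_N\mid R_L\cup R_N\cup R_C}$ of $(G_d)_A=L\ltimes N$, and each step is algorithmic. The point needing the most care is the structural one: one must check that membership in $G_{\Q}$--normal form really does constrain $L$ to the parabolic shape above (the stabilizer in $\GL(n,\Z)$ of an arbitrary matrix need not be so tractable) and that Lemma~\ref{le:normalformunique} applies with the prescribed $d$ --- but these are precisely what the normal-form definition and that lemma provide.
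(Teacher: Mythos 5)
Your proof is correct and follows essentially the same strategy as the paper's: use Lemma~\ref{le:normalformunique} to split $(G_d)_A$ as the semidirect product of the parabolic-type subgroup $L\cong M_{l,n-l}(\Z)\rtimes\GL(n-l,\Z)$ with the free abelian kernel $N\cong K^n$, then assemble a finite presentation via Lemma~\ref{le:sdpres}. The only small variation is that you derive the parabolic shape of $L$ directly from $A_1A_{\mathrm t}=A_{\mathrm t}$ and the independence of the pivot rows, while the paper reads it off from the ``first $l$ columns of $Q_1$'' conclusion of Lemma~\ref{le:normalformunique}; both give the same conclusion.
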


\begin{proof}
First we pin down the stabilizer; then we will find a presentation for it.
Let $l$ be the number of pivot rows of $A$.
Suppose $Q\in G_d$ and $A=QA$.
By Lemma~\ref{le:normalformunique}, we have $Q=Q_1Q_2$ where $Q_2\in \{I\}\ltimes M_{n,k}(\frac{1}{d}\Z)$, $Q_1\in\GL(n,\Z)\ltimes \{O\}$, $Q_2A=A$ and the first $l$ columns of $Q_1$ are the same as those of the identity matrix.
This tell us that the nontrivial block of $Q_1$ is in $M_{l,n-l}(\Z)\rtimes \GL(n-l,\Z)$, in other words the nontrivial block of $Q_1$ is itself a block-upper-triangular matrix of the form
\[
\left(
\begin{array}{cc}
I & B \\ O & C
\end{array}
\right)
\]
where $B\in M_{l,n-l}(\Z)$, $C\in \GL(n-l,\Z)$, $I$ is the $l\times l$ identity matrix and $O$ is the $(n-l)\times l$ zero matrix.
The configuration of blocks implies that $\GL(n-l,\Z)$ acts on $M_{l,n-l}(\Z)$ on the right, as the semidirect product notation reflects.

The matrix $Q_2$ is in the stabilizer of $A$ in $\{I\}\ltimes M_{n,k}(\frac{1}{d}\Z)$.
Each row of $Q_2$ acts by adding a rational linear combination of rows $n+1$ through $n+k$ of $A$ to some row of $A$ in the top $n$, and being in the stabilizer means that each such rational linear combination has trivial value.
So each row of the upper-right $n\times k$ block of $Q_2$ is an element of the kernel of the group homomorphism $(\frac{1}{d}\Z)^k\to \Q^m$ that sends a $k$-tuple of coefficients to its corresponding linear combination of rows $n+1$ through $n+k$ of $A$.
Letting $K\subset (\frac{1}{d}\Z)^k$ denote this kernel, we see that the stabilizer of $A$ in $\{I\}\ltimes M_{n,k}(\frac{1}{d}\Z)$ is isomorphic to $K^n$.

This makes it easy to see that the stabilizer of $A$ in $G_d$ is
\[\big(M_{l,n-l}(\Z)\rtimes \GL(n-l,\Z)\big)\ltimes K^n,\]
since each element of this group stabilizes $A$ and any $Q$ stabilizing $A$ is certainly in this group by the above argument.

Since $K$ is the kernel of a map $(\frac{1}{d}\Z)^k\to \Q^m$, it is a finite-rank free abelian group and we can find a basis for $K$.
This in turn yields a basis for $K^n$, which is also free abelian.
Likewise $M_{l,n-l}(\Z)$ is a free abelian group with an obvious basis.
Of course, this means that $K^n$ and $M_{l,n-l}(\Z)$ have obvious finite presentations, where the generators are the given bases and the relations state that all pairs of basis elements commute.
The group $\GL(n-l,\Z)$ has a generating set given by transvections  (elementary matrices with a single nonzero off-diagonal entry of $1$) and inversions (matrices sending a single basis element to its inverse and fixing the others).
The finite  presentation for $\SL(n-l,\Z)$ from Milnor~\cite[Chapter~10]{Milnor}, can easily be modified to give a finite presentation for $\GL(n-l,\Z)$.
The conjugate of a generator of $M_{l,n-l}(\Z)$ by a generator of $\GL(n-l,\Z)$ can easily be written down as a product of generators of $M_{l,n-l}(\Z)$.
We can tabulate this data for all choices of pairs of generators.
Using Lemma~\ref{le:sdpres}, this data together with the presentations for $\GL(n-l,\Z)$ and $M_{l,n-l}(\Z)$ can be combined into a finite presentation for
$M_{l,n-l}(\Z)\rtimes \GL(n-l,\Z)$.
Finally, we can tabulate the action of generators of $M_{l,n-l}(\Z)\rtimes \GL(n-l,\Z)$ on generators of $K^n$ (again in terms of generators of $K^n$).
This data, together with the obvious presentation for $K^n$ and the presentation for $M_{l,n-l}(\Z)\rtimes \GL(n-l,\Z)$, can be combined to give a presentation for the stabilizer of $A$ in $G_d$, again using Lemma~\ref{le:sdpres}.
\end{proof}

\subsection{Integer linear problems}
Let $d$ be a fixed positive integer.
To exploit our rational results in the previous section, we use a crossed homomorphism to keep track of the cosets of $G_d$ in $G_1$.
Let $\rho\co G_d\to M_{n,k}(\Z/d\Z)$ be the following composition
\[G_d = \GL(n,\Z)\ltimes M_{n,k}(\frac{1}{d}\Z)\to M_{n,k}(\frac{1}{d}\Z) \to M_{n,k}(\Z) \to M_{n,k}(\Z/d\Z),\]
where the maps are the second coordinate projection from the semidirect product, then multiplication by $d$, then reduction modulo $d$.

\begin{lemma}\label{le:crohomcosets}
The map $\rho$ is a crossed homomorphism: for $A,B\in G_d$, we have
\[\rho(AB)=A\cdot \rho(B) + \rho(A),\]
where $G_d$ acts on $M_{n,k}(\Z/d\Z)$ via the projection $G_d\to \GL(n,\Z)$ and the standard left action of $\GL(n,\Z)$ on $M_{n,k}(\Z/d\Z)$.

The kernel of $\rho$ is $G_1$ and further, the set of preimages of elements of $M_{n,k}(\Z/d\Z)$ under $\rho$ is precisely the set of left cosets of $G_1$ in $G_d$. 
\end{lemma}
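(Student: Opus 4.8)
The plan is to reduce everything to a short computation in the semidirect-product coordinates on $G_d$. Write a general element of $G_d$ as a pair $(A,B)$ with $A\in\GL(n,\Z)$ and $B\in M_{n,k}(\frac1d\Z)$, corresponding to the block matrix $\left(\begin{smallmatrix}A&B\\ O&I\end{smallmatrix}\right)$; then block multiplication gives $(A_1,B_1)(A_2,B_2)=(A_1A_2,\,A_1B_2+B_1)$, and by definition $\rho(A,B)$ is the class of $dB$ in $M_{n,k}(\Z/d\Z)$. First I would check the crossed-homomorphism identity: applying $\rho$ to the product yields the class of $d(A_1B_2+B_1)=A_1(dB_2)+dB_1$, and since $A_1$ has integer entries, reduction modulo $d$ commutes with left multiplication by $A_1$, so this class equals $A_1\cdot\rho(A_2,B_2)+\rho(A_1,B_1)$, where $A_1$ acts through its image in $\GL(n,\Z)$ acting on $M_{n,k}(\Z/d\Z)$. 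In particular $\rho$ of the identity is $0$ (directly, or from $\rho(I)=\rho(I\cdot I)=2\rho(I)$).

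Next I would identify the kernel and note surjectivity. From the formula, $\rho(A,B)=0$ exactly when $dB$ reduces to $0$ modulo $d$, i.e.\ when $dB\in M_{n,k}(d\Z)$, i.e.\ when $B\in M_{n,k}(\Z)$; this is precisely the condition that $(A,B)\in G_1$, so $\ker\rho=G_1$. Surjectivity is immediate: given $C\in M_{n,k}(\Z/d\Z)$, lift it to $\widetilde C\in M_{n,k}(\Z)$, and then $\rho\bigl(I,\tfrac1d\widetilde C\bigr)=C$.

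Finally I would show that the fibers of $\rho$ are exactly the left cosets of $G_1$. The only wrinkle compared with an honest homomorphism is the twist, so I would first record that, using the crossed-homomorphism identity and $\rho(I)=0$, one has $\rho(P^{-1})=-P^{-1}\cdot\rho(P)$ for every $P\in G_d$; hence $\rho(P^{-1}Q)=P^{-1}\cdot\bigl(\rho(Q)-\rho(P)\bigr)$, and since the action of $P^{-1}$ on $M_{n,k}(\Z/d\Z)$ is a bijection, this is $0$ if and only if $\rho(P)=\rho(Q)$. Therefore $\rho(P)=\rho(Q)\iff P^{-1}Q\in\ker\rho=G_1\iff Q\in PG_1$, so each nonempty fiber $\rho^{-1}(C)$ is a single left coset of $G_1$ and, conversely, every left coset is the fiber over its common $\rho$-value; combined with surjectivity this gives the stated bijection between $M_{n,k}(\Z/d\Z)$ and the set of left cosets of $G_1$ in $G_d$. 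I expect no genuine obstacle here: the one point needing care is keeping the semidirect-product multiplication and the $\GL(n,\Z)$-action straight, and remembering that $\rho$ is only a crossed homomorphism, so the ``fibers are cosets'' step must go through the twisted identity above rather than a one-line subtraction.
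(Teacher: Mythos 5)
Your proof is correct and follows essentially the same route as the paper: working in semidirect-product coordinates, reading off the crossed-homomorphism identity from the block multiplication formula, identifying $\ker\rho = G_1$ directly, and matching fibers to left cosets. Your derivation of the coset correspondence via the identity $\rho(P^{-1}Q)=P^{-1}\cdot(\rho(Q)-\rho(P))$ is a slightly more streamlined version of the paper's two-inclusion argument, but the underlying content is the same.
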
 

\begin{proof}
We consider the definition of the product operation in $\GL(n,\Z)\ltimes M_{n,k}(\Z/d\Z)$:
\[(A,B)\cdot (C,D)=(AC,AD+B).\]
Of course this means that the projection $G_d\to M_{n,k}(\frac{1}{d}\Z)$ is a crossed homomorphism  with respect to the action via the canonical left action of $\GL(n,\Z)$.
Since the map $M_{n,k}(\frac{1}{d}\Z)\to M_{n,k}(\Z/d\Z)$ is equivariant with respect to this action, the map $\rho$ is a crossed homomorphism.

If $A\in G_d$ is in the kernel of $\rho$, this means that the entries in its upper-right block are divisible by $d$ after being multiplied by $d$, in other words that they are integers.
So the kernel of $\rho$ is $G_1$.
Now suppose $B\in M_{n,k}(\Z/d\Z)$.
We can pick representatives for the residue classes of the entries of $B$ to get an element $\tilde B$ in $M_{n,k}(\Z)$ mapping to $B$.
Then $\rho$ maps $(I,\frac{1}{d}\tilde B)$ in $G_d$ to $B$.
We claim that $\rho^{-1}(B)$ is the coset $(I,\frac{1}{d}\tilde B)\cdot G_1$.
If $(C,D)$ is in $G_1$, then  
\[\rho( (I,\frac{1}{d}\tilde B)(C,D))=\rho(\frac{1}{d}\tilde B)+\rho(D)=B+0.\]
This implies that the coset is a subset of the preimage.
On the other hand, if $\rho((C,D))=B$, then $D-\frac{1}{d}\tilde B$ is in $M_{n,k}(\Z)$, and $(C, D-\frac{1}{d}\tilde B)\in G_1$ with $(C,D)=(I,\frac{1}{d}\tilde B)(C,D-\frac{1}{d}\tilde B)$.
In other words, the preimage is a subset of the coset.
This shows that every preimage is a coset.

Now suppose that $(A,B)G_1$ is a coset.
Let $\overline B=\rho((A,B))$.
If $(C,D)\in G_1$, then
$\rho((A,B)(C,D))=B\cdot \rho(D)+\rho(B)=0+\overline B$, so this coset is a subset of this preimage.
If $(C,D)\in G_d$ and $\rho((C,D))=\overline B$, then $(C,D)=(A,B)(A^{-1}C, A^{-1}(D-B))$ with $(A^{-1}C,A^{-1}(D-B))\in G_1$, so this preimage is a subset of this coset.
So every coset is a preimage.
\end{proof}

Now we can prove our proposition on determining orbit membership under the action of $G_1$ on $M_{n+k,m}(\Z)$.
\begin{proof}[Proof of Proposition~\ref{pr:matrixorbitalgorithm}]
Let $A$ and $B$ be in $M_{n+k,m}(\Z)$; we wish to find a matrix $D\in G_1$ with $DA=B$, or show that no such matrix exists.
We start by computing the $G_{\Q}$--normal forms of $A$ and $B$, using the algorithm in Proposition~\ref{pr:normalformexists}.
If these normal forms are different, then $A$ and $B$ are in distinct $G_{\Q}$--orbits (by uniqueness of the normal form, Lemma~\ref{le:normalformunique}).
Certainly if $A$ and $B$ are in different $G_{\Q}$--orbits, then they are also in different $G_1$--orbits.
So we suppose that $A$ and $B$ have the same $G_{\Q}$--normal form $N\in M_{n+k,m}(\Q)$.
Suppose $Q,R\in G_{\Q}$ with $N=QA$ and $N=RB$.
Let $d$ be the least common denominator of the entries of $N$, $Q$ and $R$.
Let $S$ be the generating set from the presentation for the stabilizer of $N$ in $G_d$ from Proposition~\ref{pr:normalformQstabpres}.
Let $\Delta$ be the Schreier graph of $G_1$ in $G_d$ with respect to $S$.
\begin{claim*}
$A$ and $B$ are in the same $G_1$--orbit if and only if the vertices $QG_1$ and $RG_1$ are in the same connected component of $\Delta$.
\end{claim*}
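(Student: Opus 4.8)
The plan is to show that each of the two conditions in the claim is equivalent to the single assertion $(G_d)_N\, Q \cap R\, G_1 \neq \varnothing$, where $N$ is the common $G_{\Q}$--normal form of $A$ and $B$ and $(G_d)_N = \genby{S}$ is the stabilizer furnished by Proposition~\ref{pr:normalformQstabpres}. First I would record the setup: by the choice of $d$ as the least common denominator of the entries of $N$, $Q$ and $R$, both $Q$ and $R$ lie in $G_d$, and $G_1$ is a subgroup of $G_d$, so all the products below make sense in $G_d$.

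For the orbit condition, since $N = QA$ and $N = RB$ we have $A = Q^{-1}N$ and $B = R^{-1}N$, so for $D \in G_1$ the equation $DA = B$ is equivalent (left-multiply by $R$) to $RDQ^{-1}N = N$. As $R, Q \in G_d$ and $D \in G_1 \subseteq G_d$, the matrix $RDQ^{-1}$ lies in $G_d$, so the last equation says exactly that $RDQ^{-1} \in (G_d)_N$. Therefore $A$ and $B$ lie in the same $G_1$--orbit if and only if there exist $D \in G_1$ and $\sigma \in (G_d)_N$ with $\sigma Q = RD$; letting $D$ range over $G_1$, this is precisely $(G_d)_N\, Q \cap R\, G_1 \neq \varnothing$.

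For the Schreier-graph condition, any edge path in $\Delta$ from $QG_1$ to $RG_1$ has a composition of edge labels $c$ lying in $\genby{S} = (G_d)_N$, and $c \cdot QG_1 = RG_1$ by Lemma~\ref{le:travelSchreier}; conversely, given $\sigma \in (G_d)_N$ with $\sigma QG_1 = RG_1$, writing $\sigma = c_m \cdots c_1$ as a word in $S^{\pm 1}$ produces the explicit edge path through the cosets $QG_1,\ c_1 QG_1,\ \dots,\ c_m \cdots c_1 QG_1 = RG_1$ (traversing an edge backwards whenever a letter is the inverse of an element of $S$). Hence $QG_1$ and $RG_1$ lie in the same component of $\Delta$ if and only if there is $\sigma \in (G_d)_N$ with $\sigma Q \in R\, G_1$, i.e.\ again $(G_d)_N\, Q \cap R\, G_1 \neq \varnothing$. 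Chaining the two equivalences yields the claim.

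I do not expect a genuine obstacle here: it is all coset bookkeeping of the sort already left to the reader elsewhere in this section. The one point that demands care is that $(G_d)_N$ need not be normal in $G_d$ (indeed $G_1$ itself is not normal in $G_d$), so throughout I must keep the $(G_d)_N$--cosets on the left and the $G_1$--cosets on the right and never slip from $\sigma Q$ to $Q\sigma$. The only external ingredients are that $S$ generates $(G_d)_N$ (Proposition~\ref{pr:normalformQstabpres}) and Lemma~\ref{le:travelSchreier} for reading group elements off paths in a Schreier graph.
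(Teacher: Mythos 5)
Your proposal is correct and is essentially the same argument as the paper's: both directions rest on Lemma~\ref{le:travelSchreier} together with the fact from Proposition~\ref{pr:normalformQstabpres} that $S$ generates $(G_d)_N$, and the computations (passing between $DA=B$, $RDQ^{-1}\in(G_d)_N$, and an edge path from $QG_1$ to $RG_1$) are the same ones the paper does. The only cosmetic difference is that you route both directions through the common intermediate statement $(G_d)_N Q \cap RG_1 \neq \varnothing$, whereas the paper simply proves the two implications directly.
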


First we suppose that $QG_1$ and $RG_1$ are in the same connected component of $\Delta$.
Let $C\in G_d$ be the composition of edge labels on an edge path from $QG_1$ to $RG_1$.
Then $CQG_1=RG_1$ by Lemma~\ref{le:travelSchreier}.
In particular, $R^{-1}CQ$ is in $G_1$, and $R^{-1}CQ$ sends $A$ to $B$.

Conversely, we suppose that there is $D$ in $G_1$ with $DA=B$.
Then $RDQ^{-1}$ fixes $N$.
By Proposition~\ref{pr:normalformQstabpres},  $RDQ^{-1}$ is a word in $S$.
Starting at $QG_1$, we form an edge path in $\Delta$ by following the edges labeled by this expression for $RDQ^{-1}$.
This is possible and unambiguous since $\Delta$, being a Schreier graph, has exactly one edge with each label entering and leaving each vertex.
Then by Lemma~\ref{le:travelSchreier}, the terminus of the path is $RDQ^{-1}QG_1=RG_1$.
This proves the claim.

The algorithm should then be clear at this point.
First we compute the $G_{\Q}$--normal forms of $A$ and $B$ and report that $A$ and $B$ are in different $G_1$--orbits if these $G_{\Q}$--normal forms differ.
If the normal forms are the same matrix $N$, we find matrices $Q$ and $R$ with $QA=N$ and $RB=N$ and find the lowest common denominator $d$ of the entries of $Q$, $R$ and $N$.
Then we find a generating set $S$ for the stabilizer of $N$ in $G_d$ and construct the Schreier graph $\Delta$ of $G_1$ in $G_d$ with respect to $S$.
This is possible since $\Delta$ is finite by Lemma~\ref{le:crohomcosets}.
The crossed homomorphism $\rho$ gives a convenient way to construct $\Delta$: $CDG_1=EG_1$ if and only if $\rho(CD)=\rho(E)$, if and only if $C\rho(D)+\rho(C)=\rho(E)$ for any $D,E\in G_d$ and $C\in S$.
Next in the algorithm, we check whether $QG_1$ and $RG_1$ are in the same connected component.
If not, we report that $A$ and $B$ are in different $G_1$--orbits.
If they  are in the same connected component, we take $C$ to be the composition of edge labels along a path from $QG_1$ to $RG_1$, and report that $R^{-1}CQ$ is a matrix in $G_1$ sending $A$ to $B$.
\end{proof}

Now we find a presentation for the stabilizer in $G_1$ of a matrix $A$ in $M_{n+k,m}(\Z)$.
\begin{proof}[Proof of Proposition~\ref{pr:matrixstabpres}]
Let $N$ be the $G_{\Q}$--normal form of $A$ and let $Q\in G_{\Q}$ be an element with $N=QA$, as found using Proposition~\ref{pr:normalformexists}.
Let $d$ be the least common denominator of the entries of $N$ and $Q$.
Let $S$ be a generating set for the stabilizer of $N$ in $G_d$, as given by Proposition~\ref{pr:normalformQstabpres}.
Let $\Delta$ be the Schreier graph of $G_1$ in $G_d$ with respect to $S$.
Let $S'$ be $\{ Q^{-1}CQ | C\in S\}$; note that $S'\subset (G_d)_A$.
Finally, let $\Delta'$ be the Schreier graph of $(G_1)_A$ in $(G_d)_A$ with respect to $S'$.
The proof of the proposition will follow from the claim:

\begin{claim*}
As a directed multigraph, $\Delta'$ is isomorphic to the connected component of $Q^{-1}G_1$ in $\Delta$, by an isomorphism that sends edge labels in $S'$ to their conjugates by $Q$ in $S$.
\end{claim*}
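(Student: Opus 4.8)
The plan is to build the claimed isomorphism directly from conjugation by $Q$, and then to extract the statement about connected components from the orbit–stabilizer description of Schreier graphs. Write $c_Q$ for the map $x\mapsto QxQ^{-1}$. Because $d$ was chosen to clear the denominators of $Q$, both $Q$ and $Q^{-1}$ lie in $G_d$, so $c_Q$ is an automorphism of $G_d$ (not merely of $G_{\Q}$) --- this is the one place where the choice of $d$ is needed. From $N=QA$ one gets that a matrix $g\in G_d$ fixes $A$ if and only if $QgQ^{-1}$ fixes $N$, so $c_Q$ restricts to an isomorphism $(G_d)_A\xrightarrow{\ \sim\ }(G_d)_N$. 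Since $S'=Q^{-1}SQ=c_Q^{-1}(S)$ we have $c_Q(S')=S$, and as $S$ generates $(G_d)_N$ the set $S'$ generates $(G_d)_A$; in particular $\Delta'$ is connected. Finally $c_Q$ commutes with intersection, so
\[
c_Q\bigl((G_1)_A\bigr)=c_Q\bigl(G_1\cap(G_d)_A\bigr)=QG_1Q^{-1}\cap(G_d)_N .
\]

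First I would transport $\Delta'$ along $c_Q$: it carries the Schreier graph of $(G_1)_A$ in $(G_d)_A$ with respect to $S'$, as a directed multigraph, onto the Schreier graph of $QG_1Q^{-1}\cap(G_d)_N$ in $(G_d)_N$ with respect to $S$, sending each label $C'\in S'$ to $QC'Q^{-1}\in S$ --- which is exactly ``conjugation by $Q$'' in the wording of the claim. It then remains to identify this last Schreier graph with a component of $\Delta$. For that I would use the principle behind Lemma~\ref{le:travelSchreier}: since $S$ generates $(G_d)_N$, the graph $\Delta$ is the coset action graph of $(G_d)_N$ on the left cosets $G_d/G_1$, so its connected components are the $(G_d)_N$--orbits, and the component of a coset $gG_1$ is isomorphic, as a directed multigraph with labels in $S$, to the Schreier graph of $\stab_{(G_d)_N}(gG_1)=(G_d)_N\cap gG_1g^{-1}$ in $(G_d)_N$ with respect to $S$. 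Taking $g=Q$ (a vertex of $\Delta$, as $Q\in G_d$), this stabilizer is $(G_d)_N\cap QG_1Q^{-1}$, which is exactly $c_Q((G_1)_A)$ computed above. Composing the two isomorphisms yields a label-respecting isomorphism of directed multigraphs from $\Delta'$ onto the connected component of $\Delta$ containing $QG_1$, with the base vertex $(G_1)_A$ of $\Delta'$ mapping to $QG_1$; this is the content of the claim.

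The routine but fiddly part --- the step I would write out most carefully --- is the coset bookkeeping compressed into the previous paragraph: keeping the paper's left-coset and left-action conventions straight and verifying the edge condition in both directions. Explicitly, there is an edge labeled $C'\in S'$ from $g(G_1)_A$ to $h(G_1)_A$ in $\Delta'$ exactly when $h^{-1}C'g\in(G_1)_A$; since $h^{-1}C'g\in(G_d)_A$ automatically and $(G_d)_A\cap G_1=(G_1)_A$, this is equivalent to $h^{-1}C'g\in G_1$, which in turn is precisely the condition for an edge labeled $C=QC'Q^{-1}$ from $QgG_1$ to $QhG_1$ in $\Delta$. Checking this equivalence in both directions, together with the elementary observations gathered above and the fact that $c_Q$ genuinely lands in $G_d$ rather than just in $G_{\Q}$, is all the proof needs; there is no deeper difficulty.
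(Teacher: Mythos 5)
Your proof is correct. It also takes a genuinely different route from the paper's: the paper defines the vertex map $B\cdot(G_1)_A\mapsto Q^{-1}BG_1$ explicitly and then verifies well-definedness, injectivity, the edge correspondence, and the connectivity of $\Delta'$ as separate checks, whereas you factor the desired isomorphism through the conjugation automorphism $c_Q\colon g\mapsto QgQ^{-1}$ of $G_d$, transporting $\Delta'$ to the Schreier graph of $QG_1Q^{-1}\cap(G_d)_N$ in $(G_d)_N$ with respect to $S$, and then identify that graph with a component of $\Delta$ via the orbit--stabilizer description of Schreier-graph components. Once unfolded, the composite map is the same bijection on cosets, but your factorization has the virtue that the connectivity of $\Delta'$ is immediate (because $c_Q(S')=S$ generates $(G_d)_N$) and that the label-preservation is confined to a single transparent orbit--stabilizer step rather than a hand check of both implications.

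There is one discrepancy you gloss over that deserves to be flagged: you land at the component of $QG_1$, while the claim and the paper's own proof place the image at $Q^{-1}G_1$. You are in fact right, and the claim as printed has a sign error. Since $S'=Q^{-1}SQ$, the only conjugation carrying $S'$ back into $S$ is $C'\mapsto QC'Q^{-1}$, and with this label map the vertex formula is forced to be $B\cdot(G_1)_A\mapsto QBG_1$ (your $g=Q$ computation); the base vertex therefore goes to $QG_1$. With the paper's formula $B\cdot(G_1)_A\mapsto Q^{-1}BG_1$, the image of an edge labeled $C'\in S'$ would have to carry the label $Q^{-1}C'Q=Q^{-2}DQ^2$ (where $C'=Q^{-1}DQ$ with $D\in S$), which does not lie in $S$; indeed the paper's own sentence ``an edge labeled by $Q^{-1}CQ$ in $S'$'' is already internally inconsistent, as edges of $\Delta$ carry labels in $S$, not $S'$. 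The worked example conceals the slip because $\rho(Q)=\rho(Q^{-1})$ there, so $QG_1$ and $Q^{-1}G_1$ name the same vertex. Since you are proving a statement with a misprint in it, do not end with ``this is the content of the claim''; instead state explicitly that the component is that of $QG_1$ and that the claim's $Q^{-1}$ should read $Q$.
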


To prove the claim, we start by defining a map on vertices from $\Delta'$ to $\Delta$.
Let $B\in (G_d)_A$.
We send the vertex $B\cdot (G_1)_A$ of $\Delta'$ to the vertex $Q^{-1}BG_1$ of $\Delta$.
This map is well defined: if $C$ is also in $(G_d)_A$ with $B\cdot (G_1)_A=C\cdot (G_1)_A$, then $B^{-1}C\in (G_1)_A\subset G_1$ and therefore $Q^{-1}BG_1=Q^{-1}CG_1$.
Conversely, the map is injective: given $B,C\in (G_d)_A$ with $Q^{-1}BG_1=Q^{-1}CG_1$, we see that $B^{-1}C\in G_1$; since $(G_1)_A=(G_d)_A\cap G_1$, this means that $B^{-1}C\in (G_1)_A$ and therefore that $B\cdot (G_1)_A=C\cdot(G_1)_A$.
Suppose there is an edge labeled by $C$ in $S'$ from $B\cdot (G_1)_A$ to $B'\cdot (G_1)_A$ in $\Delta'$.
Then there is an edge labeled by $Q^{-1}CQ$ in $S'$ from $Q^{-1}BG_1$ to $Q^{-1}B'G_1$.
This is immediate from the definition of the Schreier graph.
The reverse implication also holds, so the map is an isomorphism of directed multigraphs and respects labels as described.

All that is left in the claim is to show that $\Delta'$ is connected.
Suppose $B\cdot (G_1)_A$ is a vertex of $\Delta'$.
Of course this means that $B\in (G_d)_A$.
Then $QBQ^{-1}\in (G_d)_N$, and by the definition of $S$, $QBQ^{-1}$ can be expressed as a product of elements of $S$.
Of course, this means that $B$ can be expressed as a product of elements of $S'$.
Since $\Delta'$ is a Schreier graph,
 we can trace out a unique edge path starting at $(G_1)_A$ using the labels from from the given expression for $B$ as a product of elements of $S'$.
By Lemma~\ref{le:travelSchreier}, the terminus of this path is $B\cdot (G_1)_A$.
Since $B$ was arbitrary, this means that $\Delta'$ is connected.  This proves the claim.

Now we use the claim to finish the proposition.
Let $Z$ be the presentation $2$--complex for $(G_d)_N$ using the finite presentation from Proposition~\ref{pr:normalformQstabpres}.
Of course, the generating set for this presentation is $S$.
The inner automorphism of $G_d$ given by conjugating by $Q^{-1}$ sends $(G_d)_N$ to $(G_d)_A$ and sends $S$ to $S'$.
Therefore we can also view $Z$ as a presentation $2$--complex for $(G_d)_A$ with generators $S'$.
Let $\widetilde Z$ be the cover of $Z$ with fundamental group $(G_1)_A$.
Since $\Delta'$ is a finite graph, $\widetilde Z$ is a finite-sheeted cover and is therefore a finite complex.
Then we use the Seifert--Van Kampen theorem to write down a finite presentation for the fundamental group of $\widetilde Z$, which is $(G_1)_A$.
Since every step in this construction can be done effectively, this is an algorithm to produce a finite presentation.
\end{proof}

\begin{example}
We consider a concrete example to illustrate the above algorithms.
It also happens that this is an example where the Schreier graph we describe is disconnected, and where a pair of matrices are in the same $G_d$--orbit but in different $G_1$--orbits.
Consider the following matrices:
\[
A=
\left(
\begin{array}{c}
1 \\ 0 \\ 2
\end{array}
\right)
,
\quad
N=
\left(
\begin{array}{c}
0 \\ 0 \\ 2
\end{array}
\right)
,\quad
Q=
\left(
\begin{array}{ccc}
1 & 0 & -\frac{1}{2} \\ 0 &  1 & 0\\ 0 & 0 & 1
\end{array}
\right).
\]
Then $A,N\in M_{3,1}(\Z)$ and $Q\in G_2=\GL(2,\Z)\ltimes M_{2,1}(\frac{1}{2}\Z)$.
We see that $A$ is not in $G_{\Q}$--normal form, but $N$ is, and $N=QA$.
The stabilizer of $N$ in $G_2$ is a copy of $\GL(2,\Z)$ generated by $S=\{a,b,c\}$ with
\[
a=
\left(
\begin{array}{ccc}
1 & 1 & 0 \\
0 & 1 & 0 \\
0 & 0 & 1 \\
\end{array}
\right),
\quad
b=
\left(
\begin{array}{ccc}
1 & 0 & 0 \\
1 & 1 & 0 \\
0 & 0 & 1 \\
\end{array}
\right),
\quad
c=
\left(
\begin{array}{ccc}
-1 & 0 & 0 \\
0 & 1 & 0 \\
0 & 0 & 1 \\
\end{array}
\right).\]
Using the fact that $\SL(2,\Z)$ is the amalgamated free product $(\Z/4\Z) *_{\Z/2\Z} (\Z/6\Z)$ (see for example, Serre~\cite[page~35]{Serre}) with the elements of order $4$ and $6$ given by $ba^{-1}b$ and $a^{-1}b$ respectively, it is straightforward to derive a presentation for $\GL(2,\Z)=(G_2)_N$ as follows:
\[
\genby{ a,b,c | c^2=1, (a^{-1}b)^3=(ba^{-1}b)^2, (a^{-1}b)^6=1, cac=a^{-1}, cbc=b^{-1}}.
\]

\begin{figure}
\labellist\small\hair 2.5pt
\pinlabel $\binom{0}{0}$ [l] at 32 138
\pinlabel $\binom{1}{0}$ [l] at 129 138
\pinlabel $\binom{1}{1}$ [l] at 129 39
\pinlabel $\binom{0}{1}$ [l] at 32 39
\pinlabel $c$ [l] at 39 9
\pinlabel $c$ [l] at 144 9
\pinlabel $c$ [l] at 148 118
\pinlabel $c$ [l] at 45 115
\pinlabel $a$ [r] at 0 124 
\pinlabel $a$ [r] at 100 124 
\pinlabel $a$ [l] at 70 44
\pinlabel $a$ [l] at 70 6
\pinlabel $b$ [r] at 16 142
\pinlabel $b$ [r] at 16 42
\pinlabel $b$ [r] at 111 75
\pinlabel $b$ [l] at 138 75
\endlabellist
\begin{center}
\includegraphics{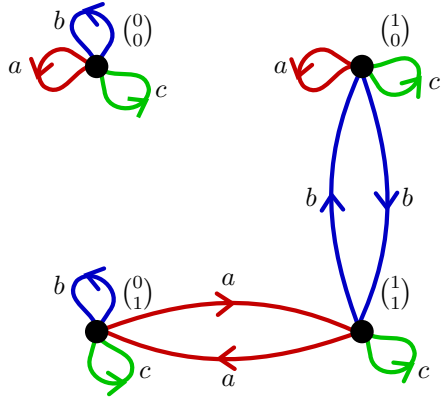}
\end{center}
\caption{The Schreier graph of $\GL(2,\Z)\ltimes M_{2,1}(\Z)$ in $\GL(2,\Z)\ltimes M_{2,1}(\frac{1}{2}\Z)$ with respect to $\{a,b,c\}$.
Each vertex is labeled by the image of its coset under $\rho$. 
The vertex $Q^{-1}G_1$ is labeled $\binom{1}{0}$.
}\label{fig:Schreiereg}
\end{figure}

As usual, $G_1=\GL(2,\Z)\ltimes M_{2,1}(\Z)$.
The Schreier graph $\Delta$ of $G_1$ in $G_2$ with respect to $S$ is displayed in Figure~\ref{fig:Schreiereg}.
By inspecting $\Delta$, we learn that the fundamental group of the connected component of $Q^{-1}G_1$, based at $Q^{-1}G_1$, is generated by the following seven elements:
\[\{a,c,b^2,bcb^{-1}, ba^2b^{-1}, baca^{-1}b^{-1}, baba^{-1}b^{-1}\}.\]
Therefore the stabilizer $(G_1)_A$ is generated by the conjugates of these seven elements by $Q^{-1}$:
\begin{multline*}
\Bigg\{
a,
c,
\left(
\begin{array}{ccc}
1 & 0 & 0 \\
2 & 1 & -1 \\
0 & 0 & 1 \\
\end{array}
\right),
\left(
\begin{array}{ccc}
-1 & 0 & 1 \\
-2 & 1 & 1 \\
0 & 0 & 1 \\
\end{array}
\right),
\left(
\begin{array}{ccc}
-1 & 2 & 1 \\
-2 & 3 & 1 \\
0 & 0 & 1 \\
\end{array}
\right),
\\
\left(
\begin{array}{ccc}
-3 & 2 & 2 \\
-4 & 3 & 2 \\
0 & 0 & 1 \\
\end{array}
\right),
\left(
\begin{array}{ccc}
3 & -1 & -1 \\
4 & -1 & -2 \\
0 & 0 & 1 \\
\end{array}
\right)
\Bigg\}.
\end{multline*}

There is a presentation $2$--complex for $(G_1)_A$ that has the connected component of $QG_1$ in $\Delta$ as its $1$--skeleton, and which is a three-sheeted cover of the presentation $2$--complex for $(G_2)_N$ corresponding to the presentation given above.
In particular, $(G_1)_A$ has a finite presentation with seven generators and fifteen relators.

There is another interesting observation we can make by looking at $\Delta$.
Even though $A$ and $N$ are both in $M_{3,1}(\Z)$ and $A$ and $N$ are in the same orbit under $G_2$, the vertices $G_1$ and $QG_1$ are in different connected components of $\Delta$ and therefore $A$ and $N$ are in different orbits under the action of $G_1$.
This shows that it is not enough simply to check whether matrices are in the same $G_\Q$--orbit.
\end{example}

\section{Peak reduction}\label{se:peakreduction}
\subsection{Preliminary notions}
Throughout this section we use the generalized Whitehead automorphisms $\Omega$ defined in the introduction.
The definition of peak here does not exactly match the definition used in Day~\cite{Day1}, but the current definition is more symmetric.
\begin{definition}
A \emph{peak} is a triple $(W,\alpha,\beta)$, where $W$ is a tuple of conjugacy classes and $\alpha$ and $\beta$ are automorphisms of $A_\Gamma$ and 
\[\abs{\alpha\cdot W}\leq \abs{W}\quad\text{and}\quad\abs{\beta\cdot W}\leq\abs{W},\]
with at least one of these inequalities being strict.
The \emph{height} of a peak is $\abs{W}$.
In this paper, the automorphisms $\alpha$ and $\beta$ in a peak are assumed to be in $\Omega$.

A \emph{lowering} of a peak $(W,\alpha,\beta)$ is a factorization
\[\beta\alpha^{-1}=\gamma_k\dotsc\gamma_1\]
of $\beta\alpha^{-1}$ by automorphisms $\gamma_1,\dotsc,\gamma_k$, such that all the lengths of the intermediate images of $\alpha\cdot W$ are strictly lower than that of $W$:
\[\abs{\gamma_i\dotsm\gamma_1\alpha\cdot W}<\abs{W}\]
for $i=1,\dotsc,k-1$.
In this paper, the automorphisms in a lowering factorization of a peak are always elements of $\Omega$.
\end{definition}

The goal of this section is to prove the following lemma:
\begin{mainlemma}\label{mle:peaklowering}
Suppose $(W,\alpha,\beta)$ is a peak and $\alpha$ and $\beta$ are generalized Whitehead automorphisms.
Then this peak can be lowered using a factorization of $\beta\alpha^{-1}$ by generalized Whitehead automorphisms.
\end{mainlemma}
Theorem~\ref{th:fullfeaturedpeakreduction} follows immediately from this lemma:
\begin{proof}[Proof of Theorem~\ref{th:fullfeaturedpeakreduction} from Main Lemma~\ref{mle:peaklowering}]
This is a standard argument; it appears in detail as the ``Proof of part (3) of Theorem~B" on pages~836--837 of~\cite{Day1}.
In short, we express $\alpha$ as a product of generalized Whitehead automorphisms (for example, as a product of Laurence--Servatius generators using Laurence's theorem) and then repeatedly alter the factorization using Main Lemma~\ref{mle:peaklowering}.
Whenever we see a peak of maximal height, we replace it with a peak-lowering factorization.
Each application of the Main Lemma reduces either the number of peaks of maximal height or reduces the maximum of the heights of the peaks in the factorization.
The procedure will terminate with a factorization of $\alpha$ as product of automorphisms from $\Omega$ such that no peaks appear in the sequence of lengths of the intermediate images of $W$.
Such a factorization satisfies the conclusion of the theorem.
\end{proof}

In our algorithm to lower peaks, we break into cases depending on the properties of the automorphisms $\alpha$ and $\beta$ in the given peak $(W,\alpha,\beta)$.
We often consider the support of automorphisms, defined in Definition~\ref{de:support}.
Using this, we can explain the structure of a generalized Whitehead automorphism.
\begin{lemma}\label{le:actoncomps}
Suppose $\alpha\in\whset{a}$ for some vertex $a$.
Then $\alpha$ is a product of dominated transvections and partial conjugations with multipliers in $[a]$ and inversions in elements of $[a]$.
In particular:
\begin{itemize}
\item if $b$ is adjacent to $a$ with $[b]\neq[a]$ and $\alpha$ does not fix $b$, then $a$ dominates $b$.
\item if $Y$ is a connected component of $\Gamma\setminus\st(a)$ with at least two elements, then either $Y^{\pm1}\cap\supp(\alpha)=\varnothing$ or $Y^{\pm1}\subset\supp(\alpha)$.
\end{itemize}
\end{lemma}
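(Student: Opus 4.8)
The plan is to read the statement off the matrix description of $\whset{a}$ provided by Lemma~\ref{le:etainjective}. That lemma identifies $\eta\co\whset{a}\to G$ as an isomorphism, where $G=\GL(n,\Z)\ltimes M_{n,k}(\Z)$ sits inside $\Aut(Z_{[a]})$ as the block-upper-triangular matrices described there and $n=\abs{[a]}$. First I would fix a generating set for $G$: the elementary transvections (the identity plus a single off-diagonal $1$) in the upper-left $n\times n$ block, together with $\mathrm{diag}(-1,1,\dots,1)$, generate the $\GL(n,\Z)$ factor (standard: these generate $\SL(n,\Z)$, and one inversion adjoins the other coset); and the matrices that are the identity plus a single $\pm1$ in the upper-right $n\times k$ block generate the abelian normal subgroup $M_{n,k}(\Z)$. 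Since $G$ is the internal semidirect product of these two subgroups, their union generates $G$, hence the corresponding automorphisms generate $\whset{a}$.

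Next I would apply the inverse isomorphism $\theta$ (constructed in the proof of Lemma~\ref{le:etainjective}) to each of these generators and check that it is of one of the three named types, writing $[a]=\{a_1,\dots,a_n\}$. The transvection $I+E_{ij}$ in the upper-left block pulls back to the automorphism sending $a_j\mapsto a_ja_i$ and fixing the other generators; this is a dominated transvection with multiplier $a_i\in[a]$, using that any two distinct elements of $[a]$ are adjacent (noted in the introduction) and mutually dominating (since they share a star). The matrix $\mathrm{diag}(-1,1,\dots,1)$ pulls back to the inversion in $a_1\in[a]$. An upper-right elementary matrix with its nonzero entry in the column of $Z_{[a]}$ indexed by $r_b$ or $l_b$ pulls back to the dominated transvection $b\mapsto ba_i$ or $b\mapsto a_ib$; the multiplier $a_i$ dominates $b$ because $a$ does and $\st(a_i)=\st(a)$. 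An upper-right elementary matrix in the column indexed by $r_Y$ pulls back to the partial conjugation $x\mapsto a_i^{-1}xa_i$ on the component $Y$ (or $x\mapsto a_ixa_i^{-1}$, after choosing the sign of the entry), and $Y$ is a component of $\Gamma\setminus\st(a_i)=\Gamma\setminus\st(a)$, so this is a partial conjugation with multiplier in $[a]$. Since every element of $\whset{a}$ is a product of these generators, the first assertion of the lemma follows.

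For the first bullet I would argue by contraposition. Fix a factorization $\alpha=\gamma_m\dotsm\gamma_1$ of a given $\alpha\in\whset{a}$ by the generators above, and observe that each $\gamma_\ell$ either fixes $b$ or is a dominated transvection whose single moved generator has multiplier in $[a]$: a partial-conjugation generator moves only generators lying in some component of $\Gamma\setminus\st(a)$, whereas $b\in\st(a)$; an inversion generator moves only an element of $[a]$, whereas $b\notin[a]$ (since $[b]\neq[a]$). So if $\gamma_\ell(b)\neq b$ then $\gamma_\ell$ is a dominated transvection $b\mapsto ba_i$ or $b\mapsto a_ib$ with $a_i\in[a]$, whence $a_i$ — and therefore $a$, since $\st(a)=\st(a_i)$ — dominates $b$. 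Consequently, if $a$ does not dominate $b$ then every $\gamma_\ell$ fixes $b$ and hence $\alpha$ fixes $b$, which is the contrapositive of the claim.

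Finally, for the second bullet I would use the structure already extracted inside the proof of Lemma~\ref{le:etainjective}: for any $\alpha\in\whset{a}$ and any component $Y$ of $\Gamma\setminus\st(a)$ with at least two vertices there is a single element $u\in\genby{[a]}$, depending only on $\alpha$, with $\alpha(x)=u^{-1}xu$ for every $x\in Y$. Feeding this into Definition~\ref{de:support} shows that for each $b\in Y$ one has $b\in\supp(\alpha)\iff b^{-1}\in\supp(\alpha)\iff u\neq 1$, and the truth of ``$u\neq1$'' does not depend on $b$; hence $Y^{\pm1}$ is contained in $\supp(\alpha)$ or disjoint from it. The main obstacle here is bookkeeping rather than conceptual difficulty: one must verify carefully that each pulled-back generator moves precisely the generators claimed and no others, and must keep the domination relations straight when translating between $a$ and the various $a_i\in[a]$ — but Lemma~\ref{le:etainjective} does the substantive work.
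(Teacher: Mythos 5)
Your proof is correct and takes essentially the same approach as the paper: both read the first assertion off the semidirect-product description $\eta\co\whset{a}\cong\GL(n,\Z)\ltimes M_{n,k}(\Z)$ from Lemma~\ref{le:etainjective} by pulling back a standard generating set, and both deduce the bullets by examining which generators can move $b$ or a component $Y$. Your handling of the second bullet, which invokes the ``fixed conjugator $u$'' observation directly from the proof of Lemma~\ref{le:etainjective} rather than appealing to a minimal-length factorization, is a small stylistic simplification but not a different route.
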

\begin{proof}
We know that $\whset{a}$ is isomorphic to a semidirect product $\GL(k,\Z)\ltimes M_{k,l}(\Z)$, where the short-range dominated transvections between elements of $[a]$ and inversions in $[a]$ generate the $\GL(k,\Z)$--factor and the other dominated transvections and partial conjugations with multipliers in $[a]$ generate the $M_{k,l}(\Z)$--factor.
This $M_{k,l}(\Z)$ is a free abelian group.
In particular, we can express $\alpha$ as $\alpha_2\alpha_1$, where $\alpha_1$ acts trivially on $\Gamma\setminus[a]$ (it is in the $\GL(k,\Z)$--factor) and $\alpha_2$ acts trivially on $[a]$ (it is in the $M_{k,l}(\Z)$--factor).

If $b$ is adjacent to $a$ with $[b]\neq[a]$, then the only possible generators of $\whset{a}$ that can alter $b$ are transvections, if they exist.
So if $\alpha$ does not fix $b$, there is a dominated transvection with multiplier $a$ acting on $b$, and therefore $a$ dominates $b$.

For $Y$ a connected component of $\Gamma\setminus\st(a)$, 
the only possible generators of $\whset{a}$ that do not leave $Y$ pointwise fixed are the partial conjugations that conjugate $Y$ by an element of $[a]$.
If one of these appears in a minimal-length factorization of $\alpha_2$, then $Y^{\pm1}\subset\supp(\alpha)$.
If none do, then $Y^{\pm1}\cap\supp(\alpha)=\varnothing$.
\end{proof}

We need some basic observations about the interactions of Whitehead automorphisms, which we gather together in the following lemma.
\begin{lemma}\label{le:basicobservations}
Suppose $a$ and $b$ are vertices of $\Gamma$ and $\alpha\in\whset{a}$ and $\beta\in\whset{b}$.
Then:
\begin{itemize}
\item If $a$ is adjacent to $b$ and $\alpha|_{[b]}$ is not the identity and $\beta|_{[a]}$ is not the identity, then $[a]=[b]$.
\item If $a$ is not adjacent to $b$ and $\alpha$ fixes each element of $[b]$ then $\alpha$ fixes each element of $\st(b)$.
\item If $a$ is not adjacent to $b$ then $\alpha$ and $\beta$ both fix all of $\st(a)\cap\st(b)$.
\item If $a$ is not adjacent to $b$ and $a$ dominates $b$, then $\beta$ is a long-range automorphism.
\item If $a$ is not adjacent to $b$ and $\alpha|_{[b]}$ is not the identity and $\supp(\alpha)\cap \supp(\beta)=\varnothing$ then $\beta$ is a long-range automorphism.
\end{itemize}
\end{lemma}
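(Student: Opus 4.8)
The plan is to prove the five bullets one at a time, in the order: first, third, second, fourth, fifth. Each is a ``local'' assertion about how two generalized Whitehead automorphisms act on a bounded part of $\Gamma$, and the only tools needed are the structural description of $\whset{a}$ in Lemma~\ref{le:actoncomps}, the Servatius centralizer theorem recalled in Section~\ref{ss:combinatorial}, and the fact extracted inside the proof of Lemma~\ref{le:etainjective} that an automorphism in $\whset{a}$ conjugates \emph{every} vertex of a given component $Y$ of $\Gamma\setminus\st(a)$ with $|Y|\geq 2$ by one common element $m\in\genby{[a]}$. Throughout, in the bullets hypothesizing that $a$ is not adjacent to $b$ I also take $a\neq b$ (these statements are about two distinct non-adjacent vertices; for such $a,b$ one automatically has $[a]\neq[b]$ by the remark in the introduction). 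I will also use the recurring elementary observation that no vertex of $[a]$ commutes with a vertex $c\notin\st(a)$ (else $c\in\st(a')=\st(a)$), so that for $m\in\genby{[a]}$ the word $m^{-1}cm$ is already graphically reduced and equals $c$ only when $m=1$; in particular no letter of such an $m$ can be ``absorbed'' when one computes $\supp$ of the relevant automorphism, which is the subtle point in the last bullet.

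For the first bullet I would argue by contradiction: assuming $[a]\neq[b]$, pick $b'\in[b]$ moved by $\alpha$; since $a\sim b$ we get $b'$ adjacent to $a$ with $[b']\neq[a]$, so Lemma~\ref{le:actoncomps} forces $a$ to dominate $b'$, hence $a$ dominates $b$; symmetrically $b$ dominates $a$; for two adjacent vertices, mutual domination immediately gives $\st(a)=\st(b)$, contradicting $[a]\neq[b]$. For the third bullet, take $c\in\st(a)\cap\st(b)$; distinctness and non-adjacency force $c\neq a,b$ and $[c]\notin\{[a],[b]\}$, so if $\alpha$ (resp.\ $\beta$) failed to fix $c$ then Lemma~\ref{le:actoncomps} would make $a$ (resp.\ $b$) dominate $c$, forcing $a\sim b$ since $b$ (resp.\ $a$) is a neighbor of $c$. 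The second bullet then follows quickly: for $c\in\st(b)\setminus[b]$, either $c$ is adjacent to $a$ and the third bullet applies, or $c$ lies in a component $Y$ of $\Gamma\setminus\st(a)$ that also contains $b$; as $b\in[b]$ is fixed by $\alpha$ we have $b\notin\supp(\alpha)$, so Lemma~\ref{le:actoncomps} excludes $Y^{\pm1}\subseteq\supp(\alpha)$ and hence $\alpha$ fixes all of $Y$, in particular $c$.

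For the fourth bullet I would first note that $a$ dominating $b$ with $a\not\sim b$ forces $[b]=\{b\}$, since any other element of $[b]$ would be a neighbor of $b$, hence commute with $a$, hence lie in $\st(a)=\st(b)$, forcing $a\sim b$. Writing $\beta=\beta_2\beta_1$ as in Lemma~\ref{le:actoncomps}, $\beta_1$ acts only on $[b]=\{b\}$ and is the identity or the inversion in $b$; and a dominated transvection of $\beta_2$ with multiplier $b$ could only target some $x\notin\st(b)$, because if such an $x$ were adjacent to $b$ then every neighbor of $x$ (including $x$'s neighbor $b$, and, since $a$ dominates $b$, also $a$) would commute with $b$, giving $a\sim b$. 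Hence $\beta$ is a product of long-range dominated transvections, partial conjugations, and an inversion, so it is long-range.

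The fifth bullet is the main obstacle and is where all three tools are combined. I would first show $\beta$ fixes $[b]$ pointwise: if $|[b]|\geq 2$ then $[b]$ lies in one component $Y$ of $\Gamma\setminus\st(a)$ with $|Y|\geq 2$, $\alpha$ conjugates all of $Y$ by a common $m\in\genby{[a]}$, and $\alpha|_{[b]}\neq\mathrm{id}$ forces $m\neq 1$, whence $\alpha$ moves every $b_0\in[b]$ and (no absorption) $b_0^{\pm1}\in\supp(\alpha)$ for all of them; if $|[b]|=1$ the analogous conclusion that $\supp(\alpha)$ meets $\{b,b^{-1}\}$ is immediate. Either way $\supp(\alpha)\cap\supp(\beta)=\varnothing$ forces $\beta(b_0)=b_0$ for every $b_0\in[b]$. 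Next, suppose $\beta$ moved some $c\in\st(b)\setminus[b]$; then $c$ is adjacent to $b$ with $[c]\neq[b]$, so Lemma~\ref{le:actoncomps} makes $b$ dominate $c$; if $c$ were adjacent to $a$ the third bullet would give $\beta(c)=c$, so instead $c$ lies in a component $Y'$ of $\Gamma\setminus\st(a)$ which also contains $b$ and hence has at least two vertices. Then $\alpha$ conjugates $Y'$ by a common $m'\in\genby{[a]}$, and $\alpha(b)\neq b$ forces $m'\neq 1$; using that $b$ dominates $c$ together with $[a]\neq[b]$ shows no letter of $m'$ is adjacent to $c$, so $\alpha(c)=m'^{-1}cm'\neq c$ and $c^{\pm1}\in\supp(\alpha)$ --- but $c^{\pm1}\in\supp(\beta)$ as well, contradicting disjointness. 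Hence $\beta$ fixes all of $\st(b)$ pointwise, and is in particular long-range. The hard part will be making the ``no absorption'' bookkeeping airtight, i.e.\ checking carefully that in each of these situations the conjugating element $m$ really does contribute nontrivially on both sides of the relevant generator; this is exactly what the opening observation about $[a]$ and $\st(a)$ is for.
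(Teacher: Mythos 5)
Your proposal is correct and takes essentially the same approach as the paper's proof: each bullet is proved by factoring the automorphisms into Laurence generators via Lemma~\ref{le:actoncomps}, using the fact (from the proof of Lemma~\ref{le:etainjective}) that a component $Y$ of $\Gamma\setminus\st(a)$ with $|Y|\geq 2$ is conjugated by a single common element of $\genby{[a]}$, and invoking disjoint-support contradictions for the final bullet. Your version reorders the bullets and is slightly more explicit in two spots the paper treats implicitly (that $[b]=\{b\}$ in bullet four, and that $\beta$ fixes $[b]$ pointwise in bullet five, which is what actually makes $\beta|_{\st(b)^{\pm 1}}$ a permutation), but the underlying reasoning matches the paper's.
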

\begin{proof}
For the first item, suppose for contradiction that $[a]\neq[b]$.
We factor $\alpha$ and $\beta$ as products of Laurence generators.
Since $\alpha|_{[b]}$ is not the identity, some factor of $\alpha$ is a dominated transvection that replaces some $c\in[b]$ with $cd$ for some $d\in[a]$.
In particular, this means that $b$ dominates $a$.
Reversing the roles of $\alpha$ and $\beta$, we see that $a$ dominates $b$ as well.
Then by definition $[a]=[b]$, a contradiction.

For the second item, we write $\alpha=\alpha_1\alpha_2$ where $\alpha_1$ is a product of partial conjugations and $\alpha_2$ is a product of transvections and of inversions of elements of $[a]$.
Let $x\in\st(b)\setminus[b]$.
If $\alpha_1$ does not fix $x$, then $x$ is not adjacent to $a$ and $\alpha_1$ conjugates the entire connected component of $x$ in $\Gamma\setminus\st(a)$ by the same nontrivial element of $\genby{[a]}$.
Necessarily $b$ is in the same connected component and is therefore not fixed by $\alpha_1$.
But since $x$ is adjacent to $b$ but not to $a$, we know that $a$ does not dominate $b$, and therefore $\alpha_2$ fixes $b$.
This implies that $\alpha$ does not fix $b$, a contradiction.
If $\alpha_2$ does not fix $x$, then there is a transvection multiplying an element of $[a]$ onto $x$. 
Then $a$ dominates $x$.
But since $x$ is adjacent to $b$, this implies $a$ is adjacent to $b$, a contradiction.
So neither $\alpha_1$ nor $\alpha_2$ can alter $x$, which was arbitrary, and therefore $\alpha$ fixes $\st(b)$.

The third item is similar.
If $\alpha$ does not fix an element $x$ of $\st(a)\cap\st(b)$, then there must be a transvection multiplying an element of $[a]$ onto $x$, so that $a$ dominates $x$.
But if $a$ dominates $x$ and $x$ is adjacent to $b$, then $a$ is adjacent to $b$, a contradiction.
Similarly $\beta$ must fix every element of $\st(a)\cap\st(b)$.

To show the fourth item, we suppose that $a$ dominates $b$.
If $x$ is adjacent to $b$ and $\beta$ does not fix $x$, then there is a transvection factor of $\beta$ that multiplies an element of $[b]$ onto $x$.
Then $b$ dominates $x$.
Since $a$ dominates $b$, this implies that $a$ dominates $x$, and therefore that $a$ is adjacent to $b$, because $x$ is.
This is a contradiction, so therefore $\beta$ fixes every vertex adjacent to $b$.

Now we explain the fifth item.
If $\alpha$ does not fix $b$, then either $a$ dominates $b$, or the component $Y$ of $a$ in $\Gamma\setminus\st(b)$ has at least two elements and $\alpha$ conjugates every element of $Y$ by the same nontrivial word in $[a]$.
If $a$ dominates $b$, then we have already shown that $\beta$ must be long-range.
So we suppose $Y$ has at least two elements.
Suppose $x$ is adjacent to $b$.
If $\beta$ does not fix $x$, then $b$ dominates $x$ (as explained in the previous item).
If $x$ were adjacent to $a$, then $b$ would be adjacent to $a$, which it is not.
So $x$ is not adjacent to $a$, and therefore $x$ is an element of $Y$.
However, if $x$ is an element of $Y$, then $x$ is in $\supp(\beta)$, a contradiction since $\supp(\beta)\cap\supp(\alpha)=\varnothing$ and $x\in\supp(\alpha)$.
So $\beta$ fixes each vertex adjacent to $b$.
\end{proof}

The following observation is easy and needs no proof.
It greatly reduces the cases we need to consider.
\begin{lemma}
If $(W,\alpha,\beta)$ is a peak, then so is $(W,\beta,\alpha)$.
If one of these peaks can be lowered, then so can the other.
\end{lemma}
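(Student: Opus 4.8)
The plan is to verify both claims straight from the definitions. For the first claim, note that the conditions making $(W,\alpha,\beta)$ a peak---that $\abs{\alpha\cdot W}\leq\abs{W}$, that $\abs{\beta\cdot W}\leq\abs{W}$, and that at least one of these inequalities is strict---are visibly unchanged when $\alpha$ and $\beta$ are interchanged, so $(W,\beta,\alpha)$ is a peak as well. There is nothing further to do for this part.

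For the second claim, suppose $\beta\alpha^{-1}=\gamma_k\dotsm\gamma_1$ is a lowering of $(W,\alpha,\beta)$, with each $\gamma_j\in\Omega$. I would produce a lowering of $(W,\beta,\alpha)$ by reversing and inverting the factorization: set $\delta_i=\gamma_{k+1-i}^{-1}$ for $i=1,\dotsc,k$. Since $\Omega$ is a union of subgroups of $\Aut(A_\Gamma)$ (the group $P$ together with the $\whset{a}$), it is closed under taking inverses, so each $\delta_i$ again lies in $\Omega$. Moreover $\delta_k\dotsm\delta_1=\gamma_1^{-1}\dotsm\gamma_k^{-1}=(\gamma_k\dotsm\gamma_1)^{-1}=\alpha\beta^{-1}$, so this is a factorization of the required element.

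The only computation needed is the intermediate-length condition, and it is pure index bookkeeping. From $\beta=\gamma_k\dotsm\gamma_1\alpha$ one gets, by cancellation, that $\gamma_j^{-1}\dotsm\gamma_k^{-1}\beta=\gamma_{j-1}\dotsm\gamma_1\alpha$ for every $j$; taking $j=k+1-i$ shows that the $i$th intermediate image of the new factorization, $\delta_i\dotsm\delta_1\beta\cdot W=\gamma_{k-i}\dotsm\gamma_1\alpha\cdot W$, coincides with the $(k-i)$th intermediate image of the given lowering. As $i$ ranges over $1,\dotsc,k-1$, so does $k-i$, so all of these images have length strictly less than $\abs{W}$ by hypothesis; hence $\alpha\beta^{-1}=\delta_k\dotsm\delta_1$ is a lowering of $(W,\beta,\alpha)$. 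Applying the same construction to the peak $(W,\beta,\alpha)$ then yields the converse implication. I do not expect any genuine obstacle: the whole content is the reindexing above together with the observation that $\Omega$ is closed under inverses.
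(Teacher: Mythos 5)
Your argument is correct, and it is exactly the bookkeeping the paper has in mind when it declares this observation ``easy and needs no proof'': reverse and invert the factorization, use that $\Omega$ is a union of subgroups, and observe that the intermediate images of the reversed factorization are a relabeling of the originals. Nothing to add.
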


Now we outline the proof of Main Lemma~\ref{mle:peaklowering}, which is broken into lemmas filling out the rest of this section.
\begin{proof}[Proof of Main Lemma~\ref{mle:peaklowering}]
Since $(W,\alpha,\beta)$ is a peak, at least one of $\alpha$ or $\beta$ shortens $W$.
Since permutation automorphisms do not shorten $W$, at least one of $\alpha$ or $\beta$ is not a permutation automorphism.
Since we may swap $\alpha$ and $\beta$, we assume that $\alpha$ is not a permutation, so $\alpha\in\whset{a}$ for some vertex $a$ of $\Gamma$.
If $\beta$ is a permutation automorphism, then conjugate $\alpha$ by $\beta$ to lower the peak as described in Lemma~\ref{le:permlower}.

So we assume that neither $\alpha$ nor $\beta$ is a permutation automorphism; then $\beta\in\whset{b}$ for some vertex $b$ in $\Gamma$.
If $[a]=[b]$, then we lower the peak by replacing the length-two factorization $\beta\alpha^{-1}$ with the length-one factorization given by its product, as explained in Lemma~\ref{le:samemultset}.

Next we suppose that $a$ and $b$ are adjacent to each other in $\Gamma$, but that $[a]\neq[b]$.
By Lemma~\ref{le:basicobservations}, this implies that $\alpha|_{[b]}$ is the identity or $\beta|_{[a]}$ is the identity.
We suppose that $\alpha|_{[b]}$ is the identity; then Lemma~\ref{le:adjsteinberg} shows how to lower the peak.
This uses a Steinberg relation explained in Lemma~\ref{le:steinbergrel}.

So we assume that $a$ and $b$ are not adjacent in $\Gamma$.
If $a$ dominates $b$ and $b$ dominates $a$, then by Lemma~\ref{le:basicobservations}, both $\alpha$ and $\beta$ are long-range automorphisms.
In this case, we can lower the peak (in fact, fully reduce it) using Theorem~\ref{th:longrangepeakreduction}.
So we suppose that $b$ dominates $a$ but $a$ does not dominate $b$.
Again by Lemma~\ref{le:basicobservations}, $\alpha$ is a long-range automorphism.
Proposition~\ref{pr:nonadjacentasymmetric} explains how to lower such a peak.
Finally, we suppose that $b$ does not dominate $a$ and $a$ does not dominate $b$.
Then Proposition~\ref{pr:nonadjacentnodom} shows that the peak can be lowered.
\end{proof}

\subsection{Change of length under generalized Whitehead automorphisms}
To show that the algorithm for lowering peaks works as desired, we need to show that certain factorizations really are peak lowering.
To do this we need a good understanding of the effect that generalized Whitehead automorphisms have on lengths of tuples of cyclic words.
Next we prove some results about this.

We recall the action of $\whset{a}$ on syllables with respect to $a$ given in Definition~\ref{de:syllableaction}.
\begin{lemma}\label{le:syllablesandlengthchange}
Suppose $W$ is a tuple of cyclic words and $T$ is a syllable decomposition of $W$ with respect to $a$ and $\alpha\in\whset{a}$.
Then
\[\abs{\alpha\cdot W}-\abs{W}=\abs{\alpha\cdot T}-\abs{T}.\]
In particular, we can compute the difference $\abs{\alpha\cdot W}-\abs{W}$ by computing the differences $\abs{\alpha\cdot t}-\abs{t}$ over all syllables $t$ of $T$ and summing.
\end{lemma}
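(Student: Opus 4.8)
The plan is to invoke Lemma~\ref{le:decompositionequivariance}, which says that $\alpha\cdot T$ is again a syllable decomposition --- this time of $\alpha\cdot W$ --- and then to compare lengths syllable by syllable. First I would use the observation just before Lemma~\ref{le:decompositionequivariance} to rewrite each syllable of $T$ in the standard form $c\,a_1^{p_1}\dotsm a_n^{p_n}u\,d$ with $u\in\genby{\st(a)\drop[a]}$; this is only a commutation inside each syllable, so it changes neither $\abs{T}$ nor the tuple $\alpha\cdot T$. With $T$ in this form, Lemma~\ref{le:decompositionequivariance} tells us that $\alpha\cdot T$ is obtained from $T$ by altering \emph{only} the exponents on the $a_i$ in the middle part of each syllable, leaving the two endpoints (each a single letter of $(X\drop\st(a))^{\pm1}$, or absent in the cyclic case) and the part $u$ unchanged, and that $\alpha\cdot T$ is a syllable decomposition of $\alpha\cdot W$. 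In particular each syllable of $\alpha\cdot T$ is again graphically reduced, and a linear syllable stays linear while a cyclic syllable stays cyclic.

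Next I would record the elementary length bookkeeping. The associated representative of a syllable decomposition is graphically and cyclically reduced (this is explicit in the linear case and is the natural choice of representative in the cyclic case), hence has minimal length in its conjugacy class; so $\abs{W}$ equals the length of the representative associated with $T$, and likewise $\abs{\alpha\cdot W}$ equals the length of the representative associated with $\alpha\cdot T$. On the other hand, writing a general linear syllable as $c v d$ with $v\in\genby{\st(a)}$, graphical reducedness gives $\abs{c v d}=\abs{v}+2$, while a cyclic syllable $v$ has $\abs{v}$; so $\abs{T}$ is the sum of the middle lengths $\abs{v}$ over all syllables plus twice the number of linear syllables in $T$. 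For the associated representative the bookkeeping is the same except that consecutive syllables of a single conjugacy class share an endpoint letter, which therefore appears only once in the representative: its length is the sum of the middle lengths plus \emph{once} the number of linear syllables.

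Combining these, passing from $T$ to $\alpha\cdot T$ changes each syllable's length only through its middle part --- by exactly $\abs{v'}-\abs{v}$, where $v,v'$ are the old and new middle parts --- and, since the endpoint letters and the count of linear syllables are unchanged, it changes the length of the associated representative by the same per-syllable amount. Summing over all syllables yields
\[\abs{\alpha\cdot T}-\abs{T}=\sum_{t}\bigl(\abs{v'_t}-\abs{v_t}\bigr)=\abs{\alpha\cdot W}-\abs{W},\]
which is the asserted identity; the ``in particular'' then follows at once, since $\abs{T}=\sum_t\abs{t}$ and $\abs{\alpha\cdot T}=\sum_t\abs{\alpha\cdot t}$ by definition. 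I do not expect a real obstacle here: the whole point of Definition~\ref{de:syllableaction} is that the $\whset{a}$--action discards exactly the material $\alpha$ would place outside the endpoints but keeps the endpoints, so all of the length change is concentrated in the middle parts and is seen identically by $\abs{T}$ and by the associated representatives. The one thing one must be slightly careful about --- that the new associated representative is still cyclically reduced, so that its length really is $\abs{\alpha\cdot W}$ --- is precisely what Lemma~\ref{le:decompositionequivariance} provides.
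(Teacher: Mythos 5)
The proposal is correct, and it fills in exactly the kind of bookkeeping the paper leaves implicit (the paper's proof of this lemma is a one-line ``easy corollary of Lemma~\ref{le:decompositionequivariance}; the details are left to the reader''). You correctly identify that the two quantities $\abs{T}$ and the length of the associated representative differ by a constant (the number of linear syllables), that Lemma~\ref{le:decompositionequivariance} guarantees $\alpha\cdot T$ is again a valid decomposition of $\alpha\cdot W$ with the same endpoints and the same count of linear syllables, and hence that all length change is concentrated in the middle parts and is seen identically by both quantities. The one place to be careful is the one you flagged: that the representative associated with $\alpha\cdot T$ is still graphically and cyclically reduced, so that its length is genuinely $\abs{\alpha\cdot W}$; that is what Lemma~\ref{le:decompositionequivariance} delivers. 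No gaps.
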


\begin{proof}
This is an easy corollary of Lemma~\ref{le:decompositionequivariance}; the details are left to the reader.
\end{proof}

In the following, when we talk about an endpoint of a syllable $cud$ being in the support of an automorphism, we are only talking about whether $c$ and $d^{-1}$ are in the support, but not about $c^{-1}$ or $d$.
This is because the  action of automorphisms on syllables has nothing to do with the action on the far sides of the endpoints.
The following proposition is used to show that factorizations coming from the relations in Lemma~\ref{le:steinbergrel} can lower peaks.
\begin{proposition}\label{pr:steinberglengthchange}
Let $a$ and $b$ be vertices of $\Gamma$ with $[a]\neq[b]$.
Suppose $\alpha\in\whset{a}$ and $\beta\in\whset{b}$ satisfy the hypotheses of Lemma~\ref{le:steinbergrel} ($\alpha$ fixes $[b]$, and either $a$ is adjacent to $b$, or $\supp(\alpha)\cap\supp(\beta)=\varnothing$ and $\beta$ fixes $[a]$).
Let $\gamma=\alpha\beta\alpha^{-1}$, which is in $\whset{b}$ by Lemma~\ref{le:steinbergrel}.
Suppose $W$ is a tuple of conjugacy classes.
Then
\[\abs{W}-\abs{\beta\cdot W}=\abs{\alpha\cdot W}-\abs{\alpha\beta\cdot W}.\]
Further, if $(W,\alpha,\beta)$ is a peak, then $\abs{\gamma\alpha\cdot W} < \abs{W}$.
\end{proposition}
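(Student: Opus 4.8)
The plan is to prove the displayed identity first and then read off the strict inequality. Writing $\gamma\alpha=\alpha\beta\alpha^{-1}\alpha=\alpha\beta$, we have $\gamma\alpha\cdot W=\alpha\beta\cdot W$, so the identity $\abs{W}-\abs{\beta\cdot W}=\abs{\alpha\cdot W}-\abs{\alpha\beta\cdot W}$ rearranges to $\abs{\gamma\alpha\cdot W}=\abs{\alpha\cdot W}+\abs{\beta\cdot W}-\abs{W}$. Since $(W,\alpha,\beta)$ is a peak, $\abs{\alpha\cdot W}\le\abs{W}$ and $\abs{\beta\cdot W}\le\abs{W}$ with at least one inequality strict, so $\abs{\alpha\cdot W}+\abs{\beta\cdot W}<2\abs{W}$ and hence $\abs{\gamma\alpha\cdot W}<\abs{W}$. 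Thus everything reduces to the identity, which I will put in the equivalent form
\[\abs{\alpha\cdot W}-\abs{W}=\abs{\alpha\beta\cdot W}-\abs{\beta\cdot W};\]
in words, $\alpha$ changes the length of $W$ and of $\beta\cdot W$ by the same amount.

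To prove this I would use syllable decompositions with respect to $a$ and Lemma~\ref{le:syllablesandlengthchange}: if $S$ is a syllable decomposition of $W$ with respect to $a$ and $S''$ one of $\beta\cdot W$ with respect to $a$, then the two sides above equal $\abs{\alpha\cdot S}-\abs{S}$ and $\abs{\alpha\cdot S''}-\abs{S''}$, so it suffices to choose $S$ and $S''$ making these agree. By Lemma~\ref{le:syllableequivariance} and the description of $\eta$ in Lemma~\ref{le:etainjective}, the effect of $\alpha$ on a syllable $s=cud$ with respect to $a$ is only to replace the $\genby{[a]}$-content of $u$, leaving every letter outside $[a]$ and both endpoints in place; so $\abs{\alpha\cdot s}-\abs{s}$ is computed from $\nu(s)$ together with $\eta(\alpha)$ (up to the degenerate case, discussed below, in which a linear syllable $cuc^{-1}$ collapses to a cyclic syllable, forcing a $-2$ correction). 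The task is then to produce $S''$ from $S$ by applying $\beta$ letter by letter and regrouping, so that its syllables carry the same endpoints and the same $[a]$-content as those of $S$ --- the only data on which $\alpha$'s length change depends.

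The point that makes this possible is that $\beta\in\whset{b}$ with $[a]\ne[b]$ does not disturb the $[a]$-data: since $[a]\cap[b]=\varnothing$, every generator in $[a]$ lies in $X\drop[b]$, so $\beta$ sends it to $v_1cv_2$ with $v_1,v_2\in\genby{[b]}$, and therefore preserves the sum-exponent of each element of $[a]$ in every word. I would then split into the two cases of the hypothesis. In case~(i), where $a$ is adjacent to $b$, one checks $[b]\subseteq\st(a)$ (if $y\in[b]$ then $a\in\st(b)=\st(y)$, so $y\in\st(a)$) and symmetrically $[a]\subseteq\st(b)$; hence $\beta$ preserves $\genby{\st(a)}$ and carries each endpoint $c\in(X\drop\st(a))^{\pm1}$ to $v_1cv_2$ with $v_1,v_2\in\genby{[b]}\subseteq\genby{\st(a)}$. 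Applying $\beta$ to the representative word of $S$ and absorbing these $\genby{[b]}$-factors into the adjacent $\st(a)$-blocks then produces, after graphical reduction, a syllable decomposition $S''$ of $\beta\cdot W$ with the same endpoints as $S$ and, by preservation of $[a]$-content (and since $\eta(\alpha)$ acts trivially on the coordinates attached to letters of $[b]$, which $\alpha$ fixes), the same relevant part of $\nu$, syllable by syllable. In case~(ii), where $\supp(\alpha)\cap\supp(\beta)=\varnothing$ and each of $\alpha,\beta$ fixes the other multiplier set, $\alpha$ and $\beta$ commute (so $\gamma=\beta$) and $\beta$ fixes $[a]$ pointwise; here $\beta$ may turn an endpoint or interior letter into a product containing generators of $[b]$ outside $\st(a)$, so $S''$ can genuinely have more syllables than $S$. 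But the letters $\beta$ moves are, by disjoint support, exactly the letters $\alpha$ fixes, so in every new syllable the $[a]$-content that $\alpha$ produces comes from $\alpha$-moved letters only, and a block-by-block accounting shows the total $[a]$-length $\alpha$ produces across the block of $S''$ replacing a syllable of $S$ equals what $\alpha$ produces on that syllable of $S$. Either way the syllable-level length changes match.

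The step I expect to be the main obstacle is the careful verification that the word obtained from $\beta$ and regrouping is genuinely a graphically and cyclically reduced syllable decomposition with the claimed endpoints and $\nu$-values, and in particular the handling of the standard degeneracy in which a linear syllable gets identified with a cyclic one once its $[a]$-content is killed: one must check that such a collapse on one side is forced on the other, so that the $\pm2$ corrections to the naive length count cancel. This is the same bookkeeping that underlies Lemmas~\ref{le:decompwelldef} and~\ref{le:decompositionequivariance}, and I would organize it according to the type of each endpoint --- dominated by $a$, or lying in a component of $\Gamma\drop\st(a)$ with at least two vertices --- exactly as in the definition of $\nu$, and, in case~(ii), according to whether each letter is moved by $\alpha$ or by $\beta$.
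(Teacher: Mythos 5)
Your reduction of the strict inequality to the displayed identity is correct, and the rewritten form
\[
\abs{\alpha\cdot W}-\abs{W}=\abs{\alpha\beta\cdot W}-\abs{\beta\cdot W}
\]
does point to a genuine alternative route. The paper decomposes $W$ and $\alpha\cdot W$ into syllables \emph{with respect to $b$}, builds an injective partial matching $f$ between the two syllable tuples, and compares $\abs{t_i}-\abs{\beta\cdot t_i}$ with $\abs{f(t_i)}-\abs{\gamma\cdot f(t_i)}$; you instead decompose $W$ and $\beta\cdot W$ with respect to $a$ and compare the length change under the single automorphism $\alpha$. These are dual bookkeepings and both live or die on the same mechanism: in the nonadjacent case the disjoint-support hypothesis, together with $\alpha|_{[b]}=\mathrm{id}$ and $\beta|_{[a]}=\mathrm{id}$, guarantees that wherever one automorphism inserts its multiplier letters the other inserts nothing, so the two insertions are disjoint at the level of syllable boundaries. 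Your version has a modest aesthetic advantage in that only $\alpha$ acts on both sides, rather than the pair $\beta$ and $\gamma=\alpha\beta\alpha^{-1}$.

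Two things need fixing. First, the ``$\pm2$ correction'' you set aside is a misconception: the syllable action in Definition~\ref{de:syllableaction} sends $cud$ to $cw_2\alpha(u)w_3d$, which always has the same two endpoints, and the image is again graphically reduced because a collapse $cvc^{-1}\to v$ would require $\eta(\alpha)$ to annihilate the top-block coordinates of $\nu(cud)$ while its bottom-block coordinates are already zero, which the invertibility of the upper-left block of $\eta(\alpha)$ forbids (this is what Lemma~\ref{le:decompositionequivariance} is implicitly using). No linear syllable becomes cyclic and there is no boundary correction to track; organizing an argument around that phantom case would waste effort and could mask real issues. Second, the heart of case~(ii) is left as ``a block-by-block accounting,'' and the key point that makes it work is never stated: when $\beta$ splits a syllable $s=cud$ at (say) its left endpoint because $c\in\supp(\beta)$, the disjointness forces $c\notin\supp(\alpha)$, so in $\alpha(c)=w_1cw_2$ the factor $w_2$ that would land in the new short syllable $c1e_1$ is trivial; it is precisely this vanishing that makes $\abs{w_2\alpha(u)w_3}=\abs{w_2}+\abs{\alpha(u)w_3}$ and lets the syllable-level changes add up correctly across the block. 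Without spelling this out (and the symmetric statement at right endpoints, and the fact that the freshly inserted $[b]$-letters are $\alpha$-fixed so any purely-new syllable $e_i1e_j$ contributes zero) the argument is not yet a proof, only a correct plan. The paper's proof of the proposition carries out exactly this bookkeeping, just on the $b$-side.
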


Since $\gamma\alpha=\alpha\beta$, we can also express $\alpha\beta\cdot W$ as $\gamma\alpha\cdot W$ in the equation above.

\begin{proof}[Proof of Proposition~\ref{pr:steinberglengthchange}]
Let $T=(t_1,\dotsc,t_N)$ be a syllable decomposition of $W$ with respect to $[b]$.
We form a syllable decomposition $T'$ of $\alpha\cdot W$ with respect to $[b]$ by the following process:
first we form the representative of $W$ associated to $T$, then we apply $\alpha$ to this representative by inserting elements from $[a]$ where appropriate, then we delete inverse pairs of elements of $[a]$ that commute with all intervening letters, and then we split the resulting representative into syllables.
We note that no collapsing can happen in this process: if $c\notin[a]$ cancels with its inverse after we apply $\alpha$, then it could have canceled before applying $\alpha$, thus contradicting the stipulation from the definition of syllable decomposition that the associated representative be graphically reduced.
We prove the proposition by defining a relation $f$ between the syllables in $T$ and the syllables $T'$.
This $f$ will be an injective partial function.

Consider syllable $t_i$ from $T$.  
Either $t_i$ is a linear syllable $t_i=c_iv_i u_i d_i$  or a cyclic syllable $t_i=v_i u_i$,  with $v_i\in\genby{[b]}$, $u_i\in\genby{\st(b)\setminus[b]}$ and $c_i,d_i\in(\Gamma\setminus\st(b))^{\pm1}$.
We consider what happens to $t_i$ in passing from $W$ to $\alpha\cdot W$.
The hypotheses of Lemma~\ref{le:steinbergrel} give us two cases for $\alpha$ and $\beta$.
In the first case, $a$ is adjacent to $b$, so acting by $\alpha$ cannot alter syllable boundaries, and we define $f$ to be the map that sends each syllable in $T$ to the corresponding syllable with the same boundaries in $T'$.

In the other case, $a$ is not adjacent to $b$ and $\supp(\alpha)\cap\supp(\beta)=\varnothing$.
So by Lemma~\ref{le:basicobservations},  $\alpha$ fixes $v_iu_i$.
In this case, $\beta|_{[a]}$ is the identity.
Also, if $c_i$ or $d_i^{-1}$ is in $\supp(\beta)$, it is not in $[a]$ and is not in $\supp(\alpha)$, so action by $\alpha$ cannot cancel it away or insert an element of $[a]$ between it and $v_iu_i$.
So in the second case, if $c_i$ or $d_i^{-1}$ or any element of $u_i$ is in $\supp(\beta)$ then all of these land in the same syllable of $T'$; we declare $f$ to send $t_i$ to this syllable.

In the second 
case, if $c_i$ and $d_i^{-1}$ are not in $\supp(\beta)$ and $u_i$ does not contain any elements of $\supp(\beta)$, then we leave $t_i$ out of the domain of $f$.
This completes the definition of $f$.
We note that $f$ is injective.
This is obvious in the first case; in the second 
case, if two or more syllables merge to form the syllable $t'_j$, then at most one of them can be in the domain of $f$.
This is because $\supp(\alpha)\cap\supp(\beta)=\varnothing$ and because elements of $\st(a)\cap\st(b)$ cannot be in $\supp(\beta)$: to break a syllable boundary, we must have the far endpoint being in $\supp(\alpha)$ and everything in $\st(b)$ on one side of the boundary in $\st(b)\cap\st(a)$.

\begin{claim*}
If $t_i$ is in the domain of $f$, then $\abs{t_i}-\abs{\beta\cdot t_i}=\abs{f(t_i)}-\abs{\gamma\cdot f(t_i)}$.
\end{claim*}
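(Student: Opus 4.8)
The plan is to deduce the claim from the relation $\gamma\alpha=\alpha\beta$ (with $\gamma=\alpha\beta\alpha^{-1}$) together with the structural fact that applying $\alpha$ to a $b$-syllable changes its length by an amount insensitive to the $[b]$-exponents of that syllable. As preliminaries I would record: writing $[b]=\{b_1,\dots,b_n\}$ and $t_i=c_i\,b_1^{p_1}\cdots b_n^{p_n}\,u_i'\,d_i$ with $u_i'\in\genby{\st(b)\setminus[b]}$, Lemma~\ref{le:syllableequivariance} applied with $b$ in the role of $a$ gives $\beta\cdot t_i=c_i\,b_1^{p_1'}\cdots b_n^{p_n'}\,u_i'\,d_i$, so $\beta$ leaves the endpoints and the $\st(b)\setminus[b]$-part of $t_i$ untouched and alters only the $[b]$-exponents; $\gamma\in\whset{b}$ by Lemma~\ref{le:steinbergrel}; and since $\alpha$ fixes $[b]$ pointwise and $\beta(b_j)\in\genby{[b]}$ we have $\gamma(b_j)=\alpha\beta(b_j)=\beta(b_j)$ for every $b_j\in[b]$. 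I will also use the ``shifted'' partial function $f'$ obtained from the construction of $f$ with $\beta\cdot W$ in place of $W$; since $\beta\cdot t_i$ has the same endpoints, the same $\st(b)\setminus[b]$-part, and the same $\supp(\beta)$-letters as $t_i$, the syllable $\beta\cdot t_i$ lies in the domain of $f'$ exactly when $t_i$ lies in the domain of $f$.

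In the first case of the definition of $f$ ($a$ adjacent to $b$), every vertex of $[a]$ is adjacent to every vertex of $[b]$ and $[a]\cap[b]=\varnothing$, so $[a]\subseteq\st(b)\setminus[b]$; hence applying $\alpha$ introduces no letters outside $\st(b)$, no syllable boundaries move, and $f(t_i)$ has the same endpoints $c_i,d_i$ and the same $[b]$-exponents $p_1,\dots,p_n$ as $t_i$, while its $\st(b)\setminus[b]$-part becomes a word $\tilde u_i$ built from $\alpha(u_i')$ together with the $[a]$-material $\alpha$ places next to $c_i$ and $d_i$. Thus $\abs{f(t_i)}-\abs{t_i}=\abs{\tilde u_i}-\abs{u_i'}$, a quantity determined by $\alpha$, $c_i$, $d_i$ and $u_i'$ alone; since none of these changes when $t_i$ is replaced by $\beta\cdot t_i$, the same computation gives $\abs{f'(\beta\cdot t_i)}-\abs{\beta\cdot t_i}=\abs{\tilde u_i}-\abs{u_i'}$ as well.

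In the second case ($a$ not adjacent to $b$, $\supp(\alpha)\cap\supp(\beta)=\varnothing$, $\beta$ fixing $[a]$), Lemma~\ref{le:basicobservations} shows $\alpha$ fixes all of $\st(b)$, so the block $b_1^{p_1}\cdots b_n^{p_n}u_i'$ of $t_i$ survives intact and the only effect of $\alpha$ is to splice some $[a]$-letters against $c_i$ and $d_i$, which then become the endpoints (and bodies) of new syllables. The syllable $f(t_i)$ is the one carrying the intact middle block, and whenever $c_i$ (resp.\ $d_i^{-1}$) lies in $\supp(\beta)$ it does not lie in $\supp(\alpha)$, so by Definition~\ref{de:support} the multiplier $\alpha$ places on the inner side of $c_i$ (resp.\ $d_i$) is trivial; consequently both endpoints of $f(t_i)$ are single letters and $\abs{f(t_i)}=\abs{t_i}$. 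The same reasoning gives $\abs{f'(\beta\cdot t_i)}=\abs{\beta\cdot t_i}$.

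Finally I would identify $\gamma\cdot f(t_i)$ with $f'(\beta\cdot t_i)$: the former is a syllable of a decomposition of $\gamma\cdot(\alpha\cdot W)$, while acting by $\alpha$ on the decomposition $\beta\cdot T$ of $\beta\cdot W$ and re-splitting produces a decomposition of $\alpha\cdot(\beta\cdot W)$ containing $f'(\beta\cdot t_i)$. Since $\gamma\alpha=\alpha\beta$, these are two syllable decompositions of the single tuple $\alpha\beta\cdot W$, and both syllables in question contain the images of exactly the same letters of $t_i$ (the $\supp(\beta)$-part, together with $c_i$ in the first case), so Lemma~\ref{le:decompwelldef} forces them to have equal $\nu$-value, hence equal $[b]$-exponents, hence equal length. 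Plugging in: in the first case $\abs{f(t_i)}-\abs{t_i}=\abs{f'(\beta\cdot t_i)}-\abs{\beta\cdot t_i}=\abs{\gamma\cdot f(t_i)}-\abs{\beta\cdot t_i}$, and in the second case $\abs{f(t_i)}=\abs{t_i}$ and $\abs{\gamma\cdot f(t_i)}=\abs{\beta\cdot t_i}$; either way this rearranges to the claimed identity. (The cyclic-syllable subcases are entirely analogous.) I expect the main obstacle to be this last step — verifying that $\gamma\cdot f(t_i)$ and $f'(\beta\cdot t_i)$ genuinely match up despite the commutation-and-cyclic-permutation ambiguity of syllable decompositions — together with the case-2 bookkeeping that pins $f(t_i)$ down as the syllable carrying the whole middle block, so that no $[a]$-material can inflate its length.
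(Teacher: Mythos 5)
Your outline follows the paper's strategy: same case split, same observation that $\alpha$ acts only on the $\st(b)\setminus[b]$-part of a $b$-syllable in case~1 and not on the $\st(b)$-part at all in case~2, same use of $\gamma\alpha=\alpha\beta$. But the step you correctly flag as the ``main obstacle'' is a genuine gap as written. Lemma~\ref{le:decompwelldef} only says that the $\nu$-tuples of two decompositions of the same tuple differ by a permutation; it does not pair up individual syllables, and in particular cannot by itself tell you that the syllable of one decomposition ``containing the images of the same letters'' as $t_i$ is the syllable of the other decomposition with the same property. That is precisely the kind of syllable-level matching that Example~\ref{ex:syllablealgorithm} warns needs direct verification. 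So your appeal to Lemma~\ref{le:decompwelldef} to conclude $\gamma\cdot f(t_i)=f'(\beta\cdot t_i)$ does not close the argument.

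The paper avoids introducing $f'$ altogether. In case~2 it uses the remark after Lemma~\ref{le:steinbergrel}: under the disjoint-support hypotheses with $\beta|_{[a]}=\mathrm{id}$, one has $\gamma=\alpha\beta\alpha^{-1}=\beta$. Then $\gamma\cdot f(t_i)=\beta\cdot f(t_i)$ is computed directly from Definition~\ref{de:syllableaction}: since $f(t_i)$ has the same $\supp(\beta)$-letters and the same $[b]$-part $v_i$ as $t_i$ (material merged in from adjacent syllables cannot carry anything from $\supp(\beta)$ or $[b]$), $\beta$ replaces $v_i$ by the very same $v'_i$ as for $t_i$, and the length differences agree with no detour. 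In case~1 the paper does assert that $\gamma\cdot f(t_i)$ is obtained by applying $\alpha$ to $\beta\cdot t_i$, but it justifies this by checking explicitly that the $\supp(\alpha)$-letters of $t_i$ (which lie in $c_i$, $d_i^{-1}$, $u_i$ since $\alpha|_{[b]}=\mathrm{id}$) are untouched by $\beta$, so the $\st(b)$-parts of $t_i$, $\beta\cdot t_i$, $f(t_i)$, $\gamma\cdot f(t_i)$ are $u_iv_i$, $u_iv'_i$, $u'_iv_i$, $u'_iv'_i$ — the ``rectangle'' you sketch, made concrete. Two minor points: in case~2 the cleaner reason that $\abs{f(t_i)}=\abs{t_i}$ is simply that $[a]\cap\st(b)=\varnothing$ when $a\not\sim b$, so any $[a]$-material $\alpha$ places against $c_i$ or $d_i$ starts new syllables and never enters the middle block regardless of whether $c_i,d_i^{-1}\in\supp(\beta)$; and your case-1 claim that $\abs{\tilde u_i}-\abs{u_i'}$ is independent of the $[b]$-exponents is correct but wants the explicit observation that $[a]\subset\st(b)\setminus[b]$ commutes past $[b]$ without cancellation.
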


First we consider the case where $a$ is adjacent to $b$.
Then $f(t_i)$ differs from $t_i$ in that some elements of $[a]$ have been added and deleted from $u_i$.
We use $u'_i$ to denote the part of $f(t_i)$ that is a word from $\st(b)\setminus[b]$.
The syllable $\beta\cdot t_i$ differs from $t_i$ in that elements of $[b]$ have been added and deleted from $v_i$.
Let $v'_i$ denote the part of $\beta\cdot t_i$ that is a word from $[b]$.
Since $\gamma\alpha=\alpha\beta$, the syllable $\gamma\cdot f(t_i)$ can be computed by applying $\alpha$ to the syllable $\beta\cdot t_i$ (in the same manner that we obtained $f(t_i)$ by applying $\alpha$ to $t_i$).
Since the elements of $\supp(\alpha)$ in $t_i$ are in $c_i$, $d_i^{-1}$ and $u_i$ (since $\alpha$ fixes $[b]$), these are unchanged in $\beta\cdot t_i$, and therefore $\gamma\cdot f(t_i)$ differs from $\beta\cdot t_i$ in that $u_i$ is replaced by $u'_i$ (the same $u'_i$ as above).
So the $\st(b)$-parts of $t_i$, $\beta\cdot t_i$, $f(t_i)$ and $\gamma\cdot f(t_i)$ are $u_iv_i$, $u_iv'_i$, $u'_iv_i$ and $u'_iv'_i$ respectively.
Since these are reduced products, this proves the claim in this case.

Next we consider the case where $a$ is not adjacent to $b$, the supports of $\alpha$ and $\beta$ are disjoint, and $\beta$ fixes $[a]$.
In this case, $\gamma=\beta$.
The elements of $f(t_i)$ in $\supp(\beta)$ are exactly the same as those in $t_i$, since, as noted above, any elements merged in from other syllables cannot contain anything from $\supp(\beta)$.
Similarly, parts merged in from other syllables cannot contain elements of $[b]$.
By Lemma~\ref{le:basicobservations}, $\alpha$ fixes $v_iu_i$.
Of course, $\beta\cdot t_i$ differs from $t_i$ in that $v_i$ is replaced by $v'_i$, where $v'_i$ is determined by $v_i$ and the elements of $t_i$ in $\supp(\beta)$.
Since $f(t_i)$ contains the same $[b]$-part $v_i$ and the same elements of $\supp(\beta)$, it follows that $\beta\cdot t_i$ will be the same as $f(t_i)$, but with $v_i$ replaced by the same $v'_i$ just mentioned.
In particular, the claim follows in this case.

\begin{claim*}
If $t_i$ is not in the domain of $f$, then $t_i=\beta\cdot t_i$, and if $t'_j$ is not in the range of $f$, then $t'_j=\gamma\cdot t'_j$.
\end{claim*}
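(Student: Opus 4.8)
The plan is to prove the two halves of the claim together, splitting into the two situations distinguished in the construction of $f$ --- equivalently, the two cases of Lemma~\ref{le:steinbergrel}.

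In the case where $a$ is adjacent to $b$, I would argue there is nothing to prove, because $f$ is a bijection between the syllables of $T$ and those of $T'$, so neither a syllable of $T$ outside the domain of $f$ nor a syllable of $T'$ outside the range of $f$ exists. The point is that every vertex of $[a]$ lies in $\st(b)$: if $x\in[a]$ then $\st(x)=\st(a)\ni b$, so $x$ is adjacent to $b$ (and $x\neq b$ since $[a]\neq[b]$). Hence $\alpha$, being in $\whset{a}$, acts on the representative associated with $T$ only by inserting and cancelling letters of $\genby{[a]}\subseteq\genby{\st(b)}$ within the $\genby{\st(b)}$-parts of the syllables, without changing which letters lie outside $\st(b)$ nor their cyclic order; so $T$ and $T'$ have the same syllable boundaries and $f$ identifies them.

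In the remaining case --- $a$ not adjacent to $b$, $\supp(\alpha)\cap\supp(\beta)=\varnothing$, and $\beta$ trivial on $[a]$, so that $\gamma=\beta$ --- I would proceed by a direct computation with Definitions~\ref{de:support} and~\ref{de:syllableaction}. Here $\alpha$ fixes all of $\st(b)$ pointwise (Lemma~\ref{le:basicobservations}) and $[a]\cap\st(b)=\varnothing$ (since $b\notin\st(a)$). Let $t_i=c_iv_iu_id_i$ be a syllable outside the domain of $f$ (the cyclic case being $c_i=d_i=1$): this means neither $c_i$ nor $d_i^{-1}$ lies in $\supp(\beta)$ and no letter of the $\genby{\st(b)}$-part $v_iu_i$ lies in $\supp(\beta)$ (the letters of $v_i$ being treated like those of $u_i$). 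Writing $\beta(c_i)=w_1c_iw_2$ and $\beta(d_i)=w_3d_iw_4$ with $w_j\in\genby{[b]}$, Definition~\ref{de:syllableaction} gives $\beta\cdot t_i=c_iw_2\,\beta(v_iu_i)\,w_3d_i$. From $c_i\notin\st(b)$ and $c_i\notin\supp(\beta)$ one reads off from Definition~\ref{de:support} --- distinguishing whether $c_i$ is a generator or an inverse generator --- that $w_2=1$, and symmetrically $w_3=1$; and since each letter of $v_iu_i$ is in $\st(b)$ but outside $\supp(\beta)$, Definition~\ref{de:support} gives $\beta(v_iu_i)=v_iu_i$. Hence $\beta\cdot t_i=c_iv_iu_id_i=t_i$.

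For a syllable $t'_j$ of $T'$ outside the range of $f$, I would argue it has one of two forms: either it consists entirely of letters of $\genby{[a]}$ inserted by $\alpha$ (such short syllables arise precisely because $[a]\cap\st(b)=\varnothing$ in this case), and then $\gamma=\beta$ fixes it since $\beta$ is trivial on $[a]$; or its $\genby{\st(b)}$-part is assembled, via $\alpha$'s action on $\st(b)$ (which $\alpha$ fixes), from the $\genby{\st(b)}$-parts of one or more syllables of $T$ none of which lies in the domain of $f$ --- so this part avoids $\supp(\beta)$ --- while its endpoints are either original endpoints avoiding $\supp(\beta)$ or inserted letters of $[a]$ avoiding $\supp(\beta)$. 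In either case the computation above applies and yields $\gamma\cdot t'_j=t'_j$. The step that needs real care, and the only one, is this last structural description of which syllable boundaries $\alpha$ creates, destroys, or merges in passing from $T$ to $T'$: this is where the disjointness $\supp(\alpha)\cap\supp(\beta)=\varnothing$ is indispensable --- it is what prevents a domain-of-$f$ syllable from being absorbed into $t'_j$ and what pins the new boundaries to letters of $\genby{[a]}$ --- and once it is in hand, the rest is a mechanical unwinding of Definitions~\ref{de:support} and~\ref{de:syllableaction}.
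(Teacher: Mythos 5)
Your proposal is correct and follows essentially the same approach as the paper: split on the two hypotheses of Lemma~\ref{le:steinbergrel}, note that $f$ is a total bijection when $a$ is adjacent to $b$, and in the disjoint-support case reduce the claim to the observation that a syllable outside the domain (resp.\ range) of $f$ contains nothing from $\supp(\beta)$ and hence is fixed by $\beta=\gamma$. You spell out the definition-chase via $w_2$, $w_3$ a bit more explicitly than the paper does, and you are right to flag the structural description of which $T$-syllables contribute to a given $t'_j$ as the one step requiring care --- that is exactly the point the paper covers with its one-sentence justification (that $\alpha$ creates no new $\supp(\beta)$-letters and that the $\supp(\beta)$-letters of a domain syllable all land in its $f$-image).
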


In the first case, $f$ is a bijective total function and there is nothing to show.
In the second case, the statement about the domain of $f$ is obvious: if $t_i$ is not in the domain, then it contains nothing from $\supp(\beta)$, and therefore $\beta$ does nothing to it.
In the second case, if $t'_j$ is some syllable in $T'$ not in the range of $f$, then no part of it is in $\supp(\beta)$.
This is because $\alpha$ did not create new elements of $\supp(\beta)$ in acting on $W$, and if some $t_i$ has some elements of $\supp(\beta)$, all these elements of $\supp(\beta)$ end up in its correspondent under $f$.
The action of $\gamma$ is to place cancelling copies of $v$ and $v^{-1}$ in the syllable, proving the claim.

Now we finish the proof of the proposition.
Using Lemma~\ref{le:syllablesandlengthchange}, we have shown that
\[\abs{W}-\abs{\beta\cdot W}=\abs{\alpha\cdot W}-\abs{\gamma\alpha\cdot W},\]
since we have matched up all the syllables that change length on one side with the syllables that change length on the other side, and shown that they change length by the same amount.
If we suppose that $(W,\alpha,\beta)$ is a peak, then by definition, we know that $2\abs{W}<\abs{\beta\cdot W}+\abs{\alpha\cdot W}$ (this is from summing the two inequalities in the definition of a peak and using the stipulation that one of them is strict).
Combining these two statements, we obtain
 $\abs{W}>\abs{\gamma\alpha\cdot W}$.
\end{proof}

For the next proposition, we recall the technique from Day~\cite{Day1} for computing the change in length of classic long-range Whitehead automorphisms.
We defined classic Whitehead automorphisms in Section~\ref{ss:relations}.
Fix a tuple of conjugacy classes $W$.
For $c$ in a vertex of $\Gamma$ and $A$ and $B$ subsets of $X^{\pm1}$, we use the notation
$\pcount{A}{B}{W,c}$ to denote the number of instances of subwords of the forms $due^{-1}$ or $eud^{-1}$ in a graphically reduced representative for $W$, where $d\in A\setminus\st(c)$, $e\in B\setminus\st(c)$, and $u$ is a word in $\st(c)$.
This number is nonnegative and does not depend on the choices involved.
As a function of $A$ and $B$, it is additive over disjoint sets in both inputs.
Generally we will work with a specific $W$ and suppress $W$ from the notation.
Let $\gamma$ be a long-range classic Whitehead automorphism with multiplier $c$ and write $C=\supp(\gamma)+c$.
We write $c$ for $\{c\}$, ``-" for differences of sets, ``$+$" for disjoint unions and $C'$ for $X^{\pm1}\setminus C$ in the following computations.
This bracket has the following useful property (see Day~\cite[Lemma~3.16, Lemma~3.17]{Day1}):
\[\abs{W}-\abs{\gamma\cdot W}=\pcount{c}{C-c}{c}-\pcount{C'}{C-c}{c}.\]
Since $\pcount{c}{c}{c}=0$ ($W$ is graphically reduced), we can rewrite this as
\[\abs{W}-\abs{\gamma\cdot W}=\pcount{c}{X^{\pm1}}{c}-\pcount{C'}{C}{c}.\]

We use this bracket to detect sets of vertices for constructing Whitehead automorphisms for factorizations used in the peak reduction algorithm.
The next result helps us do this.
\begin{proposition}\label{pr:shorterfactors}
Suppose $\alpha$ and $\beta$ are classic long-range Whitehead automorphisms with multipliers $a$ and $b$ respectively.
Suppose that $a$ is not adjacent to $b$ and $b$ dominates $a$.
Finally, we suppose that $\alpha(b)=b$ and $a$ is in $\supp(\beta)$.
Let $W$ be a tuple of conjugacy classes.
Let $A$ denote $\supp(\alpha)+a$ and let $B$ denote $\supp(\beta)+b$.
Then:
\begin{itemize}
\item there is a classic long-range Whitehead automorphism $\beta_1$ with multiplier $b^{-1}$ and support $\supp(\beta_1)=A'\cap B'-b^{-1}$;
\item there is a classic long-range Whitehead automorphism $\alpha_1$ with multiplier $a$ and support $\supp(\alpha_1)=A\cap B-a$; and
\item we have the following inequality for the changes in length of $W$ under these automorphisms:
\[(\abs{W}-\abs{\beta_1\cdot W})+(\abs{W}-\abs{\alpha_1\cdot W})\geq (\abs{W}-\abs{\beta\cdot W})+(\abs{W}-\abs{\alpha\cdot W}).\]
\end{itemize}
\end{proposition}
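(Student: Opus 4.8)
The plan is to prove this by writing down $\alpha_1$ and $\beta_1$ explicitly, checking that they are genuine long-range classic Whitehead automorphisms, and then verifying the length inequality by a computation with the counting function $\pcount{\cdot}{\cdot}{W,c}$ and the length-change formula recalled above. The whole argument is a right-angled-Artin version of the ``uncrossing'' step in Whitehead's algorithm and closely parallels the bracket computations in Day~\cite{Day1}.

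Take $\alpha_1$ to be the classic Whitehead automorphism with multiplier $a$ and set $A\cap B$, and $\beta_1$ the one with multiplier $b^{-1}$ and set $A'\cap B'$; their supports are then $A\cap B-a$ and $A'\cap B'-b^{-1}$ as required. That these are well-defined long-range automorphisms is where the hypotheses enter most concretely. For $\alpha_1$: $a\in A\cap B$ because $a\in A$ and $a\in\supp(\beta)\subseteq B$; $a$ dominates every vertex of $A\cap B-a\subseteq\supp(\alpha)$ since $\alpha$ does; and the connected-component condition of Lemma~\ref{le:actoncomps} survives intersecting $\supp(\alpha)$ with $B$, because that intersection can only delete whole components of $\Gamma\drop\st(a)$, not split one. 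For $\beta_1$: $b^{-1}\in A'\cap B'$ since $\alpha(b)=b$ and $\beta(b)=b$ put $b^{-1}$ outside both $A$ and $B$; if $\beta_1$ were to act on a vertex $x$ by a transvection then exactly one of $x,x^{-1}$ lies in $A'\cap B'$, so the other lies in $A$ or in $B$, whence (using that $\supp(\alpha)$ and $\supp(\beta)$ are long-range, so $x\notin\st(a)^{\pm1}\cup\st(b)^{\pm1}$, in particular $x\not\sim a$) either $a$ or $b$ dominates $x$; combined with the hypothesis that $b$ dominates $a$, i.e.\ $\lk(a)\subseteq\st(b)$, this gives that $b^{-1}$ dominates $x$. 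The component condition for $\beta_1$ is checked the same way, using long-rangeness and the fact that no neighbor of $a$ can lie in a component of $\Gamma\drop\st(b)$ with two or more vertices.

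For the inequality, I would substitute into the length-change formula $\abs{W}-\abs{\gamma\cdot W}=\pcount{c}{X^{\pm1}}{c}-\pcount{C'}{C}{c}$ for each of $\gamma=\alpha,\beta,\alpha_1,\beta_1$, and partition $X^{\pm1}$ into the four Boolean regions $P=A\cap B$, $Q=A\cap B'$, $R=A'\cap B$, $S=A'\cap B'$. Using that the bracket is additive in each argument and symmetric (and that $\pcount{c}{c}{c}=0$), the two differences $(\abs{W}-\abs{\alpha_1\cdot W})-(\abs{W}-\abs{\alpha\cdot W})$ and $(\abs{W}-\abs{\beta_1\cdot W})-(\abs{W}-\abs{\beta\cdot W})$ each collapse to a short combination of brackets on these regions, with many terms cancelling in pairs by symmetry. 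What is left is an inequality between brackets carrying the subscript $a$ and brackets carrying the subscript $b^{-1}$ (equivalently $b$, since $\st(b^{-1})=\st(b)$); here one invokes $b$ dominates $a$ and $a\not\sim b$, so $\st(a)\subseteq\{a\}\cup\st(b)$, together with the long-range hypotheses, to identify or dominate the relevant subword counts of a cyclically reduced representative of $W$, after which the inequality reduces to the nonnegativity of brackets. I expect this last comparison of $a$-brackets with $b$-brackets to be the main obstacle: it is the one step that is genuinely geometric rather than formal, requiring one to track exactly which subwords each bracket counts and to see why the star inclusion together with the long-range conditions forces each negatively-signed term to be absorbed; the verification that $\beta_1$ is an automorphism, via the chain of dominations through $a$, is the secondary subtlety.
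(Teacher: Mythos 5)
Your plan is exactly the paper's: define $\alpha_1,\beta_1$ on the Boolean regions $A\cap B$ and $A'\cap B'$, and compare length changes by expanding the bracket $\pcount{\cdot}{\cdot}{\cdot}$ over the four regions $A\cap B$, $A\cap B'$, $A'\cap B$, $A'\cap B'$. But at both points where the real work lies the proposal asserts rather than argues. For well-definedness of $\alpha_1$, the claim that intersecting $\supp(\alpha)$ with $B$ ``can only delete whole components of $\Gamma\setminus\st(a)$, not split one'' is not self-evident and is where the hypotheses $\alpha(b)=b$ and ``$b$ dominates $a$'' must actually be used; the paper proves it with a case split on whether $a$ dominates $b$, and in the interesting case by showing that if a vertex $c$ of a whole component $Y\subset A$ of $\Gamma\setminus\st(a)$ lies in $B$, then $b$ cannot dominate $c$ (else a path from $c$ to $b$ avoiding $\st(a)$ would put $b$ in $Y$, contradicting $\alpha(b)=b$), so the full component $Z\supset Y$ of $c$ in $\Gamma\setminus\st(b)$ lies in $B$ and hence $Y\subset A\cap B$. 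Your argument for $\beta_1$ is similarly compressed: the step ``the other lies in $A$ or in $B$, whence $a$ or $b$ dominates $x$'' needs the component dichotomy of Lemma~\ref{le:actoncomps} to rule out that the opposite sign is in a conjugated component, and the whole-component check for $\beta_1$ needs an argument that a two-or-more-vertex component of $\Gamma\setminus\st(b)$ meeting $A'\cap B'$ sits inside a component of $\Gamma\setminus\st(a)$ missed by $\supp(\alpha)$.

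The more serious gap is the one you flag yourself: you leave the comparison of $a$-brackets with $b$-brackets as ``the main obstacle'' and do not resolve it, yet this is where the proposition is actually proved. After the Boolean expansion, what remains is to establish the identities
\[\pcount{A'\cap B'}{A\cap B'}{b}=\pcount{A'\cap B'}{A\cap B'}{a}\quad\text{and}\quad\pcount{A\cap B}{A\cap B'}{b}=\pcount{A\cap B}{A\cap B'}{a}+C_W,\]
where $C_W$ counts subwords $auc^{-1}$ with $c\in A-a$ and $u\in\genby{\st(b)}$ but $u\notin\genby{\st(a)}$, together with the parallel identities on the $\alpha$ side. Proving the first identity requires decomposing a subword $cud^{-1}$ with $u\in\genby{\st(b)}$ as $cu_1e(\dots)$ with $e\in\st(b)\setminus\st(a)$, and then arguing that such an $e$ lies in $A'\cap B'$ precisely because $\beta$ is long-range and $\alpha(b)=b$; only then does the total difference collapse to $C_W+\pcount{A\cap B'}{A'\cap B}{b}+\pcount{A\cap B'}{A'\cap B}{a}\geq 0$. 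Without these identities the argument is a correct plan, not a proof.
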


\begin{proof}
First we explain why the automorphisms $\beta_1$ and $\alpha_1$ exist.
Since $b$ dominates $a$, $b$ dominates every vertex that $a$ dominates, and if $Y$ is a connected component of $\Gamma\setminus\st(a)$, $Y$ is a union of vertices adjacent to $b$, vertices dominated by $b$, and connected components of $\Gamma\setminus\st(b)$.
Therefore $\beta_1$ can be expressed as  a product of Laurence generators and is a well defined automorphism.

If $a$ dominates $b$, then $a$ and $b$ dominate the same vertices non-adjacently and $\Gamma\setminus\st(a)$ and $\Gamma\setminus\st(b)$ have the same connected components with two or more vertices.
Then $A\cap B-a$ is a union of vertices that $a$ dominates non-adjacently and components of $\Gamma\setminus\st(a)$ with two or more vertices, and therefore $\alpha_1$ can be expressed as a product of Laurence generators with multiplier $a$ and is well defined.
Now suppose that $a$ does not dominate $b$.
Then since $\alpha$ fixes $b$, $\alpha$ must fix the entire connected component of $b$ in $\Gamma\setminus\st(a)$. 
If $c$ is a vertex in $A\cap B$ and $a$ does not dominate $c$, then the entire connected component $Y$ of $c$ in $\Gamma\setminus\st(a)$ is in $A$.
If $b$ dominates $c$, then there is a path from $c$ to $b$ outside of $\st(a)$, and therefore $b$ and $c$ are both in $Y$.
However, this is a contradiction, since $\alpha$ must conjugate every element of $Y$ by $a$.
So $b$ does not dominate $c$, which means that the entire connected component $Z$ of $c$ in $\Gamma\setminus\st(b)$ is in $B$.
Since $Y\subset Z$ (since $b$ dominates $a$), this means that $Y\subset A\cap B$.
In particular, $\alpha_1$ can be expressed as a product of Laurence generators and is well defined.

Now we consider lengths of images of $W$.
From the comments preceding the statement of Proposition~\ref{pr:shorterfactors}, we know
\[\abs{W}-\abs{\alpha\cdot W}=\pcount{a}{X^{\pm1}}{a}-\pcount{A'}{A}{a},\] 
\[\abs{W}-\abs{\beta\cdot W}=\pcount{b}{X^{\pm1}}{b}-\pcount{B'}{B}{b},\] 
and
\[\abs{W}-\abs{\beta_1\cdot W}=\pcount{b^{-1}}{X^{\pm1}}{b}-\pcount{A'\cap B'}{(A'\cap B')'}{b}.\]
We expand:
\[
\begin{split}
\pcount{A'\cap B'}{(A'\cap B')'}{b}=&\pcount{A'\cap B'}{A\cap B}{b}+\pcount{A'\cap B'}{A'\cap B}{b}\\
&+\pcount{A'\cap B'}{A\cap B'}{b}
\end{split}
\]
and:
\[
\begin{split}
\pcount{B'}{B}{b}=&\pcount{A\cap B'}{A\cap B}{b}+\pcount{A\cap B'}{A'\cap B}{b}+\pcount{A'\cap B'}{A\cap B}{b}\\&+\pcount{A'\cap B'}{A'\cap B}{b}.
\end{split}
\]
In particular, we see
\[
\begin{split}
(\abs{W}-\abs{\beta_1\cdot W})-(\abs{W}-\abs{\beta\cdot W})=&\pcount{A\cap B}{A\cap B'}{b}-\pcount{A'\cap B'}{A\cap B'}{b}\\
&+\pcount{A\cap B'}{A'\cap B}{b}.
\end{split}
\]
Let $C_W$ be the number of subwords of our graphically reduced representative for $W$ of the form $auc^{-1}$, where $c$ is in $A-a$ and $u$ is in $\genby{\st(b)}$ but not in $\genby{\st(a)}$ (so $u$ contains some letter not commuting with $a$).
Then 
\[\pcount{A\cap B}{A\cap B'}{b}=\pcount{A\cap B}{A\cap B'}{a}+C_W.\]
On the other hand, if $cud^{-1}$ is counted by $\pcount{A'\cap B'}{A\cap B'}{b}$, meaning that $c\in A'\cap B'$, $u\in\genby{\st(b)}$ and $d\in A-a$, then either $u\in\genby{\st(a)}$ and $cud^{-1}$ is counted by $\pcount{A'\cap B'}{A\cap B'}{a}$, or else $u=u_1eu_2$ with $u_1\in\genby{\st(a)}$ and $e\in\st(b)\setminus\st(a)$.
In the latter case, $cu_1e$ is counted by  $\pcount{A'\cap B'}{A\cap B'}{a}$, since $e$ is in $A'\cap B'$ ($e$ is fixed by $\beta$ because $\beta$ is long range and by $\alpha$ because otherwise $\alpha$ would not fix $b$).
So each subword counted by $\pcount{A'\cap B'}{A\cap B'}{b}$ is counted exactly once by $\pcount{A'\cap B'}{A\cap B'}{a}$, so 
\[\pcount{A'\cap B'}{A\cap B'}{b}=\pcount{A'\cap B'}{A\cap B'}{a}.\]
Then
\[\begin{split}
(\abs{W}-\abs{\beta_1\cdot W})-(\abs{W}-\abs{\beta\cdot W})=&\pcount{A\cap B}{A\cap B'}{a}-\pcount{A'\cap B'}{A\cap B'}{a}\\&+C_W+\pcount{A\cap B'}{A'\cap B}{b}.
\end{split}\]
By an argument parallel to the one for $\beta_1$, we deduce
\[\begin{split}
(\abs{W}-\abs{\alpha_1\cdot W})-(\abs{W}-\abs{\alpha\cdot W})=&\pcount{A\cap B'}{A'\cap B'}{a}-\pcount{A\cap B}{A\cap B'}{a}\\&+\pcount{A'\cap B}{A\cap B'}{a}.
\end{split}\]
Summing the two equations, we see that
\[(\abs{W}-\abs{\beta_1\cdot W})-(\abs{W}-\abs{\beta\cdot W})+(\abs{W}-\abs{\alpha_1\cdot W})-(\abs{W}-\abs{\alpha\cdot W})\]
is 
\[C_W+\pcount{A\cap B'}{A'\cap B}{b}+\pcount{A\cap B'}{A'\cap B}{a},\]
a nonnegative number.
\end{proof}

In the following, when we write $\abso{\alpha}$, we mean the minimum length of a representative of the class of $\alpha$ in $\Out(A_\Gamma)$ as a product of Laurence generators.
\begin{corollary}\label{co:shorterfactors}
Suppose $(W,\alpha,\beta)$ is a peak with $\alpha\in \whset{a}$, $\beta\in\whset{b}$ classic long-range Whitehead automorphisms, with $a$ not adjacent to $b$, and $b$ dominating $a$, but $a$ not dominating $b$.
Suppose $a\in\supp(\beta)$ and $\alpha(b)=b$.
Then either
\begin{itemize}
\item there is a classic long-range Whitehead automorphism $\beta_1$ with multiplier $b^{-1}$, with $\abso{\beta_1^{-1}\beta}<\abso{\beta}$ as a product of Laurence generators, with $\beta_1(a)$ equal to $ba$ or $a$, and with $\abs{\beta_1\cdot W}<\abs{W}$, or
\item there is a classic long-range Whitehead automorphism $\alpha_1$ with multiplier $a$, 
with $\supp(\alpha_1)\subset\supp(\beta)$, and with $\abs{\alpha_1\cdot W}<\abs{W}$.
\end{itemize}
Further, if $\supp(\alpha)\cap\supp(\beta)=\varnothing$, then the first option holds.
\end{corollary}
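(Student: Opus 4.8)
The plan is to derive the corollary from Proposition~\ref{pr:shorterfactors}, whose hypotheses are all contained in those of the corollary (the assumption that $a$ does not dominate $b$ is not needed here). Write $A=\supp(\alpha)+a$ and $B=\supp(\beta)+b$, and write $A'=X^{\pm1}\setminus A$, $B'=X^{\pm1}\setminus B$. That proposition produces a classic long-range Whitehead automorphism $\beta_1$ with multiplier $b^{-1}$ and $\supp(\beta_1)=A'\cap B'-b^{-1}$, a classic long-range Whitehead automorphism $\alpha_1$ with multiplier $a$ and $\supp(\alpha_1)=A\cap B-a$, and the inequality
\[(\abs{W}-\abs{\beta_1\cdot W})+(\abs{W}-\abs{\alpha_1\cdot W})\geq(\abs{W}-\abs{\beta\cdot W})+(\abs{W}-\abs{\alpha\cdot W}).\]
Since $(W,\alpha,\beta)$ is a peak, adding its two defining inequalities (one of them strict) gives $\abs{\alpha\cdot W}+\abs{\beta\cdot W}<2\abs{W}$, so the right-hand side above is strictly positive, and hence so is the left-hand side. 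Therefore at least one of $\beta_1,\alpha_1$ strictly shortens $W$, and I would finish by a case analysis.

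If $\abs{\alpha_1\cdot W}<\abs{W}$, then $\alpha_1$ witnesses the second bullet: it has multiplier $a$ by construction, and $\supp(\alpha_1)=A\cap B-a\subseteq B-b=\supp(\beta)$, because $b\notin A$ --- indeed $\alpha$ is a classic Whitehead automorphism with multiplier $a$ and $\alpha(b)=b$, so neither $b$ nor $b^{-1}$ lies in $A$, whence $b\notin A\cap B$. This is the easy case.

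If instead $\abs{\beta_1\cdot W}<\abs{W}$, I aim to produce a witness for the first bullet. Three of the required properties are not hard. The shortening of $W$ holds for $\beta_1$ itself; the multiplier is $b^{-1}$ by construction; and for $\beta_1(a)\in\{a,ba\}$ one notes that the defining set of $\beta_1$ as a classic Whitehead automorphism is $\supp(\beta_1)+b^{-1}=A'\cap B'$ (here $b^{-1}\in A'\cap B'$, since $b^{-1}\notin A$ as $\alpha(b)=b$ and $b^{-1}\notin B$ as $\beta$ has multiplier $b$), and $a\in A$ forces $a\notin A'\cap B'$, so $\beta_1$ cannot put $b^{-1}$ on the right of $a$, leaving only $\beta_1(a)=a$ or $\beta_1(a)=ba$. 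The real work is $\abso{\beta_1^{-1}\beta}<\abso{\beta}$. Here I would use the standard relations (from the list in Day~\cite{Day1}) identifying a classic Whitehead automorphism built from a set with the one built from the complementary set: since $\beta_1$ is built from $A'\cap B'$ with multiplier $b^{-1}$, it equals, modulo an inner automorphism, a classic long-range Whitehead automorphism $\hat\beta_1$ with multiplier $b$ and defining set $A\cup B$, so $\abso{\beta_1^{-1}\beta}=\abso{\hat\beta_1^{-1}\beta}$; since $A\cup B\supseteq B$, the composite $\hat\beta_1^{-1}\beta$ is, up to inner automorphisms, supported only on $A\setminus B=\supp(\alpha)\setminus\supp(\beta)$, which omits $a$, so the Laurence-generator contributions coming from the action of $\beta$ near $a$ drop out. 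Using the length function on long-range classic Whitehead automorphisms that is additive over disjoint pieces of support (as in Day~\cite{Day1}), this should yield $\abso{\beta_1^{-1}\beta}<\abso{\beta}$; the degenerate subcase where $\beta$ is inner (so $\abso{\beta}=0$) has to be treated separately, by observing that there $\beta_1$ from the proposition is trivial, so the positivity above instead forces $\alpha_1$ to be the shortening automorphism and we land in the second bullet. I expect this analysis --- fixing the conventions for the defining sets, choosing the inner correction, and handling the degenerate subcase --- to be the main obstacle.

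For the last sentence of the corollary: if $\supp(\alpha)\cap\supp(\beta)=\varnothing$, then since $b\notin A$ and $a\in\supp(\beta)\subseteq B$ we get $A\cap B=(\supp(\alpha)\cap\supp(\beta))\cup(\{a\}\cap B)=\{a\}$, so $\supp(\alpha_1)=A\cap B-a=\varnothing$ and $\alpha_1$ is the identity; it cannot strictly shorten $W$, so positivity of the left-hand side of the displayed inequality forces $\abs{\beta_1\cdot W}<\abs{W}$, i.e.\ the first bullet holds.
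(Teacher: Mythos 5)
Your overall approach matches the paper's own proof exactly: invoke Proposition~\ref{pr:shorterfactors}, add the two peak inequalities to get strict positivity, conclude that one of $\alpha_1,\beta_1$ shortens $W$, and then handle the disjoint-support case by noting $\alpha_1$ is trivial there. The paper's proof stops almost immediately with the one-line assertion that ``$\alpha_1$ and $\beta_1$ satisfy the other properties in the lemma,'' so your explicit verifications of the easy properties are in fact more careful than what the paper records. Your checks that $\supp(\alpha_1)\subseteq\supp(\beta)$ (via $b\notin A$), that $\beta_1(a)\in\{a,ba\}$ (via $a\in A$ so $a\notin A'\cap B'$), and that $\alpha_1$ vanishes when $\supp(\alpha)\cap\supp(\beta)=\varnothing$ (since $A\cap B=\{a\}$) are all correct.

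The one step you flag as ``the main obstacle'' --- showing $\abso{\beta_1^{-1}\beta}<\abso{\beta}$ --- is a genuine gap, and the heuristic you offer does not close it. Passing to the complement $\hat\beta_1$ gives that $\hat\beta_1^{-1}\beta$ is (up to inner) supported on $A\setminus B=\supp(\alpha)\setminus\supp(\beta)$, a set \emph{disjoint} from $\supp(\beta)$. You observe correctly that the Laurence-generator contribution from $a$ drops out, but you do not account for the new contributions coming from $\supp(\alpha)\setminus\supp(\beta)$, which appear in $\hat\beta_1^{-1}\beta$ but not in $\beta$. Additivity over disjoint supports by itself therefore gives no comparison at all between $\abso{\hat\beta_1^{-1}\beta}$ and $\abso{\beta}$; one needs some argument that the lost contribution strictly exceeds the gained one, and nothing in your sketch provides it. (For instance, when $\supp(\beta)=\{a\}$ and $\supp(\alpha)=\{c,c^{-1}\}$ with $b$ non-adjacently dominating both $a$ and $c$, one computes $\abso{\beta}=1$ from the single transvection while $\beta_1^{-1}\beta$ is a single partial conjugation, also of outer length $1$ --- no strict drop.) Be aware, though, that the paper itself offers no justification for this clause either, and the application of this corollary in the proof of Proposition~\ref{pr:nonadjacentasymmetric} reaches termination in the $\beta_1$-case by strictly decreasing $\abs{W'}$ rather than $\abso{\beta'}$; so the clause you could not prove is also the one the paper leans on least.
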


\begin{proof}
We apply Proposition~\ref{pr:shorterfactors}.
Using the definition of  a peak,
\[(\abs{W}-\abs{\beta_1\cdot W})+(\abs{W}-\abs{\alpha_1\cdot W})\geq (\abs{W}-\abs{\beta\cdot W})+(\abs{W}-\abs{\alpha\cdot W}) > 0.\]
So one of $\alpha_1$ or $\beta_1$ strictly shortens $W$.
Since $\alpha_1$ and $\beta_1$ satisfy the other properties in the lemma, this proves the main statement of the lemma.
If $\supp(\alpha)\cap\supp(\beta)=\varnothing$, then $\alpha_1$ is defined to be the trivial automorphism and cannot shorten $W$.
Therefore in this case, it is $\beta_1$ that shortens $W$.
\end{proof}

\subsection{The algorithm: basic cases}\label{ss:algbasic}

Now we begin considering cases for lowering peaks.
For the rest of this section, we suppose we have peak $(W,\alpha,\beta)$ with $\alpha$ and $\beta$ generalized Whitehead automorphisms.
\begin{lemma}\label{le:permlower}
If $\alpha$ is a permutation automorphism, then the peak $(W,\alpha,\beta)$ can be lowered.
\end{lemma}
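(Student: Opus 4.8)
The plan is to exploit that permutation automorphisms do not change lengths, so that the strict inequality in the peak must come from $\beta$, and then to ``move $\alpha$ out of the way'' by conjugation. First I would observe that since $\alpha\in P$ we have $\abs{\alpha\cdot W}=\abs{W}$: a permutation of $X\cup X^{-1}$ carries a graphically reduced word to a graphically reduced word of the same length, so it preserves the lengths of elements, of conjugacy classes, and of tuples. Hence the requirement that at least one of the two inequalities in the definition of a peak be strict forces $\abs{\beta\cdot W}<\abs{W}$.

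Next I would write down the candidate lowering factorization of length two,
\[\beta\alpha^{-1}=\gamma_2\gamma_1,\qquad \gamma_1=\alpha\beta\alpha^{-1},\quad \gamma_2=\alpha^{-1},\]
and check that $\gamma_1,\gamma_2\in\Omega$. Since $P$ is a subgroup of $\Aut(A_\Gamma)$, $\gamma_2=\alpha^{-1}\in P\subseteq\Omega$. For $\gamma_1$: if $\beta\in P$ then $\gamma_1\in P\subseteq\Omega$ (this case in fact cannot occur, by the previous paragraph); if $\beta\in\whset{b}$, then because $\alpha$ permutes $X\cup X^{-1}$ and preserves adjacency it satisfies $\st(\alpha(b))=\alpha(\st(b))$, hence carries the adjacent-domination class $[b]$ to $[\alpha(b)]$, and a direct check on generators shows $\alpha\whset{b}\alpha^{-1}=\whset{\alpha(b)}$; thus $\gamma_1\in\whset{\alpha(b)}\subseteq\Omega$.

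Finally I would verify the length condition. The only intermediate image to control (since $k=2$) is $\gamma_1\alpha\cdot W$, and
\[\gamma_1\alpha\cdot W=\alpha\beta\alpha^{-1}\alpha\cdot W=\alpha\beta\cdot W,\]
so $\abs{\gamma_1\alpha\cdot W}=\abs{\alpha\beta\cdot W}=\abs{\beta\cdot W}<\abs{W}$, using again that $\alpha\in P$ preserves lengths and the first step. Hence $\beta\alpha^{-1}=\gamma_2\gamma_1$ is a lowering of the peak. There is no serious obstacle here; the only point requiring a little care is the routine verification that conjugating a generalized Whitehead automorphism by a permutation automorphism lands again in $\Omega$, i.e.\ the identity $\alpha\whset{b}\alpha^{-1}=\whset{\alpha(b)}$ sketched above.
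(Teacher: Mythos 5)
Your proof is correct and takes essentially the same route as the paper: observe that $\alpha\in P$ preserves lengths, deduce $\abs{\beta\cdot W}<\abs{W}$, use the factorization $\beta\alpha^{-1}=\alpha^{-1}\cdot(\alpha\beta\alpha^{-1})$ with $\alpha\beta\alpha^{-1}\in\whset{\alpha(b)}$, and check the one intermediate length. The only difference is that you spell out the (routine) verification that conjugating $\whset{b}$ by a permutation lands in $\whset{\alpha(b)}$, which the paper states without proof.
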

\begin{proof}
Since $\alpha$ is a permutation automorphism, $\alpha\cdot W$ is the same length as $W$.
Then the definition of a peak demands that $\abs{\beta\cdot W}<\abs{W}$, and therefore $\beta\in\whset{b}$ for some vertex $b$ of $\Gamma$.
Then $\beta\alpha^{-1}=\alpha^{-1}\cdot \alpha\beta\alpha^{-1}$ is a peak-lowering factorization: $\alpha\beta\alpha^{-1}$ is in $\whset{\alpha(b)}$ and $\abs{\alpha\beta\cdot W}=\abs{\beta\cdot W}<\abs{W}$ (again because $\alpha$ is a permutation automorphism). 
\end{proof}

\begin{lemma}\label{le:samemultset}
If $\alpha$ and $\beta$ are both in $\whset{a}$ for $a$  vertex of $\Gamma$, then $(W,\alpha,\beta)$ can be lowered.
\end{lemma}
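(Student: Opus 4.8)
The plan is to exploit the fact, recorded in the definition of $\whset{a}$, that $\whset{a}$ is a subgroup of $\Aut(A_\Gamma)$; this means the product $\beta\alpha^{-1}$ is itself a single generalized Whitehead automorphism, so the length-two factorization underlying the peak collapses to a length-one one. Concretely, since $\alpha,\beta\in\whset{a}$ we have $\beta\alpha^{-1}\in\whset{a}\subseteq\Omega$, and I would simply take $\gamma_1:=\beta\alpha^{-1}$, obtaining the factorization $\beta\alpha^{-1}=\gamma_1$ by generalized Whitehead automorphisms.

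It then remains to check that this is a lowering of the peak $(W,\alpha,\beta)$. Recall that a lowering $\beta\alpha^{-1}=\gamma_k\dotsm\gamma_1$ is required only to satisfy $\abs{\gamma_i\dotsm\gamma_1\alpha\cdot W}<\abs{W}$ for $i=1,\dotsc,k-1$. Here $k=1$, so this range of indices is empty and the condition holds vacuously; hence the one-term factorization is a lowering. (In the degenerate case $\alpha=\beta$ one may instead use the empty factorization, which is a lowering for the same vacuous reason.)

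I do not expect any genuine obstacle here. The only content is the observation that composing two generalized Whitehead automorphisms sharing the multiplier class $[a]$ never leaves $\Omega$, so that the two-step path $\alpha\cdot W\to W\to\beta\cdot W$ can be replaced by a single edge, which trivially contains no intermediate peak.
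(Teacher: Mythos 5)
Your proof is correct and is essentially identical to the paper's: both set $\gamma_1 = \beta\alpha^{-1} \in \whset{a}$, noting that $\whset{a}$ is closed under products and inverses, and observe that the lowering condition on intermediate lengths is vacuous for a length-one factorization.
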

\begin{proof}
Set $\gamma=\beta\alpha^{-1}$.
Then $\beta\alpha^{-1}=\gamma$ is a peak-lowering factorization: $\gamma\in\whset{a}$ and the condition on intermediate lengths of images of $W$ is vacuous for factorizations of length one.
\end{proof}

For the rest of this section we assume that $(W,\alpha,\beta)$ is a peak with $\alpha\in\whset{a}$ and $\beta\in\whset{b}$ for distinct vertices $a$ and $b$ in $\Gamma$.
\begin{lemma}\label{le:adjsteinberg}
Suppose $a$ is adjacent to $b$ and $\alpha_{[b]}$ is the identity.
Then the peak $(W,\alpha,\beta)$ can be lowered.
\end{lemma}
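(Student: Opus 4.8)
The plan is to exhibit an explicit two-term factorization of $\beta\alpha^{-1}$ and then read off the single intermediate length from Proposition~\ref{pr:steinberglengthchange}. First I would dispose of a degenerate case: if $[a]=[b]$ then $\whset{a}=\whset{b}$, so both $\alpha$ and $\beta$ lie in $\whset{a}$ and Lemma~\ref{le:samemultset} already lowers the peak. Hence I may assume $[a]\neq[b]$, so that the hypotheses of Lemma~\ref{le:steinbergrel} and Proposition~\ref{pr:steinberglengthchange} are available.

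Now with $[a]\neq[b]$, with $a$ adjacent to $b$, and with $\alpha$ restricting to the identity on $[b]$, the first bullet in the hypothesis list of Lemma~\ref{le:steinbergrel} is met. That lemma then tells us $\gamma:=\alpha\beta\alpha^{-1}$ lies in $\whset{b}$ and that $\alpha\beta\alpha^{-1}=\gamma$ is an identity among generalized Whitehead automorphisms. Rearranging, $\beta\alpha^{-1}=\alpha^{-1}\gamma$, which I would take as the candidate lowering factorization: it expresses $\beta\alpha^{-1}$ as a product of the two generalized Whitehead automorphisms $\gamma\in\whset{b}$ (applied first) and $\alpha^{-1}\in\whset{a}$ (applied second).

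It remains to verify that this length-two factorization is peak-lowering, i.e.\ that the single intermediate image $\gamma\alpha\cdot W=\alpha\beta\cdot W$ satisfies $\abs{\gamma\alpha\cdot W}<\abs{W}$. This is exactly the final assertion of Proposition~\ref{pr:steinberglengthchange}: its hypotheses are precisely the hypotheses of Lemma~\ref{le:steinbergrel} that we have just checked, and $(W,\alpha,\beta)$ is a peak by assumption, so the proposition yields $\abs{\gamma\alpha\cdot W}<\abs{W}$. Since a factorization of length two imposes only this one intermediate inequality, $\beta\alpha^{-1}=\alpha^{-1}\gamma$ lowers the peak. I do not expect any genuine obstacle here beyond care with the composition conventions (automorphisms compose as functions, while factorizations are listed right-to-left, so $\gamma$ is the first factor applied to $\alpha\cdot W$); the substance is already contained in Lemma~\ref{le:steinbergrel} and Proposition~\ref{pr:steinberglengthchange}. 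Should one wish to avoid citing the proposition as a black box, the inequality $\abs{\gamma\alpha\cdot W}<\abs{W}$ could instead be re-derived by the syllable-counting argument in its proof, using that when $a$ is adjacent to $b$ conjugation by $\alpha$ does not move syllable boundaries with respect to $b$.
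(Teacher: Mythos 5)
Your proof takes essentially the same route as the paper: you use the factorization $\beta\alpha^{-1}=\alpha^{-1}\gamma$ with $\gamma=\alpha\beta\alpha^{-1}\in\whset{b}$ supplied by Lemma~\ref{le:steinbergrel}, and verify the single intermediate inequality $\abs{\gamma\alpha\cdot W}<\abs{W}$ by citing the final assertion of Proposition~\ref{pr:steinberglengthchange}. The one addition you make — disposing of $[a]=[b]$ via Lemma~\ref{le:samemultset} before invoking the Steinberg machinery — is a sensible hygiene step, since Lemma~\ref{le:steinbergrel} requires $[a]\neq[b]$; in the paper that case has already been peeled off in the proof of the Main Lemma before this sub-lemma is invoked, so the omission there is harmless, but your version makes the sub-lemma self-contained.
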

\begin{proof}
By Lemma~\ref{le:steinbergrel}, we know that the element $\gamma=\alpha\beta\alpha^{-1}$ is in $\whset{b}$.
We use the factorization $\beta\alpha^{-1}=\alpha^{-1}\gamma$.
To show that this is peak-lowering, it is enough to show that $\abs{\gamma\alpha\cdot W}<\abs{W}$.
However, we have already done this in Proposition~\ref{pr:steinberglengthchange}
\end{proof}

\begin{lemma}\label{le:nonadjsteinbergidentity}
Suppose $a$ is not adjacent to $b$, $\supp(\alpha)\cap\supp(\beta)=\varnothing$ and $\alpha|_{[b]}$ and $\beta|_{[a]}$ are both identity maps.
Then the peak $(W,\alpha,\beta)$ can be lowered.
\end{lemma}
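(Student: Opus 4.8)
The plan is to imitate the proof of Lemma~\ref{le:adjsteinberg}, but invoking the second branch of the Steinberg relation rather than the first. The hypotheses here---$a$ not adjacent to $b$, $\supp(\alpha)\cap\supp(\beta)=\varnothing$, and $\alpha|_{[b]}$ and $\beta|_{[a]}$ both trivial---are exactly the hypotheses of the second case of Lemma~\ref{le:steinbergrel}. Note also that $[a]\neq[b]$ automatically: since $a$ and $b$ are distinct and non-adjacent they cannot have the same star. Hence Lemma~\ref{le:steinbergrel} applies and yields $\gamma:=\alpha\beta\alpha^{-1}\in\whset{b}$; in fact, since $\beta$ restricts to the identity on $[a]$, the remark following Lemma~\ref{le:steinbergrel} gives $\gamma=\beta$, so $\alpha$ and $\beta$ commute.

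First I would write down the factorization $\beta\alpha^{-1}=\alpha^{-1}\gamma$. Both factors lie in $\Omega$: $\alpha^{-1}\in\whset{a}$ because $\whset{a}$ is a group, and $\gamma\in\whset{b}$ by the previous paragraph. Since this factorization has length two, the only intermediate length that must be checked against the definition of a lowering is $\abs{\gamma\alpha\cdot W}$, and the goal reduces to showing $\abs{\gamma\alpha\cdot W}<\abs{W}$.

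This inequality is precisely the conclusion of Proposition~\ref{pr:steinberglengthchange}: the hypotheses placed on $\alpha$ and $\beta$ in that proposition are exactly the (second-case) hypotheses of Lemma~\ref{le:steinbergrel} that we have just verified, and when $(W,\alpha,\beta)$ is a peak it gives $\abs{\gamma\alpha\cdot W}<\abs{W}$. Therefore $\beta\alpha^{-1}=\alpha^{-1}\gamma$ is a peak-lowering factorization by generalized Whitehead automorphisms, which is what we wanted.

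I do not expect any genuine obstacle in this lemma: the substantive work has already been carried out in Lemma~\ref{le:steinbergrel} and Proposition~\ref{pr:steinberglengthchange}. The only point that needs care is confirming that the present hypotheses match the ``second case'' hypotheses of those two results---in particular that $[a]\neq[b]$ and that the support-disjointness and restriction conditions are as required---which is immediate from the statement.
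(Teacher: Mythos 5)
Your proposal is correct and matches the paper's proof essentially step for step: both set $\gamma=\alpha\beta\alpha^{-1}$, observe via Lemma~\ref{le:steinbergrel} that $\gamma\in\whset{b}$ (and in fact $\gamma=\beta$ since $\alpha$ and $\beta$ commute under these hypotheses), propose the length-two factorization $\beta\alpha^{-1}=\alpha^{-1}\gamma=\alpha^{-1}\beta$, and reduce the peak-lowering condition to the inequality $\abs{\gamma\alpha\cdot W}<\abs{W}$ supplied by Proposition~\ref{pr:steinberglengthchange}. Your added remark that $[a]\neq[b]$ follows automatically from non-adjacency is a useful bit of bookkeeping the paper leaves implicit.
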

\begin{proof}
Again we use $\gamma=\alpha\beta\alpha^{-1}$, which is in $\whset{b}$ by Lemma~\ref{le:steinbergrel}.
Our hypotheses imply that $\alpha$ and $\beta$ commute, so that $\gamma=\beta$.
Our peak-lowering factorization is 
\[\beta\alpha^{-1}=\alpha^{-1}\beta.\]
To show this, of course we need to show that $\abs{\beta\alpha\cdot W}<\abs{W}$.
We have already shown this in Proposition~\ref{pr:steinberglengthchange}.
\end{proof}

\subsection{General cases with non-adjacent multipliers}\label{ss:alggen}
\begin{lemma}\label{le:lowerconj}
Suppose $(W,\alpha,\beta)$ is a peak with $\alpha\in\whset{a}$ and $\beta\in\Omega$, and $\gamma\in\whset{a}$ is an inner automorphism of $A_\Gamma$.
Then $\gamma\alpha$ and $\alpha\gamma$ are both in $\whset{a}$ and $(W,\gamma\alpha,\beta)$ and $(W,\alpha\gamma,\beta)$ are both peaks and can be lowered if and only if $(W,\alpha,\beta)$ can be lowered.
\end{lemma}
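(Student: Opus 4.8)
The plan is to dispose of the structural claims first and then construct the lowering factorizations by hand. Since $\whset{a}$ is a subgroup of $\Aut(A_\Gamma)$, both $\gamma\alpha$ and $\alpha\gamma$ lie in $\whset{a}$; moreover $\gamma':=\alpha\gamma\alpha^{-1}$ is again an inner automorphism lying in $\whset{a}$, and $\alpha\gamma=\gamma'\alpha$, so the assertion for $\alpha\gamma$ is just the assertion for $\gamma\alpha$ applied to $\gamma'$ in place of $\gamma$. Hence I would only treat $\gamma\alpha$. Because $\gamma$ is inner it fixes every conjugacy class, so $\gamma\alpha\cdot W=\alpha\cdot W$ as tuples of conjugacy classes; in particular $\abs{\gamma\alpha\cdot W}=\abs{\alpha\cdot W}$, which immediately shows that $(W,\gamma\alpha,\beta)$ is a peak whenever $(W,\alpha,\beta)$ is. Finally, replacing $\alpha$ by $\gamma\alpha$ and $\gamma$ by $\gamma^{-1}$ (still inner, still in $\whset{a}$, and $\gamma^{-1}(\gamma\alpha)=\alpha$) interchanges the two peaks, so it suffices to prove one implication: a lowering of $(W,\alpha,\beta)$ yields a lowering of $(W,\gamma\alpha,\beta)$.

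So suppose $\beta\alpha^{-1}=\gamma_k\dotsm\gamma_1$ is a lowering of $(W,\alpha,\beta)$, with each $\gamma_i\in\Omega$ and $\abs{\gamma_i\dotsm\gamma_1\alpha\cdot W}<\abs{W}$ for $1\le i\le k-1$. I want a factorization of $\beta(\gamma\alpha)^{-1}=\beta\alpha^{-1}\gamma^{-1}$ into elements of $\Omega$ all of whose intermediate images of $\gamma\alpha\cdot W=\alpha\cdot W$ have length $<\abs{W}$. Since $(W,\alpha,\beta)$ is a peak, either $\abs{\alpha\cdot W}<\abs{W}$ or $\abs{\beta\cdot W}<\abs{W}$. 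If $\abs{\alpha\cdot W}<\abs{W}$, I would use $\beta(\gamma\alpha)^{-1}=\gamma_k\dotsm\gamma_1\gamma^{-1}$, where $\gamma^{-1}\in\whset{a}\subseteq\Omega$ is a single factor: the first intermediate image is $\gamma^{-1}\cdot(\alpha\cdot W)=\alpha\cdot W$ (as $\gamma^{-1}$ is inner), of length $<\abs{W}$, and the later intermediate images equal $\gamma_i\dotsm\gamma_1\alpha\cdot W$ for $i\le k-1$, again of length $<\abs{W}$, while the last image is $\beta\cdot W$ and is unconstrained.

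In the remaining case $\abs{\beta\cdot W}<\abs{W}$ (which in particular covers $\abs{\alpha\cdot W}=\abs{W}$, where the peak condition forces $\abs{\beta\cdot W}<\abs{W}$), I would conjugate $\gamma^{-1}$ past $\gamma_k\dotsm\gamma_1=\beta\alpha^{-1}$ to get
\[\beta(\gamma\alpha)^{-1}=\bigl(\beta\alpha^{-1}\gamma^{-1}\alpha\beta^{-1}\bigr)\,\gamma_k\dotsm\gamma_1,\]
and then observe that the inner automorphism $\beta\alpha^{-1}\gamma^{-1}\alpha\beta^{-1}$ can be written as a product $\epsilon_1\dotsm\epsilon_r$ of conjugations by single generators, each of which lies in $\Omega$ (conjugation by a generator $x$ lies in $\whset{x}$) and fixes every conjugacy class. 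Reading off the factorization as $\gamma_1,\dotsc,\gamma_k$ followed by $\epsilon_r,\dotsc,\epsilon_1$: the intermediate images of $\alpha\cdot W$ are $\gamma_i\dotsm\gamma_1\alpha\cdot W$ (length $<\abs{W}$ for $i\le k-1$), then $\gamma_k\dotsm\gamma_1\alpha\cdot W=\beta\cdot W$ (length $<\abs{W}$), then repeated images of $\beta\cdot W$ under the inner $\epsilon_j$, all of length $\abs{\beta\cdot W}<\abs{W}$; the last image is $\beta\cdot W$ and is unconstrained. Either way this is a lowering of $(W,\gamma\alpha,\beta)$.

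The main point to get right is the case $\abs{\alpha\cdot W}=\abs{W}$: there one cannot simply prepend $\gamma^{-1}$, since that creates an intermediate image of length $\abs{W}$, so $\gamma^{-1}$ must be carried to the far end of the factorization as a conjugate; and since that conjugate need not lie in any single $\whset{c}$, it has to be broken into conjugations by generators — the saving grace being that inner automorphisms do not change lengths of conjugacy classes, so these extra factors are harmless. Everything else is bookkeeping with the definitions of peak and lowering, using repeatedly that $\whset{a}$ is a group and that inner automorphisms fix conjugacy classes.
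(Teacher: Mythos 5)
Your proof is correct and takes essentially the same approach as the paper: exploit the fact that inner automorphisms fix all conjugacy classes (hence lengths), reduce the $\alpha\gamma$ case to the $\gamma\alpha$ case by conjugating $\gamma$, and insert the appropriate (possibly conjugated) inner factors from $\Omega$ into a given lowering at a position whose intermediate length is already below $\abs{W}$. The only mechanical difference is in where the inner factors go: the paper picks an index $l$ that minimizes $\abs{\delta_l\dotsm\delta_1\alpha\cdot W}$ and inserts there, while you case-split on which of $\abs{\alpha\cdot W}$, $\abs{\beta\cdot W}$ is strictly smaller and place the factors at the start (simply $\gamma^{-1}$) or the end (the conjugated inner automorphism, decomposed into generator conjugations, which is the same decomposition the paper implicitly invokes when it says every inner automorphism is a product of inner automorphisms from $\Omega$) -- both placements are valid.
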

\begin{proof}
The claim that $\gamma\alpha$ and $\alpha\gamma$ are in $\whset{a}$ is obvious.
We note that $\gamma\alpha=\alpha\gamma'$ for a possibly different inner automorphism $\gamma'\in\whset{a}$, so it is enough to prove the lemma for $(W,\gamma\alpha,\beta)$.
Also, it is enough to show only the ``if" direction of the statement.

First of all, if $(W,\alpha,\beta)$ is a peak, then $(W,\gamma\alpha,\beta)$ is as well, since action by inner automorphisms has no effect on cyclic words.
Now we suppose that $\delta_1,\dotsc,\delta_k\in\Omega$ and 
\[\beta\alpha^{-1}=\delta_k\dotsm\delta_1\]
is a lowering of $(W,\alpha,\beta)$.
Let $l$ be one of $1,\dotsc, k$ such that $\delta_l\dotsm\delta_1\alpha\cdot W$ is minimal length among all $\delta_i\dotsm\delta_1\alpha\cdot W$.
Since a conjugate of an inner automorphism by another automorphism is still an inner automorphism, and since every inner automorphism is a product of inner automorphisms from $\Omega$, we can find inner automorphisms $\gamma_1,\dotsc,\gamma_m$ in $\Omega$ such that
\[\beta\alpha^{-1}\gamma^{-1}=\delta_k\dotsm\delta_{l+1}\gamma_m\dotsm\gamma_1\delta_l\dotsm\delta_1.\]
Since inner automorphisms do not change the length of cyclic words, this is a lowering of the peak $(W,\gamma\alpha,\beta)$.
\end{proof}

If $\alpha$ is a classic long-range Whitehead automorphism with multiplier $a$, we will sometimes consider the \emph{complement} of $\alpha$.
This is the classic long-range Whitehead automorphism with multiplier $a$, with the property that the union of the support of $\alpha$ and the support of its complement is $(\Gamma\setminus\st(a))^{\pm1}$.
It follows that the product of $\alpha$ and its complement is inner, so Lemma~\ref{le:lowerconj} implies that we may freely replace an automorphism in a peak with its complement.

\begin{proposition}
\label{pr:nonadjacentnodom}
Suppose $a$ is not adjacent to $b$, $b$ does not dominate $a$ and $a$ does not dominate $b$.
Then the peak $(W,\alpha,\beta)$ can be lowered.
\end{proposition}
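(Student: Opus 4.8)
The plan is to strip $\alpha$ and $\beta$ down, modulo inner automorphisms, until they have disjoint support and then invoke Lemma~\ref{le:nonadjsteinbergidentity}; the part of the supports that resists this stripping will be handled by a length‑counting argument in the style of Proposition~\ref{pr:shorterfactors}. For the normalization: since every element of $[a]$ has star exactly $\st(a)$, the subgroup $\genby{[a]}$ is central in $\genby{\st(a)}$, so for $u\in\genby{[a]}$ the partial conjugation of \emph{all} of $\Gamma\setminus\st(a)$ by $u$ coincides with the inner automorphism $\iota_u$. Writing $\alpha$ as a product of dominated transvections and partial conjugations with multipliers in $[a]$ and of $[a]$-inversions (Lemma~\ref{le:actoncomps}), I would collect the partial conjugations of the component $Y$ of $\Gamma\setminus\st(a)$ containing $b$ — note $\abs{Y}\ge 2$, since $a$ does not dominate $b$ — into a single factor $\pi_{Y,u}$, rewrite $\pi_{Y,u}$ as (a partial conjugation of the complementary components of $\Gamma\setminus\st(a)$)$\,\cdot\,\iota_u$, and discard $\iota_u$ using Lemma~\ref{le:lowerconj}. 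The normalized $\alpha$ then fixes $Y$ pointwise, hence fixes $[b]$, hence (Lemma~\ref{le:basicobservations}) fixes $\st(b)$ pointwise; symmetrically I would arrange $\beta$ to fix $\st(a)$ pointwise. In particular $\alpha|_{[b]}$ and $\beta|_{[a]}$ are trivial and $\supp(\alpha)\cap\st(b)^{\pm1}=\supp(\beta)\cap\st(a)^{\pm1}=\varnothing$.

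If now $\supp(\alpha)\cap\supp(\beta)=\varnothing$ we are finished: Lemma~\ref{le:nonadjsteinbergidentity} applies and gives the peak‑lowering factorization $\beta\alpha^{-1}=\alpha^{-1}\beta$. Otherwise, a short case analysis using Lemma~\ref{le:actoncomps} and the normalization shows that $\supp(\alpha)\cap\supp(\beta)$ lies in $(X\setminus(\st(a)\cup\st(b)))^{\pm1}$ and is made up only of: (i) vertices of a component that is simultaneously a component of $\Gamma\setminus\st(a)$ and of $\Gamma\setminus\st(b)$, conjugated by $\alpha$ through an element of $\genby{[a]}$ and by $\beta$ through an element of $\genby{[b]}$; and (ii) vertices of $\dom(a)\cap\dom(b)$, which are singleton components of both $\Gamma\setminus\st(a)$ and $\Gamma\setminus\st(b)$ and are transvected by both. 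Writing $\alpha$ and $\beta$ as products of classic Whitehead automorphisms with multipliers in $[a]$ respectively $[b]$, and sorting these factors into ``on the overlap'' and ``off the overlap,'' I would set up an induction on $\abso{\alpha}+\abso{\beta}$: off‑overlap factors can be peeled off using the cases already handled (Lemmas~\ref{le:samemultset}, \ref{le:adjsteinberg}, \ref{le:nonadjsteinbergidentity}, Proposition~\ref{pr:nonadjacentasymmetric}) together with Lemma~\ref{le:lowerconj}, reducing everything to the case in which $\supp(\alpha)$ and $\supp(\beta)$ are contained in the overlap region.

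In that core case I would run a bracket computation in the spirit of the proof of Proposition~\ref{pr:shorterfactors}: working with $\pcount{\cdot}{\cdot}{\cdot}$ on the shared components and shared dominated vertices, and using that a peak forces $2\abs{W}<\abs{\alpha\cdot W}+\abs{\beta\cdot W}$, one produces a single classic Whitehead automorphism with multiplier in $[a]$ or in $[b]$ that strictly shortens $W$; prepending it to $\beta\alpha^{-1}$ strictly decreases the induction quantity, which closes the argument. The main obstacle is exactly this core. There the multiplier of the overlap part of $\alpha$ lies in $\genby{[a]}$ and that of $\beta$ in $\genby{[b]}$, and since $a$ is not adjacent to $b$ with $[a]\neq[b]$, no element of $[a]$ commutes with any element of $[b]$; thus $\genby{[a]}$ and $\genby{[b]}$ generate a free product inside $A_\Gamma$, there is no Steinberg‑type relation available — $\alpha\beta\alpha^{-1}$ need not lie in $\whset{b}$ — and one cannot simply move $\beta$ past $\alpha$. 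The length bookkeeping on the shared components and shared dominated vertices therefore has to be carried out directly, and that computation is where the bulk of the work lies.
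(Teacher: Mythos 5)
Your normalization is exactly right and matches the paper: after multiplying $\alpha$ and $\beta$ by inner automorphisms from $\whset{a}$ and $\whset{b}$ (Lemma~\ref{le:lowerconj}), one may assume $\alpha$ fixes $b$ (hence, via Lemma~\ref{le:actoncomps} and Lemma~\ref{le:basicobservations}, all of $\st(b)$) and symmetrically for $\beta$; your characterization of the residual overlap as shared isolated dominated vertices plus shared components also agrees with the paper's claim. The gap is in what comes after. Your plan is to factor $\alpha$ and $\beta$ into classic pieces, peel the ``off-overlap'' factors away via the earlier lemmas, and thereby reduce to a core case with $\supp(\alpha),\supp(\beta)$ contained in the overlap, to be settled by a bracket computation. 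But peeling is not free: if you strip an off-overlap factor $\gamma$ from $\alpha$ and pass to $(W,\gamma^{-1}\alpha,\beta)$ (or $(W,\alpha\gamma^{-1},\beta)$), nothing guarantees $\abs{\gamma^{-1}\alpha\cdot W}\leq\abs{W}$, so the peak structure — the very thing you need to invoke the earlier lemmas again — can be destroyed. Also, Proposition~\ref{pr:nonadjacentasymmetric} cannot appear in the peeling toolkit here, since it requires $b$ to dominate $a$, which is excluded by hypothesis.

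The paper never reduces to your ``core case.'' Instead it inducts on $\abs{\supp(\alpha)\cap\supp(\beta)}$ and removes a single side of a single overlap vertex $c$ at a time: with $\alpha(c)=v_1cv_2$, it forms $\alpha_c$ agreeing with $\alpha$ except $\alpha_c(c)=v_1c$, and similarly $\beta_c$, and then proves by a local count ($n_a$, $n_b$, $n'$ of occurrences of $c^{\pm1}$ according to which multipliers flank them) that at least one of $\alpha_c$ or $\beta_c$ strictly shortens $W$. That is precisely the leverage that keeps the recursion inside the peak regime, and it is what your sketch is missing. Your closing paragraph correctly diagnoses why no Steinberg relation is available in the overlap (elements of $[a]$ and $[b]$ generate a free product), so you have identified the right obstacle — but the counting argument that overcomes it, which you defer as ``the bulk of the work,'' is exactly the content of the proposition, and it is also not the $\pcount{\cdot}{\cdot}{\cdot}$ bracket of Proposition~\ref{pr:shorterfactors} (which is tailored to classic long-range automorphisms with a single multiplier, not to general elements of $\whset{a}$). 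As written, the argument is incomplete precisely at its crux.
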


\begin{proof}
We prove the proposition by proving it in successively more general cases, with each case building on the last.
First we prove:
\begin{case*}
The proposition is true if $\beta$ is a classic long-range Whitehead automorphism with multiplier $b$, $\beta|_{[a]}$ is the identity, $\alpha|_{[b]}$ is not the identity and $\supp(\alpha)\cap\supp(\beta)=\varnothing$.
\end{case*}
Since $a$ does not dominate $b$, there is some nontrivial element $v\in\genby{[a]}$ such that $\alpha(b)=vbv^{-1}$.
We define a new automorphism $\alpha_1\in\whset{a}$ by $\alpha_1(c)=v^{-1}\alpha(c)v$ for every $c\in\Gamma$.
Then $\alpha_1$ and $\alpha$ differ by an inner automorphism, and to prove the claim, it is enough to show that the peak $(W,\alpha_1,\beta)$ can be lowered.

\begin{claim*}
If $c^{\pm1}\in\supp(\beta)$, then either $a$ dominates $c$ or $Y^{\pm1}\subset\supp(\beta)$, where $Y$ is the connected component of $c$ in $\Gamma\setminus\st(a)$.
\end{claim*}
Suppose that $c$ is a vertex of $\Gamma$ with $c$ or $c^{-1}\in\supp(\beta)$ and $a$ does not dominate $c$.
Let $Y$ be the connected component of $c$ in $\Gamma\setminus\st(a)$.
We want to show that $Y$ is a subset of a connected component of $\Gamma\setminus\st(b)$ with at least two vertices.
This is enough, since by Lemma~\ref{le:actoncomps} this implies that $Y^{\pm1}\subset\supp(\beta)$.
Then it is enough to show that $b$ does not dominate $c$ and that $Y$ does not contain an element of $\st(b)$.

If $b$ dominates $c$, then since $a$ does not dominate $c$, either $b$ is adjacent to $c$ or there is a vertex adjacent to $b$ and $c$ but not adjacent to $a$.
If $Y$ contains an element of $\st(b)$, then there is a path from $c$ to $b$ outside of $\st(a)$.
In any of these three cases, it follows that $b$ and $c$ are in the same component of $\Gamma\setminus\st(a)$.
However, this contradicts Lemma~\ref{le:actoncomps}: $\alpha$ fixes $c$ and $c^{-1}$ since $\beta$ does not and $\supp(\alpha)\cap\supp(\beta)=\varnothing$, but $\alpha$ does not fix $b$.
This proves the claim.

By the claim, $\supp(\beta)$ is a union of elements $c^{\pm1}$ where $a$ dominates $c$, and subsets $Y^{\pm1}$ where $Y$ is an entire connected component of $\Gamma\setminus\st(a)$.
In particular, the following automorphisms are products of Laurence generators and are well defined.
Let $\alpha_2(c)$ be in $\{c,cv^{-1},vc,vcv^{-1}\}$ for each vertex $c\in \Gamma$, with $\supp(\alpha_2)=\supp(\beta)$,
and let $\alpha_3=\alpha_1\alpha_2^{-1}$.

Now our goal is to show that $\abs{\alpha_3\cdot W}<\abs{W}$.
If this is true, then we will have that $(W,\alpha_3,\beta)$ is a peak with $\beta|_{[a]}$ and $\alpha_3|_{[b]}$ being identity maps and $\supp(\alpha_3)\cap\supp(\beta)=\varnothing$.
In particular, Lemma~\ref{le:nonadjsteinbergidentity} applies to lower $(W,\alpha_3,\beta)$.
Appending $\alpha_2^{-1}$ to this factorization will give us a peak-lowering factorization for $\beta\alpha^{-1}$.

In fact we show that 
\[\abs{\alpha_1\cdot W}-\abs{\alpha_2\cdot W}\geq \abs{v}(\abs{W}-\abs{\beta\cdot W}).\]
Let $T$ be a syllable decomposition of $W$ with respect to $[b]$.
For each syllable $t_i$ in $T$, either $\beta\cdot t_i$ is longer that $t_i$ or shorter than $t_i$ or the same length.
Let $L_1$ be the sum over syllables $t_i$ that $\beta$ lengthens of $\abs{\beta\cdot t_i}-\abs{t_i}$, and let $S_1$ be the sum over syllables $t_i$ that $\beta$ shortens of $\abs{t_i}-\abs{\beta\cdot t_i}$.
Then
\[\abs{W}-\abs{\beta\cdot W}= S_1-L_1.\]
Let $T'$ be a syllable decomposition of $\alpha_1\cdot W$ with respect to $[a]$, and similarly let $L_2$ be the total increase in length of syllables $\alpha_2^{-1}$ lengthens and let $S_2$ be the total decrease in length of syllables that $\alpha_2^{-1}$ shortens.
Then 
\[\abs{\alpha_1\cdot W}-\abs{\alpha_3\cdot W}=S_2-L_2.\]
We assume without loss of generality that $\alpha_1\cdot T$ and $T'$ have the same associated representative of $\alpha_1\cdot W$.

Suppose $t_i$ is a syllable in $T$ that decreases in length under $\beta$.
Then $t_i$ contains $b$ to a nonzero power and exactly one endpoint of $t_i$ is in $\supp(\beta)$.
Suppose $t_i=cb^kud$ and where $u\in\genby{\st(b)\setminus\{b\}}$ and without loss of generality assume $c$ is in $\supp(\beta)$.
Then the initial part $cb^{\pm1}$ of $t_i$ is a syllable of $W$ with respect to $[a]$.
The image of this syllable under $\alpha_1$ is a syllable of $\alpha_1\cdot W$ with respect to $[a]$; it is of the form $c\alpha_1(b)^{\pm1}$.
Clearly $\alpha_3\cdot cb^{\pm1}$ (acting on the syllable) is $cv^{-1}\alpha_1(b)^{\pm1}$.
This is a syllable of $\alpha_1\cdot W$ in $T'$ that shortens by $\abs{v}$ under the action of $\alpha_2^{-1}$.
In particular, summing over all syllables in $T$ that decrease in length under $\beta$, we have
\[\abs{v} S_1\leq S_2.\]

Suppose $t'_j$ is a syllable in $T'$ that increases in length under $\alpha_2^{-1}$.
Suppose $t'_j=cwud$ where $u\in\genby{\st(a)\setminus[a]}$ and $w\in\genby{[a]}$.
We note that $\alpha_2$ is like a classic Whitehead automorphism in that $\alpha_2(c)$ must be one of $\{c,cv^{-1},vc,vcv^{-1}\}$.
In particular, since $\alpha_2^{-1}\cdot t'_j$ is longer than $t_j$, we must have exactly one of $c$ and $d^{-1}$ in $\supp(\alpha_2)$, and it must be the case $w\neq 1$ that the $v$ or $v^{-1}$ that is inserted by $\alpha_2^{-1}$ does not cancel away completely into $w$.
We assume without loss of generality that $c$ is in $\supp(\alpha_2)=\supp(\beta)$ but $d^{-1}$ is not.
Clearly $t'_j$ increases in length by at most $\abs{v}$ under $\alpha_2^{-1}$.
We consider $\alpha_1^{-1}\cdot t'_j$, which is a syllable of $W$ with respect to $[a]$.
Then $\alpha_1^{-1}\cdot t'_j=cw'ud$, where $w'\in\genby{[a]}$.
Since $c\in\supp(\beta)$, we know $c\neq b^{\pm1}$.
We note that if $d$ is $b^{\pm1}$, then it must be that $w'u$ contains a generator not in $\st(b)$.
If $w'$ is trivial, then $u$ contains elements adjacently dominated by $a$, which cannot be in $\st(b)$ since $a$ is not adjacent to $b$.
If $w'$ is nontrivial, then $w'$ contains elements of $[a]$ which are not adjacent to $b$.
Further $w'u$ contains no elements of $\supp(\beta)$ because $\beta$ fixes $\st(a)$.
So whether $d=b^{\pm1}$ or not, the initial segment of $cw'ud$ contains a syllable with respect to $[b]$ starting with $c$ (in $\supp(\beta)$) and ending with an element not in $\supp(\beta)$.
Such a syllable increases in length by one under $\beta$.
Then summing over all syllables in $T'$ that increase in length under $\alpha_2^{-1}$, we have
\[L_2\leq \abs{v} L_1.\]
Summing these gives us
\[\abs{v}S_1+L_2\leq \abs{v}L_1+S_2,\]
in other words 
\[\abs{v}(\abs{W}-\abs{\beta\cdot W})=\abs{v}(S_1-L_1)\leq S_2-L_2 = \abs{\alpha_1\cdot W}-\abs{\alpha_3\cdot W}.\]
We can rewrite this as
\[\abs{W}-\abs{\alpha_3\cdot W} \geq \abs{v}(\abs{W}-\abs{\beta\cdot W}) + \abs{W}-\abs{\alpha_1\cdot W}.\]
Since $(W,\alpha_1,\beta)$ is a peak, we know $\abs{W}-\abs{\beta\cdot W}\geq 0$ and $\abs{W}-\abs{\alpha_1\cdot W} \geq0$ with at least one strict.
In either case, it follows that $\abs{\alpha_3\cdot W}<\abs{W}$.
As explained above, this means $(W,\alpha_3,\beta)$ is a peak that Lemma~\ref{le:nonadjsteinbergidentity} lowers, and by attaching $\alpha_2^{-1}$ to the lowering factorization, we get a peak-lowering factorization for the peak $(W,\alpha_1,\beta)$.

This proves the first case.
Now we move to a slightly more general case.
\begin{case*}
The proposition is true for general $\alpha\in\whset{a}$, $\beta\in\whset{b}$ with $\supp(\alpha)\cap\supp(\beta)=\varnothing$.
(We are still assuming $a$ does not dominate $b$ and $b$ does not dominate $a$.)
\end{case*}
If $\alpha|_{[b]}$ and $\beta|_{[a]}$ are both the identity, then Lemma~\ref{le:nonadjsteinbergidentity} lowers the peak.
So we assume without loss of generality that $\alpha|_{[b]}$ is not the identity.
Lemma~\ref{le:basicobservations} implies that $\beta$ is a long-range automorphism in this case.
If we also assume that $\beta|_{[a]}$ is not the identity, then $\alpha$ is also a long-range automorphism, and therefore Theorem~\ref{th:longrangepeakreduction} applies to lower the peak.
So we assume that $\beta|_{[a]}$ is the identity.

Since $\beta$ is a long-range automorphism, we peak-reduce $\beta$ with respect to $W$ to get a factorization $\beta=\beta_k\dotsm\beta_1$ by classic long-range Whitehead automorphisms.
It is possible that $\abs{\beta_i\dotsm\beta_1\cdot W}<\abs{W}$ for some $i$  in $1,\dotsc,k$.
If this is the case, we lower the peak $(W,\alpha,\beta_1)$ by the method above and concatenate $\beta_k\dotsm\beta_2$ to this factorization to get a peak-lowering factorization for $\beta\alpha^{-1}$.

Otherwise, since $\beta_k\dotsm\beta_1$ is peak reduced, we have $\abs{\beta\cdot W}=\abs{W}$, $\abs{\alpha\cdot W}<\abs{W}$ and $\abs{\beta_i\dotsm\beta_1\cdot W}=\abs{W}$ for $i=1,\dotsc,k$.
We prove the claim by induction on the length $k$ of the factorization.
We let $\alpha_1,\alpha_2$ and $\alpha_3$ have the same meanings as above, with $\beta_1$ taking the role of $\beta$.
Then $\beta_1\alpha_1^{-1}=\alpha_3^{-1}\beta_1\alpha_2^{-1}$ is a peak-lowering factorization for the peak $(W,\alpha_1,\beta_1)$, using Lemma~\ref{le:nonadjsteinbergidentity} as explained above. 
However, if $k>1$, then we have a new peak $(\beta_1\cdot W, \alpha_3, \beta_k\dotsm\beta_2)$.
The inductive hypothesis implies that this new peak can be lowered.
Concatenating $\alpha_3^{-1}\beta_1$ onto the peak-lowering factorization for $(\beta_1\cdot W,\alpha_3,\beta_k\dotsm\beta_2)$ gives us a peak-lowering factorization for $(W,\alpha_1,\beta)$, which gives us a peak-lowering factorization for the original peak using Lemma~\ref{le:lowerconj}.
This proves the second case.

\paragraph{General case.} Finally we prove the proposition as stated.
We have $\alpha\in\whset{a}$ and $\beta\in\whset{b}$.
If $\alpha$ does not fix $b$, then since $a$ does not dominate $b$, we know that $\alpha$ conjugates $b$ by a nontrivial element of $\genby{[a]}$.
Then by Lemma~\ref{le:lowerconj}, we may replace $\alpha$ by its product with an inner automorphism and assume that $\alpha$ fixes $b$.
Similarly, we assume that $\beta$ fixes $a$.

\begin{claim*}
If $c^{\pm1}$ is in $\supp(\alpha)\cap\supp(\beta)$, then either both $a$ and $b$ non-adjacently dominate $c$, or else the connected component $Y$ of $c$ in $\Gamma\setminus\st(a)$ is also a connected component of $\Gamma\setminus\st(b)$ and $Y\cup Y^{-1}\subset\supp(\alpha)\cap\supp(\beta)$.
\end{claim*}

If $c^{\pm1}$ is in $\supp(\alpha)\cap\supp(\beta)$, then one possibility is that $a$ dominates $c$.
If $b$ were adjacent to $c$, it would imply that $a$ is adjacent to $b$, which is not the case.
So $b$ is not adjacent to $c$.
If $b$ does not also dominate $c$, then there is a vertex adjacent to $a$ and $c$ but not adjacent to $b$.
This means that $a$ and $c$ are in the same component of $\Gamma\setminus\st(b)$; then $\beta(a)=a$ implies that $c,c^{-1}\notin\supp(\beta)$ by Lemma~\ref{le:actoncomps}.
This is a contradiction, so we see that both $a$ and $b$ dominate $c$.
Then if $c$ were adjacent to $a$, it would imply that $a$ and $b$ are adjacent, so $c$ is also not adjacent to $a$.

Next we suppose that $c$ in $\supp(\alpha)\cap\supp(\beta)$ is in a connected component $Y$ of $\Gamma\setminus\st(a)$ with at least two vertices.
First of all, $b$ is not adjacent to $c$, since otherwise $\alpha$ would not fix $b$ (by Lemma~\ref{le:actoncomps}, since $b$ would be in $Y$).
Further, it cannot be the case that $b$ dominates $c$---if it did, $b$ would be adjacent to a vertex of $Y$ other than $c$ and therefore $b$ would be in $Y$ and $\alpha$ would not fix $b$.
So let $Y'$ be the component of $c$ in $\Gamma\setminus\st(b)$.
Suppose $d\in Y$. 
Then there is a path from $d$ to $c$ outside of $\st(a)$.
If this path intersects $\st(b)$, then $b$ would be in $Y$, which is impossible.
So the path from $d$ to $c$ is outside $\st(b)$, meaning that $d$ is in $Y'$.
Since $d$ was arbitrary $Y\subset Y'$.
By a parallel argument, $Y'\subset Y$, and therefore they are equal.
This proves the claim.

We prove the proposition by induction on the cardinality of $\supp(\alpha)\cap\supp(\beta)$.
The base case is that $\supp(\alpha)\cap\supp(\beta)=\varnothing$, which we covered in a previous case.

Now we work the inductive step.
Since $\supp(\alpha)\cap\supp(\beta)$ is nonempty, we select an element $c$ from it.
First we assume that both $a$ and $b$ non-adjacently dominate $c$.
Fix a graphically reduced representative for $W$.
Let $n_a$ denote the number of instances of $c$ or $c^{-1}$ in the representative in a subword $cud$ or $duc^{-1}$ with $u\in\genby{\st(a)}$ and $d\in[a]^{\pm1}$ and let $n_b$ be defined similarly with $a$ replaced by $b$.
Let $n'$ count the number of remaining instances of $c$ or $c^{-1}$ in the representative (those that are counted by neither $n_a$ or $n_b$).
Let $v_1$ and $v_2$ be in $\genby{[a]}$ with $\alpha(c)=v_1cv_2$ and let
$u_2$ and $u_2$ be in $\genby{[b]}$ with $\beta(c)=u_1cu_2$.
Let $\alpha_c$  be the element of $\whset{a}$ with $\alpha_c(d)=\alpha(d)$ for all $d\neq c$, 
and $\alpha_c(c)=v_1c$.
Let $\beta_c$ be defined similarly, with $\beta_c(c)=u_1c$.
We note that $\alpha_c$ and $\beta_c$ can be expressed as products of Laurence generators and are well-defined automorphisms.

The decomposition of $W$ into syllables with respect to $[a]$ coming from our representative, with Lemma~\ref{le:syllablesandlengthchange}, tells us the following: at each instance of $c$ counted by $n_b$ or $n'$, $\alpha$ increases the length of $W$ by $\abs{v_2}$ more than $\alpha_c$ does, and at each instance of $c$ counted by $n_a$, $\alpha$ may decrease the length of $W$ by up to $\abs{v_2}$ more than $\alpha$ does or increase it by up to $\abs{v_2}$ more.
Specifically,
\[\abs{v_2}(n'+n_b-n_a)\leq \abs{\alpha\cdot W}-\abs{\alpha_c\cdot W}\leq \abs{v_2}(n'+n_b+n_a).\]
Similarly for $\beta$:
\[\abs{u_2}(n'+n_a-n_b)\leq \abs{\beta\cdot W}-\abs{\beta_c\cdot W}\leq \abs{u_2}(n'+n_a+n_b).\]
Of course, since $c\in\supp(\alpha)\cap\supp(\beta)$, we know $\abs{v_2}>0$ and $\abs{u_2}>0$.

Suppose both $\abs{\alpha_c\cdot W}\geq\abs{\alpha\cdot W}$ and $\abs{\beta_c\cdot W}\geq\abs{\beta\cdot W}$.
Then $n'+n_b-n_a\leq 0$ and $n'+n_a-n_b\leq 0$.
Summing these, we see that $n'\leq 0$, but since $n'$ is a nonnegative integer, $n'=0$.
Then $n_b=n_a$, and in this case we have both $\abs{\alpha_c\cdot W}=\abs{\alpha\cdot W}$ and $\abs{\beta_c\cdot W}=\abs{\beta\cdot W}$.
By the definition of a peak, either $\abs{\alpha\cdot W}<\abs{W}$ or $\abs{\beta\cdot W}<\abs{W}$.
So in this case we have either $\abs{\alpha_c\cdot W}<\abs{W}$ or $\abs{\beta_c\cdot W}<\abs{W}$.

Otherwise, we have either $\abs{\alpha_c\cdot W}<\abs{\alpha\cdot W}$ or $\abs{\beta_c\cdot W}<\abs{\beta\cdot W}$.
Since the definition of a peak ensures that both $\abs{\alpha\cdot W}\leq \abs{W}$ and $\abs{\beta\cdot W}\leq \abs{W}$,
in this case we also have either $\abs{\alpha_c\cdot W}<\abs{W}$ or $\abs{\beta_c\cdot W}<\abs{W}$.

Without loss of generality we suppose $\abs{\alpha_c\cdot W}<\abs{W}$.
Then $(W,\alpha_c,\beta)$ is a peak that can be lowered by the inductive hypothesis.
So we append the element $(\alpha_c\alpha^{-1})\in\whset{a}$ onto a peak-lowering factorization for $(W,\alpha_c,\beta)$ to get a peak-lowering factorization for $(W,\alpha,\beta)$.

The case where $c\in\supp(\alpha)\cap\supp(\beta)$ is in a connected component of $\Gamma\setminus\st(a)$ is similar, with $n_a$, $n_b$ and $n'$ defined similarly.
\end{proof}

\begin{lemma}\label{le:classicsteinberg}
Suppose $a$ and $b$ are vertices of $\Gamma$ with $[a]\neq[b]$ and $\alpha\in\whset{a}$ and $\beta\in\whset{b}$ are classic long-range Whitehead automorphisms such that: $\alpha(b)=b$, $\supp(\alpha)\cap\supp(\beta)=\varnothing$ and
$\beta(a)$ is $a$ or $ab$.
Then $\alpha\beta\alpha^{-1}$ is a classic long-range Whitehead automorphism in $\whset{b}$ and for any tuple $W$ of cyclic words, we have
\[\abs{W}-\abs{\beta\cdot W}=\abs{\alpha\cdot W}-\abs{\alpha\beta\cdot W}.\]
\end{lemma}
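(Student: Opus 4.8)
The statement has two halves --- that $\gamma:=\alpha\beta\alpha^{-1}$ is a classic long-range Whitehead automorphism in $\whset{b}$, and the length identity --- and the plan is to handle the first by a case split on whether $a$ and $b$ are adjacent, and the second by imitating the proof of Proposition~\ref{pr:steinberglengthchange}. I would begin by recording a structural fact: a Whitehead automorphism that is simultaneously \emph{classic} and \emph{long-range} with multiplier $a$ must restrict to the identity on $\st(a)$, since a classic Whitehead automorphism of the second kind effects no permutation of generators and the only way such a map can carry $\st(a)^{\pm1}$ into itself is to fix it pointwise. Combined with the hypothesis $\alpha(b)=b$, this shows $\alpha$ fixes $[a]\cup\{b\}$ pointwise.

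If $a$ is adjacent to $b$, then $\beta$ fixes $a$ because it is long-range and $a\in\st(b)$, so $\beta(a)=a$; moreover $[b]\subseteq\st(a)$ and $[a]\subseteq\st(b)$, so $\alpha$ restricts to the identity on $[b]$ and $\beta$ restricts to the identity on $[a]$. Lemma~\ref{le:steinbergrel} (in its ``$a$ adjacent to $b$'' alternative), together with the remark following it, then gives $\gamma=\beta$, which is classic and long-range by hypothesis, and in this case the length identity is literally an instance of Proposition~\ref{pr:steinberglengthchange}. So the content of the lemma is concentrated in the case that $a$ and $b$ are not adjacent, which I treat next.

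Assume $a$ is not adjacent to $b$. I would first show that $\alpha$ fixes $\st(b)$ pointwise: vertices of $\st(b)\cap\st(a)$ are already fixed, and if $\alpha$ moved a vertex $c\in\st(b)\setminus\st(a)$ then, by Lemma~\ref{le:actoncomps}, either $a$ dominates $c$ --- forcing the neighbor $b$ of $c$ to commute with $a$, contradicting non-adjacency --- or $\alpha$ nontrivially conjugates the connected component of $c$ in $\Gamma\setminus\st(a)$, which contains $b$, contradicting $\alpha(b)=b$. In particular $\alpha$ fixes $[b]$. Now compute $\gamma$ on each generator $c$: for $c\notin\supp(\alpha)\cup\{a\}$ we have $\alpha^{\pm1}(c)=c$, so $\gamma(c)=\alpha(\beta(c))=\beta(c)$ since $\alpha$ fixes $b$; for $c=a$ we get $\gamma(a)=\alpha(\beta(a))=\beta(a)$, which is $a$ or $ab$; and for $c\in\supp(\alpha)$ --- where $c\notin\supp(\beta)$ by disjointness of supports, and where, if $\beta$ transvects $a$, the multiplier of $\beta$ dominates $a$, which dominates $c$ --- one checks, running through the possible shapes of $\alpha(c)\in\{ca,a^{-1}c,a^{-1}ca\}$ and of $\beta(a)$, that the copies of $b^{\pm1}$ produced by applying $\alpha^{-1}$, then $\beta$, then $\alpha$ either cancel against each other or come to rest immediately next to $c$, leaving $\gamma(c)$ of classic form with multiplier $b$. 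Reassembling the cases shows $\gamma$ is a classic long-range Whitehead automorphism in $\whset{b}$. This last generator computation is the step I expect to be the main obstacle, because the bookkeeping of how $\alpha$'s left and right multiplications by $a^{\pm1}$ interact with $\beta$'s transvection on $a$ is delicate.

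For the length identity when $a$ is not adjacent to $b$, I would follow the proof of Proposition~\ref{pr:steinberglengthchange} closely. Fix a syllable decomposition $T=(t_1,\dotsc,t_N)$ of $W$ with respect to $[b]$, and build a syllable decomposition $T'$ of $\alpha\cdot W$ with respect to $[b]$ by applying $\alpha$ to the representative associated to $T$ and re-splitting into syllables --- no collapsing can occur, by graphical reducedness. Define an injective partial function $f$ from the syllables of $T$ to the syllables of $T'$, sending $t_i$ to the syllable of $T'$ that inherits its endpoints and all of its letters from $\supp(\beta)$; $f$ is injective because, although several syllables of $T$ may merge in $T'$, a merge is always caused by $\supp(\alpha)$-letters at a broken boundary, and $\supp(\alpha)\cap\supp(\beta)=\varnothing$ --- together with the fact that, since $a$ is not adjacent to $b$, no element of $\st(a)\cap\st(b)$ lies in $\supp(\beta)$ --- forces at most one of the merged syllables to carry any $\supp(\beta)$-content. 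One then checks that $\beta$ fixes every syllable of $T$ outside the domain of $f$, that $\gamma$ fixes every syllable of $T'$ outside the range of $f$ (since $\alpha$ creates no new $\supp(\beta)$-letters), and that $\abs{t_i}-\abs{\beta\cdot t_i}=\abs{f(t_i)}-\abs{\gamma\cdot f(t_i)}$ for each $t_i$ in the domain of $f$ --- the last equality holding because $\gamma\alpha=\alpha\beta$, so $\gamma\cdot f(t_i)$ is obtained from $\beta\cdot t_i$ by applying $\alpha$ in exactly the way $f(t_i)$ is obtained from $t_i$. Summing these equalities over all syllables and invoking Lemma~\ref{le:syllablesandlengthchange} (for $\beta$ acting on $T$ and for $\gamma$ acting on $T'$) yields $\abs{W}-\abs{\beta\cdot W}=\abs{\alpha\cdot W}-\abs{\gamma\alpha\cdot W}=\abs{\alpha\cdot W}-\abs{\alpha\beta\cdot W}$, as required; pinning down the definition of $f$ carefully enough that the letter-by-letter bookkeeping is airtight is the secondary difficulty here.
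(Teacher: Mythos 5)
Your proposal takes a genuinely different and more ambitious route than the paper's. The paper's proof is essentially a citation: the conjugation claim is relation R4 of Day~\cite{Day1}, read with $\alpha$ being $(A,a)^{-1}$ and $\beta$ being $(B,b)$, and the length identity is Sublemma~3.21 of that paper applied twice (once to $W$, once to $\alpha\cdot W$ with $\alpha^{-1}$ in place of $\alpha$). You instead verify both claims from scratch. Your adjacent case (reduction to Lemma~\ref{le:steinbergrel} and Proposition~\ref{pr:steinberglengthchange}) and your observation that $\alpha$ fixes $\st(b)$ pointwise when $a\not\sim b$ are both sound.

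But the generator computation you flag as ``the main obstacle'' actually fails for a sign combination that the lemma, read literally, permits. Take $\Gamma$ on $\{a,b,c,d\}$ with edges $cd$, $da$, $db$; let $\alpha$ send $c\mapsto ca$ and fix $a,b,d$ (classic, long-range, multiplier $a$), and let $\beta$ send $a\mapsto ab$ and fix $b,c,d$. All stated hypotheses hold ($[a]\neq[b]$, $\alpha(b)=b$, $\supp(\alpha)=\{c\}$ and $\supp(\beta)=\{a\}$ are disjoint, $\beta(a)=ab$), yet $\alpha\beta\alpha^{-1}(c)=\alpha\beta(ca^{-1})=\alpha(cb^{-1}a^{-1})=cab^{-1}a^{-1}$, which is not $ucv$ for any $u,v\in\genby{b}$, so $\alpha\beta\alpha^{-1}\notin\whset{b}$; and for $W=(ca^{-1})$ one finds $\abs{W}-\abs{\beta\cdot W}=-1$ but $\abs{\alpha\cdot W}-\abs{\alpha\beta\cdot W}=1-4=-3$. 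The culprit is a sign convention the statement does not encode: the paper's dictionary $\alpha=(A,a)^{-1}$ means the intended $\alpha$ has multiplier $a^{-1}$ (a transvected $c$ goes to $ca^{-1}$), and with that reading the computation closes up ($\alpha^{-1}(c)=ca\mapsto cab\mapsto ca^{-1}\cdot a\cdot b=cb$); equivalently, for multiplier $a$ the compatible hypothesis is $a\notin\supp(\beta)$ (i.e.\ $\beta(a)\in\{a,b^{-1}a\}$), which is precisely the condition under which the lemma is invoked in Proposition~\ref{pr:nonadjacentasymmetric}. Your ``running through the possible shapes of $\alpha(c)\in\{ca,a^{-1}c,a^{-1}ca\}$ and of $\beta(a)$'' takes the incompatible pairing, so that step would not verify the claim but rather expose the mismatch. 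A secondary gap: the length-identity argument cannot be lifted wholesale from Proposition~\ref{pr:steinberglengthchange}, because in the relevant case there $\gamma=\beta$ (since $\beta|_{[a]}$ is assumed trivial), whereas here $\beta(a)$ may be $ab$ so $\gamma\neq\beta$, and the syllable-matching and the claim ``$\gamma$ fixes every syllable outside the range of $f$'' need to be reworked. Once the sign convention is pinned down, an argument along your lines is likely salvageable, but as written the proposal has a genuine hole at the point you yourself identified as delicate.
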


\begin{proof}
The fact that $\alpha\beta\alpha^{-1}\in\whset{b}$ is relation R4 from Section~2.4 in Day~\cite{Day1}, with $\alpha$ being $(A,a)^{-1}$ and $\beta$ being $(B,b)$ in that statement.
The equation of differences of lengths is essentially Sublemma~3.21 from Day~\cite{Day1}, applied twice.
Near the end of the proof of that statement, the equation above appears as an inequality ($\alpha$ and $\alpha^{-1}$ are switched and the terms rearranged, but it is the same assertion).
The same inequality with $W$ replaced by $\alpha\cdot W$ and $\alpha$ replaced by $\alpha^{-1}$ is the inequality in the reverse direction, proving the equation.
\end{proof}

\begin{proposition}\label{pr:nonadjacentasymmetric}
Suppose $a$ is not adjacent to $b$, $b$ dominates $a$ and $a$ does not dominate $b$.
Then the peak $(W,\alpha,\beta)$ can be lowered.
\end{proposition}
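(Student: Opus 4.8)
The plan is to follow the template of the proof of Proposition~\ref{pr:nonadjacentnodom}, reducing to successively simpler configurations and invoking the ``shorter factors'' result Corollary~\ref{co:shorterfactors} together with the Steinberg-type relation of Lemma~\ref{le:classicsteinberg}. I begin with two preliminary observations. Since $a$ is not adjacent to $b$ and $b$ dominates $a$, Lemma~\ref{le:basicobservations} (with the roles of $a$ and $b$ exchanged) shows that $\alpha$ is a long-range automorphism. Since $a$ is not adjacent to $b$ and $a$ does not dominate $b$, the automorphism $\alpha$ must fix every element of $[b]$: otherwise $\alpha$ would transvect some $c\in[b]$ by an element of $[a]$, forcing $a$ to dominate $c$; but any $c\in[b]$ with $c\neq b$ is adjacent to $b$ and has $\st(c)=\st(b)$, so $a$ dominating it would make $a$ adjacent to $b$, while $a$ does not dominate $b$ itself. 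Thus $\alpha|_{[b]}$ is the identity, so Lemma~\ref{le:basicobservations} also gives that $\alpha$ fixes $\st(b)$ pointwise; moreover $[a]\cap[b]=\varnothing$, since $[a]=[b]$ would make $a$ adjacent to $b$.

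Next come the reductions. If $\alpha$ does not fix $b$ then, since $a$ does not dominate $b$, $\alpha$ conjugates $b$ by a nontrivial $v\in\genby{[a]}$; by Lemma~\ref{le:lowerconj} I may replace $\alpha$ by its product with the inner automorphism conjugating by $v^{-1}$ (which lies in $\whset{a}$ and is long-range) and so assume $\alpha(b)=b$. I then peak-reduce the long-range automorphism $\alpha$ with respect to $W$ using Theorem~\ref{th:longrangepeakreduction}, writing $\alpha=\alpha_k\dotsm\alpha_1$ with the $\alpha_i$ classic long-range Whitehead automorphisms in $\whset{a}$ and permutations. If some intermediate image $\alpha_i\dotsm\alpha_1\cdot W$ is strictly shorter than $W$ then the peak-reduced shape forces $\abs{\alpha_1\cdot W}<\abs{W}$, so $(W,\alpha_1,\beta)$ is a peak (with $\alpha_1$ classic long-range, since a permutation cannot shorten $W$), which the classic case below lowers; concatenating $\alpha_k\dotsm\alpha_2$, as in Proposition~\ref{pr:nonadjacentnodom}, then lowers $(W,\alpha,\beta)$. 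Otherwise all intermediate images have length $\abs{W}$, and a downward induction on $k$ — peeling off $\alpha_1$, lowering $(W,\alpha_1,\beta)$, and lowering the resulting peak based at $\alpha_1\cdot W$ by the inductive hypothesis, exactly as in Proposition~\ref{pr:nonadjacentnodom} — reduces everything to the case that $\alpha$ is a classic long-range Whitehead automorphism with multiplier $a$ and $\alpha(b)=b$; if in addition $\alpha$ is inner, then $\beta\alpha^{-1}=\beta$ is a length-one lowering, so I assume $\alpha$ is not inner.

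Now, with $\alpha$ classic long-range fixing $b$ and $\beta\in\whset{b}$ arbitrary, I induct on the cardinality of $\supp(\alpha)\cap\supp(\beta)$. For the base case $\supp(\alpha)\cap\supp(\beta)=\varnothing$: the part of $\beta$ acting within $\st(b)$ commutes with $\alpha$ (which fixes $\st(b)$ and $[a]$ pointwise), so after splitting it off I may assume $\beta$ is long-range and, by Theorem~\ref{th:longrangepeakreduction}, a classic long-range Whitehead automorphism with multiplier $b$. If $\beta(a)=a$ then $\alpha$ and $\beta$ commute and $\beta\alpha^{-1}=\alpha^{-1}\beta$ is a lowering (or one cites Lemma~\ref{le:nonadjsteinbergidentity}). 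Otherwise $a\in\supp(\beta)$ and Corollary~\ref{co:shorterfactors} applies; as the supports are disjoint it gives its first alternative — a classic long-range Whitehead automorphism $\beta_1$ with multiplier $b^{-1}$, with $\supp(\beta_1)\cap\supp(\alpha)=\varnothing$, with $\beta_1(a)$ equal to $a$ or, after adjusting orientation, $ab$, and with $\abs{\beta_1\cdot W}<\abs{W}$. By Lemma~\ref{le:classicsteinberg}, $\gamma:=\alpha\beta_1\alpha^{-1}$ is a classic long-range Whitehead automorphism in $\whset{b}$ with
\[\abs{\alpha\cdot W}-\abs{\alpha\beta_1\cdot W}=\abs{W}-\abs{\beta_1\cdot W}>0,\]
so $\abs{\gamma\alpha\cdot W}=\abs{\alpha\beta_1\cdot W}<\abs{\alpha\cdot W}\le\abs{W}$. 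Since $\gamma\alpha=\alpha\beta_1$, the factorization
\[\beta\alpha^{-1}=(\beta\beta_1^{-1})\cdot\alpha^{-1}\cdot\gamma\]
has its intermediate images of $\alpha\cdot W$ equal to $\gamma\alpha\cdot W=\alpha\beta_1\cdot W$ and then $\alpha^{-1}\gamma\alpha\cdot W=\beta_1\cdot W$, both of length strictly below $\abs{W}$, while $\gamma,\alpha^{-1},\beta\beta_1^{-1}$ all lie in $\Omega$; this is the desired lowering.

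For the inductive step, when $\supp(\alpha)\cap\supp(\beta)\neq\varnothing$ I pick $c$ in this intersection and follow the general case of Proposition~\ref{pr:nonadjacentnodom}: replace $\alpha$ by an automorphism $\alpha_c\in\whset{a}$ agreeing with $\alpha$ away from $c$ but ``forgetting one side'' of $c$, or symmetrically replace $\beta$ by such a $\beta_c$, using the syllable-length count of Lemma~\ref{le:syllablesandlengthchange} to show that one of $\alpha_c$, $\beta_c$ still strictly shortens $W$; the resulting peak has smaller support-intersection and is lowered by the inductive hypothesis, and prepending the single generalized Whitehead automorphism $\alpha_c\alpha^{-1}$ (resp.\ $\beta_c^{-1}\beta$) yields a lowering of $(W,\alpha,\beta)$. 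The step I expect to be the main obstacle is the base case: reducing a general $\beta\in\whset{b}$ with support disjoint from $\alpha$ to a classic long-range automorphism with $a$ in its support so that Corollary~\ref{co:shorterfactors} and Lemma~\ref{le:classicsteinberg} become available (in particular handling the short-range-in-$[b]$ part of $\beta$ carefully, since it can change lengths), together with the orientation bookkeeping needed to reconcile the ``$ba$'' of Corollary~\ref{co:shorterfactors} with the ``$ab$'' of Lemma~\ref{le:classicsteinberg}; the remaining ingredients — the peak-reduction reductions on $\alpha$ and the ``forget one side'' induction — run as in the already-established Proposition~\ref{pr:nonadjacentnodom}.
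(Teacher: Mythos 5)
Your preliminary reductions are right and match the paper's: $\alpha$ is long-range with $[a]=\{a\}$ by Lemma~\ref{le:basicobservations}, $\alpha|_{[b]}$ is forced to be the identity, and Lemma~\ref{le:lowerconj} lets you normalize $\alpha$ to fix $b$. After that, your plan diverges from the paper's, and the divergence is where the gaps appear. The paper does not run an induction on $\abs{\supp(\alpha)\cap\supp(\beta)}$ here; instead it proves a strictly stronger claim (that the lowering factorization can be chosen with every factor in $\whset{b}$ or a long-range element of $\whset{a}$), inducts on $\abso{\alpha}$, and in the base case runs an iterative loop on the factorization $\beta\alpha^{-1}=\alpha''^{-1}\beta''\alpha'^{-1}\beta'$ that peels one classic Whitehead automorphism $\beta_0$ off $\beta'$ at a time, with termination driven by a lexicographic-type decrease of $(\abs{W'},\abso{\alpha'},\abso{\beta'})$. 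That loop is the technical heart of the proposition.

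There are three concrete problems with the proposal as written. First, the outer step ``peeling off $\alpha_1$ \dots exactly as in Proposition~\ref{pr:nonadjacentnodom}'' cannot be justified without knowing what the lowering factorization of $(W,\alpha_1,\beta)$ looks like: you need to pass the residual $\alpha_2=\alpha\alpha_1^{-1}$ past the factors, and this is exactly why the paper formulates and proves the stronger claim about the shape of the factorization. Without that claim the induction does not close. Second, the base case assertion ``by Theorem~\ref{th:longrangepeakreduction}, a classic long-range Whitehead automorphism with multiplier $b$'' misreads that theorem: it produces a peak-reduced \emph{factorization} of the long-range part $\beta'$, typically with several factors, not a single classic automorphism. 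Reducing to a single classic $\beta$ is not a harmless reduction; it is the content that the paper's loop supplies, and you correctly flag it as ``the main obstacle'' but do not resolve it. Third, even the inner induction on $\abs{\supp(\alpha)\cap\supp(\beta)}$ is not automatic here: the structural claim from the general case of Proposition~\ref{pr:nonadjacentnodom} (that the intersection consists of jointly non-adjacently dominated vertices and common components) is proved there using $\beta(a)=a$, which Lemma~\ref{le:lowerconj} can arrange when neither multiplier dominates the other. Here $b$ dominates $a$, so $\beta$ can genuinely transvect $a$ and no inner conjugation kills that. The claim does in fact still hold — any path from such a $c$ to $a$ avoiding $\st(b)$ would have to pass through a vertex of $\st(a)\setminus\{a\}\subset\st(b)$, so $a$ cannot lie in the relevant component of $\Gamma\setminus\st(b)$ — but this needs to be argued afresh, and ``exactly as in Proposition~\ref{pr:nonadjacentnodom}'' does not cover it.
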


\begin{proof}
More precisely, we prove:
\begin{claim*}
Suppose $a$ is not adjacent to $b$, $b$ dominates $a$ and $a$ does not dominate $b$.
Then the peak $(W,\alpha,\beta)$ has a peak-lowering factorization
such that each automorphism in the factorization is in $\whset{b}$ or is a long-range automorphism in $\whset{a}$.
\end{claim*}
Since $a$ does not dominate $b$, we know that $\alpha$ acts on $[b]$ by conjugating it by a fixed element of $\genby{[a]}$.
Then by Lemma~\ref{le:lowerconj}, we can replace $\alpha$ by its composition with an inner automorphism from $\whset{a}$ and assume that $\alpha$ fixes $[b]$.
We also note that since $b$ non-adjacently dominates $a$, $a$ does not adjacently dominate anything and therefore $\alpha$ is long-range and $[a]=\{a\}$ (see Lemma~\ref{le:basicobservations}).

We induct on $\abso{\alpha}$, the length of the outer class of $\alpha$ as a product of Laurence generators.
We use as a base case the case that $\alpha$ is a classic long-range Whitehead automorphism, which will certainly be true if $\abso{\alpha}$ is one.
Before proving this case, we prove the inductive step.
We peak-reduce $\alpha$ with respect to $W$ by Theorem~\ref{th:longrangepeakreduction}.
We select the first automorphism $\alpha_1$ out of such a peak-reducing factorization.
Let $\alpha_2=\alpha\alpha_1^{-1}$.
Then $\alpha_1$ will be a classic long-range Whitehead automorphism with multiplier $a^{\pm1}$, $\abso{\alpha_2}<\abso{\alpha}$, and $\abs{\alpha_1\cdot W}\leq \abs{W}$, with the inequality being strict if $\abs{\alpha\cdot W}<\abs{W}$.
(Specifically, we factor  $\alpha$ as a product of transvections and partial conjugations with multipliers $a^{\pm1}$ and apply the algorithm from Day~\cite{Day1}; at each step, the algorithm merges, commutes or splits the automorphisms in the factorization, and the resulting peak-reduced factorization consists entirely of long-range automorphisms with multipliers in $\{a,a^{-1}\}$.)
Then $(W,\alpha_1,\beta)$ is a peak.
We apply the claim and get a peak-lowering factorization of $\beta\alpha_1^{-1}$.
If the last term in the factorization is a long-range automorphism $\alpha'\in\whset{a}$, then $\abs{\alpha'^{-1}\alpha_1\cdot W}<\abs{W}$ by the definition of a peak-lowering factorization.
We peak-reduce $\alpha_2\alpha'$ with respect to $\abs{\alpha'^{-1}\alpha_1\cdot W}$ using Theorem~\ref{th:longrangepeakreduction}.
If we replace the element $\alpha'$ in our factorization of $\beta\alpha_1^{-1}$ with this peak-reduced factorization of $\alpha_2\alpha'$, then the result is a peak-lowering factorization of $\beta\alpha^{-1}$.
Otherwise the last term in the factorization of $\beta\alpha_1^{-1}$ is an automorphism $\beta'\in\whset{b}$.
Then $(\alpha_1\cdot W,\alpha_2,\beta'')$ is a peak satisfying the hypotheses of the claim, but with $\abso{\alpha_2}<\abso{\alpha}$.
Then by induction, we apply the claim and get a peak-lowering factorization of $\beta'\alpha_2^{-1}$.
We replace $\beta'$ in the factorization of $\beta\alpha_1^{-1}$ with the entire factorization of $\beta'\alpha_2^{-1}$ to get a peak-lowering factorization of $\beta\alpha^{-1}$.

Now we prove the claim in the base case: with the additional hypothesis that $\alpha$ is a classic long-range Whitehead automorphism with multiplier $a$ (the case where the multiplier is $a^{-1}$ is similar).
Let $\beta''$ be the short-range part of $\beta$; in other words let $\beta''$ equal $\beta$ on $\st(b)$ and let $\beta$ fix all other vertices.
Then $\beta''$ will be a product of short-range transvections and inversions and will be a well defined automorphism.
Let $\beta'$ be the difference, the long-range part, so that $\beta=\beta'\beta''$.
Everything in $\supp(\beta'')$ is adjacent to $b$ and dominated by $b$, or equal to $b$.
Since $a$ is not adjacent to $b$, $a$ cannot be adjacent to anything in $\supp(\beta'')$ (otherwise domination would force $a$ to be adjacent to $b$).
Since $\alpha$ fixes $b$, $\alpha$ fixes the entire connected component of $b$ in $\Gamma\setminus\st(a)$.
Therefore $\supp(\beta'')\cap\supp(\alpha)=\varnothing$.
Since $a$ is not adjacent to $b$, by definition, $\beta''$ fixes $[a]$.
Then by Lemma~\ref{le:steinbergrel} and Proposition~\ref{pr:steinberglengthchange}, we know $\alpha$ commutes with $\beta''$ and 
\[\abs{W}-\abs{\alpha\cdot W}=\abs{\beta''\cdot W}-\abs{\alpha\beta''\cdot W}.\]
So $\beta\alpha^{-1}=\beta'\alpha^{-1}\beta''$.

Since we are about to start a step that we will repeat, we relabel $\alpha$ as $\alpha'$ and $\beta'^{-1}\beta\cdot W$ as $W'$.
Let $\alpha''$ denote the trivial automorphism at first.
Then we have $\beta\alpha^{-1}=\beta'\alpha'^{-1}\beta''\alpha''^{-1}$.
We process this factorization into a better one using an algorithm with a loop.
If $\abs{W'}<\abs{W}$, then we do not enter the loop and instead skip ahead to the next step.
Otherwise $(W',\alpha',\beta')$ is a peak:
\[\abs{\beta'\cdot W'}=\abs{\beta\cdot W}\leq \abs{W}\leq\abs{W'}\]
and
\[\abs{W'}-\abs{\alpha'\cdot W'}=\abs{\beta''\cdot W}-\abs{\alpha'\beta''\cdot W}=\abs{W}-\abs{\alpha'\cdot W}\geq 0,\]
with one of these inequalities strict because $(W,\alpha,\beta)$ is a peak.

\begin{claim*}
There is an algorithm to iteratively process the factorization $\beta\alpha^{-1}=\alpha''^{-1}\beta''\alpha'^{-1}\beta'$, such that 
at the beginning of each loop we have
\begin{itemize}
\item $(W',\alpha',\beta')$ is a peak, where $W'=\beta'^{-1}\beta\cdot W$,
\item $\alpha'$ is a non-inner long-range classic Whitehead automorphism,
\item $\beta'$ is a non-inner automorphism,
\item $\alpha'^{-1}\beta''\alpha'$ is in $\whset{b}$ and 
\[\abs{W'}-\abs{\alpha'\cdot W'}=\abs{\beta''^{-1}\cdot W'}-\abs{\beta''^{-1}\alpha'\cdot W'}.\]
\end{itemize}
With each repeat of the loop, we strictly decrease $\abs{W'}$, or we do not increase $\abs{W'}$ but strictly decrease one of $\abso{\alpha'}$ or  $\abso{\beta'}$ while leaving the other fixed.
At the end of an iteration, either the new $\abs{W'}$ satisfies $\abs{W'}<\abs{W}$ and we exit the loop, or we repeat the loop (and satisfy the conditions for beginning the loop with the relabeled terms).
\end{claim*}

We have already shown that if we enter the loop at all, then the beginning conditions are satisfied for the first iteration.
Now we explain the algorithm.
By construction, $\beta'$ is a long-range automorphism.
Using Theorem~\ref{th:longrangepeakreduction}, we peak-reduce $\beta'$ with respect to $W'$ and consider the first automorphism $\beta_0$ in the resulting peak-reduced factorization of $\beta'$.
This $\beta_0$ is a classic long-range Whitehead automorphism with multiplier $d^{\epsilon}$ for some $d\in[b]$ and $\epsilon=\pm1$.
Since $\abs{\beta'\cdot W'}\leq\abs{W'}$ (because $\abs{\beta'\cdot W'}=\abs{\beta\cdot W}\leq\abs{W}$), it follows from the definition of a peak-reduced factorization that $\abs{\beta_0\cdot W'}\leq\abs{W'}$.
Then $(W',\alpha',\beta_0)$ is a peak.

If $\supp(\alpha')\cap\supp(\beta_0)=\varnothing$ and $a\notin\supp(\beta_0)$, then Lemma~\ref{le:classicsteinberg} applies.
In particular, $\alpha'\beta_0\alpha'^{-1}\in\whset{b}$ and
\[\abs{W'}-\abs{\alpha'\cdot W'}=\abs{\beta_0\cdot W'}-\abs{\alpha'\beta_0\cdot W'}.\]
We replace $\beta'$ with $\beta_0^{-1}\beta'$ and replace $\beta''$ with $\beta''\alpha'\beta_0\alpha'^{-1}$.
The new $W'$ is $\beta_0\cdot W'$.
We go back to the beginning of the loop since the conditions to continue the loop are satisfied.
If $\supp(\alpha')\subset\supp(\beta_0)$ and $a\in\supp(\beta_0)$, we replace $\beta_0$ with its complement so that we have $\supp(\alpha')\cap\supp(\beta_0)=\varnothing$ and $a\notin\supp(\beta_0)$.
Then we apply the previous case.
In these cases we have not increased $\abs{W'}$ or the length of $\alpha'$, but we have decreased the length of $\beta'$.
If the new $\beta'$ is inner, then the changes in length imply that the new $W'$ is shorter than $\beta\cdot W$ and is therefore shorter than $W$.

Now we want $a\in\supp(\beta_0)$; if this is not the case, we replace $\beta_0$ with its complement.
This replacement does not change $\beta_0\cdot W$, so $(W',\alpha',\beta_0)$ is still a peak.
After performing such a replacement if necessary, we definitely have $a\in\supp(\beta_0)$.
Further, since we have just considered this case separately, we can assume that $\supp(\alpha')\not\subset\supp(\beta_0)$.
Then we apply Corollary~\ref{co:shorterfactors} to $(W',\alpha',\beta_0)$ and one of the following holds:
\begin{itemize}
\item
there is a classic long-range Whitehead automorphism $\beta_1$ with multiplier $d^{-\epsilon}$ such that $\abs{\beta_1\cdot W'}<\abs{W'}$ and $\supp(\beta_1)\cap\supp(\alpha)=\varnothing$ and $\beta_1$ fixes $[a]$, or
\item
there is a classic long-range Whitehead automorphism $\alpha_1$ with multiplier $a$ such that $\abs{\alpha_1\cdot W'}<\abs{W'}$ and $\supp(\alpha_1)\subset\supp(\alpha')\cap\supp(\beta_0)$.
\end{itemize}
If $\supp(\alpha')\cap\supp(\beta_0)=\varnothing$, then the first case holds.

If the second case holds,  $\alpha'\alpha_1^{-1}$ commutes with $\beta''$ since $\supp(\alpha'\alpha_1^{-1})\subset\supp(\alpha')$ and $\supp(\alpha')\cap\supp(\beta'')=\varnothing$, and since $\alpha'\alpha_1^{-1}$ fixes $[b]$ and $\beta''$ fixes $[a]$.
Note that $\alpha_1\neq\alpha$ since we assumed $\supp(\alpha')\not\subset\supp(\beta_0)$.
So we relabel $\alpha_1$ as $\alpha'$ and relabel $\alpha''$ as $\alpha''\alpha'\alpha_1^{-1}$.
Then we still have the conditions we expect at the beginning of the loop.
The new $\alpha'$ is nontrivial because $\alpha_1\cdot W'\neq W'$ and $(W',\alpha',\beta')$ is still a peak because $\abs{W'}$ is now strictly greater than $\abs{\alpha'\cdot W'}$ and $\abs{W'}-\abs{\beta'\cdot W'}$ is unchanged.
Then we repeat the loop.
In this case we have not increased $\abs{W'}$ or the length of $\beta'$ but we have decreased the length of $\alpha'$.

In the first case,
Lemma~\ref{le:steinbergrel} and Proposition~\ref{pr:steinberglengthchange} imply that $\alpha'$ and $\beta_1$ commute and that
\[\abs{W'}-\abs{\alpha'\cdot W'}=\abs{\beta_1\cdot W'}-\abs{\alpha'\beta_1\cdot W'}.\]
We relabel $\beta_1^{-1}\beta'$ as $\beta'$, and relabel $\beta_1\beta''$ as $\beta''$.
Note that we still have
$\beta\alpha^{-1}=\beta'\alpha'^{-1}\beta''\alpha''^{-1}$.
Further, we still have $\beta''$ fixing $[a]$ and $\supp(\beta'')\cap\supp(\alpha')=\varnothing$, since this was true of $\beta_1$ and the old $\beta''$.
We relabel $W'$ as $\beta'^{-1}\beta\cdot W=\beta_1\cdot W'$.
We also note that the new $\abs{W'}$ is strictly less than the old $\abs{W'}$, since it is the image of the old $W'$ under $\beta_1$.
If $\abs{W'}<\abs{W}$, then we exit the loop.
Otherwise $(W',\alpha',\beta')$ is still a peak since $\alpha'$ shortens the new and old $W'$ by the same amount.
We repeat the loop with the newly relabeled automorphisms.
If the new $\beta'$ is inner, then the changes in length imply that the new $W'$ is shorter than $\beta\cdot W$ and is therefore shorter than $W$.

So we leave the loop when $\abs{W'}<\abs{W}$.
The sum of differences 
\[\abs{\alpha''^{-1}\alpha\cdot W}-\abs{\alpha\cdot W}+\abs{W'}-\abs{\alpha'\cdot W'}\]
equals $\abs{W}-\abs{\alpha\cdot W}$.
These two facts together are enough to deduce  that $\beta\alpha^{-1}=\alpha''^{-1}\beta''\alpha'^{-1}\beta'$  is a peak-lowering factorization.
\end{proof}

\subsection{Peak reduction and stabilizer presentations}
Finally, we use peak reduction to homotope a path in the complex $Z$ from Section~\ref{ss:stabpres} in order to prove Proposition~\ref{pr:homotopeinZ}.
We need the following.
\begin{lemma}\label{le:mastertuple}
Consider the set of conjugacy classes of $A_\Gamma$ consisting of
all conjugacy classes of length one represented by a positive generator and
all conjugacy classes of length  two represented by a product of two non-commuting generators.
Let $U$ be a tuple of conjugacy classes whose entries are this set of classes in some order.
Then 
$U$ is minimal length in its automorphism orbit.

Further, if $\alpha$ is a generalized Whitehead automorphism sending one minimal-length representative of  the orbit of $U$ to another,  then $\alpha$ is a permutation automorphism or an inner automorphism.
\end{lemma}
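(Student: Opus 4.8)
The plan is to prove the two assertions separately, in each case by reducing to a length computation for a single generalized Whitehead automorphism.

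\textbf{Minimality.} By Lemma~\ref{le:checkminlength} it suffices to show that no $\alpha\in\whset{a}$ shortens $U$. Fix $a$ and, using Lemma~\ref{le:actoncomps} and Lemma~\ref{le:etainjective}, factor $\alpha=\alpha_2\alpha_1$, where $\alpha_1$ lies in the $\GL(\abs{[a]},\Z)$--factor of $\whset{a}$ (it acts on $\genby{[a]}$ and fixes $X\setminus[a]$) and $\alpha_2$ is a product of dominated transvections and partial conjugations with multipliers in $[a]$, hence fixes $[a]$ pointwise. For $\alpha_1$: only entries of $U$ involving a vertex of $[a]$ change, and since every $c\in[a]$ has $\st(c)=\st(a)$, any non-edge $\{c,y\}$ with $c\in[a]$ has $y$ commuting with no element of $[a]$, so $[\alpha_1(c)y]$ has length $\abs{\alpha_1(c)}+1$. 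Summing the per-entry changes gives $\abs{\alpha_1\cdot U}-\abs{U}=(1+\abs{X}-\abs{\st(a)})\sum_{c\in[a]}(\abs{\alpha_1(c)}-1)$, which is $\geq 0$ because every column of a matrix in $\GL(\abs{[a]},\Z)$ is a nonzero integer vector, with equality exactly when $\alpha_1$ restricts to a signed permutation of $[a]^{\pm 1}$.

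The real work is the $\alpha_2$--step, namely $\abs{\alpha_2\cdot(\alpha_1\cdot U)}\geq\abs{\alpha_1\cdot U}$. Individual entries genuinely can shrink here — for instance a right transvection $c\mapsto cd^{-1}$ with $d\in[a]$ turns $[cd]$ into $[c]$ — so the argument must be global: I would compute the change in length with the syllable machinery of Lemma~\ref{le:syllablesandlengthchange} together with the bracket counts $\pcount{\cdot}{\cdot}{\cdot}$, carry out a short case analysis on the type of each vertex ($[a]$--vertex, a vertex in $\dom(a)\cap\st(a)$, one in $\dom(a)\setminus\st(a)$, or a connected component of $\Gamma\setminus\st(a)$), and check that when the sum is taken over all of $U$ the negative contributions from the length--two entries $[cd]$ are exactly cancelled by positive contributions coming from $[c]$ and from the other length--two entries containing $c$. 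This is the main obstacle, and it is precisely where one uses the hypothesis that $U$ contains \emph{every} length--one class and \emph{every} length--two class of the prescribed form. The equality case again forces $\alpha_2$ to be conjugation by an element of $\genby{[a]}$.

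\textbf{The rigidity statement.} First use Theorem~\ref{th:fullfeaturedpeakreduction} to peak--reduce $\alpha$ with respect to the minimal--length representative it starts at; since both endpoints are of minimal length, the whole factorization sits at constant length $\abs{U}$, so it is enough to treat a single $\gamma\in\Omega$ carrying one minimal--length representative $U'$ to another $U''$. The permutation case is immediate, so suppose $\gamma\in\whset{a}$. The key preliminary point, which I expect to be the delicate part of the argument, is to pin down the minimal--length representatives: one shows that any such $U'$ is obtained from $U$ by an element of $\langle P,\mathrm{Inn}(A_\Gamma)\rangle$ (equivalently, the minimal--length representatives form a single $P$--orbit of $U$, using that $\mathrm{Inn}(A_\Gamma)$ fixes $U$), so in particular $U'$ and $U''$ still contain all the relevant length--one and length--two classes. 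Granting this, the equality analysis of Part~1 applies verbatim to $U'$ with $\gamma=\gamma_2\gamma_1$ in place of $\alpha=\alpha_2\alpha_1$, and forces every per--entry length change to vanish; hence $\gamma_1$ is a signed permutation of $[a]^{\pm 1}$ and $\gamma_2$ is conjugation by an element of $\genby{[a]}$. Thus $\gamma$ lies in $\langle P,\mathrm{Inn}(A_\Gamma)\rangle$, and since $\gamma$ was an arbitrary step, so does $\alpha$; a final analysis of how an element of $\langle P,\mathrm{Inn}(A_\Gamma)\rangle$ can lie in $\whset{a}$ — using that distinct standard generators of a right-angled Artin group are non-conjugate and that an element conjugate to a standard generator equals it — then yields that $\alpha$ is a permutation automorphism or an inner automorphism.
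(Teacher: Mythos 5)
Your proposal takes a genuinely different route from the paper: you attempt a direct length count via the factorization $\alpha=\alpha_2\alpha_1$ along the semidirect product $\GL(n,\Z)\ltimes M_{n,k}(\Z)\cong\eta(\whset{a})$, whereas the paper encodes $U$ as a matrix $\nu(T)$ via a syllable decomposition and observes that $\nu(T)$ is already (up to a permutation of the first $n$ rows) in $G_{\Q}$--normal form; minimality then drops out of the uniqueness of the normal form (Lemma~\ref{le:normalformunique}), and the rigidity statement comes from tracking which row operations can preserve $\nu(T)$. Your $\alpha_1$ computation is correct and is the direct counterpart of the paper's observation that the columns $r_b$, $b\in[a]$, coming from the length--one classes pin down the $\GL$--block. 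But the $\alpha_2$ step is not carried out: you write ``I would compute \ldots carry out a short case analysis \ldots and check that \ldots'' and identify it yourself as ``the main obstacle.'' That step is precisely the content of the lemma, so as written this is a genuine gap, not a deferral of routine detail. There is also a structural subtlety you don't address: you want to bound $\abs{\alpha_2\cdot(\alpha_1\cdot U)}-\abs{\alpha_1\cdot U}$, but $\alpha_1\cdot U$ is no longer a tuple of length--one and length--two classes (its entries can be arbitrarily long words in $\genby{[a]}$ followed by a letter $y$), so the clean inventory of length--one and length--two entries on which your promised cancellation argument rests is no longer available. This is exactly the kind of bookkeeping the paper's matrix formulation avoids.

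The rigidity half has a second gap. After reducing via Theorem~\ref{th:fullfeaturedpeakreduction} to a single $\gamma\in\Omega$ taking one minimal representative $U'$ to another, you invoke a ``key preliminary point'' --- that every minimal--length representative of the orbit is $\pi\cdot U$ for some $\pi\in P$ --- but give no argument for it. That claim is not a preliminary; modulo the observation that inner automorphisms fix $U$ as a tuple of conjugacy classes, it is essentially equivalent to the rigidity statement itself, so invoking it is circular unless you supply the bootstrapping argument (prove the case $U'=U$ first, then induct along the peak--reduced factorization). The paper quietly treats only the case $U'=U$ and relies on exactly that bootstrap. Finally, ``$\gamma\in\langle P,\mathrm{Inn}(A_\Gamma)\rangle$'' is weaker than the target ``$\gamma$ is a permutation or inner''; you note a ``final analysis'' is needed to pass from one to the other, but that, too, is gestured at rather than given. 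So: the approach is plausible and its skeleton is right, but the central inequality and the two closing steps of the rigidity argument remain unproved.
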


\begin{proof}
Without loss of generality, we suppose that $U$ is ordered 
with the conjugacy classes of length one first and the classes of length two following.
By peak reduction, to show that $U$ is minimal length in its orbit, it is enough to show that no element of $\whset{a}$ can shorten it, for any $a$ in $\Gamma$.
Let $T$ be a syllable decomposition of $U$; we claim that the matrix $\nu(T)$ can be put in $G_{\Q}$--normal form by permuting the rows.
To see this we examine the matrix $\nu(T)$.

In the following, $\epsilon$ and $\delta$ are always in $\{1,-1\}$.
For each $b$ adjacent to or equal to $a$, the conjugacy class represented by $b^\epsilon$  maps to $\epsilon r_b$ under $\nu$.
For each $b$ not adjacent to $a$, with $a$ dominating $b$, the conjugacy class represented by $b^\epsilon$ is a single syllable that maps to $\epsilon r_b+\epsilon l_b$ under $\nu$.
For $b$ not adjacent to $a$, with $a$ not dominating $b$, the conjugacy class represented by $b^\epsilon$ is a single syllable that maps to $0$ under $\nu$ (it maps to $r_Y-r_Y$ where $Y$ is the component of $b$ in $\Gamma\setminus\st(a)$).
For $b$ and $c$ both adjacent to or equal to $a$ but not adjacent to each other, the conjugacy class of $b^\epsilon c^\delta$ maps to $\epsilon r_b+\delta r_c$.
For $b$ adjacent to or equal to $a$ and $c$  not adjacent to $a$ or $b$, but with $a$ dominating $c$, 
 the conjugacy class of $b^\epsilon c^\delta$ maps to $\epsilon r_b +\delta r_c+\delta l_c$.
For $b$ adjacent to or equal to $a$ and $c$ in the component $Y$ of $\Gamma\setminus\st(a)$ (which has at least two vertices), 
 the conjugacy class of $b^\epsilon c^\delta$ maps to $\epsilon r_b$.
For $b$ and $c$ not adjacent to $a$, but with $a$ dominating both $b$ and $c$, the conjugacy class of $(b c)^\epsilon$ splits into two syllables, which map to
$\epsilon(r_b+l_c)$ and $\epsilon(r_c+l_b)$,
and the conjugacy class of $(bc^{-1})^\epsilon$ splits into two syllables, which map to $\epsilon(r_b-r_c)$ and $\epsilon(l_b-l_c)$.
If $b$ is not adjacent to $a$, but $a$ dominates $b$, and $c$ is in the connected component $Y$ of $\Gamma\setminus\st(a)$ (which has at least two elements), then the conjugacy class of $(b c^\delta)^\epsilon$ splits into two syllables, which map to $\epsilon(r_b -r_Y)$ and $\epsilon(l_b+r_Y)$.
Finally, if $b$ is in the component $Y$ of $\Gamma\setminus\st(a)$ and $c$ is in the component $Z$ of $\Gamma\setminus\st(a)$ (with both having at least two elements), then the conjugacy class of $b^\epsilon c^\delta$ splits into two syllables that map to $r_Y-r_Z$ and $r_Z-r_Y$.

Let $n$ denote the cardinality of $[a]$ and let $k$ denote the number of basis elements of $Z_{[a]}$ other than the $r_b$ for $b\in[a]$.
First we verify that adding any linear combination of the last $k$ rows of $\nu(T)$ to any of the first $n$ does not simplify the matrix.
If $c$ does not commute with $a$ and $b$ is in $[a]$, then adding combinations of the row for $r_c$ and $l_c$ to the row for $r_b$ may simplify the columns for $b^\epsilon c^\delta$, but any such action will make the column for $c$ more complicated.
By our hypothesis on $U$, the column for $c$ precedes all the $b^\epsilon c^\delta$ columns of $\nu(T)$, and therefore none of these row moves can simplify $\nu(T)$.
However, none of the columns for the other kinds of conjugacy classes in $U$ (enumerated above) have a nonzero element in the first $n$ rows and a nonzero element in the last $k$ rows.
This implies that adding linear combinations of the last $k$ rows to the first $n$ rows cannot simplify any column of $\nu(T)$ without making a previous column worse.
Since the first $\abs{X}$ columns of $\nu(T)$ include the $n$ columns coming from the classes $b$ for $b$ in $[a]$, which map to $r_b$ in each case, and since all other columns with a nonzero entry in the first $n$ rows appear after the first $\abs{X}$ columns, the matrix $\nu(T)$ can be put in $G_{\Q}$--normal form by a permutation of the first $n$ rows.

If an element of $\whset{a}$ shortens $U$, then all it can do is to delete some elements of $[a]$ from somewhere in $U$.
Such a deletion would give us a tuple $U'$ with a syllable decomposition $T'$ differing only from $T$ in some deletions of elements of $[a]$.
The corresponding matrix $\nu(T')$ would also essentially already be in $G_{\Q}$--minimal form (up to a permutation), and therefore $U$ and $U'$ cannot be in the same orbit under $\whset{a}$ by Proposition~\ref{pr:normalformexists}.

To see the second part of the statement, we suppose that $\alpha$ is a generalized Whitehead automorphism sending $U$ to a tuple of the same length.
Of course, it is possible that $\alpha$ is a permutation automorphism.
We suppose that it is not, so that $\alpha$ is in $\whset{a}$ for some $a$.
By the form of $\nu(T)$, we know that the only way $\alpha$ can send $\nu(T)$ to itself is to add a linear combination of the bottom $k$ rows of $\nu(T)$ to some rows in the top, where that linear combination has a trivial value.
However, the only such linear combination that is trivial is the one corresponding to conjugation by some element: we add $0$ times each $r_b$ with $b$ commuting with $a$, $+1$ times each $r_c$ and $-1$ times each $l_c$ with $c$ not commuting with $a$ and with $a$ dominating $c$, and $+1$ times each $r_Y$ row.
This can be seen from the enumeration of columns above.
\end{proof}

\begin{proof}[Proof of Proposition~\ref{pr:homotopeinZ}]
Recall that $Z$ is our alleged presentation complex for $\Aut(A_\Gamma)_W$ and we have an edge-loop $p$ based at $W$ that we wish to homotope so that its edge labels are only permutations and inner automorphisms.

We define a non-locally finite graph $\widehat Z$ that is related to $Z$.
The vertices of $\widehat Z$ are the same as the vertices of $Z$.
The edges are as follows: if $W'$ is a vertex in $\widehat Z$ and $\alpha$ is a generalized Whitehead automorphism in $P$ or in $\whset{a}$ for some $a$ with $\abs{\alpha\cdot W'}=\abs{W'}$, then there is an edge labeled by $\alpha$ from $W'$ to $\alpha\cdot W'$.
Of course $Z^1$ is a finite subgraph of $\widehat Z$.

Now we define a map $\phi$ from edge paths in $\widehat Z$ to edge paths in $Z^1$.
For edges in $Z^1\subset\widehat Z$, we define $\phi$ to be the identity.
Suppose we have an edge in $\widehat Z$ starting at a vertex $W'$ that is not in $Z^1$.
Then this edge is labeled by an element $\alpha$ in $\whset{a}$ for some $a\in X$.
Let $S$ denote $(X\setminus[a])^{\pm1}\setminus \supp(\alpha)$.
Then $\alpha\in\whsetsr{a}{S}$.
By the construction of $Z$, there is an edge starting at $W'$ in $Z^1$ labeled by some $\gamma\in\whsetsr{a}{S}$.
Since $\alpha\gamma^{-1}\in(\whsetsr{a}{S})_{W'}$, there is an edge path $w$ in the loops at $W'$ representing $\alpha\gamma^{-1}$ (this step involves a choice, but we make these choices once and for all and forget about them).
Then $\phi$ sends the edge in $\widehat Z$ starting at $W'$ labeled with $\alpha$ to the concatenation of the path $w$ with the edge labeled by $\gamma$.
We extend $\phi$ to all edge paths in $Z^1$ by concatenation.

For any edge path in $\widehat Z$, the composition of edge labels gives us the same automorphism before and after applying $\phi$.
Further, the initial and terminal vertices of an edge path are the same before and after applying $\phi$.

Our path $p$ corresponds to a factorization
$1=\alpha_k\dotsm\alpha_1$
such that the intermediate images $\alpha_i\dotsm\alpha_1\cdot W$ have the same length as $W$ and there is an edge labeled by $\alpha_{i+1}$ from $\alpha_i\dotsm\alpha_1\cdot W$ to $\alpha_{i+1}\dotsm\alpha_1\cdot W$ for each $i$.
By Lemma~\ref{le:mastertuple}, there is a tuple $U$ of cyclic words, minimal length in its orbit, with the following property:
any generalized Whitehead automorphism sending one minimal length image of $U$ to another must be an inner automorphism or a permutation automorphism.
We consider the intermediate images $\alpha_i\dotsm\alpha_1\cdot U$ of $U$ and let $m$ denote the maximum length of any of these.
Let $V$ be the concatenation of $U$ with $m$ copies of $W$.
Our plan to prove the proposition is to peak-reduce the factorization $\alpha_k\dotsm\alpha_1$ with respect to $V$.
Peak reduction proceeds by finding peak-lowering substitutions to apply to peaks (subwords of length two) in the factorization.
Since we have $m$ copies of $W$ in $V$, a peak-lowering substitution can never produce a new factorization that has an intermediate image of $W$ longer than the original $W$---any increase in length in an image of $W$ would have to be countered by a decrease in length in an image of $U$ that is greater than the length of the longest intermediate image of $U$.
Then if we peak-reduce $\alpha_k\dotsm\alpha_1$ with respect to $V$, at each step the factorization remains the composition of labels on an edge path in $Z$.
So to prove the proposition, it is enough to explain how the peak-lowering substitutions from the proof of Theorem~\ref{th:fullfeaturedpeakreduction} correspond to homotopies of the path $p$.

By inspecting Sections~\ref{ss:algbasic} and~\ref{ss:alggen} above, we can see that the peak-lowering substitutions we use are from the following list:
\begin{itemize}
\item Factorization by classic Whitehead automorphisms: we take a long-range Whitehead automorphism from $\Omega$ and replace it with a product of classic long-range Whitehead automorphisms that follow an edge path in $Z$.
\item Applying classic peak reduction to a peak between two classic long-range Whitehead automorphisms.
\item Conjugating an automorphism across an inner automorphism as in Lemma~\ref{le:lowerconj}.
\item Applying a relation of length three between Whitehead automorphisms with the same multiplier, as in Lemma~\ref{le:samemultset}. 
\item Conjugating a permutation automorphism across another automorphism, as in Lemma~\ref{le:permlower}.
\item Applying a Steinberg relation from Lemma~\ref{le:steinbergrel}.
\end{itemize}

We proceed to show how applying any of the above rules to $\alpha_k\dotsm\alpha_1$ corresponds to homotoping the path given by $\phi(\alpha_k\dotsm\alpha_1)$ across some of the $2$--cells in $Z$.
Applying a substitution to $\alpha_k\dotsm\alpha_1$ corresponds to homotoping $\phi(\alpha_k\dotsm\alpha_1)$ across $2$--cells in $Z$.
In each of the following items, we indicate which cells we need.
\begin{itemize}
\item First we suppose we replace a long-range automorphism by a peak-reduced product of classic Whitehead automorphisms.
We suppose we have a vertex $W'$ in $\widehat Z$ and a long-range generalized Whitehead automorphism $\gamma$ labeling an edge in $\widehat Z$ starting at $W'$.
We  know $\phi$ sends $\gamma$ to a product $\gamma_2\gamma_1$ labeling edges starting at $W'$ in $Z$, where $\gamma_i$ has the same multiplier and support as $\gamma$, for $i=1,2$, and where $\gamma_1$ fixes $W'$  and  $\gamma_2\cdot W'=\gamma\cdot W'$.
Both $\gamma_1$ and $\gamma_2$ are long-range automorphisms, so we can drag the path they follow across $2$--cells in $Z$ of type (C2) in order to replace $\gamma_2\gamma_1$ with a product $\delta_l\dotsm\delta_1$ of long-range classic Whitehead automorphisms tracing out an edge path in $Z$.
We substitute this factorization back into $\alpha_k\dotsm\alpha_1$, replacing $\gamma$, and homotope the image under $\phi$ across cells of type (C2).
\item Now we suppose we must perform classic peak reduction.
Suppose $\beta\gamma$ is a subword of length two of $\alpha_k\dotsm\alpha_1$ and $\beta$ and $\gamma$ are classic long-range Whitehead automorphisms that we must peak-reduce with respect to $V'$, an intermediate image of $V$.
Since $V'$ is an intermediate image of $V$, it is the concatenation of $U'$ with several copies of $W'$, where $U'$ is an intermediate image of $U$ and $W'$ is an intermediate image of $W$.
Since $\beta$ and $\gamma$ are classic, $\phi$ sends the edges with these labels in $\widehat Z$ to edges with the same labels in $Z$, and $W'$ is the vertex of $Z$ that these edge are based at.
Lowering this peak with respect to $V'$ will produce a factorization that is peak-lowering with respect to $U'$, but which never increases the lengths of the intermediate images of $W$.
Therefore lowering this peak with respect to $V'$ will give us a factorization $\beta\gamma=\delta_l\dotsm\delta_1$ that follows a path in $Z$.
By Lemma~5.1 of Day~\cite{Day1}, the substitution lowering this peak is an application of a relation between classic Whitehead automorphisms.
Since these relations are all witnessed by $2$--cells of type (C3), we can replace $\beta\gamma$ by $\delta_l\dotsm\delta_1$ in $\alpha_k\dotsm\alpha_1$ and homotope $\phi(\alpha_k\dotsm\alpha_1)$ across cells of type (C3).
\item Next we suppose that we have a vertex $W'$ in $\widehat Z$ and a generalized Whitehead automorphism $\beta\in\whset{b}$ for some $b\in X$, labeling an edge starting at $W'$ and an inner classic Whitehead automorphism $\gamma$, and the next step in the peak reduction of $\alpha_k\dotsm\alpha_1$ requires us to conjugate $\beta$ across $\gamma$.
We know that $\beta$ is $\beta_2\beta_1$, where $\beta_1\in(\whset{b})_{W'}$ and $\beta_2\in\whset{b}$ labels an edge starting at $W'$ in $Z^1$.
We conjugate $\beta_1$ and $\beta_2$ across $\gamma$, which labels a loop in $Z^1$.
This amounts to a homotopy of $\phi(\alpha_k\dotsm\alpha_1)$ across $2$--cells of type (C4).
\item
Now we suppose  $\alpha_k\dotsm\alpha_1$ has a subword $\gamma\beta$ with $\beta,\gamma\in\whset{b}$ for some $b\in X$, and we need to replace $\gamma\beta$ by its product $\delta=\gamma\beta\in\whset{b}$ in the factorization as the next step in the peak reduction.
Here $\beta$ is an edge label on an edge originating at a vertex $W'$ in $\widehat Z$.
Then $\phi(\beta)$ is $\beta_2\beta_1$, $\phi(\gamma)$ is $\gamma_2\gamma_1$, and $\phi(\delta)$ is $\delta_2\delta_1$, where all these new automorphisms are in $\whset{b}$, $\beta_1$, $\gamma_1$ and $\delta_1$ stabilize the vertices they originate at, and $\beta_2$, $\gamma_2$ and $\delta_2$ label edges in $Z^1$.
So $\phi(\alpha_k\dotsm\alpha_1)$ will differ before and after the replacement as follows: before we have $\gamma_2\gamma_1\beta_2\beta_1$ and after we have $\delta_2\delta_1$.
Pulling $\gamma_1\beta_2$ across cells of type (C5) will replace it with $\beta_2\gamma_1$, where the second $\gamma_2$ is a product of edge labels at $W'$ instead of $\beta_2\cdot W'$.
Then we can homotope $\gamma_2\beta_2$ to $\delta_3\delta_1$ using a (C5) cell, where $\delta_3$ is some product of edge labels at $W'$.
Then we have homotoped $\gamma_2\gamma_1\beta_2\beta_1$ to a product of $\delta_2$ with a product of edge labels at $W'$.
At this point we only need to use some cells of type (C1) to finish this case.
\item
For this item we suppose $\alpha_k\dotsm\alpha_1$ has a subword $\gamma\beta$ with $\gamma\in\whset{a}$ for some $a\in X$ and $\beta$ a permutation automorphism.
Suppose we need to replace this $\gamma\beta$ with $\beta\delta$, where $\delta=\beta^{-1}\gamma\beta\in\whset{\beta^{-1}(a)}$.
Suppose $\gamma$ in this sequence is the label on an edge originating at a vertex $W'$.
We know $\phi(\gamma)$ is some $\gamma_2\gamma_1$  where $\gamma_1\in(\whset{a})_{W'}$ and $\gamma_2$ is the label on an edge in $Z$ originating at $W'$,
and $\phi(\delta)$ is some $\delta_2\delta_1$ with $\delta_1\in(\whset{\beta^{-1}(a)})_{\beta^{-1}\gamma\cdot W'}$ and $\delta_2$ is the label on an edge in $Z$ originating at $\beta^{-1}\gamma\cdot W'$.
We know $\gamma_1$ is represented by some edge path in the loops at $W'$.
For each edge in the sequence, we can conjugate $\beta$ across the given edge label, thereby pushing our edge path across a $2$--cell of type (C6).
Then we conjugate $\beta$ across $\gamma_2$, getting $\beta\delta_2$ together with some product of loops at $\beta^{-1}\gamma\cdot W'$.
We use disks of type (C1) to rewrite the remaining sequence as $\beta\delta_2\delta_1$.
This homotopes the image of $\alpha_k\dotsm\alpha_1$ under $\phi$ from before the replacement to the image after the replacement.
\item
Now we suppose we have $\beta\gamma$ in $\alpha_k\dotsm\alpha_1$ with $\beta\in\whset{b}$ and $\gamma\in\whset{a}$ satisfying the hypotheses of Lemma~\ref{le:steinbergrel}.
Suppose $\beta$ is an edge label originating from the vertex $W'$ and to peak-reduce $\alpha_k\dotsm\alpha_1$ we need to conjugate $\gamma$ across $\beta$.
Let $\delta=\gamma^{-1}\beta\gamma$, which is in $\whset{b}$ by Lemma~\ref{le:steinbergrel}.
Then $\phi$ sends the $\gamma$ originating at $\gamma^{-1}\cdot W'$ to some $\gamma_2\gamma_1$ and sends the $\gamma$ originating at $\delta\gamma^{-1}\cdot W'$ to some $\gamma_4\gamma_3$, where these automorphisms are in $\whset{a}$, we know $\gamma_2$ and $\gamma_4$ label edges leaving $\gamma^{-1}\cdot W'$ and $\delta\gamma^{-1}\cdot W'$ in $Z$, and $\gamma_1$ and $\gamma_3$ fix $\gamma^{-1}\cdot W'$ and $\delta\gamma^{-1}\cdot W'$ respectively.
Let  $S=(X\setminus[a])^{\pm}\setminus\supp(\gamma)$.
Then $\gamma\in\whsetsr{a}{S}$, and by the construction of $\phi$, we know $\gamma_i$ is as well, for $i=1,2,3,4$.
Similarly, $\phi$ sends $\beta$ to $\beta_2\beta_1$ where $\beta_2$ labels an edge of $Z$ starting at $W'$, $\beta_1$ fixes $W'$, $\beta_i\in\whset{b}$ and $\supp(\beta_i)\subset\supp(\beta)$ for $i=1,2$, and $\phi$ sends $\delta$ to $\delta_1\delta_2$ where $\delta_2$ labels an edge of $Z$ starting at $\gamma^{-1}\cdot W'$, $\delta_1$ fixes $\gamma^{-1}\cdot W'$, $\delta_i\in\whset{b}$ and $\supp(\delta_i)\subset\supp(\delta)$ for $i=1,2$.
These conditions mean that the edges labeled with $\beta_2$ and $\gamma_2^{-1}$ leaving $W'$ in $Z$ satisfy the hypotheses for Lemma~\ref{le:steinbergrel}, and therefore there is a $2$--cell of type (C7) for this relation.
We can move $\beta_1$ out of the way using cells of type (C5).
Then we slide the edge path across our cell of type (C7), and replace
$\gamma_2\beta_2$ with $\gamma_2w_2w_1\delta_2$, where $w_2$ is an edge loop label sequence representing an element of $(\whset{a})_{\delta\gamma^{-1}\cdot W'}$ and $w_1$ is an edge loop label sequence representing an element of $(\whset{b})_{\delta\gamma^{-1}\cdot W'}$.
Again we use $2$--cells of type (C5) to move $w_1$ to precede $\delta_2$.
We note that the edge labels appearing in $w_1$ and in $\gamma_1$ satisfy the hypotheses of Lemma~\ref{le:steinbergrel}, and so by a sequence of slides across $2$--cells of type (C7) we can move one past the other.
Similarly, the labels in $\gamma_1$ together with $\delta_2$ satisfy the hypothesis of Lemma~\ref{le:steinbergrel}, and we can slide $\gamma_1$ across $\delta_2$.
Then $\delta_1$ and $w_1$ both represent the same element of $(\whset{b})_{\delta\gamma^{-1}\cdot W'}$ and we can use $2$--cells of type (C1) homotope one to the other.
Similarly we homotope $w_2$ to $\gamma_3\gamma_1$.
So we have homotoped $\beta_2\beta_1\gamma_2\gamma_1$ to $\gamma_4\gamma_3\delta_2\delta_1$ relative to endpoints, so we have homotoped the image of $\alpha_k\dotsm\alpha_1$ under $\phi$ before this move to the image after this move.
\end{itemize}

So we peak-reduce $\alpha_k\dotsm\alpha_1$ with respect to $V$ and homotope $\phi(\alpha_k\dotsm\alpha_1)$ at each step, so our edge path describes an edge loop in $\widehat Z$ that sends each intermediate image of $U$ to one of the same length as $U$.
Then each $\alpha_i$ is an inner automorphism or a permutation automorphism by Lemma~\ref{le:mastertuple}.
\end{proof}

\subsection*{Acknowledgements}
This research was supported by a grant from the National Science Foundation, award number DMS-1206981.
I would like to thank the anonymous referee for carefully reading and commenting on this paper.
I am grateful to all the people with whom I had conversations about this research, including Montserrat Casals-Ruiz, Matt Clay, Michael Day, Michael Geline, Mark Johnson, Yo'av Rieck and Ilya Kazachkov.
My mother Rose Day has made important contributions to all my work, indirectly, in no small amount.
\bibliography{fullpeakred}

\begin{thebibliography}{10}

\bibitem{CasalsRuizKazachkov}
Montserrat Casals-Ruiz and Ilya Kazachkov.
\newblock On systems of equations over free partially commutative groups.
\newblock {\em Mem. Amer. Math. Soc.}, 212(999):viii+153, 2011.

\bibitem{Charney}
Ruth Charney.
\newblock An introduction to right-angled {A}rtin groups.
\newblock {\em Geom. Dedicata}, 125:141--158, 2007.

\bibitem{CharneyStambaughVogtmann}
Ruth Charney, Nathaniel Stambaugh, and Karen Vogtmann.
\newblock Outer space for right-angled {A}rtin groups {I}.
\newblock ArXiv preprint, arXiv:1212.4791, 2012.

\bibitem{Cohen}
Henri Cohen.
\newblock {\em A course in computational algebraic number theory}, volume 138
  of {\em Graduate Texts in Mathematics}.
\newblock Springer-Verlag, Berlin, 1993.

\bibitem{CollinsZieschang84a}
Donald~J. Collins and Heiner Zieschang.
\newblock On the {W}hitehead method in free products.
\newblock In {\em Contributions to group theory}, volume~33 of {\em Contemp.
  Math.}, pages 141--158. Amer. Math. Soc., Providence, RI, 1984.

\bibitem{CollinsZieschang84b}
Donald~J. Collins and Heiner Zieschang.
\newblock Rescuing the {W}hitehead method for free products. {I}. {P}eak
  reduction.
\newblock {\em Math. Z.}, 185(4):487--504, 1984.

\bibitem{CollinsZieschang84c}
Donald~J. Collins and Heiner Zieschang.
\newblock Rescuing the {W}hitehead method for free products. {II}. {T}he
  algorithm.
\newblock {\em Math. Z.}, 186(3):335--361, 1984.

\bibitem{CollinsZieschang87}
Donald~J. Collins and Heiner Zieschang.
\newblock A presentation for the stabiliser of an element in a free product.
\newblock {\em J. Algebra}, 106(1):53--77, 1987.

\bibitem{Day1}
Matthew~B. Day.
\newblock Peak reduction and finite presentations for automorphism groups of
  right-angled {A}rtin groups.
\newblock {\em Geom. Topol.}, 13(2):817--855, 2009.

\bibitem{Day2}
Matthew~B. Day.
\newblock Symplectic structures on right-angled {A}rtin groups: between the
  mapping class group and the symplectic group.
\newblock {\em Geom. Topol.}, 13(2):857--899, 2009.

\bibitem{HermillerMeier}
Susan Hermiller and John Meier.
\newblock Algorithms and geometry for graph products of groups.
\newblock {\em J. Algebra}, 171(1):230--257, 1995.

\bibitem{Laurence}
Michael~R. Laurence.
\newblock A generating set for the automorphism group of a graph group.
\newblock {\em J. London Math. Soc. (2)}, 52(2):318--334, 1995.

\bibitem{LiuWrathallZeger}
Hai-Ning Liu, C.~Wrathall, and Kenneth Zeger.
\newblock Efficient solution of some problems in free partially commutative
  monoids.
\newblock {\em Inform. and Comput.}, 89(2):180--198, 1990.

\bibitem{McCool1}
J.~McCool.
\newblock A presentation for the automorphism group of a free group of finite
  rank.
\newblock {\em J. London Math. Soc. (2)}, 8:259--266, 1974.

\bibitem{McCool2}
J.~McCool.
\newblock On {N}ielsen's presentation of the automorphism group of a free
  group.
\newblock {\em J. London Math. Soc. (2)}, 10:265--270, 1975.

\bibitem{McCool3}
James McCool.
\newblock Some finitely presented subgroups of the automorphism group of a free
  group.
\newblock {\em J. Algebra}, 35:205--213, 1975.

\bibitem{Milnor}
John Milnor.
\newblock {\em Introduction to algebraic {$K$}-theory}.
\newblock Princeton University Press, Princeton, N.J., 1971.
\newblock Annals of Mathematics Studies, No. 72.

\bibitem{Serre}
Jean-Pierre Serre.
\newblock {\em Trees}.
\newblock Springer-Verlag, Berlin, 1980.
\newblock Translated from the French by John Stillwell.

\bibitem{Servatius}
Herman Servatius.
\newblock Automorphisms of graph groups.
\newblock {\em J. Algebra}, 126(1):34--60, 1989.

\bibitem{VanWyk}
Leonard VanWyk.
\newblock Graph groups are biautomatic.
\newblock {\em J. Pure Appl. Algebra}, 94(3):341--352, 1994.

\bibitem{Whitehead}
J.~H.~C. Whitehead.
\newblock On equivalent sets of elements in a free group.
\newblock {\em Ann. of Math. (2)}, 37(4):782--800, 1936.

\bibitem{Wrathall}
C.~Wrathall.
\newblock Free partially commutative groups.
\newblock In {\em Combinatorics, computing and complexity ({T}ianjing and
  {B}eijing, 1988)}, volume~1 of {\em Math. Appl. (Chinese Ser.)}, pages
  195--216. Kluwer Acad. Publ., Dordrecht, 1989.

\end{thebibliography}
\bibliographystyle{plain}
\end{document}